\documentclass[11pt]{amsart}
\usepackage[utf8]{inputenc}
\usepackage{lmodern}
\usepackage{amsthm,amsmath,amsfonts,amssymb}
\usepackage{fullpage}

\usepackage{bbm}

\usepackage{graphicx}

\usepackage[colorlinks=true, allcolors=blue]{hyperref}
\usepackage{verbatim}

\usepackage{xcolor}
\definecolor{defaultColor}{HTML}{5B68FF}

\usepackage{tikz}
\usetikzlibrary{mindmap}
\usepackage{subfigure}

\newtheorem{theor}{Theorem}[section]
\newtheorem*{theorem*}{Theorem}

\newtheorem{lemma}[theor]{Lemma}
\newtheorem{cor}[theor]{Corollary}
\newtheorem{defi}[theor]{Definition}
\newtheorem{prop}[theor]{Proposition}

\theoremstyle{definition}
\newtheorem{assumption}[theor]{Assumption}
\newtheorem{rem}[theor]{Remark}
\newtheorem{Example}[theor]{Example}
\theoremstyle{plain}

\newcommand{\N}{\mathbb{N}}

\newcommand{\R}{\mathbb{R}}

\def\Prob{{\mathbb P}}

\def\R{{\mathbb R}}
\def\N{{\mathbb N}}

\def\eps{\epsilon}

\def\Prob{{\mathbb P}}

\newcommand{\CE}[2]{
    \mathbb{E} \big[ #1 \, \big \vert \, #2 \big]
}

\newcommand{\sigmai}[1]{P_{{#1}}\sigma}

\def\sw{{\mathsf w}}
\def\sW{{\mathsf W}}

\def\fp{{\mathfrak p}}
\def\fc{{\mathfrak c}}

\date{\today}

\def\cal{\mathcal}

\def\bP{{\bf P}}
\def\bQ{{\bf Q}}
\newcommand{\CA}{{\cal A}}
\newcommand{\var}{{\rm Var}}
\newcommand{\Eb}[1]{\mathbb{E}\big[ #1 \big]}
\newcommand{\E}{\mathbb{E}}
\newcommand{\ConE}[2]{(\mathbb{E}_{  #1 } #2 )}

\newcommand{\h}{{\rm h}}

\newcommand{\lA}{{\rm h^\circ}}
\newcommand{\lAs}{{\rm h^*}}
\newcommand{\MP}{\hyperref[rem:MarkovProperty]{Markov Property}}

\title{
Low Degree Hardness for Broadcasting on Trees}

\author{ Han Huang \,\, Elchanan Mossel}

\begin{document}
\maketitle

\begin{abstract}
We study the low-degree hardness of broadcasting on trees.
Broadcasting on trees has been extensively studied in statistical physics, 
in computational biology in relation to phylogenetic reconstruction and in statistics and computer science in the context of block model inference,  and as a simple data model for algorithms that may require depth for inference. 

The inference of the root can be carried by celebrated Belief Propagation (BP) algorithm which achieves Bayes-optimal performance. Despite the fact that this algorithm runs in linear time (using real operations), recent works indicated that this algorithm in fact requires high level of complexity. 
Moitra, Mossel and Sandon constructed a chain for which estimating the root better than random (for a typical input) is $NC1$ complete. Kohler and Mossel constructed chains such that for trees with $N$ leaves, recovering the root better than random requires a polynomial of degree $N^{\Omega(1)}$. Both works above asked if such complexity bounds hold in general below the celebrated {\em Kesten-Stigum} bound. 

In this work, we prove that this is indeed the case for low degree polynomials. 
We show that for the broadcast problem using any Markov chain on trees with $n$ leaves, below the Kesten Stigum bound, any $O(\log n)$ degree polynomial has vanishing correlation with the root.  

Our result is one of the first low-degree lower bound that is proved in a setting that is not based or easily reduced to a product measure.

\end{abstract}

\section{Introduction}

Understanding the computational complexity inference problems of random instances has been extensively studies in different research areas such including statistics, cryptography,  computational complexity, computational learning theory and statistical physics.
The emerging field of research is mainly devoted to the study of computational-to-statistical gaps. 

Recently, low-degree polynomials have emerged as a popular tool for predicting computational-to-statistical gaps. Our work follows~\cite{KoehlerMossel:22} in studying the polynomial hardness of broadcasting on trees.

As explained in~\cite{KoehlerMossel:22}:
`` 
Computational-to-statistical gaps are situations where it is impossible for polynomial time algorithms to estimate a desired quantity of interest from the data, even though computationally inefficient (``information-theoretic'') algorithms can succeed at the same task.
Heuristics based on low-degree polynomials have been used in the context of Bayesian estimation and testing problems and partially motivated by connections with (lower bounds for) the powerful Sum-of-Squares proof system. 
More specifically, a recent line of work (e.g. \cite{hopkins2017efficient,hopkins2018statistical,kunisky2019notes, bandeira2019computational,gamarnik2020low,mao2021optimal,holmgren2020counterexamples,bresler2021algorithmic,wein2020optimal}) showed that a suitable ``low-degree heuristic'' can be used to predict computational-statistical gaps for a variety of problems such as recovery in the multicommunity stochastic block model, sparse PCA, tensor PCA, the planted clique problem, certification in the zero-temperature Sherrington-Kirkpatrick model, the planted sparse vector problem, and for finding solutions in random $k$-SAT problems. 

Furthermore, it was observed that the predictions from this method generally agree with those conjectured using other techniques (for example, statistical physics heuristics based on studying BP/AMP fixed points, see e.g. \cite{decelle2011asymptotic,deshpande2015finding,mohanty2021}). Some of the merits of the low-degree polynomial framework include that it is relatively easy to use (e.g.\ compared to proving SOS lower bounds), and that low degree polynomials capture the power of the ``local algorithms'' framework used in e.g. 
\cite{gamarnik2014limits,chen2019suboptimality} as well as algorithms which incorporate global information, such as spectral methods or a constant number of iterations of Approximate Message Passing \cite{wein2020optimal}."

In this work, we continue to study the power of low-degree polynomials for the (average case) broadcast on trees problem. 
In broadcast on trees the goal is to estimate the value of the Markov process at the root given its value at the leaves and the goal is to do 
so for arbitrarily deep trees. 
Two key parameters of the model are the arity of the tree $d$ and the magnitude of the second eigenvalue $\lambda$ of the broadcast chain. 

A fundamental result in this area \cite{kesten1966additional} is that when $d |\lambda|^2 > 1$  nontrivial reconstruction of the root is possible just from knowing the counts of the leaves of different types, whereas when $d |\lambda|^2 < 1$ such count statistics have no mutual information with the root (but more complex statistics of the leaves may)~\cite{Mossel:01,mossel2003information}. This threshold $d |\lambda|^2 = 1$ is known as the \emph{Kesten-Stigum threshold}. The Kesten-Stigum threshold plays a fundamental role in problems, such as algorithmic recovery in the stochastic block model \cite{MoNeSl:15,BoLeMa:15,MoNeSl:18,abbe2017community} and phylogenetic reconstruction \cite{Mossel:04a}.   Count statistics can be viewed as degree 1 polynomials of the leaves, which begs the question of what information more general polynomials can extract from the leaves. 
See~\cite{Mossel:04,Mossel:23} for surveys on the topic. 

In~\cite{KoehlerMossel:22} it was shown that $\lambda = 0$ even polynomials of degree $N^c$, where $N = d^\ell$ is the number of leaves of for a $d$-ary tree of depth $\ell$, for a small $c > 0$ are not able to correlate with the root label (as $\ell$ tends to $\infty$) whereas computationally efficient reconstruction is generally possible as long as $d$ is a sufficiently large constant~\cite{Mossel:01}. 

The main motivation of~\cite{KoehlerMossel:22} was to prove that low degree polynomials fail below the Kesten Stigum bound: 
``It is natural to wonder if the Kesten-Stigum threshold $d|\lambda|^2 = 1$ is sharp for low-degree polynomial reconstruction, analogous to how it is sharp for robust reconstruction." 
However the main result of~\cite{KoehlerMossel:22} only established this in the very special case of $\lambda = 0$.  
This problem is also stated in the ICM 2022 paper and talk on the broadcast process~\cite{Mossel:23}:
``
The authors of~\cite{KoehlerMossel:22} ask if a similar phenomenon holds through the non-linear regime. For example, 
is it true that polynomials of bounded degree have vanishing correlation with $X_0$ in the regime where $d \lambda^2 < 1$?
"
The main results of this paper prove that this is indeed the case. 
We proceed with formal definitions and statement of the main result. 

\subsection{Definitions and Main Result}

Let us begin with define the type of trees we will be investigating in this paper, which is a slight generalization of $d$-ary tree. 
\begin{defi}
   A rooted tree $T$ with root $\rho$ of depth $\ell$ with degree dominated by $d \ge 1$ with parameter $R \ge 1$ is a tree with a root $\rho$ such that for each node $u$ in $T$,
   $$
        \forall k \in \N,\,  |L_k(u)| \le Rd^k,
   $$ 
   where $L_k(u)$ is the set of $k$th descendants of $u$.
   Further, let $L$ denote the set of vertices on the $\ell$th layer.  
\end{defi}
With the above definition, a $d$-ary rooted tree is a tree $T$ with degree dominated by $d\ge 1$ with parameter $R=1$. For a typical realization of Galton-Watson Tree of Poisson type with average degree $d$ and depth $\ell$, is a 
tree with degree dominated by $d\ge 1$ with parameter $R\simeq \log(\ell)$.

\medskip

Consider a $q \times q$ ergodic transition matrix $M$, where $q \geq 2$. Let $\lambda$ represent the second largest absolute value among the eigenvalues of $M$. Additionally, we define the stationary distribution of $M$ as $\pi$.

The broadcasting process $X = (X_v)_{v \in T}$, with state space $[q]$ and transition matrix $M$, can be formally described as follows: As we reveal the values layer by layer, when the value $X_u$ is revealed, the value of $X_v$ for any child node $v$ of $u$ is independently distributed according to $M$:
\begin{align*}
	\Prob\{ X_{v}= t \,\vert\, X_u=s\} = M_{st}.
\end{align*}

In other words, the values of the nodes in the tree $T$ are updated according to the transition matrix $M$, where each node's value depends only on its parent node's value. A formal definition of the process is given below:

\begin{defi} [Broadcasting Process on Tree]
Let $q \ge 2$ be a positive integer. For any rooted tree $T$ with root $\rho$ and a $q \times q$ ergodic transition matrix $M$, the broadcasting process $X = (X_v)_{v\in T}$ with state space $[q]$, according to transition matrix $M$ with an initial distribution $\nu$, is a random process with joint distribution given by:

\begin{align*}
   \forall x = (x_v)_{ v \in T} \in [q]^T,\,\,\,\,
	\Prob\{X=x \}
= 	\nu(x_\rho) \prod_{ (v,u)} M_{x_u,x_v},
\end{align*}
where the product is taken over all pairs $(v,u)$ such that $v$ is a child node of $u$. Further, we use $x_L$ to denote 
$$
    x_L = (x_v)_{v \in L} \in [q]^L.
$$
\end{defi}
With the assumption $\nu = \pi$, $X_v \sim \pi$ for every $v \in T$, as $(X_v)_{v \in P}$ for every downward path of $T$ forms a Markov Chain with transition matrix $M$. Further, let us make a remark about the Markov property of the process.
\begin{rem} [Markov Property] \label{rem:MarkovProperty}
The probability measure defines a {\bf Markov Random Field} on tree $T$. This implies that for any three subsets $A, B,$ and $C$ of $T$, if every path from a node in $A$ to a node in $C$ passes through a node in $B$, then the random variables $X_A$ and $X_C$ are conditionally independent given $X_B$.   
\end{rem}
    \begin{defi}  
    Let $x \in [q]^T$. For $u \in T$, let $x_{\le u} = (x_v)_{v \le u}$. For each subset $U \subseteq T$ and $x \in [q]^T$, let $x_U = (x_u)_{u\in U}$.
\end{defi}    
    Let $f:[q]^T \mapsto \R$, and suppose $f$ depends only on $x_U$. This we will often abbreviate by writing  
    \begin{align*}
        f(x) = f(x_U).
    \end{align*}

The next definition is about the notion of degrees for functions $x_L=(x_v)_{v\in L}$. This is the generalization of degree of a polynomial.

\begin{defi} [Efron-Stein Degree]
   A polynomial $f$ with variables $x_L$ has Efron-Stein degree at most $d$ if it can be expressed as a sum of functions $f_S$, where $S \subseteq L$ and $f_S$ is a function of $x_S = (x_v)_{v\in S}$, such that each $S$ has size bounded by $d$. 
\end{defi}

Our main result in the paper is: 

\begin{theor} \label{thm:main}
Let $T$ be a rooted tree of depth $\ell$ and degree dominated by $d\ge 1$ with parameter $R$. Consider the boardcast process on $T$ with a $q \times q$ transition matrix $M$ and $X_\rho \sim \pi$. 
If $M$ is irreducible and aperiodic and $d\lambda^2<1$, then there exists a constant $c>0$ which depends on $M$ and $1-d\lambda^2$ such that the following holds: 
For any function $f(x_L)$ of Efron-Stein degree $\le c \frac{\ell}{1+\log(R)}$, 
we have 
\begin{align*}
	{\rm Var} ( \CE{f(X_L)}{X_\rho} ) 
\le 	(\max\{d\lambda^2,\lambda\})^{\ell/4}
	{\rm Var}( f(X_L)).
\end{align*}
\end{theor}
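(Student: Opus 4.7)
The plan is to proceed by induction on the depth $\ell$, combining the spectral contraction of $M$ with the tree's recursive branching structure. I fix an orthonormal basis $\{\phi_1, \phi_2, \ldots, \phi_q\}$ of $L^2([q], \pi)$ with $\phi_1 \equiv 1$ that (up to Jordan/singular-value form) diagonalizes $M$, so that $M\phi_j = \lambda_j \phi_j$ with $|\lambda_j| \le \lambda$ for $j \ge 2$. The key single-edge identity is $\E[\phi_j(X_v) \mid X_u] = \lambda_j\phi_j(X_u)$ whenever $v$ is a child of $u$, which gives a contraction by at most $\lambda$ per non-constant eigendirection as we condition upward across an edge.

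The inductive step peels off one layer. Let $u_1, \ldots, u_m$ (with $m \le Rd$) be the children of $\rho$, and let $T_i$ be the subtree of depth $\ell - 1$ rooted at $u_i$, which inherits the degree-dominated condition with the same parameter $R$. By the \MP, the families $X_{L_1}, \ldots, X_{L_m}$ are conditionally independent given $X_\rho$. I would first condition on $(X_{u_1}, \ldots, X_{u_m})$ to obtain $\tilde f := \E[f(X_L) \mid X_{u_1}, \ldots, X_{u_m}]$, and check that $\tilde f$ still has Efron-Stein degree at most $k$ as a function of the children (since each $f_S$ with $|S|\le k$ can touch at most $k$ distinct subtrees). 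I would then expand $\tilde f$ in the product eigenbasis $\prod_i \phi_{j_i}(X_{u_i})$ and invoke conditional independence together with the identity $\E[\prod_i \phi_{j_i}(X_{u_i}) \mid X_\rho] = \prod_i \lambda_{j_i}\phi_{j_i}(X_\rho)$ to extract a $\lambda^{2|J|}$ contraction per non-constant support $J$. Combined with the inductive hypothesis applied inside each subtree, this yields a recursion of the schematic form $\Gamma(\ell) \le (\text{combinatorial factor}) \cdot \lambda^2 \cdot \Gamma(\ell-1)$ for the optimal variance ratio $\Gamma(\ell)$.

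The main difficulty is two-fold. First, combinatorial explosion: at the top level there are $\binom{Rd}{k}$ ways to select the $\le k$ active children, which must be dominated by the $\lambda^{2k}$ contraction from the new root edge; balancing these gives the constraint $k\log(Rd) \lesssim \ell$, matching the hypothesis $k \le c\ell/(1+\log R)$. Second, and more technically delicate, the product eigenbasis is orthonormal only under the product measure $\pi^m$, whereas the true marginal law of $(X_{u_1}, \ldots, X_{u_m})$ is coupled through $X_\rho$; one cannot simply identify $\mathrm{Var}(\tilde f)$ with a sum of squared coefficients. Overcoming this likely requires an Efron-Stein-style orthogonal decomposition adapted to the conditional-independence structure given $X_\rho$, and carrying through the induction not merely the ratio $\mathrm{Var}(\E[f\mid X_\rho])/\mathrm{Var}(f)$ but a refined auxiliary norm controlling cross-terms in which the same eigenfunction appears on several children. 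I expect this to be the hardest step, and the drop from the naively expected exponent $\ell$ or $2\ell$ on $\lambda$ down to $\ell/4$ in the theorem almost certainly reflects slack consumed by absorbing these combinatorial and cross-term corrections cleanly into the recursion.
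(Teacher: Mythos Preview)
Your proposal has the right instincts---spectral contraction plus recursive tree structure---and you correctly identify the two real obstructions (combinatorial blow-up from the $\binom{Rd}{k}$ choices, and non-orthogonality of the product basis under the true joint law). But the induction variable is wrong, and this is a genuine gap, not just a matter of filling in details.

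You propose to induct on the depth $\ell$, peeling off one layer at a time. The paper does \emph{not} do this. An induction on $\ell$ would have $\ell$ steps, and to land on $(d\lambda^2)^{\Theta(\ell)}$ each step may lose at most a bounded multiplicative constant; but the combinatorial factor you note (roughly $(Rd)^k$ from choosing which $k$ children are active) does not shrink as you go down the tree---it reappears at every layer where the support branches. Your schematic recursion $\Gamma(\ell)\le C\lambda^2\,\Gamma(\ell-1)$ hides a $C$ that depends on $k$ and $Rd$, and iterating it $\ell$ times gives $(C\lambda^2)^{\ell}$, not $(d\lambda^2)^{\ell/4}$. There is no mechanism in your outline to pay the combinatorial cost only $k$ times rather than $\ell$ times.

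The paper instead inducts on a complexity parameter of the \emph{function}, called \emph{fractal capacity}: one defines a chain of collections $\mathcal A_1\subseteq\mathcal A_2\subseteq\cdots$ of leaf-subsets (singletons, then sets whose branch decomposition lands in $\mathcal A_1$, and so on), and proves that if all $\mathcal A_k$-polynomials satisfy the variance-decay estimate with ``height loss'' $h^*$, then all $\mathcal A_{k+1}$-polynomials satisfy it with height loss $h^*+C(\log R+1)$. Since Efron-Stein degree $\le k$ implies fractal capacity $\le k$, after $k$ steps the loss is $Ck(\log R+1)$, which is where the constraint $k\le c\ell/(1+\log R)$ comes from. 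Each inductive step is itself nontrivial: it requires a bespoke decomposition of $f$ into pieces $f_u$ indexed by the nearest common ancestor $u$ of the support (via auxiliary functions $\psi_\sigma$ that are ``more orthogonal'' than the naive $\phi_\sigma$), together with careful covariance bounds between pieces at different heights. The ``refined auxiliary norm'' you allude to is in fact the pair of conditions in the paper's Assumption~3.2/3.7, which track not just $\mathrm{Var}(\mathbb E_u f)$ but also pointwise bounds on $(\mathbb E_u f^2)(\theta)$.

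A secondary issue: your single-edge identity $\mathbb E[\phi_j(X_v)\mid X_u]=\lambda_j\phi_j(X_u)$ assumes $M$ is diagonalizable with eigenvectors orthonormal in $L^2(\pi)$. For a general irreducible aperiodic $M$ this fails (the chain need not be reversible); the paper works with the Jordan form and incurs polynomial-in-$k$ prefactors $k^{2q}\lambda^{2k}$ rather than clean $\lambda^{2k}$ decay, which is one source of the slack you noticed in the exponent.
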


Follows from the theorem, we have the following corollary.
\begin{cor} \label{cor:main}
   With the same setting as in Theorem~\ref{thm:main}, for any function $f(x_L)$ of Efron-Stein degree $\le c \frac{\ell}{1+\log(R)}$, and any function $g(x_\rho)$ of the root value, we have 
   $$
    {\rm Cor}(f(X_L),g(X_\rho)) \le  (\max\{d\lambda^2, \lambda\})^{\ell/4}.
   $$ 
\end{cor}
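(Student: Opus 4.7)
The plan is to reduce the correlation bound directly to the variance bound of Theorem~\ref{thm:main} via conditioning on the root and Cauchy--Schwarz. First I would use the tower property to rewrite the covariance as
\begin{align*}
{\rm Cov}(f(X_L), g(X_\rho))
&= \Exp[f(X_L) g(X_\rho)] - \Exp[f(X_L)]\Exp[g(X_\rho)] \\
&= \Exp\big[\CE{f(X_L)}{X_\rho}\, g(X_\rho)\big] - \Exp\big[\CE{f(X_L)}{X_\rho}\big]\Exp[g(X_\rho)] \\
&= {\rm Cov}\big(\CE{f(X_L)}{X_\rho},\, g(X_\rho)\big),
\end{align*}
so that $g(X_\rho)$ only interacts with the conditional expectation $\CE{f(X_L)}{X_\rho}$, which is a function of $X_\rho$ alone.

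Next I would apply Cauchy--Schwarz to this covariance:
\begin{align*}
\big|{\rm Cov}(\CE{f(X_L)}{X_\rho}, g(X_\rho))\big|
\le \sqrt{{\rm Var}(\CE{f(X_L)}{X_\rho})}\, \sqrt{{\rm Var}(g(X_\rho))}.
\end{align*}
Dividing by $\sqrt{{\rm Var}(f(X_L))\,{\rm Var}(g(X_\rho))}$ yields
\begin{align*}
{\rm Cor}(f(X_L), g(X_\rho))
\le \sqrt{\frac{{\rm Var}(\CE{f(X_L)}{X_\rho})}{{\rm Var}(f(X_L))}}.
\end{align*}

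Finally, I would invoke Theorem~\ref{thm:main}: since $f$ has Efron-Stein degree at most $c\,\ell/(1+\log R)$, the numerator is bounded by $(\max\{d\lambda^2,\lambda\})^{\ell/4}\,{\rm Var}(f(X_L))$, and the ratio is therefore at most $(\max\{d\lambda^2,\lambda\})^{\ell/4}$ (in fact even $(\max\{d\lambda^2,\lambda\})^{\ell/8}$ after taking the square root, which is stronger than what is stated). There is no real obstacle here; the entire content of the corollary is in Theorem~\ref{thm:main}, and the only ingredients beyond it are the tower property and Cauchy--Schwarz. One minor point to address is the degenerate cases where ${\rm Var}(f(X_L))=0$ or ${\rm Var}(g(X_\rho))=0$, for which the correlation is conventionally zero and the inequality is trivial.
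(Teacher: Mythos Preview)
Your proposal is correct and essentially identical to the paper's argument: rewrite the covariance using the tower property so that $f(X_L)$ is replaced by $\CE{f(X_L)}{X_\rho}$, apply Cauchy--Schwarz, and invoke Theorem~\ref{thm:main}. One small slip: after the square root you obtain $(\max\{d\lambda^2,\lambda\})^{\ell/8}$, which is \emph{weaker} than the stated $\ell/4$ bound (the base is less than $1$), not stronger---this same exponent discrepancy is present in the paper's own derivation, so it is not a defect of your argument relative to the paper.
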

The corollary follows from the fact that
\begin{align*}
    &\mathbb{E} \left[(f(X_L) - \mathbb{E}[f(X_L)]) \cdot (g(X_\rho) - \mathbb{E}[g(X_\rho)]) \right]\\
=&
    \mathbb{E} \left[(\mathbb{E}[f(X_L)\,|\, X_\rho] - \mathbb{E}[f(X_L)]) \cdot (g(X_\rho) - \mathbb{E}[g(X_\rho)]) \right]\\
\le& 
    \sqrt{{\rm Var}(\CE{f(X_L)}{X_\rho})} \cdot \sqrt{{\rm Var}(g(X_\rho))} \\
\le& 
    (\max\{d\lambda^2,\lambda\})^{\ell/4}    \sqrt{{\rm Var}(f(X_L))} \cdot \sqrt{{\rm Var}(g(X_\rho))}.
\end{align*}

Indeed, the main result is optimal in the fractal sense (Theorem \ref{thm:mainFractal}). 
The proof of the theorem is based on recursion on fractal capacity of functions.  
Let us introduce the necessary definitions and notations to introduce both the fractal capacity
and the proof overview. 
\subsection{Fractal Capacity and Proof Overview}
For convenience we will refer to the set of vertices of the tree as $T$. Following the standard poset convention, 
we define 
\begin{align*}
    v < u  
\end{align*}
for $v,u \in T$ when $v$ is a descendants of $u$. For any subset $S \subseteq L$, we define a notion of {\it branch decomposition} 
for $S$.

To provide a clearer illustration, 
we establish a correspondence between the vertices of $T$ and words of varying lengths from $0$ to $\ell$,
with vertices at the $k$th layer represented as words of length $k$.   
We denote the root $\rho$ as the empty word $()$. 
For each vertex $u$, represented by the word $(b_1, b_2, \dots, b_k),$
we define $d_u$ as the number of descendants of $u$, 
and we identify the descendants of $u$ as 
$(b_1, b_2, \dots, b_k, i)$ with $i\in [d_u]$.
Notice that $v$ is a descendant of $u$ is equivalent to $u$ is a prefix of $v$. 
For brevity, for each $u = (b_1,\ldots,b_k) \in T$, for each $i \in [d_u]$, let   
\begin{align*}
   u_i := (u,i) = (b_1,\dots, b_k, i).
\end{align*}
For $I\subseteq [d_u]$, let
\begin{align*}
 u_I = \{ u_i\}_{i \in I}.
\end{align*}
Furthermore, we define the parent function $\mathfrak{p}(u) = (b_1, \dots, b_{k-1})$ and the children function $\mathfrak{c}(u) = u_{[d_u]}$. 
\begin{defi} \label{defi: branchDecomposition}
For a non-empty subset $S \subseteq L$, 
 we introduce the notation $\rho(S)$ to represent the nearest common ancestor of the elements in $S$. 
 When $|S|>1$, we define the {\bf branch decomposition} of $S$ as follows: Let $I(S) = \{i \in [\rho(S)]\,:\, S_i \neq \emptyset\}$ then the brach decomposition is: 
	\begin{align*}
		S = \sqcup_{i\in I(S)} S_i, 		
	\end{align*}
	where $S_i = S \cap T_{\rho(S)_i}$ for $i\in [d_{\rho(S)}]$.  
    We also called each $S_i$ as a {\bf branch part} of $S$.
\end{defi}

The branching decomposition is a key concept in the proof, 
and we define the {\it fractal capacity} according to the number of iterations to decompose $S$ into singletons.

\begin{defi} \label{defi:A_0}
Let 
$$
    {\cal A_1} := \Big\{ \{u \}\, :\, u \in L\Big\} \subseteq {\bf 2}^L\backslash \emptyset, 
$$
be the collection of singletons of $L$. We say a subcollection $\cal A \subseteq {\bf 2}^L \backslash \emptyset$ is 
{\bf closed under decomposition} with base $\cal A_1$ 
if for every $S \in \cal A \backslash \cal A_1$, we have $S_i \in \cal A$ for $i \in I(S)$ where $I(S)$ is the set of components of the branched decomposition in Definition~\ref{defi: branchDecomposition}.
\end{defi}

\begin{defi} \label{LINdef:BfromA}
For any $ \cal A_1 \subseteq \cal A \subseteq {\bf 2}^L \backslash \emptyset$
which is closed under decomposition with base $\cal A_1$,
let  
	$$\cal B(\cal A) \subseteq {\bf 2}^L \backslash \emptyset$$ 
be a new subcollection defined according to the following rules:

For any $S \in {\bf 2}^L \backslash \emptyset$,  $S \in \cal B$ if and only if one of the following two conditions holds
\begin{enumerate}
\item $ S \in \cal A_1$. 	
\item  $S \notin \cal A_1$ 
	and $S_i \in \cal A$ for those $i$ with $S_i = S \cap T_{\rho(S)_i} \neq \emptyset$.  
\end{enumerate}
\end{defi}

\medskip

\begin{lemma} \label{lem:AinB}
The collection $\cal B = \cal B(\cal A)$ described above contains $\cal A$. Also, it is {\bf closed under decomposition} with base $\cal A_1$. 
\end{lemma}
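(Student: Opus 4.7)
The plan is to unwind the two definitions directly; this is essentially a bookkeeping lemma that ensures the operator $\cal A \mapsto \cal B(\cal A)$ behaves as an ``extension'' operation on collections closed under decomposition, so I expect the argument to be short and direct rather than to involve any real combinatorics.

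First I would prove the containment $\cal A \subseteq \cal B$. Pick an arbitrary $S \in \cal A$ and split on whether $S \in \cal A_1$. In the first case, condition (1) in Definition~\ref{LINdef:BfromA} places $S$ in $\cal B$ immediately. In the second case, $S \in \cal A \setminus \cal A_1$, so by the hypothesis that $\cal A$ is closed under decomposition with base $\cal A_1$, every branch part $S_i$ with $i \in I(S)$ lies in $\cal A$. This is exactly condition (2) in Definition~\ref{LINdef:BfromA}, hence $S \in \cal B$.

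Next I would verify that $\cal B$ is closed under decomposition with base $\cal A_1$. Take any $S \in \cal B \setminus \cal A_1$; we must show that $S_i \in \cal B$ for every $i \in I(S)$. Since $S \notin \cal A_1$, the only way $S$ can qualify for $\cal B$ is through condition (2), which gives $S_i \in \cal A$ for every such $i$. By the containment just proved, $\cal A \subseteq \cal B$, so $S_i \in \cal B$, as required.

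There is no real obstacle here: the statement is a direct consequence of the definitions, and the only thing to be careful about is distinguishing the roles of $\cal A_1$ (the base) and $\cal A$ (the assumed closed collection) in the two conditions of Definition~\ref{LINdef:BfromA}. In particular, Claim~1 is needed \emph{before} Claim~2 so that we can upgrade ``$S_i \in \cal A$'' (which is what condition (2) provides) to ``$S_i \in \cal B$'' (which is what closedness of $\cal B$ requires).
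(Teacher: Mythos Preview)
Your proof is correct and follows essentially the same approach as the paper: first show $\cal A \subseteq \cal B$ by case-splitting on $\cal A_1$, then use that containment to deduce closedness of $\cal B$ under decomposition.
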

\begin{proof}
To show $\cal A \subseteq \cal B$, it is sufficient to show $\cal A \backslash \cal A_1 \subseteq \cal B$.
For any $ S \in \cal A \backslash \cal A_1$, because $\cal A$ is closed under decomposition,  $S_i \in \cal A$ for $i \in I(S)$. Hence, $S \in \cal B$ follows from the definition of $\cal B$.  
Now, for $S\in \cal B \backslash \cal A_1$,  each $S_i$ with $i\in I(S)$ is contained in $\cal A \subseteq \cal B$, which in turn implies $\cal B$ is closed under decomposition.  

\end{proof}

Now, we define recursively that 
\begin{align}
    \label{eq: A_k}
        \cal A_k = \cal B(\cal A_{k-1}),
\end{align}
for positive integer $k \ge 2$.

Clearly, from the definition of $\cal A_k$, if $S \in \cal A_k$, then for any $i\in I(S)$, $S_i \in \cal A_{k-1}$. 
In particular, for each branch part $S'$ of $S$, $\rho(S') < \rho(S)$. Given that there are only $\ell$ layers of the tree, 
we conclude that every non-emptyset of $S \subseteq L$ is in $\cal A_{\ell+1}$. Therefore, together with Lemma~\ref{lem:AinB}, we have the following chain of subcollections:
$$
    \{\{u\}\,:\, u \in L \} = {\cal A}_1 \subseteq {\cal A}_2 \subseteq \dots \subseteq {\cal A}_{\ell+1}
    = {\bf 2}^L \backslash \emptyset.
$$

\begin{defi} [Fractal Capacity]
    For any non-empty subset $S \subseteq L$, we define the {\bf fractal capacity} of $S$ as the smallest $k$ such that $S \in \cal A_k$.    
\end{defi}

Next, we compare the notion of fractal capacity and the size of the set:
\begin{lemma}
    $\cal A_k$ contains all subsets $S\subseteq L$ with $|S|=k$.  Further, in a $d$-ary tree of depth $\ell \ge k$, there exists $S \in \cal A_k$ with $|S| = d^{k-1}$.
\end{lemma}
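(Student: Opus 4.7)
The plan is to prove both parts by induction on $k$, directly unwinding the recursion $\cal A_k = \cal B(\cal A_{k-1})$ from \eqref{eq: A_k}.

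For the first part, the case $k=1$ is immediate, since any singleton is in $\cal A_1$ by definition. For $k\ge 2$, given $S\subseteq L$ with $|S|=k$, I consider its branch decomposition $S = \sqcup_{i\in I(S)} S_i$. The key observation is that since $\rho(S)$ is the \emph{nearest} common ancestor of $S$, the index set $I(S)$ must have at least two elements: otherwise all points of $S$ would lie under a single child of $\rho(S)$, contradicting the minimality of $\rho(S)$. Consequently each branch part satisfies $|S_i|\le k-1$, so by the inductive hypothesis $S_i\in\cal A_{|S_i|}$, and the chain $\cal A_1\subseteq\cal A_2\subseteq\cdots$ established in the excerpt gives $S_i\in\cal A_{k-1}$. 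The second clause of Definition~\ref{LINdef:BfromA} then yields $S\in\cal B(\cal A_{k-1})=\cal A_k$.

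For the second part, the construction is explicit: pick any vertex $v$ at depth $\ell-k+1$ of the $d$-ary tree, and let $S$ be the full set of leaves descended from $v$, so that $|S|=d^{k-1}$. I verify $S\in\cal A_k$ by a parallel induction. For $k=1$ the vertex $v$ is itself a leaf and $S$ is a singleton, hence in $\cal A_1$. For $k\ge 2$, the nearest common ancestor of $S$ equals $v$ (each of the $d$ children of $v$ contains at least one leaf of $S$), so the branch parts of $S$ are exactly the leaf-sets descending from the $d$ children $v_1,\dots,v_d$ of $v$. Each $v_i$ lies at depth $\ell-k+2$, and by the inductive hypothesis its leaf-set belongs to $\cal A_{k-1}$; by Definition~\ref{LINdef:BfromA} this forces $S\in\cal A_k$.

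Neither step should be an obstacle. The only point that genuinely uses the structure of the branch decomposition is the lower bound $|I(S)|\ge 2$ in part 1, which is precisely what prevents a trivial degeneration (a size-$k$ set ``hiding'' in a single branch) and forces a real gain in the induction. The construction in part 2 is essentially the tightest one could hope for, and matches the size upper bound $|S|\le d\cdot\max\{|S'|:S'\in\cal A_{k-1}\}$ that falls out of the same recursion.
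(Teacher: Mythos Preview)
Your proposal is correct and follows essentially the same approach as the paper: both parts proceed by induction on $k$, using $|I(S)|\ge 2$ to force strictly smaller branch parts in part~1, and taking the full leaf-set below a vertex $k-1$ layers above $L$ as the witness in part~2. The paper phrases part~1 for all subsets of size $\le k$ rather than exactly $k$, but this is only a cosmetic difference in how the induction is set up.
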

\begin{proof}
Indeed, we will show that $\cal A_k$ contains all non-empty subsets of $L$ of size $\le k$.  
This can be proved by induction on $k$. 
The base case with $k=1$ follows from the definition $\CA_1 := \big\{ \{v\}\,:\, v \in L \big\}$. 
Suppose the claim holds up to some positive integer $k$. Let $S \subseteq L \backslash \emptyset$ of size $|S|\le k+1$. 
If $|S|\le k$, then $S \subseteq \CA_k \subseteq {\cal B}(\CA_k) = \CA_{k+1}$. In the case $|S|=k+1\ge 2$, notice that  $\rho(S)$ is not a leave. 
Consider the branch decomposition of $S$ (See Definition \ref{defi: branchDecomposition}):
$$
    S = \sqcup_{i \in I(S)}S_i.
$$ Because $|I(S)|>1$, for each $i \in I(S)$ we have $|S_i|<|S|=k+1$.  Therefore, $S_i \in \CA_k$ for $i \in I(S)$, which in turn implies $S \in {\cal B}(\CA_k) = \CA_{k+1}$. Therefore, the claim follows.

Now, to show the second statement. For every node $w$, let $S_w = \{v \in L\,:\, v \le w\}$. 
Observe that if $w$ is $k-1$ layers above $L$, then $S_w$ are the $(k-1)$th descendants of $w$, which has size $|S_w|=d^{k-1}$. 

We {\bf claim} that for $w$ which are $k$ layers above $L$, then $S_w \in {\cal A}_k$. Let us prove the claim by induction. 
First, it is clear that for $w \in L$, $S_w=\{w\} \in {\cal A}_1$. Suppose the statement holds up to $k$. Take any $w$ which is $k$ layer above $L$. Then, the branch decomposition of $S_w = \sqcup_{i \in [d]} S_{w_i}.$ With each $w_i$ is $k-1$ layer above $L$, we have $S_{w_i} \in {\cal A}_k$. Hence, $S_w \in {\cal A}_{k+1}$ due to ${\cal A}_{k+1} = {\cal B}({\cal A}_k)$.

\end{proof}
\begin{figure}
    \centering
    \includegraphics[scale=0.15]{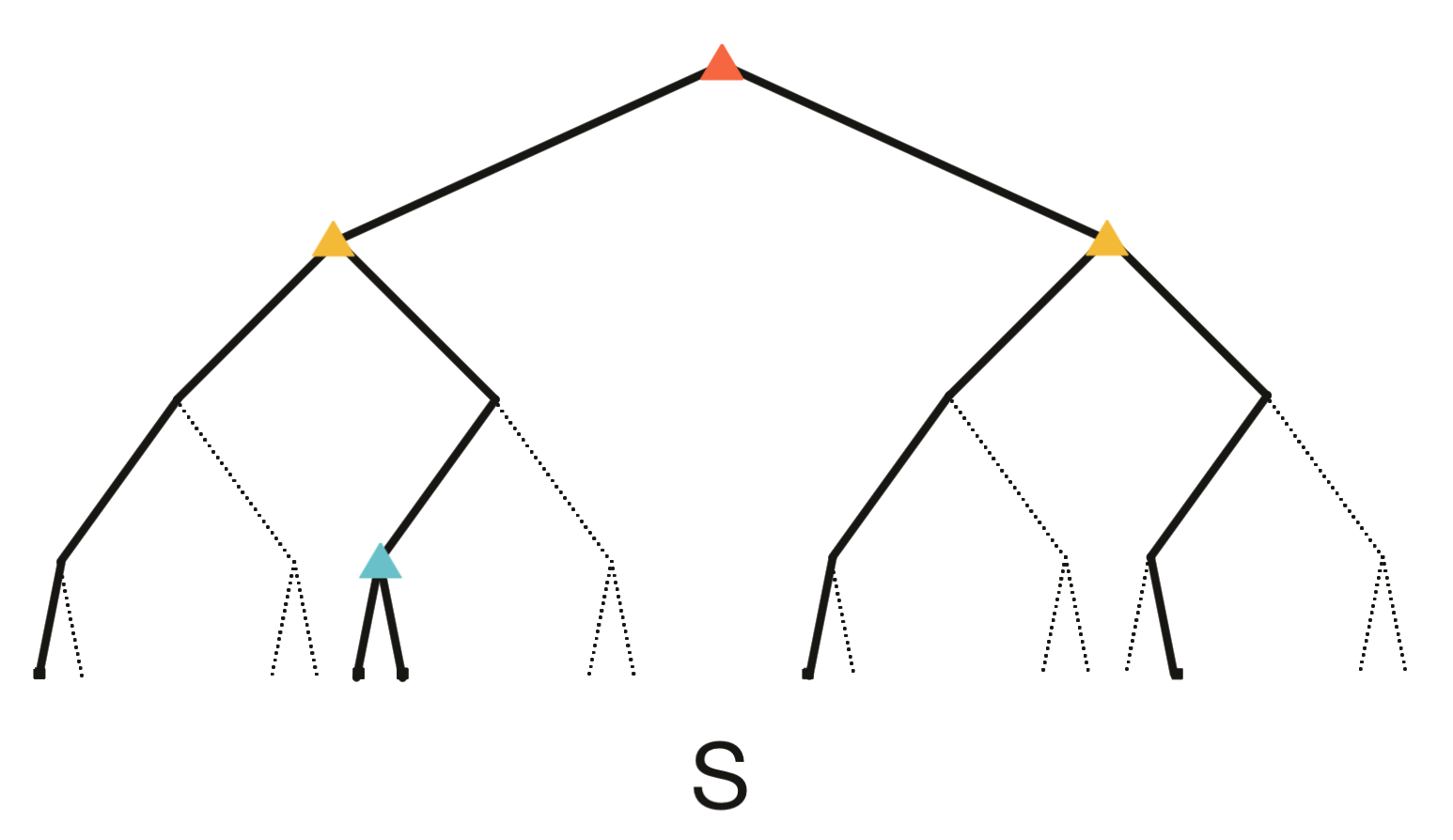}
    \caption{$S \subseteq L$ with fractal-capacity 4 and $|S|=5$.}
\end{figure}
\begin{defi}    
Given a collection $\cal A\subseteq {\bf 2}^L \backslash \emptyset$. 
A function $f:[q]^T \rightarrow \R$ is called a {\bf ${\cal A}$-polynomial} if we can express  
$$
    f(x) = \sum_{S \in \CA} f_S(x_S)
$$
where each $f_S$ is a function of  $x_S=(x_v)_{v\in S}$. 
Correspondingly, 
a function $f:[q]^T \rightarrow \R$ has {\bf fractal capacity} $\le k$ if it is a ${\cal A}_k$-polynomial.
\end{defi}

The main result of the paper in terms of the fractal capacity is the following:
\begin{theor}
\label{thm:mainFractal}
With the same setting as in Theorem~\ref{thm:main}, 
there exists a constant $c>0$ which depends on $M$ and $1-d\lambda^2$ such that the following holds:
For any function $f(x_L)$ with {\bf fractal capacity} $\le c \frac{\ell}{1+\log(R)}$, 
we have 
\begin{align*}
	{\rm Var} ( \CE{f(X_L)}{X_\rho} ) 
\le 	(\max\{d\lambda^2,\lambda\})^{\ell/4}
	{\rm Var}( f(X_L)).
\end{align*}
\end{theor}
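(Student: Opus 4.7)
The plan is to prove Theorem~\ref{thm:mainFractal} by strong induction on the fractal capacity $k$. To make the induction run cleanly, I will establish a uniform bound at every subtree simultaneously: for every $u \in T$ and every function $g(x_{L_u})$ of fractal capacity $\le k$ (computed relative to $L_u$), one controls ${\rm Var}(\CE{g(X_{L_u})}{X_u})$ by a constant that decays in both $k$ and the height $\ell - \ell_u$ of $T_u$. Applying this bound with $u = \rho$ and $k \le c\ell/(1+\log R)$ then yields the theorem.

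For the base case $k=1$, the function $g = \sum_{v \in L_u} g_v(x_v)$ is a linear combination of single-leaf contributions. A direct computation reduces the problem to the spectral action of $M$: $\CE{g_v(X_v)}{X_u} = (M^{\ell-\ell_u} g_v)(X_u)$, and the pairwise inner products $\E[g_v(X_v) g_w(X_w)]$ factor through the common ancestor $\rho(\{v,w\})$ by the \MP. Expanding the $g_v$'s in an eigenbasis $\phi_0 = 1, \phi_1,\dots,\phi_{q-1}$ of $M$, with $|\mu_\alpha| \le \lambda$ for $\alpha \ge 1$, yields the explicit comparison of ${\rm Var}(\CE{g}{X_u})$ with ${\rm Var}(g)$ at the desired rate.

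For the inductive step from $k-1$ to $k$, decompose $f = \sum_{S \in \cal A_k} f_S$ and use the dual characterization
\[
\sqrt{{\rm Var}(\CE{f(X_L)}{X_\rho})} \;=\; \sup_{h:\, \E h(X_\rho)=0,\, \|h\|_\pi = 1} \E\bigl[f(X_L)\,h(X_\rho)\bigr].
\]
Grouping the right-hand side by $w := \rho(S)$, the \MP gives $\E[f_S\, h(X_\rho)] = \langle \CE{f_S}{X_w},\, \CE{h(X_\rho)}{X_w}\rangle_\pi$, and $\|\CE{h(X_\rho)}{X_w}\|_\pi \le \lambda^{\ell_w}$ by the standard spectral contraction of the conditional-expectation operator pulling a mean-zero function of $X_\rho$ down to $X_w$. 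To bound the remaining factor $\|\CE{f_S}{X_w}\|_\pi$, I exploit the branch decomposition $S = \sqcup_i S_i$ with each $S_i \in \cal A_{k-1}$: conditional on $X_w$ the variables $(X_{S_i})_i$ are independent by the \MP, so expanding $f_S$ in the tensor eigenbasis $\prod_i \phi_{\alpha_i}$ across branches lets me apply the inductive bound at level $k-1$ on each branch and reassemble through the multiplication coefficients $c_{\vec\alpha,\gamma} = \langle \prod_i \phi_{\alpha_i},\phi_\gamma\rangle_\pi$, producing a function of $X_w$ with a controlled $L^2(\pi)$ norm.

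The hard part will be controlling the combinatorial and spectral blow-up at each recursive level. Two competing contraction rates arise: a ``straight-chain'' rate $\lambda$ from vertical propagation along the path from $w$ to $\rho$, and a ``diagonal'' rate $d\lambda^2$ from the pairing of eigenvalue contributions across the $\le Rd$ sibling branches at $w$ (each branch carrying a $\lambda^2$-type variance). Their maximum is precisely $\max\{d\lambda^2, \lambda\}$, matching the theorem. The choice $k \le c\ell/(1+\log R)$ is exactly what is needed so that the $(Rd)^{O(k)}$ combinatorial factor accumulated over $k$ recursive levels is absorbed by $(\max\{d\lambda^2,\lambda\})^{\ell/4}$; the delicate bookkeeping of eigenvalue products through the mixing coefficients $c_{\vec\alpha,\gamma}$, together with the careful accounting over all choices of ancestor $w = \rho(S)$, is where the main technical effort will lie.
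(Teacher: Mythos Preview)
Your high-level strategy---induction on fractal capacity using the branch decomposition---is the same as the paper's. But there is a structural gap in the inductive hypothesis you propose.

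You plan to carry through the induction only the variance-decay statement
\[
{\rm Var}\bigl(\CE{g}{X_u}\bigr)\;\le\;\epsilon_{k-1}(\h(u))\,{\rm Var}(g)
\]
for all capacity-$(k-1)$ functions $g$ and all $u$. This is not strong enough to make the product step work. When $f_S=\prod_i f_{S_i}$ with $S_i\in\cal A_{k-1}$, conditional independence gives $\CE{f_S}{X_w}=\prod_i h_i(X_w)$ with $h_i(\theta)=\CE{f_{S_i}}{X_w=\theta}$. Your hypothesis controls $\|h_i\|_{L^2(\pi)}$, and since $q$ is finite you can pass to $\|h_i\|_\infty$ at the cost of a $\pi_{\min}$-factor. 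So far so good. The problem is the reverse comparison: you need $\prod_i\|f_{S_i}\|^2$ to be comparable to $\|f_S\|^2=\E\bigl[\prod_i\CE{f_{S_i}^2}{X_w}\bigr]$, and for this you need that each $\theta\mapsto\CE{f_{S_i}^2}{X_w=\theta}$ is approximately constant. That is \emph{not} implied by variance decay of $\CE{f_{S_i}}{X_w}$. (Trying to recover it by applying the hypothesis to $g=f_{S_i}^2$ gives control by $\E[f_{S_i}^4]$, which is useless without a hypercontractive bound you do not have.) This is why the paper's inductive hypothesis is strictly stronger than yours: Assumption~\ref{assume: A} carries the extra comparability condition \eqref{eq: assumeSimple1}, and in the general case Assumption~\ref{assume: Ag} carries the product-stability condition \eqref{eq: kA1}. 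A nontrivial part of the paper (Sections~\ref{sec: ind1}--\ref{sec: generalInduction}) is showing that these stronger conditions self-improve through the induction; your outline never confronts this.

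A second gap: you work term-by-term on each $f_S$ and then plan to sum over $S$ (grouped by $w=\rho(S)$). But the $f_S$ are far from orthogonal under the broadcast measure, so $\sum_S{\rm Var}(f_S)$ and ${\rm Var}(f)$ are not a priori comparable, and the dual pairing $\sum_S\langle\CE{f_S}{X_w},\CE{h}{X_w}\rangle$ does not collapse cleanly. The paper handles this by first passing to the $\psi_\sigma$-basis (Lemma~\ref{LINlem:fdecomposition}), which pushes the ``lower-order'' pieces of each $\phi_\sigma$ down to lower heights, and then proving two-sided variance comparisons layer by layer (Lemma~\ref{lem: fhku}, Proposition~\ref{prop: fk}). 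Your sketch has no analogue of this decomposition, and absorbing the non-orthogonality into a crude $(Rd)^{O(k)}$ combinatorial factor will not produce the claimed rate.
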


Indeed, Theorem \ref{thm:main} is a direct consequence of Theorem \ref{thm:mainFractal}, as $\cal A_k$-polynomials contains all polynomial of Efron-Stein degree $\le k$. 

\medskip 

{\bf Sketch Proof Idea:} For illustration, let us consider the case where $T$ is a $2$-ary tree
with $M = \begin{bmatrix}
    \frac{1+\lambda}{2} & \frac{1-\lambda}{2}\\
    \frac{1-\lambda}{2} & \frac{1+\lambda}{2} 
\end{bmatrix}$, such matrix eigenvalue $\lambda$ and $1$.  
If $f$ is a degree-$1$ polynomial, we can express it in the form
$$
    f(x_L) = \sum_{u \in L} f_u(x_u),
$$
where each $f_u$ is a function of $x_u$. Given our focus on the variance, without lose of generality, we assume $\E f_u(X_u) = 0$ for each $u \in L$. 
Then, our goal is to prove 
$\mathbb{E}\big[ \big(\mathbb{E}[f(X_L) \,|\, X_\rho]\big)^2 \big]$
is negligible comparing to $\mathbb{E}\big[ (f(X_L))^2 \big]$.
Following from Cauchy-Schwarz inequality that 
$$ (\sum_{i \in [k]} a_i )^2  = (\sum_{i \in [k]} 1 \cdot a_i  )^2\le k \sum_{i \in [k]}  a_i^2,$$ 
we have
\begin{align*}
    \mathbb{E}\big[ \big(\mathbb{E}[f(X_L) \,|\, X_\rho]\big)^2 \big] 
\le  &
    |L| \sum_{u \in L} \mathbb{E}\big[ \big(\mathbb{E}[f_u(X_u) \,|\, X_\rho]\big)^2 \big]\\
& 
\lesssim
    2^\ell \sum_{u \in L}  \lambda^{2\ell} \mathbb{E}\big[ (f_u(X_u))^2 \big]
\lesssim
    (2\lambda^2)^\ell 
    \sum_{u \in L}  \mathbb{E}\big[ (f_u(X_u))^2 \big],
\end{align*} 
where the second inequality is derived from the variance decay property of in a Markov Chain.  
Then, if we can establish $\sum_{u \in L}  \mathbb{E}\big[ (f_u(X_u))^2 \big]$
is at the same order as $\mathbb{E}\big[ (f(X_L))^2 \big]$, the proof is complete.

This scenario is achievable if, for most pairs of $u$ and $v$ within $L$, the correlation between $f_u(X_u)$ and $f_v(X_v)$ is sufficiently small, which is the case for degree-1 polynomials.

Now, let us take a closer look. Fix any two vertices $u$ and $v$ in $L$, with $w$ as their nearest common ancestor.  Suppose $w$ is $k$ layer above $L$, and $u \le w_1$ and $v \le w_2$. 
Let $\tilde X = (X_{v'})_{v' \not \le w_2}$. We have 
\begin{align*}
    \mathbb{E}[f_u(X_u) f_v(X_v)] =& \mathbb{E}[f_u(X_u) \mathbb{E}[f_v(X_v) \,|\, \tilde X]]\\
    \le &
    \mathbb{E}[(f_u(X_u))^2] \cdot \mathbb{E}[(\mathbb{E}[f_v(X_v) \,|\, \tilde X])^2] \\
    \le &
    \mathbb{E}[(f_u(X_u))^2] \cdot \mathbb{E}[(f_v(X_v))^2]
    \lesssim     
    \mathbb{E}[(f_u(X_u))^2] \cdot \lambda^{2k}\mathbb{E}[(f_v(X_v))^2],
\end{align*}
where the last inequality again follows from the variance decay of a Markov Chain. 
The above inequality implies that the correlation between $f_u(X_u)$ and $f_v(X_v)$ is at most $\lambda^{2k}$.

\medskip 

Our proof of the main theorem tries to generalize the argument above to low degree polynomials using the following ideas:

{\bf I. Bounding Covariance:}
Suppose $f_\alpha$ and $f_\beta$ are two functions such that
\begin{enumerate}
    \item $f_\alpha(x_S)$ with that $S \subseteq L$ satisfying $S \cap \{v'\,:\, v' \le w_2\} \neq \emptyset$.
    \item $f_\beta(x_{S'})$ is any function such that we know  
    $$
        \mathbb{E}[(\mathbb{E}[f_\beta(X_{S'}) \,|\, \tilde X])^2]
    \ll 
        (\mathbb{E}(f_\beta(X_{S'}))^2,
    $$
    where we use $a \ll b$ to indicate $a$ is much smaller than $b$, keeping it not precise to avoid distraction from technicality.   
\end{enumerate}   
Then, we have $ \mathbb{E} f_\alpha(X_\alpha) f_\beta(X_\beta) \ll \mathbb{E} [(f_\alpha(X_\alpha))^2] \mathbb{E} [(f_\beta(X_\beta))^2]$.  \\

{\bf II. Choosing a good decomposition of the function: }
In essence, our proof strategy for any given function $f(x_L)$
revolves around decomposing $f(x_L)$ into a sum of functions $f_\alpha(x)$ for $\alpha$ in some index set $I$, 
such that 
\begin{enumerate}
    \item $|I| \lesssim d^\ell$, 
    \item For each $\alpha$, $ \mathbb{E}[(\mathbb{E}[f_\alpha(x)\,|\, X_\rho)^2] \ll \mathbb{E}[(f_\alpha(x))^2]$,
    \item Whenever $\alpha \neq \beta$, we can find $w \in T$ so that $f_\alpha(x_\alpha)$ and $f_\beta(x_\beta)$ satisfies the covariance bound in I. (Possibly with a switch of the roles of $\alpha$ and $\beta$).  
\end{enumerate}
If this is the case, then we can follow the argument in the degree 1 case to show that desired result.   
The proof of the main theorem builds on this strategy, advancing through a recursion on the fractal capacity of the function. This recursive approach relies on the following property:

{\bf III. From functions to their products:}
Suppose we have two functions $f_\alpha$ and $f_\beta$ such that $f_\alpha$ is a function with variable $(x_v)_{v \in L\,:\,v \le w_1}$ 
and $f_\beta$ is a function with variable $(x_v)_{v \in L\,:\, v \le w_2}$. Further, if
$$
        \mathbb{E}[(\mathbb{E}[f_\alpha(X) \,|\, \tilde X])^2]
    \ll 
        \mathbb{E}[(f_\alpha(X))^2]
$$
and
$$
        \mathbb{E}[(\mathbb{E}[f_\beta(X) \,|\, \tilde X])^2]
    \ll
        \mathbb{E}[(f_\beta(X))^2],
$$
where $\tilde X = (X_v)_{v \not \le w}$. 
Then, $ g(x) = f_\alpha(x)f_\beta(x)$ also satisfies 
$$
    \mathbb{E}[(\mathbb{E}[g(X)\,|\,\tilde X])^2]
\ll
    \mathbb{E}[(g(X))^2]. 
$$
Although proving III. requires some effort, its essence mirrors that of Property I, employing the Markov property and leveraging the variance decay characteristic of the functions involved. Effectively, Property III enables the derivation of additional functions exhibiting variance decay, which allows us to derive the main result in an iterative formulation.

The main theorem essentially are carried out following the three key ideas I-III. The technical difficulty comes from finding the suitable decomposition which has the right variance decay property that is presented in the Kesten-Stigum Bound. 
\begin{itemize}
    \item In Section \ref{sec: ANT}, we give additional notations and basic tools.  
    \item In Section \ref{sec: overviewInduction}, we formulate the main theorem we want to prove as an induction statement. 
    \item In Section \ref{sec: deg1}, we discuss the case for degree 1 polynomial, which prove the base case of the induction in the theorem, and the results for degree 1 polynomial will be used in the inductive step.  
    \item In Section \ref{sec: decomposition}, we give a procedure to decompose ${\cal B}$-polynomial $f$ for a given collection ${\cal B}$.  
    \item In Section \ref{sec: ind1} and \ref{sec: ind2}, we derive the proof of Theorem \ref{LINthm:inductionSimple}, the inductive step for proving Theorem \ref{thm: mainSimple}.
    \item In Section \ref{sec: generalBase} and \ref{sec: generalInduction}, we derive the main result in the general case. 
    \item In Appendix \ref{sec: deg1Var}, we provide a proof of Proposition \ref{prop: CVbasis4}, which is one technical obstacle for getting our main result from Theorem \ref{thm: mainSimple} to the general setting (Theorem \ref{thm:main}). It is postponed to this section due to the proof is essentially a result about Markov Chain.
    \item In Appendix \ref{sec: MCGWT}, we provided some standard result for decay of Markov Chain. 
\end{itemize}

\section{Additional Notations and Tools} \label{sec: ANT}
For any positive integer $n$, let $[n]$ denote the set of positive integers from 1 to $n$, inclusive: $[n]={1,2,\dots,n}$.
For integers $a$ and $b$ where $a<b$, let $[a,b]= \{a,a+1,\dots,b\}$.

We introduce the notation for the height of a node $u$
and we define:
$$T_u := T \cap \{v\in T\,:\, v\le u\} \mbox{ and } L_u = T_u \cap L.$$
It is worth noting that $T_u$ can be seen as a rooted tree with $\h(u)$ layers, having $u$ as its root and $L_u$ as its set of leaves.

For any $u \in T$ and $k \in [\h(u)]$, let $L_k(u)$ be the set of $k$th descendants of $u$, i.e., those descendants that are $k$ levels down. 
For brevity, let $L_k:=L_k(\rho)$ and $L= L_{\ell}(\rho)$, where $\ell$ is the depth of the tree. 

Further, for any $0 \le k \le \h(u)$, 
\begin{align} \label{eq: D_ku}
 D_{k}(u) = \{ v \in T \,:\,  v \le u \, \mbox{ and } \h(v)=k\}.
\end{align}
denote the set of vertices which are $k$-th descendants of $u$. Note that $D_{k}(u) = L_{\h(u)-k}(u)$. 

For a given collection of of subsets $\cal A \subseteq {\bf 2}^L \backslash \emptyset$,
we define the following subcollections: For each $u\in T$, let  
    $\CA_{u} := \{ S \in \CA\,:\, \rho(S) = u \}$,
    $\CA_{\le u} := \{ S \in \CA\,:\, \rho(S) \le u \}$, and 
    $\CA_{< u} := \{ S \in \CA\,:\, \rho(S) < u \}$.

\begin{defi}  \label{defi: ConE}
[Conditional Expectation] 
    For each $U\subseteq T$ and let $f : [q]^T \to \R$ let 
    \begin{align*}
       \ConE{U}{f}(x) 
    := 
      \mathbb{E} \Big[ f(X) \, \Big\vert\, 
        X_v =x_v \,:\, v \not\in \bigcup_{u \in U}  \{w \in T\,:\, w < u\} \Big].
    \end{align*}
\end{defi}
To rephrase it, the function $\ConE{U}{f}(x)$ represents the expected value of $f(X)$ 
condition on $X_v=x_v$ for all vertices $v$ that are not descendents of any $u \in U$. 
Clearly, 
\begin{align*}
       \ConE{U}{f}(X) 
    = 
      \mathbb{E} \Big[ f(X) \, \Big\vert\, 
        X_v  \,:\, v \not\in \bigcup_{u \in U}  \{w \in T\,:\, w < u\} \Big].
\end{align*}
Furthermore,  we will abuse the notation and denote $\E_u$ as $\E_{\{u\}}$ for $u \in T$, as it will be used repeatedly.
Finally, for any $k \in [0, \ell]$, we set 
\begin{align*}
   (\E_k f)(x) := \E \Big[ f(X) \, \Big \vert \, \forall v \in T\, \mbox{ with } \h(v) \ge k,\, X_v=x_v \Big].
\end{align*}
 If $f$ is a function of $x_L$, then by the \MP\,  we have $(\E_k f)(x)$ is a function of $(x_v\,:\, v \in D_k(\rho))$.

The following lemma is a well-known statement from Markov-Chain. 
Let us formulate it using the Broadcast Process.
\begin{lemma} \label{lem:Mbasis}
Suppose $M$ is irreducible and aperiodic, then there exists $C=C(M)>1$ so that the following hold: 
For any $u \in T$ and $k \in \N$ so that $\fp^k(u)$ exists.  
For every function $a$ with input $x_u$, 
\begin{align}
\label{eq: MCvarDecay}
    \var\Big[ 
        (\E_{\fp^k(u)}a)(X_{\fp^k(u)})
    \Big]
\le 
    C k^{2q}\lambda^{2k}  \var\big[ a(X_u) \big]
\end{align}
and 
\begin{align} \label{eq: VarLinfty}
    C^{-1} \left(\max_{\theta \in [q]} \big| a(\theta) - \E a(X_u) \big| \right)^2
\le 
    \var_{Y\sim \pi} a(Y)  
\le 
    C \left(\max_{\theta \in [q]} \big| a(\theta) - \E a(X_u) \big|\right)^2.
\end{align}

And from the above two inequalities,
 adjusting the constant $C$ if necessary, we also have  
\begin{align}
\label{eq: MClinftyDecay}
    \max_{\theta \in [q]} \big| (\E_{\fp^k(u)}a)(\theta)- \E a(X_u) \big|
\le 
    C k^q\lambda^k \max_{\theta \in [q]} \big| a(\theta) - \E a(X_u) \big|.
\end{align}
\end{lemma}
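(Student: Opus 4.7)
The plan is to recast all three inequalities as standard spectral estimates on $M^k$ applied to $\pi$-centered functions. Identifying $a:[q]\to\R$ with a column vector and writing $\bar a = \E a(X_u) = \pi^\top a$, the Markov property along the path from $\fp^k(u)$ to $u$ gives $(\E_{\fp^k(u)}a)(\theta) = (M^k a)(\theta)$, and the tower property yields $\E[(\E_{\fp^k(u)}a)(X_{\fp^k(u)})] = \bar a$. Hence every quantity appearing in the lemma is controlled by $M^k a - \bar a\mathbf{1}$ in either the $L^\infty$ or the $L^2(\pi)$ norm on $\R^q$.

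To exploit the spectral structure I introduce the decomposition $M = \Pi + N$, where $\Pi = \mathbf{1}\pi^\top$ is the rank-one projection onto the constants. The identities $M\mathbf{1} = \mathbf{1}$, $\pi^\top M = \pi^\top$ and $\pi^\top\mathbf{1} = 1$ give $M\Pi = \Pi M = \Pi^2 = \Pi$, from which $M^k = \Pi + N^k$ for every $k\ge 1$; combined with $\Pi(a - \bar a\mathbf{1}) = 0$, this yields $M^k a - \bar a\mathbf{1} = N^k(a - \bar a\mathbf{1})$. The spectrum of $N$ consists of $0$ (with eigenvector $\mathbf{1}$) together with the non-unit eigenvalues of $M$, so the spectral radius of $N$ is exactly $\lambda$. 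A standard Jordan canonical form argument on a $q\times q$ matrix with spectral radius at most $\lambda$ then produces the operator-norm bound $\|N^k\|_{\mathrm{op}} \le C_0 k^{q-1}\lambda^k$ for some $C_0 = C_0(M)$, where the norm is taken with respect to the sup norm on $\R^q$.

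With this bound in hand the three assertions fall out. For (\ref{eq: VarLinfty}), irreducibility forces $\pi_{\min} := \min_\theta \pi_\theta > 0$, and $\var(a(X_u)) = \sum_\theta \pi_\theta(a(\theta)-\bar a)^2$ clearly lies between $\pi_{\min}\|a-\bar a\mathbf{1}\|_\infty^2$ and $\|a-\bar a\mathbf{1}\|_\infty^2$, so any $C\ge 1/\pi_{\min}$ suffices. The bound (\ref{eq: MClinftyDecay}) is immediate from $\|N^k(a-\bar a\mathbf{1})\|_\infty \le C_0 k^{q-1}\lambda^k\|a-\bar a\mathbf{1}\|_\infty$ after enlarging $k^{q-1}$ to $k^q$. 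Finally, (\ref{eq: MCvarDecay}) follows by chaining $\var\bigl[(\E_{\fp^k(u)}a)(X_{\fp^k(u)})\bigr] \le \|N^k(a-\bar a\mathbf{1})\|_\infty^2 \le C_0^2 k^{2(q-1)}\lambda^{2k}\|a-\bar a\mathbf{1}\|_\infty^2$ and then applying the lower half of (\ref{eq: VarLinfty}) to return from the sup norm to the variance, absorbing numerical constants into $C$ and the polynomial factor into $k^{2q}$.

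The only nontrivial input is the polynomial-times-geometric estimate $\|N^k\|_{\mathrm{op}} \le C_0 k^{q-1}\lambda^k$, which is routine Jordan-form linear algebra on a $q$-dimensional space whose spectral radius is at most $\lambda$; no serious obstacle arises. (In the degenerate case $\lambda = 0$, $N$ is nilpotent and the same bound holds after enlarging $C$ to absorb the finitely many values of $k$ before nilpotency kicks in.) The lemma is classical Markov-chain material, collected here for repeated use later in the paper.
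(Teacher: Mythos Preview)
Your proof is correct and follows essentially the same approach as the paper's: both reduce to a Jordan-form estimate giving $\|M^k f - \bar f\,\mathbf{1}\|\le C k^q\lambda^k\|f-\bar f\,\mathbf{1}\|$ on the $\pi$-centered subspace, together with the equivalence of the $L^\infty$ and $L^2(\pi)$ norms on that finite-dimensional space. The only organizational difference is that the paper routes everything through an auxiliary semi-norm $\|f\|_M$ built from the Jordan-basis coefficients (proving three separate equivalence/decay lemmas), whereas you work directly in the sup norm via the decomposition $M=\Pi+N$; your version is slightly more streamlined but mathematically equivalent.
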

We include a proof of this lemma in Section \ref{sec: MCGWT}.

\subsection{ Basis of functions on $[q]^L$}
In this section, we will fix a basis for the space of functions from $[q]$ to  $\mathbb{R}$ for a Markov chain $M$. 

\begin{defi}
    For a given $q\times q$ ergodic and irreducible transition matrix $M$, 
    we {\bf fix} a basis $\{\phi_i\}_{i \in [0,q-1]}$ for the space of functions from $[q]$ to  $\mathbb{R}$ such that $\phi_0$ is the constant function $1$ and $\phi_i$ for $i \in [q-1]$ are functions such that 
    $$
        \mathbb{E}_{Y \sim \pi} \phi_i(Y) = 0 
        \quad \mbox{and} \quad
        \mathbb{E}_{Y\sim \pi} \phi_i^2(Y)=1. 
    $$ 
\end{defi}

\begin{defi}
For a given $\ell$ layer rooted tree $T$ and $q\times q$ transition matrix described in Lemma \ref{lem:Mbasis}, for $\sigma \in [0,q-1]^L$, let
\[
\phi_{\sigma}(x) = \prod_{v \in L} \phi_{\sigma(v)}(x_v).
\]
We write $S(\sigma) = \{v : \sigma(v) \neq 0\}$ and 
$|\sigma| = |S(\sigma)|$.
\end{defi}
\begin{rem}
   We remark that $\phi_\sigma(x)$ is a function with variables in $(x_v\,:\,S(\sigma))$.   
\end{rem}

The fact that $\phi_0,\phi_1,\dots,\phi_{q-1}$ forms a basis implies that: 
\begin{lemma} 
Every function $f: [q]^U \mapsto \mathbb{R}$ can be expressed uniquely in the form 
\begin{align} \label{eq:fun_basis}
	f(x) 
= 	\sum_{ \sigma\,:\, S(\sigma) \subseteq U} c_{\sigma} \phi_{\sigma}(x). 
\end{align}
\end{lemma}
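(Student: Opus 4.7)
The plan is to reduce the claim to the standard multilinear-algebra fact that a tensor product of bases is a basis. First I would identify the vector space of real-valued functions on $[q]^U$ with the tensor product $\bigotimes_{v \in U}\mathbb{R}^{[q]}$, under the canonical isomorphism sending a pure tensor $\bigotimes_{v \in U} g_v$ to the product function $x \mapsto \prod_{v \in U} g_v(x_v)$. Both spaces have dimension $q^{|U|}$, and the isomorphism sends products of $\phi_i$'s to the same products.

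Next, I would observe that the indexing set $\{\sigma \in [0,q-1]^L : S(\sigma) \subseteq U\}$ is in canonical bijection with $[0,q-1]^U$ via restriction: the condition $S(\sigma)\subseteq U$ is exactly $\sigma(v)=0$ for $v \notin U$, and for such $v$ the factor $\phi_{\sigma(v)}(x_v)=\phi_0(x_v)=1$ drops out of $\phi_\sigma(x)=\prod_{v\in L}\phi_{\sigma(v)}(x_v)$. So each $\phi_\sigma$ with $S(\sigma)\subseteq U$ corresponds under the isomorphism above to the pure tensor $\bigotimes_{v \in U}\phi_{\sigma'(v)}$, where $\sigma'=\sigma|_U$. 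Since $\{\phi_0,\dots,\phi_{q-1}\}$ is by construction a basis of the $q$-dimensional space $\mathbb{R}^{[q]}$, the family of pure tensors $\{\bigotimes_{v \in U}\phi_{\sigma'(v)} : \sigma' \in [0,q-1]^U\}$ is a basis of $\bigotimes_{v \in U}\mathbb{R}^{[q]}$. Translated back, this says that $\{\phi_\sigma : S(\sigma)\subseteq U\}$ is a basis of $\mathbb{R}^{[q]^U}$, which gives both the existence and the uniqueness of the coefficients $c_\sigma$ in the expansion \eqref{eq:fun_basis}.

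The argument is essentially routine linear algebra, so there is no substantive obstacle. The one point worth being careful about is the encoding convention: $\phi_\sigma$ is indexed by $\sigma \in [0,q-1]^L$, but because $\phi_0 \equiv 1$, the restriction $S(\sigma)\subseteq U$ is precisely the condition that makes $\phi_\sigma$ depend only on $x_U$, so cutting down the index set to $U$ is harmless. If one prefers a more elementary approach avoiding tensor-product language, one can instead verify linear independence directly by induction on $|U|$: given a nontrivial relation $\sum_\sigma c_\sigma \phi_\sigma = 0$ in the variables $x_U$, pick any $v_0\in U$ and apply to the $v_0$-coordinate a linear functional on $\mathbb{R}^{[q]}$ that separates $\phi_0,\dots,\phi_{q-1}$ (which exists since these form a basis on $[q]$), thereby reducing the relation to $U\setminus\{v_0\}$ and closing the induction.
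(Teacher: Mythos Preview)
Your proposal is correct. The paper does not give a proof at all: it simply states the lemma as an immediate consequence of the fact that $\phi_0,\dots,\phi_{q-1}$ forms a basis of $\mathbb{R}^{[q]}$, and your tensor-product argument is precisely the standard justification underlying that sentence.
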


\begin{rem}
With the above representation, if $f$ is not a constant function,  the {\bf Efron-Stein degree} of $f$ equals to the largest magnitude of $|\sigma|$ among those $\sigma$ such that $c_\sigma \neq 0$. 
\end{rem}

\begin{defi} \label{defi:FApsi}
Given a tree $T$ and a $q\times q$ ergodic transition matrix $M$, 
let $\{\phi_i\}_{i \in [q-1]}$ be the functions described in Lemma \ref{lem:Mbasis}. 
For a collection of subsets $\cal A \subseteq {\bf 2}^L \backslash \emptyset$, let
\begin{align}
	\cal F(\cal A) 
:= 	\{ \sigma \in [0,q-1]^L\,:\, S(\sigma) \in \cal A\}. 	
\end{align}

For any $u \in T\backslash L$, and $\sigma \in {\cal F}( {\cal A}_u)$, let $S = S(\sigma)$ and
\begin{align} \label{LINeq:psi}
	\psi_{\sigma} (x) := \prod_{i\in I(S)} \tilde \phi_{\sigmai{i}}(x),
\end{align}
where $\sigmai{i}(x)_j = \sigma(x)_j 1(j \in S_i)$ and 
\begin{align}
	\tilde \phi_{\sigma'}(x) := \phi_{\sigma'}(x) - \mathbb{E} \phi_{\sigma'}(X)
\end{align}
for any $\sigma' \in \cal F( {\bf 2}^L \backslash \emptyset)$. 
For simplicity, let us also denote 
$$
    I(\sigma) := I(S(\sigma)), \quad \mbox{ and } \quad \rho(\sigma):= \rho(S(\sigma)).
$$
\end{defi}

\section{The overall inductive argument}
\label{sec: overviewInduction}
Here we present the version of the theorem with additional assumption on the transition matrix $M$ that 
\begin{align}
\label{eq: c_M}
   c_M := \min_{i,j \in [q] } M_{ij}>0.
\end{align}
\begin{theor} \label{thm: mainSimple}
    Given the rooted tree $T$ and the transition matrix $M$ described in Theorem \ref{thm:main}, and under  
    the additional assumption that $c_M = \min_{i,j\in[q]}M_{ij}>0$, there exists $c>0$ dependent on 
    $M$ and $d$ (and implicitly on $c_M$ as well) so that the following holds:
    For any function $f$ of the leaves with {\bf fractal capacity} $\le c \frac{\ell}{\log(dR)}$, 
\begin{align*}
	{\rm Var} ( \CE{f(X)}{X_\rho} ) 
\le 	(d\lambda^2)^{\ell/4}
	{\rm Var}( f(X)).
\end{align*}

\end{theor}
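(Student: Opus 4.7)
The plan is to prove Theorem \ref{thm: mainSimple} by induction on the fractal capacity $k$ of $f$. The base case $k = 1$ corresponds to degree-one polynomials and is essentially the argument sketched in the introduction: write $f(x_L) = \sum_{u \in L} f_u(x_u)$ with $\E f_u(X_u) = 0$, apply Cauchy--Schwarz to get $\var(\E_\rho f) \le |L| \sum_u \var(\E_\rho f_u)$, use the Markov-chain variance decay \eqref{eq: MCvarDecay} to bound each $\var(\E_\rho f_u) \le C\ell^{2q}\lambda^{2\ell}\var(f_u)$, and note $|L|\lambda^{2\ell} \le R(d\lambda^2)^\ell$ to absorb this into the target $(d\lambda^2)^{\ell/4}$ factor after the polynomial-in-$\ell$ overhead is controlled. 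The denominator $\var(f)$ is compared to $\sum_u \var(f_u)$ via the exponential decay of the pairwise covariances $\E[f_u(X_u)f_v(X_v)]$, which by the Markov property and \eqref{eq: MCvarDecay} behave like $\lambda^{2(\ell - \h(u\wedge v))}\sqrt{\var(f_u)\var(f_v)}$; this is the content of Section \ref{sec: deg1}.

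For the inductive step, suppose the statement holds for fractal capacity at most $k - 1$ with a suitable constant, and let $f$ have fractal capacity $k$. Expand $f$ in the orthonormal basis $\{\phi_\sigma\}_{\sigma \in \cal F(\cal A_k)}$ and group the terms by the ancestor $u = \rho(\sigma)$. Within each $u$-group, switch to the branch product form $\psi_\sigma = \prod_{i \in I(\sigma)} \tilde\phi_{\sigma_i}$ of Definition \ref{defi:FApsi}, in which each mean-zero factor $\tilde\phi_{\sigma_i}$ is supported on the disjoint subtree $T_{u_i}$ with $S(\sigma_i) \in \cal A_{k-1}$, hence fractal capacity at most $k - 1$. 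By the \MP, the branches are conditionally independent given $X_u$, so $\E_u \psi_\sigma = \prod_{i \in I(\sigma)} \E_u \tilde\phi_{\sigma_i}$ factorises the analysis branch-wise. Apply the inductive hypothesis on each subtree $T_{u_i}$ to bound $\var(\E_{u_i}\tilde\phi_{\sigma_i})$, pass to $\E_u$ by one additional Markov step via \eqref{eq: MCvarDecay}, then use \eqref{eq: VarLinfty} and \eqref{eq: MClinftyDecay} to convert the branch variance bounds into $L^\infty$ bounds so that the factors multiply cleanly, and finally convert back. This is the rigorous realisation of Property III and yields a variance-decay bound for the single $\psi_\sigma$.

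The final step is to sum the single-$\sigma$ bounds over $\sigma$ and $u$: Cauchy--Schwarz handles the diagonal, while the off-diagonal covariances $\E[\psi_\sigma \psi_{\sigma'}]$ are controlled via Property I, by conditioning at the nearest common ancestor of $\rho(\sigma)$ and $\rho(\sigma')$ and invoking the variance-decay property of the $\psi$'s already established. The main obstacle is the quantitative bookkeeping: at each of the $k$ recursive levels one pays multiplicative overheads from Cauchy--Schwarz, from the $L^\infty$/variance conversion constants in \eqref{eq: VarLinfty}, and from the number of basis elements with a fixed root $u$, which can grow like $(qdR)^{O(k)}$. Ensuring that the total overhead after $k$ levels stays dominated by the Kesten--Stigum gain $(d\lambda^2)^{\ell/4}$ forces the fractal capacity budget to be capped at $c\ell/\log(dR)$ for a sufficiently small $c = c(M,d)$, and balancing these factors is the central technical challenge. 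The assumption $c_M > 0$ enters precisely to keep the $L^\infty$/variance constants in \eqref{eq: VarLinfty} depending only on $M$, which simplifies this accounting compared to the general case treated in Sections \ref{sec: generalBase} and \ref{sec: generalInduction}.
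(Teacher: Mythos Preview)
Your high-level structure --- induction on fractal capacity, branch factorisation via $\psi_\sigma$, and covariance control between different roots --- matches the paper. The gap is in the summation over $\sigma$. You plan to bound each $\psi_\sigma$ individually via multiplied $L^\infty$ branch bounds and then sum over $\sigma$ by Cauchy--Schwarz, claiming the number of basis elements with a fixed root $u$ grows like $(qdR)^{O(k)}$. This count is wrong: already for $k=2$, the sets $S\in\CA_2$ with $\rho(S)=u$ are precisely $\{v_i:i\in I\}$ with $I\subseteq[d_u]$, $|I|\ge 2$, and $v_i\in L_{u_i}$, so there are on the order of $\prod_{i\in[d_u]}(1+|L_{u_i}|)$ of them --- this grows with $\h(u)$, not with $k$, and for $u$ high in the tree it overwhelms any Kesten--Stigum factor regardless of how small $c$ is. Also, the $\{\phi_\sigma\}$ form a basis but are not orthonormal (the $X_v$'s are dependent), so there is no free comparison between $\var(f_u)$ and $\sum_\sigma c_\sigma^2\var(\psi_\sigma)$ to close the argument on the denominator side.

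The paper never decomposes down to individual $\sigma$'s. It groups $f_u=\sum_I f_{u,I}$ only by the branch index set $I=I(\sigma)$ (Definition~\ref{defi: fu}) and proves decay for each $f_{u,I}$ \emph{as a whole}: freeze all branches but one, observe that as a function of the remaining branch $f_{u,I}$ is an $\CA_{\le u_i}$-polynomial, and apply the inductive hypothesis to that entire function (Proposition~\ref{prop:Bwinwin}). Iterating over branches yields the pointwise bound $(\E_U f_{u,I})^2(x)\le \exp(-\varepsilon|I'|(\h(u)-C\lAs))\,(\E_U f_{u,I}^2)(x)$ without ever enumerating $\sigma$'s. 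The step that makes this work is converting the unconditional inductive variance bound into a bound conditional on $X_u=x_u$; this is where the second clause \eqref{eq: assumeSimple1} of Assumption~\ref{assume: A} is used, and \emph{that} is where $c_M>0$ enters (Lemma~\ref{lem: c^*}) --- not through \eqref{eq: VarLinfty}, which holds for any $M$. There are at most $2^{d_u}\le 2^{Rd}$ sets $I$, and the cross terms between $f_{u,I}$ and $f_{u,J}$ decay like $\exp(-\tfrac{\varepsilon}{2}|I\Delta J|\,\h(u))$ (Corollaries~\ref{cor: fuI}, \ref{cor: fufuI}), so the sum over $I$ is a geometric series in $\h(u)$ rather than a combinatorial explosion. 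One further point: passing from the $\phi_\sigma$-expansion of $f$ to $\psi_\sigma$'s generates residual terms with strictly lower root (Lemma~\ref{LINlem:phiDecomposition}), so the decomposition of $f$ into the $f_u$'s is the iterative layer-by-layer procedure of Lemmas~\ref{LINlem:p_u} and~\ref{LINlem:fdecomposition}, with a cutoff layer $k_1$ collecting everything that has been pushed down --- not a one-shot grouping by $\rho(\sigma)$.
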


We will first derive the version mentioned above, as it substantially reduces the technical complexity without compromising the structural integrity of the proof in the general setting where $c_M$ might be $0$. 

\medskip

The proof of Theorem \ref{thm:main} will be carried out  by induction on $\CA_k$-polynomials. 
Let us introduce the necessary notations to outline this induction process.

\begin{defi} \label{defi:epsilonkA}
Let $\varepsilon >0$ be the constant such that 
\begin{align*}
	\max \{d\lambda^2, \lambda\} = \exp( - 1.1 \varepsilon). 
\end{align*}
\end{defi}
The constant $\varepsilon$ is introduced to improve the readability of the paper. 
Intuitively, we aim to define $d\lambda^2 = \exp(-\varepsilon)$, 
but we relax this definition slightly so that 
inequalities like the following hold when $\ell$ is sufficiently large:
$$ 
    \text{poly}(\ell) (d\lambda^2)^\ell  \le \exp(-\varepsilon \ell).
$$

\begin{assumption} \label{assume: A}
    We say that $\cal A$ satisfies assumption \ref{assume: A} with parameters $(\lAs,c^*)$ where $\lAs>0$ and $0<c^*<1$, if 
    $$ \CA_1 \subseteq \CA \subseteq {\bf 2}^T\backslash \emptyset$$
    is {\bf closed under decomposition}, and morever, 
   \begin{enumerate}
       \item For any $v \in T$ with $\h(v) \ge \lAs$ and a  
       $\CA_{\le v}$-polynomial $f$,
       \begin{align} \label{eq: assumeSimple0}
	    {\rm Var} \big[ \ConE{v}{f}(X) \big]
    \le 	\exp\big( - \varepsilon( \h(v) - \lAs \big)) {\rm Var}\big[ f(X)\big].
    \end{align} 
    \item For any $v \in T$ with $\h(v) \ge \lAs$ and a  
       $\CA_{\le v}$-polynomial $f$ with $\E f(X)=0$,
       \begin{align} \label{eq: assumeSimple1} 
        c^* \E \left[(\E_v f)^2(X_v)\right]
        \le \E[ (\E_vf)^2(X_v)\,\vert\, X_{\fp(v)}=\theta]  \le 
        \frac{1}{c^*}\E \left[(\E_v f)^2(X_v) \right], 
    \end{align}
    for all $\theta \in [q]$.
   \end{enumerate} 
\end{assumption}
The inequality \eqref{eq: assumeSimple0} bears a resemblance to the inequality we aim to prove in Theorem \ref{thm: mainSimple}. The second inequality, \eqref{eq: assumeSimple1}, will later be seen as a crucial step proving the inductive phase of our proof. Indeed, in the case where $c_M>0$, the condition \eqref{eq: assumeSimple1} can be easily satisfied by appropriately choosing $c^*$ :

\begin{lemma} \label{lem: c^*}
    For any given $\CA_1 \subseteq \CA \subseteq {\bf 2}^T\backslash \emptyset$ which is closed under decomposition. If it satisfies \eqref{eq: assumeSimple0} with a given parameter $\lAs$ and $c_M >0$,  
    then $\CA$ satisfies Assumption \ref{assume: A} with parameter $\lAs$ and 
    $$c^* := \min \Big\{ c_M,  \frac{1}{\min_{j}\pi(j)}\Big\}>0.$$
    In other words, we can choose $c^*$ with {\bf no dependence on either} $\lAs$ {\bf or} ${\cal A}$. 
\end{lemma}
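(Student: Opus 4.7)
The plan is to verify that $\CA$ satisfies both clauses of Assumption~\ref{assume: A}. Clause~(1), the variance decay inequality~\eqref{eq: assumeSimple0}, is a hypothesis of the lemma, so the entire content is to check the two-sided bound~\eqref{eq: assumeSimple1}. The key point is that~\eqref{eq: assumeSimple1} is really a one-step Markov-chain statement about a function of $X_v$ alone, and the uniform lower bound $c_M$ on entries of $M$ makes it essentially immediate.

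First I would unpack $(\E_v f)(X)$. If $f$ is a $\CA_{\le v}$-polynomial then every $S \in \CA_{\le v}$ has $\rho(S) \le v$, hence $S \subseteq L \cap T_v$, so $f$ depends only on the values $x_w$ for $w$ a descendant of $v$. According to Definition~\ref{defi: ConE} with $U = \{v\}$, the function $\ConE{v}{f}(X)$ conditions on $X_w$ for every $w \not< v$, which includes $v$ itself. The \MP\ then tells us that given $X_v$, the descendants of $v$ are independent of the non-descendants, so $(\E_v f)(X)$ reduces to a function of $X_v$ alone; call it $g(X_v)$. With this notation,
\begin{align*}
\E[(\E_v f)^2(X_v)] &= \sum_{j \in [q]} \pi(j)\, g(j)^2,\\
\E[(\E_v f)^2(X_v) \mid X_{\fp(v)} = \theta] &= \sum_{j \in [q]} M_{\theta, j}\, g(j)^2.
\end{align*}

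The lower half of~\eqref{eq: assumeSimple1} then follows from $M_{\theta,j} \ge c_M$ together with $\pi(j) \le 1$:
\[
\sum_j M_{\theta,j}\, g(j)^2 \;\ge\; c_M \sum_j g(j)^2 \;\ge\; c_M \sum_j \pi(j)\, g(j)^2.
\]
For the upper half, since $M_{\theta,\cdot}$ is a probability vector,
\[
\sum_j M_{\theta,j}\, g(j)^2 \;\le\; \max_j g(j)^2 \;\le\; \frac{1}{\min_j \pi(j)} \sum_j \pi(j)\, g(j)^2,
\]
where the last step uses $\pi(j^*) g(j^*)^2 \le \sum_j \pi(j) g(j)^2$ for $j^* = \arg\max_j g(j)^2$. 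Both bounds therefore hold with $c^* = \min\{c_M,\, (\min_j \pi(j))^{-1}\}$, which is strictly positive since $c_M > 0$ and $\pi(j) > 0$ for every $j$ by ergodicity; crucially, this choice of $c^*$ depends only on $M$ and not on $\lAs$ or on $\CA$.

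I do not anticipate a genuine obstacle here. Once one recognizes that $(\E_v f)(X)$ collapses to a function of $X_v$, the rest is a direct comparison of $\sum_j M_{\theta,j} g(j)^2$ with $\sum_j \pi(j) g(j)^2$. The only conceptually substantive step is invoking the Markov property to replace the conditioning on all non-strict-descendants of $v$ by a conditioning on $X_v$, and it is precisely this collapse that makes the resulting constant $c^*$ independent of both $\CA$ and the parameter $\lAs$ coming from assumption~(1).
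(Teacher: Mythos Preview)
Your proposal is correct and follows essentially the same approach as the paper: reduce $(\E_v f)(X)$ to a function of $X_v$ via the Markov property, then compare $\sum_j M_{\theta,j}\,h(j)$ with $\sum_j \pi(j)\,h(j)$ for the nonnegative function $h(j)=g(j)^2$. The only cosmetic difference is that for the lower bound the paper picks a single index $j_0$ with $h(j_0)\ge \E h(X_v)$ and bounds $\sum_j M_{\theta,j}h(j)\ge M_{\theta,j_0}h(j_0)\ge c_M\,\E h(X_v)$, whereas you use $M_{\theta,j}\ge c_M$ termwise together with $\pi(j)\le 1$; both arguments are equally valid and yield the same constant.
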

\begin{proof}
 Consider an arbitrary function $f$ with variables $(x_u\,:\, u\le v)$ for some $v \in T \backslash \{\rho\}$. 

    Let $g(x) := (\E_v f)^2(x)$. By the \MP, $(\E_v f)(x)$ is a function of $x_v$, which in turn implies $g(x)=g(x_v)$. 
    Now, for any $\theta \in [q]$, fix an index $j_0 \in [q]$ such that $g(j_0) \ge \E g(X_v)$. 
    Relying on $g$ is a non-negative function, 
    $$
        \E[ g(X_v)\,\vert\, X_{\fp(v)} = \theta]
    = 
        \sum_{j \in [q]} M_{\theta j} g(j)
    \ge
        M_{\theta j_0}g(j_0)
    \ge
        c_M \E g(X_j). 
    $$
    By unraveling the definition of $g$, we can satisfy the first inequality of \eqref{eq: assumeSimple1} as long as $c^* < c_M$.  The proof for the second 
    inequality follows a similar logic, using the condition   
    $c^* \le \frac{1}{\min_{j}\pi(j)}$ and the trival inequality $\max_{i,j}M_{ij} \le 1$.
\end{proof}

Given this notation, the proof of Theorem \ref{thm: mainSimple} proceeds by induction, with the base case and inductive articulated in the subsequent two statements.

\begin{prop} \label{prop: inductionBaseSimple}
    Given the rooted tree $T$ and the transition matrix $M$ described in Theorem \ref{thm:main}, and under the additional assumption that $c_M = \min_{i,j\in[q]}M_{ij}>0$.
    There exists $C = C(M,\varepsilon) \ge 1$ so that the following holds: 

 Fix $\rho' \in T$ and $0 \le m \le \h(\rho')$, 
if $f(x)$ is a degree 1 polynomials of variables $(x_v\,:\, v \in D_m(\rho'))$, then  
    \begin{align}
    \label{eq: kA0toy}
	    {\rm Var} \big[ \ConE{\rho'}{f}(X) \big]
    \le 	\exp\Big( - \varepsilon\big( \h(\rho') - m - C(\log(R) + 1)\big)\Big) {\rm Var}\big[ f(X)\big].
    \end{align}
\end{prop}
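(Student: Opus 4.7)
Set $k := \h(\rho')-m$ and write $f(x) = \sum_{v \in D_m(\rho')} f_v(x_v)$ with each $f_v$ a function of $x_v$ alone. Replacing $f_v$ by $f_v - \E f_v(X_v)$ alters $f$ only by a constant, so WLOG $\E f_v(X_v) = 0$ for every $v$. By the \MP\ applied to the complement of the subtree strictly below $\rho'$, $\ConE{\rho'}{f}(X)$ depends only on $X_{\rho'}$ and equals $\sum_v \CE{f_v(X_v)}{X_{\rho'}}$. The plan is to bound the numerator $\var[\ConE{\rho'}{f}(X)]$ from above via Cauchy--Schwarz and Lemma~\ref{lem:Mbasis}, bound the denominator $\var[f(X)]$ from below by a constant multiple of $\sum_v \var[f_v(X_v)]$ via a one-layer orthogonal decomposition, and combine. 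The numerator bound will produce a factor $Rd^k \cdot \lambda^{2k} = R(d\lambda^2)^k$, where the $R$ slack is what gets absorbed into the $C(1+\log R)$ shift.

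\textbf{Upper bound.} Cauchy--Schwarz applied to the $|D_m(\rho')|$-term decomposition, followed by \eqref{eq: MCvarDecay} of Lemma~\ref{lem:Mbasis} applied to each summand with height-gap $k$, yields
\[
\var\bigl[\ConE{\rho'}{f}(X)\bigr] \;\le\; |D_m(\rho')|\sum_v \var\bigl[\CE{f_v(X_v)}{X_{\rho'}}\bigr] \;\le\; C R\,k^{2q}(d\lambda^2)^k \sum_v \var[f_v(X_v)],
\]
using the degree-domination bound $|D_m(\rho')|\le Rd^k$.

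\textbf{Lower bound via one-layer decomposition.} Define $\tilde f_v(X) := f_v(X_v) - \CE{f_v(X_v)}{X_{\fp(v)}}$ and $F^{(1)}(X) := \sum_v \CE{f_v(X_v)}{X_{\fp(v)}}$, so $f(X) = \sum_v \tilde f_v(X) + F^{(1)}(X)$. The tree Markov property makes $X_v$ and $X_{v'}$ conditionally independent given $(X_{\fp(v)}, X_{\fp(v')})$ for every $v\neq v'$, and each $\tilde f_v$ has conditional mean zero given $X_{\fp(v)}$ (hence also given all of $X_{D_{m+1}(\rho')}$); together these imply that $\{\tilde f_v\}_v$ is pairwise $L^2$-orthogonal and each $\tilde f_v$ is also orthogonal to $F^{(1)}$. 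Hence $\var[f(X)] \ge \sum_v \var[\tilde f_v]$. A direct $L^2$ calculation gives $\var[\tilde f_v] = \var[f_v(X_v)] - \var[\CE{f_v(X_v)}{X_{\fp(v)}}]$. I would then establish the strict one-step contraction $\sigma_M := \|M\|_{\text{mean-zero }L^2(\pi)} < 1$: Jensen forces $\sigma_M \le 1$, equality would require $f_v$ to be constant on every row-support of $M$, and irreducibility+aperiodicity then force $f_v$ to be globally constant, a contradiction to $f_v \not\equiv 0$ being mean-zero; finite-dimensional compactness upgrades the strict pointwise inequality on the unit sphere to a uniform gap. Setting $c_M' := 1-\sigma_M^2 > 0$, we get $\var[\tilde f_v]\ge c_M'\var[f_v(X_v)]$ and thus
\[
\var[f(X)] \;\ge\; c_M' \sum_v \var[f_v(X_v)].
\]

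\textbf{Combining and the main obstacle.} Dividing the two bounds,
\[
\frac{\var[\ConE{\rho'}{f}(X)]}{\var[f(X)]} \;\le\; \frac{CR\,k^{2q}(d\lambda^2)^k}{c_M'}.
\]
Using $d\lambda^2 \le e^{-1.1\varepsilon}$ (Definition~\ref{defi:epsilonkA}) and the elementary bound $k^{2q} \le K(\varepsilon,q)\, e^{0.1\varepsilon k}$ valid for all $k \ge 0$, the right-hand side is at most $(CRK/c_M')\,e^{-\varepsilon k}$, which rewrites as $\exp\!\bigl(-\varepsilon(k - C(1+\log R))\bigr)$ once $C=C(M,\varepsilon)$ is chosen large enough to absorb both $\log R/\varepsilon$ and $\log(CK/c_M')/\varepsilon$; for $k \le C(1+\log R)$ the target bound already exceeds $1$ and is trivially true. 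The only nontrivial technical step is the strict one-step $L^2$ contraction $\sigma_M < 1$: it is not immediate from Lemma~\ref{lem:Mbasis} (whose one-step bound is vacuous when $C\lambda^2 \ge 1$) and, in the non-reversible case, $\sigma_M$ can strictly exceed $\lambda$. The compactness argument sketched above, relying on irreducibility, aperiodicity, and finiteness of $[q]$, is the cleanest route and would be stated as a short standalone lemma.
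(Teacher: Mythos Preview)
Your approach is essentially the paper's: Cauchy--Schwarz plus Lemma~\ref{lem:Mbasis} for the upper bound, and a one-step variance contraction plus the orthogonal (conditional-variance) decomposition for the lower bound. The paper packages the same ingredients as Lemma~\ref{lem:deg1}, with the lower bound coming from Proposition~\ref{prop: CVbasisSimple}; your inequality $\var[\tilde f_v]\ge c_M'\var[f_v]$ is exactly the paper's Lemma~\ref{lem: MCvarDecaySimple}.

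There is, however, a genuine error in your justification of $\sigma_M<1$. You claim that if $g$ is constant on every row-support of $M$, then irreducibility and aperiodicity force $g$ to be globally constant. This is false. Take $q=4$ and
\[
M=\begin{pmatrix} 1/2 & 1/2 & 0 & 0\\ 0 & 0 & 1/2 & 1/2\\ 1/2 & 1/2 & 0 & 0\\ 0 & 0 & 1/2 & 1/2 \end{pmatrix}.
\]
This chain is irreducible ($1\to 2\to 4\to 3\to 1$) and aperiodic ($M_{11}>0$), with uniform stationary distribution. The row-supports are $\{1,2\}$ and $\{3,4\}$, so $g=(1,1,-1,-1)$ is mean-zero, constant on every row-support, and one checks $Mg=(1,-1,1,-1)$, giving $\|Mg\|_{L^2(\pi)}=\|g\|_{L^2(\pi)}$; hence $\sigma_M=1$ and your lower bound collapses.

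The fix is to use the hypothesis you were given but did not invoke: $c_M>0$ makes every row-support equal to $[q]$, so constancy on all row-supports \emph{is} global constancy, and then your compactness argument goes through. This is precisely how the paper proves Lemma~\ref{lem: MCvarDecaySimple}, and the distinction is exactly why the general case (Sections~\ref{sec: generalBase}--\ref{sec: generalInduction} and Appendix~\ref{sec: deg1Var}) requires substantial additional machinery: without $c_M>0$ the one-step contraction can genuinely fail, and the lower bound must instead go through the carefully chosen decomposition of Proposition~\ref{prop: CVbasis4}.
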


\medskip

\begin{theor} \label{LINthm:inductionSimple}
    Given the rooted tree $T$ and the transition matrix $M$ described in Theorem \ref{thm:main}, and under the additional assumption that $c_M = \min_{i,j\in[q]}M_{ij}>0$.
    Suppose $\cal A$ is a collection of subsets satisfying Assumption \ref{assume: A} with parameters $(\lAs,\,c^*)$.
    Then, there exists $C=C(M,d, c^*) \ge 1$ such that $\cal B = \cal B(\cal A)$ satisfies Assumption \ref{assume: A} with parameters $\big(\lAs + C(\log(R) + 1),\,c^*\big)$. 
\end{theor}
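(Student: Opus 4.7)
The plan is to combine the three strategies outlined in the sketch (I: covariance bound, II: good decomposition, III: products inherit variance decay) to go from the assumption on $\cal A$ to the analogous assumption on $\cal B = \cal B(\cal A)$, at the cost of increasing $\lAs$ by $C(\log R + 1)$. Since Lemma \ref{lem: c^*} already shows that \eqref{eq: assumeSimple1} holds with the same constant $c^* = \min\{c_M,\, 1/\min_j \pi(j)\}$ for any closed-under-decomposition collection satisfying \eqref{eq: assumeSimple0}, and since $\cal B$ is closed under decomposition by Lemma \ref{lem:AinB}, the real content is proving the variance-decay bound \eqref{eq: assumeSimple0} for $\cal B_{\le v}$-polynomials at every $v$ with $\h(v) \ge \lAs'$, where $\lAs' := \lAs + C(\log R + 1)$.

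I fix such a $v$ and a mean-zero $\cal B_{\le v}$-polynomial $f$, and expand it in the basis $\{\psi_\sigma\}_{\sigma \in \cal F(\cal B_{\le v})}$ from Definition \ref{defi:FApsi}. The key structural feature is that for each $\sigma$ with $u := \rho(\sigma)$, the basis element factors as $\psi_\sigma = \prod_{i \in I(\sigma)} \tilde \phi_{\sigmai{i}}$ across the branches of $u$, and since $\cal B$ is obtained from $\cal A$ by a single branch decomposition, each factor $\tilde \phi_{\sigmai{i}}$ is a mean-zero $\cal A$-polynomial supported on $S_i \in \cal A$ inside $T_{u_i}$. For each such $\sigma$, Assumption \ref{assume: A} on $\cal A$ applied to $\tilde \phi_{\sigmai{i}}$ at the child $u_i$ gives individual variance-decay bounds on $\E[\tilde \phi_{\sigmai{i}}\,|\,X_{u_i}]$. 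Conditional independence of the branches of $u$ given $X_u$, combined with the two-sided bound \eqref{eq: assumeSimple1} for $\cal A$ to interchange conditional and unconditional $L^2$ norms, then implements idea III and yields $\var\big[\E[\psi_\sigma\,|\,X_u]\big] \lesssim \exp(-\varepsilon(\h(u) - \lAs))\var[\psi_\sigma]$ with a constant depending only on $c^*$ and $M$. An additional factor of $\lambda^{2(\h(v) - \h(u))}$ from Lemma \ref{lem:Mbasis} applied to the downward Markov chain from $v$ to $u$ then gives $\var\big[\E[\psi_\sigma\,|\,X_v]\big] \lesssim \lambda^{2(\h(v)-\h(u))}\exp(-\varepsilon(\h(u) - \lAs))\var[\psi_\sigma]$.

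To conclude I group contributions according to $u = \rho(\sigma)$ and sum. At each level $u$, the family $\{\psi_\sigma : \rho(\sigma) = u\}$ is organized by the subsets of branches of $u$ on which its factors live; treating each $\psi_\sigma$ as an effective ``degree-$1$ variable'' at this meta-level, the number of such effective variables at a given height $h = \h(u)$ is controlled by a polynomial in $|L_{\h(v)-h}(v)| \le R d^{\,\h(v)-h}$. A Cauchy-Schwarz step in the spirit of Proposition \ref{prop: inductionBaseSimple} therefore pays a factor polynomial in $R$ and $d^{\h(v)-h}$, which is exactly what the $C(\log R + 1)$ shift in $\lAs'$ is chosen to absorb; summing geometrically over $u \le v$ via the $\lambda^{2(\h(v)-\h(u))}$ tail yields the claimed bound $\var[\ConE{v}{f}(X)] \le \exp(-\varepsilon(\h(v) - \lAs'))\var[f(X)]$.

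The main obstacle will be the rigorous execution of idea III together with the meta-level degree-$1$ summation. For idea III, the product $\psi_\sigma = \prod_{i \in I(\sigma)} \tilde \phi_{\sigmai{i}}$ must inherit the variance decay of its individual factors with a constant that does not blow up with $|I(\sigma)|$: this is precisely what the two-sided bound \eqref{eq: assumeSimple1} on $\cal A$ buys us, but it requires careful telescoping of the cross-terms obtained when expanding $\E[\psi_\sigma\,|\,X_u]$ against the product $\prod_i \E[\tilde \phi_{\sigmai{i}}\,|\,X_u]$ and controlling each residual by a product of conditional variances. Secondarily, the cross-covariances $\E[\psi_\sigma \psi_{\sigma'}]$ between different basis elements (rooted at possibly different $u, u'$) must be bounded via idea I along the tree path joining $\rho(\sigma)$ and $\rho(\sigma')$, so that $\var[f]$ dominates $\sum_\sigma c_\sigma^2 \var[\psi_\sigma]$ up to a constant absorbed in $C$; this is the ``good decomposition'' of idea II and is what keeps the Cauchy-Schwarz step lossless beyond the claimed $C(\log R + 1)$ shift.
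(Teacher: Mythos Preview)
Your outline is broadly the paper's strategy, but two of your intermediate claims do not hold as stated, and both are places where the paper has to work harder.

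First, you ``expand $f$ in the basis $\{\psi_\sigma\}_{\sigma \in \cal F(\cal B_{\le v})}$''. The $\psi_\sigma$ are not a basis for $\cal B_{\le v}$-polynomials in any direct sense: by Lemma~\ref{LINlem:phiDecomposition}, rewriting a $\phi_\sigma$ with $\rho(\sigma)=u$ in terms of $\psi_\sigma$ produces residual $\phi$-terms rooted strictly below $u$. The paper's Lemma~\ref{LINlem:fdecomposition} carries out this rewriting layer by layer, from the top down to a cutoff height $k_1$, pushing the residuals down at each step; what remains at height $k_1$ is a generic function of $(x_w : \h(w)=k_1)$ handled separately via the degree-$1$ base case. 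Without this cascading decomposition there is no clean expansion to work with.

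Second, and more seriously, you aim for $\var[f] \gtrsim \sum_\sigma c_\sigma^2 \var[\psi_\sigma]$. This fails: two $\psi_\sigma$, $\psi_{\sigma'}$ with the same root $u$ and the same branch set $I(\sigma)=I(\sigma')$ have no reason to be nearly orthogonal (their factors on each branch are arbitrary $\cal A$-polynomials). The paper never decomposes this finely. Instead it groups all $\sigma$ with $\rho(\sigma)=u$ into a single $f_u$, then further into $f_{u,I}$ by branch set (Definition~\ref{defi: fu}), and proves decay directly for these aggregates. The key point you miss is that the product bound is not merely ``a constant that does not blow up with $|I(\sigma)|$'': Proposition~\ref{prop:Bwinwin} gives decay $\exp(-\varepsilon|I'|(\h(u)-C-\lAs))$, i.e.\ \emph{multiplicatively better} with more factors. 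This $|I|$-in-exponent gain is exactly what Corollary~\ref{cor: fuI} uses to make the $f_{u,I}$ nearly orthogonal (via the $|I\Delta J|$ bound), yielding $\sum_I \E[f_{u,I}^2 \mid X_u] \simeq \E[f_u^2 \mid X_u]$ in Corollary~\ref{cor: fufuI}. Only after this does the meta-level degree-$1$ aggregation over $u$ go through (Lemma~\ref{lem: fhku}, Proposition~\ref{prop: fk}).
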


Let us derive the proof of Theorem \ref{thm: mainSimple} based on the above two statements. 
\begin{proof}[Proof of Theorem \ref{thm: mainSimple}]

First, we {\bf claim} that $\cal A_k$ contains all non-empty subsets of $L$ of size $\le k$.  
This can be proved by induction on $k$. 
The base case with $k=1$ follows from the definition $\CA_1 := \big\{ \{v\}\,:\, v \in L \big\}$. 
Suppose the claim holds for some positive integer $k$. Let $S \subseteq L \backslash \emptyset$ with $|S|\le k+1$. 
If $|S|\le k$, then $S \subseteq \CA_k \subseteq {\cal B}(\CA_k) = \CA_{k+1}$. In the case where $|S|=k+1\ge 2$, notice that  $\rho(S)$ is not a leave. 
Consider the branch decomposition of $S$ (See Definition \ref{defi: branchDecomposition}):
$$
    S = \sqcup_{i \in I(S)}S_i.
$$ Because $|I(S)|>1$, for each $i \in I(S)$ we have $|S_i|<|S|=k+1$.  Therefore, $S_i \in \CA_k$ for $i \in I(S)$, which in turn implies $S \in {\cal B}(\CA_k) = \CA_{k+1}$. Therefore, the claim follows.

\medskip

Second, we apply Proposition \ref{prop: inductionBaseSimple} and Lemma \ref{lem: c^*} to get $\CA_1$ satisfies Assumption \ref{assume: A} with parameter $\lAs = C_{\ref{prop: inductionBaseSimple}}(\log(R)+1)$, where $C_{\ref{prop: inductionBaseSimple}} = C(M,d)$ is the constant introduced in the Proposition and 
$$
    c^* =\min \Big\{ c_M,  \frac{1}{\min_{j}\pi(j)}\Big\}>0.
$$
Then, by applying Theorem \ref{LINthm:inductionSimple} inductively on the chain $\CA_k$, we can conclude that $\CA_k$ satisfies Assumption \ref{assume: A} with parameter $\lAs = C(\log(R)+1)k$ and the same $c^*$ described above, provided that $C=C(M,d,c^*)$ is the maximum of the constants $C$ described in Proposition \ref{prop: inductionBaseSimple}
and Theorem \ref{LINthm:inductionSimple}. 
In other words, for any $\CA_k$-polynomial $f$,   
\begin{align*}
    {\rm Var} \big[ \ConE{\rho}{f}(X) \big]
\le 	
    \exp\big( - \varepsilon( \ell - C(\log(R) + 1)k)\big) {\rm Var}\big[ f(X)\big].
\end{align*}
The theorem follows by choosing $k = \frac{1}{2C(\log(R) + 1)} \ell$.

\end{proof} 
\vspace{1.5cm}

\section{Variance Decomposition and Variance Estimate for degree 1 polynomials}
\label{sec: deg1}
To describe the goal of this section, let us begin with the variance decomposition of degree 1 polynomials in a slight generalized form. Essentially, the following statement is a direct consequence of the conditional variance formula. However, for the sake of completeness, a detailed proof is provided below.  

\begin{lemma} 
\label{lem: generalizedDeg1VarDecomposition}
Fix $\rho' \in T$ and $0 \le k \le \h(\rho')$, consider a function  
$g : [q]^T \to \R$ of the form 
$$
    g(x) = \sum_{v \in D_k(\rho')} g_v(x)
    \quad \mbox{where} \quad
    g_v(x) = g_v(x_{\le v}).
$$
Then, 
\begin{align*}
    \var[g(X)]
= &
    \var\big[(\E_{\rho'} g) (X_{\rho'})\big]
    +   \sum_{w \in T_{\rho'} \backslash \{\rho'\} : \h(w) \ge k}
            \E \var \big[ (\E_w g_w)(X_w) \, \big\vert \,  X_{\fp(w)}\big] \\
    &+  \sum_{v \in D_k(\rho')}
            \E \var \big[  g_v(X) \, \big\vert \,  X_{v}\big],
\end{align*}
where for  $w \in T_{\rho'}\backslash \{\rho'\}$ with $\h(w)\ge k+1$, 
$
    g_w(x) := \sum_{v \in  D_{k}(w)} g_v(x).
$
\end{lemma}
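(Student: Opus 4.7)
The plan is to derive the identity by iterating the conditional variance formula $\var[Z] = \var[\E[Z\mid Y]] + \E\var[Z\mid Y]$, at each stage exploiting the tree Markov property from Remark~\ref{rem:MarkovProperty} to decouple the contributions of disjoint subtrees.

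First I would condition on the cut-set $(X_v)_{v \in D_k(\rho')}$. Since each $g_v$ depends only on $x_{\le v}$, the Markov property guarantees that the subtrees strictly below distinct $v \in D_k(\rho')$ are conditionally independent given $(X_v)_{v \in D_k(\rho')}$. This yields both
$$\E\bigl[g(X)\,\big\vert\, (X_v)_{v\in D_k(\rho')}\bigr] = \sum_{v \in D_k(\rho')} (\E_v g_v)(X_v) \qquad \text{and} \qquad \var\bigl[g(X)\,\big\vert\, (X_v)_{v\in D_k(\rho')}\bigr] = \sum_{v \in D_k(\rho')} \var[g_v(X) \mid X_v].$$
Taking expectations and applying the conditional variance formula peels off the term $\sum_{v \in D_k(\rho')} \E\var[g_v(X)\mid X_v]$ and reduces the problem to evaluating $\var\bigl[\sum_{v \in D_k(\rho')} (\E_v g_v)(X_v)\bigr]$, which is the variance of a function living on the tree $T_{\rho'}$ truncated at level $k$.

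For the second step I would telescope layer by layer up to $\rho'$. Set, for $0 \le j \le \h(\rho')-k$,
$$\tilde S_j := \sum_{w \in D_{k+j}(\rho')} (\E_w g_w)(X_w),$$
so that $\tilde S_0 = \sum_{v \in D_k(\rho')} (\E_v g_v)(X_v)$ and $\tilde S_{\h(\rho')-k} = (\E_{\rho'} g)(X_{\rho'})$. Using the additivity $g_w = \sum_{i \in [d_w]} g_{w_i}$, which is valid whenever $\h(w) \ge k+1$ (so that each child $w_i$ has height $\ge k$ and $g_{w_i}$ is defined), together with the tower and Markov properties, I would verify the crucial recursion
$$\E\bigl[\tilde S_j \,\big\vert\, (X_w)_{w \in D_{k+j+1}(\rho')}\bigr] = \tilde S_{j+1}.$$
Moreover, conditioning on $(X_w)_{w \in D_{k+j+1}(\rho')}$ makes the values $X_u$ for $u \in D_{k+j}(\rho')$ mutually independent (each one depends only on its parent), so
$$\var\bigl[\tilde S_j \,\big\vert\, (X_w)_{w\in D_{k+j+1}(\rho')}\bigr] = \sum_{u \in D_{k+j}(\rho')} \var\bigl[(\E_u g_u)(X_u)\,\big\vert\,X_{\fp(u)}\bigr].$$
Applying the conditional variance formula at each layer and telescoping $j$ from $0$ to $\h(\rho')-k-1$ gives
$$\var[\tilde S_0] = \var\bigl[(\E_{\rho'} g)(X_{\rho'})\bigr] + \sum_{\substack{w \in T_{\rho'}\setminus\{\rho'\}\\ \h(w)\ge k}} \E\var\bigl[(\E_w g_w)(X_w)\,\big\vert\,X_{\fp(w)}\bigr],$$
which combined with the first step produces the claimed identity.

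There is no real conceptual obstacle; this is a standard tree ANOVA decomposition. The main thing to be careful about is the notational bookkeeping: checking that $\E_w g_w$ is genuinely a function of $X_w$ alone (immediate from the Markov property, since $g_w$ depends only on $x_{\le w}$), that the additive decomposition of $g_w$ over its children is valid throughout the range of the telescoping, and that each conditional independence statement used is a direct instance of the path-cut formulation of the Markov property stated in Remark~\ref{rem:MarkovProperty}.
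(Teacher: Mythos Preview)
Your proposal is correct and complete. The approach differs from the paper's own proof: the paper expands $\var[g(X)]$ as a double sum of covariances $\E[\tilde g_v \tilde g_{v'}]$, groups the cross terms by the least common ancestor $w=\rho(v,v')$, and then performs an inclusion--exclusion/telescoping on $w$ to reassemble the layer-wise terms. Your route is the more standard tree ANOVA: apply the law of total variance once to split off $\sum_v \E\var[g_v\mid X_v]$, then iterate the conditional variance formula layer by layer on the martingale $\tilde S_j$. Both arguments rely on exactly the same Markov-property cut-set independence, but yours is conceptually cleaner and avoids the explicit covariance bookkeeping; the paper's version has the mild advantage of making the pairwise covariance structure explicit, which they later reuse in Lemma~\ref{lem:deg1T}. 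Either proof is fine here.
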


Our goal is to show that when $d \lambda^2 < 1$, $\var[g(X)]$ is of the same order as $\sum_{v \in D_k(\rho')} \var[g_v(X)]$.

\begin{lemma} \label{lem:deg1T}
Suppose the transition matrix $M$ satisfies $d\lambda^2<1$ and the tree $T$ has growth factor $R$. Then, there exists a constant $C = C(M,\varepsilon) \ge 1$ so that the following holds. 
Let $\rho' \in T$, $ l' := \h(\rho') $, and $k \in [0,l']$. 
Consider a function of the form $g(x) = \sum_{ v \in D_k(\rho')}g_v(x_v)$. Then, 
\begin{align}
	\var[g(X)] 
\le
	C R \sum_{v \in D_{k}(\rho')} \var[g_v(X_v)].
\end{align}
\end{lemma}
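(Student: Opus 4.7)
The plan is to apply the variance decomposition of Lemma~\ref{lem: generalizedDeg1VarDecomposition}, control each summand via the Markov-chain variance decay of Lemma~\ref{lem:Mbasis}, and exploit $d\lambda^2<1$ to collapse the resulting bound to a geometrically convergent series. Without loss of generality I may assume $\E g_v(X_v)=0$ for every $v \in D_k(\rho')$, since centering does not change any variance. Because each $g_v$ depends only on $x_v$, we have $\var[g_v(X)\mid X_v]=0$, so the third sum in the decomposition vanishes identically, and only
\begin{align*}
\var[g(X)] = \var[(\E_{\rho'} g)(X_{\rho'})] + \sum_{\substack{w \in T_{\rho'}\setminus\{\rho'\} \\ \h(w) \ge k}} \E\var\bigl[(\E_w g_w)(X_w)\mid X_{\fp(w)}\bigr]
\end{align*}
survives.

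For each interior $w$ with $\h(w)=h>k$, the law of total variance gives $\E\var[\,\cdot\mid X_{\fp(w)}]\le \var[\,\cdot\,]$, so it suffices to bound $\var[(\E_w g_w)(X_w)]$. Writing $(\E_w g_w)(X_w)=\sum_{v \in D_k(w)}(\E_w g_v)(X_w)$ and applying Cauchy--Schwarz yields
\begin{align*}
\var[(\E_w g_w)(X_w)] \le |D_k(w)| \sum_{v \in D_k(w)} \var[(\E_w g_v)(X_w)].
\end{align*}
The degree-domination hypothesis gives $|D_k(w)|\le Rd^{h-k}$, while Lemma~\ref{lem:Mbasis} applied with $\fp^{h-k}(v)=w$ gives $\var[(\E_w g_v)(X_w)] \le C(h-k)^{2q}\lambda^{2(h-k)}\var[g_v(X_v)]$, so combining,
\begin{align*}
\var[(\E_w g_w)(X_w)] \le CR(h-k)^{2q}(d\lambda^2)^{h-k} \sum_{v \in D_k(w)} \var[g_v(X_v)].
\end{align*}
The boundary case $h=k$ just contributes $\E\var[g_v(X_v)\mid X_{\fp(v)}]\le \var[g_v(X_v)]$, which is already of the desired form.

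I would then exchange the order of summation: each $v \in D_k(\rho')$ lies in $D_k(w)$ exactly when $w$ is an ancestor of $v$ in $T_{\rho'}$, and so $v$ contributes at most one term for every height $h \in \{k,\dots,l'-1\}$. Since $d\lambda^2<1$, the series $\sum_{m\ge 0} m^{2q}(d\lambda^2)^m$ converges to a finite constant $C'=C'(M,\varepsilon)$, which absorbs the $h$-sum and bounds the inner sum by $CC'R\sum_v \var[g_v(X_v)]$. The initial term $\var[(\E_{\rho'}g)(X_{\rho'})]$ is handled by the same two-step Cauchy--Schwarz-plus-decay bound with $w=\rho'$ and height gap $l'-k$, producing an extra factor $R(l'-k)^{2q}(d\lambda^2)^{l'-k}$ that is uniformly bounded in $l'$.

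The key quantitative point — and the only place the subcritical assumption $d\lambda^2<1$ is used — is that the Cauchy--Schwarz loss of $|D_k(w)|\lesssim Rd^{h-k}$ is exactly compensated by the $\lambda^{2(h-k)}$ variance decay along the ancestor chain, leaving a geometric series in $(d\lambda^2)^{h-k}$. I do not anticipate a real obstacle; the only care required is to absorb the polynomial prefactor $(h-k)^{2q}$ coming from the Markov-chain decay into the geometric decay, and to keep track of the single factor of $R$ (rather than a power of it) produced by the uniform bound $|D_k(w)|\le Rd^{h-k}$.
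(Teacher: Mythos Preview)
Your proof is correct but takes a different route from the paper. The paper expands $\var[g(X)]$ directly as a double sum $\sum_{u,u'} \E[\tilde g_u \tilde g_{u'}]$, groups the off-diagonal pairs by the height $s$ of their nearest common ancestor, and bounds each covariance via Lemma~\ref{lem:Mbasis} as $|\E g_u g_{u'}|\le C s^{2q}\lambda^{2s}\sigma_u\sigma_{u'}$; after a Cauchy--Schwarz step on the $\sigma_u\sigma_{u'}$ products and the growth bound $|L_s(v)|\le Rd^s$, the same geometric series $\sum_s s^{2q}(d\lambda^2)^s$ emerges. You instead invoke the already-proven variance decomposition of Lemma~\ref{lem: generalizedDeg1VarDecomposition}, observe that the ``leaf'' term vanishes because each $g_v$ is $\sigma(X_v)$-measurable, and then bound each ancestor contribution $\var[(\E_w g_w)(X_w)]$ by Cauchy--Schwarz plus the Markov decay. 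After summing over heights and using that each $v$ has a unique ancestor at each level, you land on exactly the same series.

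Both approaches are equally short; yours is perhaps a bit cleaner in that it reuses Lemma~\ref{lem: generalizedDeg1VarDecomposition} rather than reproducing the pairwise covariance argument, while the paper's version is more self-contained and makes the role of the common-ancestor distance explicit. Either way the essential mechanism---$Rd^{h-k}$ from Cauchy--Schwarz cancelling against $\lambda^{2(h-k)}$ from the chain decay to give a convergent series under $d\lambda^2<1$---is the same.
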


The opposite bound does not depend on $d \lambda^2 \leq 1$. However, the proof in the general case where $c_M = 0$ is not straight-forward. We state it in full generality but will defer the general proof and prove it here in the simpler case where $c_M > 0$.

\begin{prop} \label{prop: CVbasis4}
There exists a constant $C = C(M,d) \ge 1$ so that the following holds. 
Let $\rho' \in T$, $ l' := \h(\rho') $, and $k \in [0,l']$. 
For any degree-1 funcion $g$ with variables $(x_v\,:\, v \in D_k(\rho'))$. 
There exists functions $g_v(x) = g_v(x_v)$ for $v \in D_k(\rho')$ so that the following holds:
\begin{enumerate}
    \item $g(X) = \sum_{v \in D_k(\rho')} g_v(X_u)$ almost surely. (They may not agree as functions from $[q]^T$ to $\R$.)
    \item For any $u \in T_{\rho'}$ with $ \h(u) \ge k$, 
    \begin{align}
    \label{eq: CVbasis4.00}
        \sum_{v \in D_k(u)} \var[g_v(X_v)] \le CR^3 \var \big[\sum_{v \in D_k(u)}g_v(X_v) \big].
    \end{align}
    In particular, taking $u=\rho'$ we have 
    \begin{align}
        \label{eq: CVbasis4.01}
        \sum_{v \in D_k(\rho')} \var[g_v(X_v)] \le CR^3 \var [g(X)].
    \end{align}
\end{enumerate}
\end{prop}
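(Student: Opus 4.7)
The plan is to realise the $g_v$'s through the canonical mean-zero Efron--Stein decomposition of $g$ and then lower-bound $\var(h_u)$, with $h_u := \sum_{v\in D_k(u)} g_v(X_v)$, by an $L^2$-orthogonal wavelet expansion along the tree. Every degree-$1$ function admits a unique representation $g(x) = c_0 + \sum_v g_v(x_v)$ with $\E g_v(X_v) = 0$, and shifting any $g_v$ by a constant alters neither $\var(g_v(X_v))$ nor $\var(\sum_v g_v(X_v))$; I take these canonical $g_v$'s as the functions in the statement. Expanding in the orthonormal basis $\{\phi_i\}_{i\ge 1}$ from Definition~\ref{defi:FApsi}, write $g_v = \sum_{i \ge 1} c_{v,i}\phi_i$, so that $\var(g_v(X_v)) = \sum_i c_{v,i}^2$ and $h_u = \sum_{v,i} c_{v,i}\phi_i(X_v)$.

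In the reversible (diagonalisable) regime where $M\phi_i = \mu_i\phi_i$ with $|\mu_i| \le \lambda$, I introduce the increments $\varphi_w^{(i)} := \phi_i(X_w) - \mu_i \phi_i(X_{\fp(w)})$ for each $w$ strictly below $u$. The identity $\E[\varphi_w^{(i)} \mid X_{\fp(w)}] = 0$, together with a short case analysis (conditioning on the nearest common ancestor when $w,w'$ are incomparable, and on $X_{w'}$ when $w' > w$), shows that the $\varphi_w^{(i)}$ are pairwise $L^2$-orthogonal across all $(w,i)$ and orthogonal to each $\phi_{i'}(X_u)$, with $\|\varphi_w^{(i)}\|^2 = 1 - \mu_i^2$. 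Iterating $\phi_i(X_v) = \varphi_v^{(i)} + \mu_i \phi_i(X_{\fp(v)})$ along the path from $v$ to $u$ gives the orthogonal decomposition
\begin{align*}
\phi_i(X_v) = \sum_{j=0}^{\h(u)-k-1} \mu_i^{j}\, \varphi_{\fp^j(v)}^{(i)} + \mu_i^{\h(u)-k} \phi_i(X_u).
\end{align*}
Substituting into $h_u$ and regrouping by the wavelet node $w$ (so that $\varphi_w^{(i)}$ carries total coefficient $\mu_i^{\h(w)-k} \sum_{v \in D_k(w)} c_{v,i}$), orthogonality yields
\begin{align*}
\var(h_u) \ge \sum_i (1-\mu_i^2) \sum_{\substack{w \in T_u \\ k \le \h(w) < \h(u)}} \bigg(\sum_{v \in D_k(w)} c_{v,i}\bigg)^2 \mu_i^{2(\h(w)-k)}.
\end{align*}
Retaining only the $\h(w) = k$ terms, where $w$ ranges over $D_k(u)$, $D_k(w) = \{w\}$, and $\mu_i^{2(\h(w)-k)} = 1$, produces $\var(h_u) \ge (1-\lambda^2)\sum_v \var(g_v(X_v))$. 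Since $d\lambda^2 < 1$ forces $1 - \lambda^2 > (d-1)/d$, the claim holds with $C(M,d) = d/(d-1)$ and no $R$-dependence whatsoever in this regime.

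The main obstacle, and the apparent source of the $R^3$ slack in the statement, is extending the construction to a general ergodic $M$ whose basis $\{\phi_i\}$ cannot be chosen as eigenvectors: then $M\phi_i$ couples several $\phi_{i'}$'s and Jordan structure introduces polynomial prefactors $j^{q-1}\lambda^j$ as in Lemma~\ref{lem:Mbasis}, so the wavelet increments at different frequencies mix across generations. My plan would be to pass to a generalised eigenbasis, write the iterated relation for $\phi_i(X_v)$ using generalised eigen-increments with matrix-valued weights $(M^j)_{ii'}$, and control the resulting cross terms through Cauchy--Schwarz combined with the branch-count bound $|L_j(w)| \le R d^j$ and the convergent tail $\sum_j j^{2q} (d\lambda^2)^j \le C(M,d)$. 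The $R$ factors enter through this Cauchy--Schwarz-plus-branch-count step; $R^3$ suffices as a loose safety factor and is absorbed by the recursion in Theorem~\ref{LINthm:inductionSimple} through the $\log R$ loss in the depth parameter $\lAs$.
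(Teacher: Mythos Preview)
Your wavelet argument in the reversible case is correct and clean: with an orthonormal-under-$\pi$ eigenbasis the increments $\varphi_w^{(i)}$ really are orthogonal, and keeping only the bottom layer gives $\var(h_u)\ge(1-\lambda^2)\sum_v\var(g_v)$, which is even sharper than the paper's bound and needs no $R$. (Minor quibble: ``reversible'' is what you need, not merely ``diagonalisable''; a diagonalisable but non-reversible $M$ does not give you an eigenbasis orthonormal under $\pi$, and without that the cross-frequency terms at the same node don't vanish.)

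The genuine gap is in the non-reversible sketch, and it is not a matter of controlling cross terms more carefully. The canonical mean-zero Efron--Stein decomposition can \emph{fail the inequality outright}: take the irreducible aperiodic chain on $[4]$ with $1\to2\to3\to4$ deterministically and $4\to\{1,4\}$ uniformly. The function $\mathbf 1_{\{1,4\}}(X_u)$ is $\sigma(X_{\fp(u)})$-measurable, so for two siblings $u,v$ one has $g_u+g_v\equiv 0$ almost surely with $g_u=\mathbf 1_{\{1,4\}}-\tfrac35$, $g_v=-g_u$; hence $\var(h_{\rho'})=0$ while $\sum_v\var(g_v)>0$. No Cauchy--Schwarz or Jordan-block bookkeeping can rescue an inequality of the form $0\ge c\cdot(\text{positive})-(\text{error})$. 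This is exactly why the proposition says ``there \emph{exists} functions $g_v$'' and adds the parenthetical that they need only agree with $g$ almost surely: the freedom to choose a non-canonical decomposition is essential, not cosmetic.

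The paper's proof exploits precisely this freedom. It builds a special basis $\{\xi_\sw\}$ indexed by a chain of partitions $\bP^{r,0}$ of $[q]$ that records, for each function, at which ancestor level it becomes $\sigma(X_{\fp^r(u)})$-measurable; then it \emph{averages} the coefficients of $g$ in this basis across all leaves sharing the relevant ancestor, producing new $g_v$'s that agree with $g$ almost surely but for which the degeneracy above cannot occur. Your wavelet idea is the right picture when no such degeneracies exist; to make it work in general you would first have to quotient out the ``lifts from the parent'' --- and once you do that, you are essentially rebuilding the paper's partition machinery.
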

We postpone the proof of Proposition in full generality in Appendix \ref{sec: deg1Var},
due to the technical complexity of the proof and the fact that the proof is about properties of a Markov Chain. Instead, a statement of the proposition and its proof in the case where $c_M>0$ is provided in this section.

Now, the purpose of this section is twofold. 
\begin{itemize}
    \item First, it is the derivation of the variance related estimates: Lemma \ref{lem: generalizedDeg1VarDecomposition}, Lemma \ref{lem:deg1T}, and Proposition \ref{prop: CVbasis4} with the additional assumption that $c_M>0$. Additionally, we summarise the estimates into a single statements, as stated in Lemma \ref{lem: f_kSubstitute}.
    
    \item Second, it is the derivation of the base case of the induction, Proposition \ref{prop: inductionBaseSimple}. 
\end{itemize}

\subsection{Variance Decomposition and Estimates}

Before we proceed to the proof of Lemma \ref{lem: generalizedDeg1VarDecomposition}, let us remark on the following consequence of the lemma. 
\begin{rem} \label{rem: generalizedDeg1VarDecomposition}
   For any $g$ described in Lemma \ref{lem: generalizedDeg1VarDecomposition},
   if we define $h(x) := (\E_kg)(x)$ and $h_v(x):= (\E_vg_v)(x_v)$, 
   then by applying the lemma to $g$ and to $h$, we conclude that 
   \begin{align*}
        \var[g(X)] 
    =  
        \var[ (\E_k g)(X)] 
    +   \sum_{v \in D_k(\rho')}
            \E \var \big[  g_v(X) \, \big\vert \,  X_{v}\big].
   \end{align*}
\end{rem}

\begin{proof} [Proof of Lemma \ref{lem: generalizedDeg1VarDecomposition} ]
    First, for $u \in T_{\rho'}\backslash \{\rho'\}$ with $\h(u)\ge k$, 
    $$
        \tilde g_v(x) := g_v(x) - \mathbb{E}g_v(X). 
    $$
    
    Notice the following holds: 
    $$
        \tilde g_u(x) = \sum_{v \in D_k(u)} \tilde g_v(x). 
    $$ 

    Let us start decomposing the variance of $g$. 
    \begin{align*}
       \var [g(X)] 
    = & 
        \sum_{v,v' \in D_k(\rho')} \E \big[\tilde g_v (X) \tilde g_{v'} (X) \big] \\
    =&  \sum_{w \in T_{\rho'}: \h(w)> k} \sum_{v,v' \in D_k(\rho')\,:\, \rho(v,v')=w} \E \big[\tilde g_v (X) \tilde g_{v'} (X) \big] 
        + \sum_{v \in D_k(\rho')} \E \big[ \tilde g_v^2(X)\big] \\
    =&  
        \sum_{w \in T_{\rho'}: \h(w)> k} \sum_{v,v' \in D_k(\rho')\,:\, \rho(v,v')=w} \E \big[ (\E_w \tilde g_v \tilde g_{v'})(X_w) \big]
        + \sum_{v \in D_k(\rho')} \E \big[ \tilde g_v^2(X)\big] \\
    =&  \sum_{w \in T_{\rho'}: \h(w)> k} 
        \Big( 
            \sum_{v,v' \in D_k(w)} \E \big[(\E_w \tilde g_v \tilde g_{v'})(X_w) \big]
        -
            \sum_{v,v'  \in D_k(w)\,:\, \rho(v,v')<w} \E \big[(\E_w \tilde g_v \tilde g_{v'})(X_w) \big]
        \Big) \\
    & + \sum_{v \in D_k(\rho')} \E \big[ \tilde g_v^2(X)\big]. 
    \end{align*}

    Notice that for $w \in T_{\rho'}$ with $\h(w)>k$,  
    $$
        \sum_{v,v' \in D_k(w)} \E (\E_w \tilde g_v \tilde g_{v'})(X_w)  = \E \big[(\E_w\tilde g_w)^2 (X)\big]
    $$ 
     and  
    \begin{align*}
        \sum_{v,v'  \in D_k(w)\,:\, \rho(v,v')<w}
        \E (\E_w \tilde g_v \tilde g_{v'})(X_w) 
    =& 
        \sum_{w' \in \fc(w)} 
        \sum_{v,v'  \in D_k(w')}
        \E (\E_w \tilde g_v \tilde g_{v'})(X_w) \\
    =&  
        \sum_{w' \in \fc(w)} \E \big[ (\E_{w}\tilde g_{w'})^2(X)\big].
    \end{align*}
    Hence, 
    \begin{align*}
       & \var [g(X)]  \\
    = & 
        \sum_{w \in T_{\rho'}: \h(w)> k} \Big( 
            \E \big[(\E_w\tilde g_w)^2 (X)\big]
         - \sum_{w' \in \fc(w)} \E \big[ (\E_{w}\tilde g_{w'})^2(X)\big] \Big)
         + \sum_{v \in D_k(\rho')} \E \big[ \tilde g_v^2(X)\big] \\
    = &
        \sum_{w \in T_{\rho'}: \h(w)> k} 
            \E \big[(\E_w\tilde g_w)^2 (X)\big]
    -
        \sum_{w' \in T_{\rho'} \backslash \{\rho'\} : \h(w') \ge k}
            \E \big[(\E_{\fp(w)}\tilde g_w)^2 (X)\big] 
    + \sum_{v \in D_k(\rho')} \E \big[ \tilde g_v^2(X)\big] \\
    = &
        \E \big[(\E_{\rho'} \tilde g_{\rho'})^2 (X)\big]
    +   \sum_{w \in T_{\rho'} \backslash \{\rho'\} : \h(w) > k}
            \E \big[ (\E_w \tilde g_w)^2(X) -  (\E_{\fp(w)}\tilde g_w)^2 (X)\big]  \\
    & + \sum_{v \in D_k(\rho')} \Big( 
        \E \big[ \tilde g_v^2(X)\big] 
        \underbrace{- \E \big[ (\E_v \tilde g_v)^2(X) \big] + \E \big[(\E_v \tilde g_v)^2(X)\big]}_{=0} - \E (\E_{\fp(v)} \tilde g_v)^2(X)
        \Big) \\
    = & \E \big[(\E_{\rho'} \tilde g_{\rho'})^2 (X)\big]
    +   \sum_{w \in T_{\rho'} \backslash \{\rho'\} : \h(w) \ge k}
            \E \var \big[ (\E_w g_w)(X_w) \, \big\vert \,  X_{\fp(w)}\big]
    +  \sum_{v \in D_k(\rho')}
            \E \var \big[  g_v(X) \, \big\vert \,  X_{v}\big].
    \end{align*}
\end{proof}

Next, let us show the proof of Lemma \ref{lem:deg1T}. 
This follows the standard second moment calculation for the tree model where it is shown that covariance terms decay exponentially in the distance between the corresponding function on the tree. 

\begin{proof} [Proof of Lemma \ref{lem:deg1T}]
Let $C_0 = C_0(M,d)$ denote the constant introduced in the statement of the Lemma. Its precise value will be determined along the proof. Without lose of generality, we may assume both $\E g(X)=0$ and $ \E g_v(X_v) = 0$ for $v \in D_k(\rho')$, and the variance of each function is simply its the second moment.

For brevity, let $\sigma:= (\mathbb{E} (g_u(X))^2)^{1/2}$ for $u \in D_{k}(\rho')$. 
By \eqref{eq: MCvarDecay} from Lemma \ref{lem:Mbasis}, 
for $s \in [l'-k]$, 
\begin{align}
\E \big[(\E_{\frak p^s(u)}g_u)^2(X_{\frak p^s(u)}) \big]
\le
	C_{\ref{lem:Mbasis}} s^{2q}\lambda^{2s} \sigma_u^2,
\end{align}
where $C_{\ref{lem:Mbasis}}\ge 1$ is the constant introduced in the Lemma. 

In particular, if $\rho(u,u')=  \frak p^s(u)$ for $u,u' \in D_{k}(\rho')$, then 
\begin{align*}
	|\mathbb{E} g_u(X) g_{u'}(X)| 	
=&
    \big| \E \big[(\E_{\frak p^s(u)}g_ug_{u'})(X_{\frak p^s(u)}) \big]\big| \\
\le&
 \big(\E \big[(\E_{\frak p^s(u)}g_u)^2(X_{\frak p^s(u)}) \big]\big)^{1/2}
 \cdot 
\big(\E \big[(\E_{\frak p^s(u)}g_{u'})^2(X_{\frak p^s(u)}) \big]\big)^{1/2} \\
\le&
	C_{\ref{lem:Mbasis}} s^{2q} \lambda^{2s} \sigma_u \sigma_{u'}. 
\end{align*}

For each $s \in [l'-k]$ and $v \in D_{k+s}(\rho')$, 
let $\sigma_{v}:= \sum_{u \in L_{s}(v)} \sigma_u$. 
Then, 
\begin{align*}
  \mathbb{E} (g(X))^2		
\le &  
    \sum_{u,u' \in D_{k}(\rho')}
    |\mathbb{E} g_u(X)g_{u'}(X)|\\
=&
    \sum_{s \in [l'-k]} \sum_{v \in D_{k+s}(\rho')} 
    \sum_{u,u'}C_{\ref{lem:Mbasis}} s^{2q} \lambda^{2s}\sigma_u\sigma_{u'} ,
\end{align*}
where the sum $\sum_{u,u'}$ is taken over all ordered pairs $(u,u')$ 
with $u,u' \in L_{l'-k+s}(v)$ and the nearest common ancestor of $u$ and $u'$ is $v$. 
By relaxing the constraint of the summation we have 
\begin{align*}
    \mathbb{E} (g(X))^2		
\le &  
 \sum_{s \in [l'-k]} \sum_{v \in L_{l'-k+s}(\rho')} 
    \sum_{u,u'\in L_{l'-k+s}(v)}C s^{2q} \lambda^{2s}\sigma_u\sigma_{u'}  \\
=&
    \sum_{s \in [l'-k]} \sum_{v \in L_{l'-k+s}(\rho')} 
    C_{\ref{lem:Mbasis}} s^{2q} \lambda^{2s}
    \Big(\sum_{u \in L_{l'-k+s}(v)}\sigma_u\Big)^2 \\
\le &
    \sum_{s \in [l'-k]} \sum_{v \in L_{l'-k+s}(\rho')} 
    C_{\ref{lem:Mbasis}} s^{2q} \lambda^{2s}Rd^s\sum_{u \in L_{l'-k+s}(v)}\sigma_u^2\\
= &
\Big(\sum_{s \in [l'-k]} C_{\ref{lem:Mbasis}} s^{2q} \lambda^{2s} Rd^s\Big)
 \sum_{u \in D_{k}(\rho')} \sigma_u^2. 
\end{align*}

Next, 
\begin{align*}
    \sum_{s =1}^\infty C_{\ref{lem:Mbasis}} s^{2q} \lambda^{2s}Rd^s
\le 
    \sum_{s =1}^\infty R C_{\ref{lem:Mbasis}} s^{2q} \exp(-1.1\varepsilon s)
:= 
   C_0R .
\end{align*}
Hence, $C_0$ depends on $\varepsilon$, $q$, and $C_{\ref{lem:Mbasis}}$. It is a constant which is determined by $M$ and $\varepsilon$.  

\end{proof}

Let us formulate Proposition \ref{prop: CVbasis4} under the additional assumption that $c_M = \min_{i,j\in[q]} M_{ij}>0$. 
Indeed, in this case, the bound does not depend on $R$.  

\begin{prop}    \label{prop: CVbasisSimple}
    Suppose the transition matrix $M$ satisfies $c_M>0$.
    There exists a constant $C = C(M,\varepsilon) \ge 1$ so that the following holds:  
    Let $\rho' \in T$, $ l' := \h(\rho') $, and $k \in [0,l']$. 
    For any function  
$g = [q]^T \to \R$ of the form 
$$
    g(x) = \sum_{v \in D_k(\rho')} g_v(x_v).
$$
The following holds:
   For any $u \in T_{\rho'}$ with $ \h(u) \ge k$, 
        \begin{align}
        \label{eq: CVbasis4.00}
            \sum_{v \in D_k(u)} \var[g_v(X_v)] \le C \var \big[\sum_{v \in D_k(u)}g_v(X_v) \big].
        \end{align}
        In particular, taking $u=\rho'$ we have 
        \begin{align}
            \label{eq: CVbasis4.01}
            \sum_{v \in D_k(\rho')} \var[g_v(X_v)] \le C \var [g(X)].
        \end{align}
\end{prop}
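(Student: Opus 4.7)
The plan is to apply Lemma~\ref{lem: generalizedDeg1VarDecomposition} with $\rho'$ replaced by $u$. Two simplifications occur because each $g_v$ depends only on $x_v$: the residual terms $\E\,\var[g_v(X)\,|\,X_v]$ vanish, and for any $w \in D_k(u)$ with $\h(w)=k$ we have $(\E_w g_w)(X_w) = g_w(X_w)$. Discarding the non-negative ``root'' term $\var[(\E_u g)(X_u)-\E g(X)]$ as well as the non-negative contributions coming from $w \in T_u$ with $\h(w) > k$, one is reduced to the key inequality
\begin{align*}
    \var\Big[\sum_{v\in D_k(u)} g_v(X_v)\Big]
    \;\ge\; \sum_{v \in D_k(u)} \E\,\var\big[g_v(X_v)\,\big|\,X_{\fp(v)}\big].
\end{align*}
In the degenerate case $\h(u)=k$, one has $D_k(u)=\{u\}$ and both sides of the desired bound equal $\var[g_u(X_u)]$, so I may assume $\h(u)>k$ and in particular that $\fp(v)$ exists for every $v \in D_k(u)$.

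The remaining step is a termwise comparison of each conditional variance with its unconditional counterpart, and this is where the assumption $c_M>0$ is used. I will invoke the standard identity $\var[Y] = \tfrac12 \sum_{i,j} p_i p_j (Y_i-Y_j)^2$ valid for any real-valued random variable $Y$ with law $p$ on $[q]$. Because $M_{\theta i}\ge c_M$ for every $\theta, i \in [q]$, for each fixed $\theta$ we have
\begin{align*}
    \var\big[g_v(X_v)\,\big|\,X_{\fp(v)}=\theta\big]
    \;=\; \tfrac12\sum_{i,j}M_{\theta i}M_{\theta j}\big(g_v(i)-g_v(j)\big)^2
    \;\ge\; \tfrac{c_M^2}{2}\sum_{i,j}\big(g_v(i)-g_v(j)\big)^2,
\end{align*}
whereas $\var[g_v(X_v)] = \tfrac12\sum_{i,j}\pi_i\pi_j(g_v(i)-g_v(j))^2 \le \tfrac12\sum_{i,j}(g_v(i)-g_v(j))^2$ since $\pi_i,\pi_j\le 1$. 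Taking expectations over $X_{\fp(v)}\sim \pi$ yields $\E\,\var[g_v(X_v)\,|\,X_{\fp(v)}] \ge c_M^2\,\var[g_v(X_v)]$.

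Combining the two displays produces the claim with $C = c_M^{-2}$, a constant depending only on $M$. I do not anticipate any real obstacle: the essence of the argument is that when $c_M>0$ the one-step distribution from every state is uniformly positive on $[q]$, so the local conditional variance cannot degenerate, and this is precisely what provides a reverse bound to the covariance estimate used in Lemma~\ref{lem:deg1T}, with a constant that is independent of the growth parameter $R$.
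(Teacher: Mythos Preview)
Your proof is correct and follows essentially the same route as the paper: apply Lemma~\ref{lem: generalizedDeg1VarDecomposition} (with $u$ in place of $\rho'$), discard all but the bottom-layer conditional-variance terms, and then use $c_M>0$ to compare $\E\var[g_v(X_v)\mid X_{\fp(v)}]$ with $\var[g_v(X_v)]$. The only difference is cosmetic: the paper packages the last step as Lemma~\ref{lem: MCvarDecaySimple}, proved via a max--min argument yielding the constant $4/c_M$, whereas you derive it directly from the identity $\var[Y]=\tfrac12\sum_{i,j}p_ip_j(Y_i-Y_j)^2$, giving $C=c_M^{-2}$.
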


The proof of the Proposition relies on the following immediate consequence of $c_M>0$:
   
\begin{lemma} \label{lem: MCvarDecaySimple}
    Suppose $M$ is a $q\times q$ ergodic transition matrix with $c_M = \min_{i,j\in[q]} M_{ij} >0$. 
    There exists $C=C(M)\ge 1$ so that the following holds. 
    For any $u \in T\backslash \{\rho\}$ and a function $h(x)=h(x_u)$,  
    \begin{align*}
        \E\var [h(X_u)\,\vert\, X_{\fp(u)} ] 
    \ge 
        \frac{1}{C(M)} \var[h(X_u)].
    \end{align*} 
\end{lemma}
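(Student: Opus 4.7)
The plan is to reduce the claim to a one-step statement about the Markov chain $M$ itself. Since $h$ depends only on $x_u$, the only relevant randomness is the joint law of $(X_{\fp(u)}, X_u)$, which is $X_{\fp(u)} \sim \pi$ and $X_u \mid X_{\fp(u)} = y \sim M_{y\cdot}$. Setting $Y = X_{\fp(u)}$ and $Z = X_u$, the lemma reduces to showing $\E \var[h(Z) \mid Y] \ge c_M^2 \var[h(Z)]$, independently of $u$.

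The key step is to write both the total variance and the conditional variance in the standard ``i.i.d.\ copy'' form $\var[W] = \tfrac{1}{2}\E[(W - W')^2]$. This gives
\begin{align*}
    \var[h(Z)]
&= \tfrac{1}{2}\sum_{j,j' \in [q]} \pi(j)\pi(j')\,(h(j) - h(j'))^2, \\
    \var[h(Z) \mid Y = y]
&= \tfrac{1}{2}\sum_{j,j' \in [q]} M_{yj} M_{yj'}\,(h(j) - h(j'))^2.
\end{align*}
Averaging the second identity against $\pi$ in $y$ and using the pointwise bound $M_{yj} M_{yj'} \ge c_M^2$ (valid for all $y,j,j'$ by the assumption $c_M > 0$), I get
\begin{align*}
    \E \var[h(Z) \mid Y]
\;\ge\; \frac{c_M^2}{2} \sum_{j,j' \in [q]} (h(j) - h(j'))^2.
\end{align*}
On the other hand, the expression for $\var[h(Z)]$ is bounded above by the same unweighted double sum (since $\pi(j)\pi(j') \le 1$), so the two displays combine to give $\E \var[h(Z) \mid Y] \ge c_M^2\, \var[h(Z)]$. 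Thus $C(M) = c_M^{-2}$ works.

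I do not expect any real obstacle here: the strict positivity $c_M > 0$ trivializes what is, for general ergodic chains, a more delicate contraction/Dobrushin argument. The ``i.i.d.\ copy'' rewriting is what converts the lower bound on transition entries into a multiplicative lower bound on conditional variance in a single line, without ever needing to invoke spectral gap or the finer bounds of Lemma~\ref{lem:Mbasis}.
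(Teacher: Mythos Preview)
Your proof is correct. The i.i.d.\ copy identity $\var[W]=\tfrac12\E[(W-W')^2]$ reduces both sides to nonnegative double sums over $j,j'$, and then the pointwise bounds $M_{yj}M_{yj'}\ge c_M^2$ and $\pi(j)\pi(j')\le 1$ sandwich both expressions against the unweighted sum $\tfrac12\sum_{j,j'}(h(j)-h(j'))^2$, giving $C(M)=c_M^{-2}$.

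The paper argues differently: it bounds $\var[h(X_u)]$ from above by the squared range $(h(\theta_2)-h(\theta_1))^2$, then for each value $\beta$ of $X_{\fp(u)}$ notes that the conditional mean is at distance $\ge \tfrac12|h(\theta_2)-h(\theta_1)|$ from one of the extreme values $h(\theta_i)$, and uses $\Prob\{X_u=\theta_i\mid X_{\fp(u)}=\beta\}\ge c_M$ to lower bound the conditional variance by $\tfrac{c_M}{4}(h(\theta_2)-h(\theta_1))^2$. This yields $C(M)=4/c_M$, linear in $c_M^{-1}$ rather than your quadratic $c_M^{-2}$; since the lemma only asks for existence, this difference is immaterial. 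Your double-sum argument is cleaner and coordinate-free, while the paper's range argument is more bare-hands but gives a sharper dependence on $c_M$.
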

For completeness, we provide the proof in the Appendix.

\begin{proof}[ Proof of Proposition \ref{prop: CVbasisSimple}]

We adapt the notation from Lemma \ref{lem: generalizedDeg1VarDecomposition}. For $w \le \rho'$ with $\h(w)> k$, let
$$
    g_w(x) := \sum_{u \in D_k(w)} g_u(x).
$$
Now, we apply Lemma \ref{lem: generalizedDeg1VarDecomposition} and Lemma \ref{lem: MCvarDecaySimple} to get 
\begin{align*}
    \var[g(X)]
= &
    \var\big[(\E_{\rho'} g_{\rho'}) (X_{\rho'})\big]
    +   \sum_{w \in T_{\rho'} \backslash \{\rho'\} : \h(w) \ge k}
            \E \var \big[ (\E_w g_w)(X_w) \, \big\vert \,  X_{\fp(w)}\big] \\
    &+  \sum_{v \in D_k(\rho')}
            \E \var \big[  g_v(X) \, \big\vert \,  X_{v}\big] \\
\ge &
    \sum_{u \in D_k(\rho')} \E \var \big[ (\E_u g_u)(X_u) \, \big\vert \,  X_{\fp(u)}\big]  \\
\ge & 
    \frac{1}{C_{\ref{lem: MCvarDecaySimple}}}  \sum_{u \in D_k(\rho')} \var[ g_u(X_u)],
\end{align*}
where we used the fact that all the terms are non-negative, the first inequality is obtained by looking at the second terms for the summands with $h(w) = k$ and 
$ C_{\ref{lem: MCvarDecaySimple}} $ is the $M$-dependent constant introduced in Lemma \ref{lem: MCvarDecaySimple}. 

\end{proof}

\begin{lemma} \label{lem: f_kSubstitute}
Suppose $d \lambda^2 < 1$ and the growth factor is at most $R$. 
There exists a constant $C=C(M,d)\ge 1$ so that the following holds. 
Fix $\rho' \in T$ and $0 \le m \le \h(\rho')$, 
if $f_m(x)$ is a function in the form 
$$
    f_m(x) = \sum_{v \in D_m(\rho')} f_v(x_{\le v}). 
$$
with 
$$
    \E f_m(X)=0. 
$$
Then, there exists $\bar f_v(x_{\le v})$ for $v \in D_m(\rho')$ such that 
their sum $ \bar f_m(x) = \sum_{v \in D_m(\rho')} \bar{f}_v(x_{\le v})$ satisfies 
$$
    \bar f_m(X)  =
    f_m(X)   \mbox{ almost surely,}
$$
and for $u \le \rho'$ with $\h(u) \ge m$, 
    \begin{align}
    \label{eq: f_ksusbstitute}
        \frac{1}{CR^3} \sum_{v \in D_m(u)} \E \bar f^2_v(X) 
    \le 
        \E \Big(\sum_{v \in D_m(u)} \bar f_v(X) \Big)^2 
    \le
        CR \sum_{v \in D_m(u)} \E \bar f^2_v(X) .
    \end{align}
\end{lemma}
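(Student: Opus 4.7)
The plan is to reduce the lemma to the degree-1 results already established, namely Lemma~\ref{lem:deg1T} and Proposition~\ref{prop: CVbasis4}, by splitting each $f_v$ canonically into its projection onto $X_v$ and a subtree-residual. Specifically, set
$$ h_v(x_v) := (\E_v f_v)(x_v), \qquad r_v(x_{\le v}) := f_v(x_{\le v}) - h_v(x_v). $$
By the \MP, $h_v$ is a function of $x_v$ alone, and by construction $\E[r_v(X) \mid X_v] = 0$ almost surely. The crucial orthogonality claim is that for $v \neq v'$ in $D_m(\rho')$ we have both $\E[r_v(X) r_{v'}(X)] = 0$ and $\E[h_v(X_v) r_{v'}(X)] = 0$. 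Both follow by conditioning on $X_{\rho(v,v')}$ (respectively on $(X_v,X_{v'})$), using the \MP{} to decouple the disjoint subtrees $T_v$ and $T_{v'}$, and then invoking $\E[r_{v'}\mid X_{v'}]=0$.

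Next, apply Proposition~\ref{prop: CVbasis4} to the degree-1 polynomial $g(x) := \sum_{v \in D_m(\rho')} h_v(x_v)$ to obtain functions $\bar h_v(x_v)$ with $\sum_v \bar h_v(X_v) = g(X)$ almost surely and the lower bound \eqref{eq: CVbasis4.00} at every $u \le \rho'$ with $\h(u) \ge m$. Recenter by setting $\tilde h_v(x_v) := \bar h_v(x_v) - \E \bar h_v(X_v)$; the almost-sure identity $\sum_v \tilde h_v(X_v) = g(X)$ is preserved because $\sum_v \E\bar h_v(X_v) = \E g(X) = \E f_m(X) = 0$, and variances are unaffected. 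Define
$$ \bar f_v(x_{\le v}) := \tilde h_v(x_v) + r_v(x_{\le v}), $$
which depends only on $x_{\le v}$, is mean zero, and satisfies $\sum_v \bar f_v(X) = f_m(X)$ a.s.

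Exploiting the two orthogonalities above together with the pairwise orthogonality of the $r_v$'s, for any fixed $u \le \rho'$ with $\h(u) \ge m$ one obtains
$$ \E\Bigl(\sum_{v \in D_m(u)} \bar f_v(X)\Bigr)^2 = \var\Bigl(\sum_{v \in D_m(u)} \tilde h_v(X_v)\Bigr) + \sum_{v \in D_m(u)} \E r_v^2(X), $$
and analogously $\E \bar f_v^2(X) = \E \tilde h_v^2(X_v) + \E r_v^2(X)$. The upper bound in \eqref{eq: f_ksusbstitute} follows by applying Lemma~\ref{lem:deg1T} to $\sum_v \tilde h_v$ to dominate the first summand by $C_1 R \sum \E \tilde h_v^2$, and absorbing the $r$-part using $\sum \E r_v^2 \le C_1 R \sum \E r_v^2$. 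The lower bound follows symmetrically from Proposition~\ref{prop: CVbasis4} applied to $\sum_v \tilde h_v$, together with the trivial $\sum \E r_v^2 \ge (CR^3)^{-1}\sum \E r_v^2$.

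The main obstacle I anticipate is making the two Markov-property orthogonalities fully rigorous: each requires choosing the right conditioning set (for the $r$-$r$ pairing, conditioning on $X_{\rho(v,v')}$; for the $h$-$r$ pairing, conditioning on $(X_v,X_{v'})$) and carefully invoking conditional independence of the sibling subtrees. Once these orthogonalities are in hand, the rest is bookkeeping that cleanly combines Lemma~\ref{lem:deg1T} and Proposition~\ref{prop: CVbasis4} with the elementary centering of the $\bar h_v$'s.
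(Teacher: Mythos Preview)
Your proposal is correct and is essentially the paper's proof: your $\bar f_v = \tilde h_v + r_v$ is exactly the paper's construction $\bar f_v = f_v - (\E_v f_v)(x_v) + h_v(x_v)$, and the two orthogonalities you isolate are precisely the content of Remark~\ref{rem: generalizedDeg1VarDecomposition} (a corollary of Lemma~\ref{lem: generalizedDeg1VarDecomposition}) which the paper invokes directly. The Markov-property orthogonalities you flag as the ``main obstacle'' are routine: for any mean-zero function $a(X_v)$ of $X_v$ alone and any $v'\in D_m(\rho')$, conditioning on $X_{v'}$ and using that $v'$ separates $T_{v'}\setminus\{v'\}$ from the rest of the tree gives $\E[a(X_v)\,r_{v'}(X)] = \E\big[a(X_v)\,\E[r_{v'}\mid X_{v'}]\big]=0$, and similarly $\E[r_v r_{v'}]=0$ for $v\neq v'$ by conditioning on $(X_v,X_{v'})$.
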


The statement of the lemma using $\bar{f}_m$ and 
$\bar{f}_b$ covers also the case $c_M = 0$. We will prove the Lemma by using either Proposition \ref{prop: CVbasis4} or Proposition \ref{prop: CVbasisSimple} with the assumption $c_M>0$. In the later case, it suffice to simply take $f_{v}(x_{\le v}) = \bar f_v(x_{\le v})$.

\begin{rem}
Note that the lemma implies the following:
 For any $u \le \rho'$ with $\h(u)\ge m$, 
 let 
 \begin{align}
    \label{eq: deffmu}
       \bar f_{m,u}(x) := \sum_{v \in D_m(u)} \bar f_v(x). 
 \end{align}
  Then, for any given $m \le k<k' \le \h(\rho')$ and $u \in D_{k'}(\rho')$, 
 we have 
 $$
    \E \bar f_{m,u}^2(X) 
\le 
    C R  \sum_{v \in D_m(u)} \E \bar f^2_v(X) 
= 
     C R  \sum_{w \in D_k(u)} \sum_{v \in D_m(w)} \E \bar f^2_v(X) 
\le 
    C^2R^4 \sum_{w \in D_k(u)} \bar f_{m,w}^2(X),
$$
where the first inequality follows from the second inequality of \eqref{eq: f_ksusbstitute} and the second inequality follows from the first inequality of \eqref{eq: f_ksusbstitute}.

\end{rem}

\begin{proof}[Proof of Lemma \ref{lem: f_kSubstitute}]
 Let
$$
    h(x):= (\E_mf_{m})(x) = \sum_{v\in D_{m}(\rho')} (\E_v f_v)(x_v). 
$$
Note that $h$ 
is a degree one function of the variables $(x_v\,:\, v\in D_m(\rho'))$. Thus, we could apply 
Proposition \ref{prop: CVbasis4} to show the existence of 1-variable functions $h_v(x_v)$ 
for $v \in D_m(\rho')$ such that 
\begin{align} 
\label{eq: f_kSubstitute00}
    h(X) = \sum_{v \in D_m(\rho')} h_v(X_v) \mbox{\,\,\,\, almost surely }
\end{align}
and for any $ u \in T(\rho')$ with $\h(u) \ge m$, 
\begin{align}
\label{eq: f_kSubstitute01}
    \sum_{v \in D_m(u)} \var [ h_v(X_v) ] 
\le 
    C_{\ref{prop: CVbasis4}} R^3 
    \var \big[ 
        \sum_{v \in D_m(u)}h_v(X_v)
    \big],
\end{align}
where $C_{\ref{prop: CVbasis4}} \ge 1$ is the constant introduced in Proposition \ref{prop: CVbasis4}. 

Since $\E h(X) = \E (\E_mf_{m})(X) = 0$, we may also assume that 
\begin{align*}
 \E h_v(X) =0   
\end{align*}
for $v \in D_m(\rho')$, as a constant shift of the functions will not affect \eqref{eq: f_kSubstitute00} and \eqref{eq: f_kSubstitute01}. Now, consider the following functions: For $v \in D_m(\rho')$, let 
\begin{align*}
    \bar f_v(x_{\le v}) = f_v(x_{\le v}) - \E f_v(X_{\le v}) + h_v(x_v)
\end{align*}
and 
\begin{align*}
    \bar f_m(x) = \sum_{ v \in D_m(\rho')} \bar f_v(x).  
\end{align*}

First, since $\bar f_v(x_{\le v})$ is defined as the sum of three terms with mean $0$,  
we have $\E \bar f_v(X_{\le v}) =0$. 

Second, 
\begin{align*}
    \bar f_m(X) 
=&  
    \sum_{v \in D_m(\rho')} \Big( f_v(X_{\le v}) - (\E_v f_v)(X_v) + h_v(X_v) \Big) \\
=&
    f_m(X) - h(X) + \sum_{v \in D_m(\rho')} h_v(X_v) \\
\underset{\mbox{a.s.}}{=} &
    f_m(X). 
\end{align*}

By Remark \ref{rem: generalizedDeg1VarDecomposition}, 
\begin{align}
\nonumber
    \var \big[\sum_{v \in D_m(u)} \bar f_v(X) \big]
=&
    \var \Big[ \Big(\E_m \sum_{v \in D_m(u)} \bar f_v \big)(X) \Big]
+
    \sum_{v \in D_m(u)} \E \var[ \bar f_v(X) \, \vert\, X_v] \\
\nonumber
=&
    \var \Big[ \sum_{v \in D_m(u)} (\E_v\bar f_v) (X) \Big]
+
    \sum_{v \in D_m(u)} \E \var[  f_v(X) - (\E_vf_v)(X_v)+ h_v(X_v) \, \vert\, X_v] \\
\label{eq: f_kSubstitute02}
=&
    \var \Big[  \sum_{v \in D_m(u)} h_v(X) \Big]
+
    \sum_{v \in D_m(u)} \E \var[ f_v(X)\, \vert\, X_v] 
\end{align}
To estimate the lower bound, 
we rely on own choice of $h_v$. By \eqref{eq: f_kSubstitute01} we have 
\begin{align*}
\eqref{eq: f_kSubstitute02}
\ge & 
    \frac{1}{C_{\ref{prop: CVbasis4}} R^3}\sum_{v \in D_m(u)} \var \big[   h_v(X) \big]
+
    \sum_{v \in D_m(u)} \E \var[ f_v(X)\, \vert\, X_v]  \\
\ge & 
    \frac{1}{C_{\ref{prop: CVbasis4}} R^3}\sum_{v \in D_m(u)}
    \Big(
        \var \big[   h_v(X) \big] + \E \var[ f_v(X)\, \vert\, X_v] 
    \Big) \\
= & 
    \frac{1}{C_{\ref{prop: CVbasis4}} R^3}\sum_{v \in D_m(u)}
    \Big(
        \var \big[  (\E_v\bar f_v) (X_v) \big] + \E \var[ \bar f_v(X) \, \vert\, X_v] 
    \Big) \\
= & 
    \frac{1}{C_{\ref{prop: CVbasis4}} R^3}\sum_{v \in D_m(u)}
    \var[ \bar f_v(X)]. 
\end{align*}
As for the upper bound, we can apply Lemma \ref{lem:deg1T} 
and repeat the same derivation as above to get 
\begin{align*}
\eqref{eq: f_kSubstitute02}
\le & 
    C_{\ref{lem:deg1T}}R \sum_{v \in D_m(u)} \var \big[   h_v(X) \big]
+
    \sum_{v \in D_m(u)} \E \var[ f_v(X)\, \vert\, X_v]  \\
\le& 
    C_{\ref{lem:deg1T}} R \sum_{v \in D_m(u)}
    \var[ \bar f_v(X)]. 
\end{align*}

Therefore, by taking the constant $C$ stated in the lemma to be the maximum of 
$C_{\ref{prop: CVbasis4}}$ and $C_{\ref{lem:deg1T}}$, the proof follows.

\end{proof}

\subsection{Proof of the base Case of Proposition \ref{prop: inductionBaseSimple}}

We now prove the base case of 
Proposition \ref{prop: inductionBaseSimple}: 
\begin{lemma} \label{lem:deg1}
There exists a constant $C = C(M,d)\ge 1$ so that the following holds. 
For any degree $1$ function $f$ with variables $(x_u\,:u \in D_k(\rho'))$ for some $\rho' \in T$ with $k \le \h(\rho')$,
\begin{align}
	{\rm Var}  \big[\CE{ f(X) }{ X_{\rho'} } \big]
\le
 C R^4 (\h')^{2q} (d\lambda^2)^{\h'}  \var [f(X)]. 
\end{align}
where $\h' = \h(\rho') - k$. 
\end{lemma}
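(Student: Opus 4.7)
The plan is to decompose $f$ as a sum of single-variable functions with controlled variances, push the conditional expectation through the sum using the Markov property, and then combine a Cauchy--Schwarz step over $D_k(\rho')$ with the Markov-chain variance decay from Lemma~\ref{lem:Mbasis}.

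First I would assume $\E f(X) = 0$ without loss of generality. Since $f$ is a degree-$1$ polynomial in $(x_u : u \in D_k(\rho'))$, Proposition~\ref{prop: CVbasis4} applied at $u = \rho'$ produces single-variable functions $f_v(x_v)$ for $v \in D_k(\rho')$ such that $f(X) = \sum_{v \in D_k(\rho')} f_v(X_v)$ almost surely and $\sum_{v \in D_k(\rho')} \var[f_v(X_v)] \le C R^3 \var[f(X)]$. After a harmless shift of each $f_v$ by a constant, I may assume $\E f_v(X_v) = 0$ for every $v$.

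Next, by the Markov property each conditional expectation factors through $X_{\rho'}$:
$$
\CE{f(X)}{X_{\rho'}} = \sum_{v \in D_k(\rho')} (\E_{\rho'} f_v)(X_{\rho'}).
$$
The elementary Cauchy--Schwarz bound $(\sum_i a_i)^2 \le N \sum_i a_i^2$ with $N = |D_k(\rho')| \le R d^{\h'}$ (using that the tree has degree dominated by $d$ with parameter $R$) gives
$$
\var[\CE{f(X)}{X_{\rho'}}] \le R d^{\h'} \sum_{v \in D_k(\rho')} \var[(\E_{\rho'} f_v)(X_{\rho'})].
$$
For each term I would invoke the Markov-chain variance decay \eqref{eq: MCvarDecay} of Lemma~\ref{lem:Mbasis} with $u = v$ and $\fp^{\h'}(v) = \rho'$, obtaining $\var[(\E_{\rho'} f_v)(X_{\rho'})] \le C (\h')^{2q} \lambda^{2\h'} \var[f_v(X_v)]$. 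Multiplying the three estimates together and using the variance bound from Proposition~\ref{prop: CVbasis4} then yields $\var[\CE{f(X)}{X_{\rho'}}] \le C' R^4 (\h')^{2q} (d\lambda^2)^{\h'} \var[f(X)]$, which is the claim.

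All three ingredients are already available, so there is no serious obstacle. The main subtle point is that one must use the decomposition produced by Proposition~\ref{prop: CVbasis4} rather than an arbitrary representation of $f$ as a sum of single-variable functions: an arbitrary choice could make the individual $\var[f_v(X_v)]$ much larger than $\var[f(X)]$ and spoil the final bound. The combinatorial heart of the estimate is the combination $R d^{\h'} \cdot \lambda^{2\h'} = R (d\lambda^2)^{\h'}$, where the Cauchy--Schwarz factor $|D_k(\rho')|$ is precisely balanced by the Markov-chain contraction at depth $\h'$, and the Kesten--Stigum assumption $d\lambda^2 < 1$ is what makes the product decay as $\h' \to \infty$.
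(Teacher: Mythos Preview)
Your proposal is correct and follows essentially the same approach as the paper: decompose $f$ via Proposition~\ref{prop: CVbasis4}, apply Cauchy--Schwarz over $D_k(\rho')$ with $|D_k(\rho')|\le R d^{\h'}$, and then invoke the Markov-chain variance decay \eqref{eq: MCvarDecay} from Lemma~\ref{lem:Mbasis} on each summand. Your remark about needing the particular decomposition from Proposition~\ref{prop: CVbasis4} rather than an arbitrary one is exactly the point the paper relies on.
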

\begin{proof}

Let $f_u$ for $u \in D_k(\rho')$ be the functions from Proposition \ref{prop: CVbasis4} so that 
\begin{align} \label{eq: baseCase00}
    f(X) = \sum_{u \in D_k(\rho')} f_u(X_u) \mbox{ almost surely.} 
\end{align}

We can assume $ \mathbb{E} f(X)=0$ and $ \mathbb{E} f_u(X)=0$ for every $u \in D_k(\rho')$ without affecting \eqref{eq: baseCase00}. From Proposition \ref{prop: CVbasis4}, we have 
$$
    \sum_{u \in D_k(\rho')} \E[f^2_u(X)] \le C_{\ref{prop: CVbasis4}}R^3\E[f^2(X)],
$$ 
where $C_{\ref{prop: CVbasis4}}$ denotes the constant $C$ introduced in the Proposition.

We could apply Lemma \ref{lem:Mbasis} to get 
\begin{align*}
    \E \big[(\E_{\rho'}f)^2(X_{\rho'})\big]
\le&
	|D_k(\rho')|  \sum_{v \in D_k(\rho')} \big[(\E_{\rho'}f_v)^2(X_{\rho'})\big] \\
\le&
    |D_k(\rho')|
	C_{\ref{lem:Mbasis}} \h'^{2q} \lambda^{2\h'} 
    \sum_{u \in D_k(\rho')} \E[f^2_u(X)]
\le 
    C_{\ref{prop: CVbasis4}}
    C_{\ref{lem:Mbasis}} R^4
    (\h')^{2q} (d\lambda^2)^{\h'} 
    \E[f^2(X)]
\end{align*}
where $C_{\ref{lem:Mbasis}}$ denotes the constant $C$ stated in the Lemma. 
\end{proof}

\medskip
\begin{proof} [Proof of the Base Case Proposition \ref{prop: inductionBaseSimple}]
Given $\rho' \in T$ and $ 0 \le m \le \h(\rho')$ described in the Proposition. 
Let $\h' = \h(\rho')-m$. 
By Lemma \ref{lem:deg1}, any function 
$f(x) = \sum_{v \in D_m(\rho')} f_v(x_v)$ satisfies 
\begin{align*}
		{\rm Var} \big[ \ConE{\rho'}{f}(X) \big]
\le 	 	C_{\ref{lem:deg1}} R^4 (\h')^q (d\lambda^2)^{\h'} {\rm Var}[ f(X) ],
\end{align*}
where $ C_{\ref{lem:deg1}} $ denotes the $M$-dependent constant introduced in the Lemma.  
With 
\begin{align*}
   C_{\ref{lem:deg1}} R^4 (\h')^q (d\lambda^2)^{\h'}
\le
	C_{\ref{lem:deg1}} R^4 (\h')^q \exp( - 1.1\varepsilon \h')
\le
	\exp\big( - \varepsilon ( \h' - C_1(\log(R) + 1))\big),
\end{align*}
for some $C_1 \ge 1$ which depends on $M,d$. 
\end{proof}

\bigskip

\section{Decomposition of Polynomials}
\label{sec: decomposition}
In this section we study the representation of functions in terms of $\phi_{\sigma}$ and $\psi_\sigma$.
Roughly speaking $\psi_{\sigma}$ are ``more orthogonal" than the $\phi_\sigma$. More formally we will show that under appropriate conditioning expections of $\psi_{\sigma}$ factorize. Thus some of the effort in the proof and particularly in this section is devoted to relating the $\phi$ and $\psi$ representations and bounding moments of such representations. 
\begin{lemma} \label{LINlem:fdecomposition}
Assuming $d \lambda^2 < 1$ and growth factor of $R$, 
there exists $C = C(M,d) \ge 1$ so that the following holds. 
Let $\CA_1 \subseteq {\cal B} \subseteq {\bf 2}^L\backslash \emptyset$ be a collection of subsets which is closed under decomposition. 
Fix a positive integer $k_1$ and $\rho' \in T$ with $l':=\h(\rho')>k_1$. 
For every function $f$ of the form
\begin{align*} 
	f(x) 
= 
	\sum_{ {\sigma : \sigma \neq 0 } \in \cal F (\cal B_{\le \rho'})} c_{\sigma }\phi_{\sigma }(x)
\end{align*}
with 
$$
    \E f(X) = 0, 
$$
here exists a decomposition of $f$ 
        $$ f(X) = \sum_{u \le \rho'\,:\, \h(u)\ge k} \tilde f_u(X) \mbox{ almost surely,}$$
where,
for each $u\le \rho'$ with $\h(u)\ge k_1$, we have a function $f_u(x)= f_u(x_{\le u})$ and  
\begin{enumerate}
    \item For $u\in T_{\rho'}$ with $\h(u)> k_1$, $f_u(x)$ is a linear combination of $\psi_{\sigma }(x)$ with ${\sigma }\in {\cal F}({\cal B}_u)$ and $\tilde f_{u}(x) = f_{u}(x) - \mathbb{E} f_{u}(X)$.
    \item  
    For $w \le \rho'$ with $\h(w)\ge k_1$, we have 
    \begin{align}
    \label{eq: LINfdecompositionk1}
        \frac{1}{CR^3} \E \Big[\sum_{u \in D_{k_1}(w)} f^2_u(X) \Big]
    \le 
        \E (\sum_{u \in D_{k_1}(w)} f_u(X) )^2 
    \le
        CR \E \Big[\sum_{v \in D_{k_1}(w)} f^2_u(X) \Big].
    \end{align}

\end{enumerate}

We may group the $f_u$ according to $\h(u)$ and define for $k_1 \le k \le \h(\rho')$, 
$$
    f_k(x) := \sum_{u \in D_k(\rho')} \tilde f_u(x).
$$  
In other words, 
$$
    f(X) = \sum_{k \in [k_1,\h(\rho')]} f_k(X) \mbox{ almost surely.}
$$

\end{lemma}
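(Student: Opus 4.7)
The plan is to (i) rewrite $f$ in the $\psi_\sigma$-basis by a top-down sweep from $\rho'$ down to level $k_1+1$, allocating to each $u$ with $\h(u)>k_1$ the $\psi$-terms rooted at $u$ and pushing the single-branch residuals to deeper levels, and then (ii) apply Lemma~\ref{lem: f_kSubstitute} at level $m=k_1$ to obtain the two-sided variance bound~\eqref{eq: LINfdecompositionk1}.

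The algebraic engine of step (i) is the identity, valid for every $\sigma$ with $\rho(\sigma)=u$,
\begin{align*}
\phi_\sigma
= \psi_\sigma
 + \sum_{\substack{I'\subsetneq I(\sigma)\\|I'|\ge 2}}\Bigl(\prod_{i\notin I'}\E\phi_{\sigma^{(i)}}\Bigr)\psi_{\sigma\vert_{I'}}
 + \sum_{i\in I(\sigma)}\Bigl(\prod_{j\ne i}\E\phi_{\sigma^{(j)}}\Bigr)\tilde\phi_{\sigma^{(i)}}
 + \prod_{i\in I(\sigma)}\E\phi_{\sigma^{(i)}},
\end{align*}
which follows from the factorisation $\phi_\sigma=\prod_{i\in I(\sigma)}\phi_{\sigma^{(i)}}$ over the branch decomposition together with $\phi_{\sigma^{(i)}}=\tilde\phi_{\sigma^{(i)}}+\E\phi_{\sigma^{(i)}}$. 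Here $\sigma\vert_{I'}$ agrees with $\sigma$ on $\bigcup_{i\in I'}S_i$ and is zero elsewhere, so the $|I'|\ge 2$ contributions are $\psi$-terms still rooted at $u$ and will be absorbed into $f_u$, while the $|I'|=1$ contributions equal (up to an additive constant) $\phi_{\sigma^{(i)}}$ with $\sigma^{(i)}\in\mathcal{F}(\mathcal{B}_{\le u_i})$ by closure of $\mathcal{B}$ under decomposition, and are recursively processed at the deeper level $u_i$. Iterating from level $\h(\rho')$ down to level $k_1+1$, and terminating the recursion at level $k_1$ (absorbing both the push-downs that land there and the original $\phi_{\sigma''}$ with $\rho(\sigma'')\le v$, $v\in D_{k_1}(\rho')$, into a function $g_v(x_{\le v})$), one obtains, after mean-subtraction,
\begin{align*}
f(X)=\sum_{\substack{u\le\rho'\\\h(u)>k_1}}\tilde f_u(X)+\sum_{v\in D_{k_1}(\rho')}\tilde g_v(X)\quad\text{almost surely,}
\end{align*}
with each $f_u$ a $\mathcal{B}_u$-indexed $\psi$-combination and each $g_v$ a function of $x_{\le v}$.

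For step (ii), $\sum_{v\in D_{k_1}(\rho')}\tilde g_v(X)$ has zero mean and each summand depends only on $x_{\le v}$, which is exactly the form required by Lemma~\ref{lem: f_kSubstitute} at $m=k_1$. That lemma produces functions $\bar g_v(x_{\le v})$ (with $\E\bar g_v(X)=0$) such that $\sum_v\bar g_v$ equals $\sum_v\tilde g_v$ almost surely and the two-sided bound~\eqref{eq: f_ksusbstitute} holds for every $w\le\rho'$ with $\h(w)\ge k_1$. Defining $f_v:=\bar g_v$ for $v\in D_{k_1}(\rho')$ yields assertion~\eqref{eq: LINfdecompositionk1}, while assertion (1) and the decomposition identity $f(X)=\sum_u\tilde f_u(X)$ follow from step (i).

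The main technical obstacle is bookkeeping within the iterative expansion: one must check that each single-branch $\tilde\phi_{\sigma^{(i)}}$ pushed deeper into the tree still corresponds to an index in $\mathcal{F}(\mathcal{B})$ (this is precisely where closure of $\mathcal{B}$ under decomposition is essential, and where one verifies the $|I(\sigma)|\ge 2$ hypothesis used to obtain $\psi_\sigma$ at the current level), and that the residuals collected at level $k_1$ respect the variable-support structure $x_{\le v}$ needed to invoke Lemma~\ref{lem: f_kSubstitute}. Once these points are verified, the variance inequality~\eqref{eq: LINfdecompositionk1} is immediate from Lemma~\ref{lem: f_kSubstitute}, and the $\psi$-structure required by property~(1) is guaranteed by the top-down construction.
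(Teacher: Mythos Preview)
Your proposal is correct and follows essentially the same approach as the paper: a top-down layer-by-layer sweep converting $\phi$-terms rooted at each $u$ into $\psi$-terms plus residuals pushed to deeper levels, terminating at height $k_1$ where Lemma~\ref{lem: f_kSubstitute} supplies the two-sided variance bound. Your closed-form identity for $\phi_\sigma$ in terms of $\psi_{\sigma|_{I'}}$ is a mild streamlining of the paper's two-stage route (Lemma~\ref{LINlem:phiDecomposition} first writes $\phi_\sigma$ as $\psi_\sigma$ plus \emph{$\phi$}-terms with strictly smaller $|I(\sigma')|$, and Lemma~\ref{LINlem:p_u} then recurses on $|I(\sigma)|$ to complete the conversion), but the content is the same.
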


To prove the main lemma, let us begin by comparing $\phi_{\sigma}(x)$ and $\psi_{\sigma}(x)$ (See Definition \ref{defi:FApsi}).
\begin{lemma} \label{LINlem:phiDecomposition}
	For ${\sigma }  \in \cal F( \cal {B}_{ u})$, $\phi_{\sigma}(x)$ can be expressed in the form 
\begin{align} \label{LINeq:phiTopsi}
	\phi_{\sigma }(x) =& \prod_{i \in I(\sigma)} \psi_{\sigmai{i}}(x)	
	- a_{\subset, {\sigma }}(x) 
	- a_{< , {\sigma }}(x) 
	- a_{c,{\sigma }},
\end{align}
where:
\begin{itemize}
\item $a_{\subset, {\sigma }}(x)$ is a linear combination of $\phi_{{\sigma }'}(x)$ for ${\sigma }' \in \cal F(\cal B_u)$ such that $I(\sigma')$ is a proper subset of $I(\sigma)$. 
\item $a_{<, \sigma}(x)$ is a linear combination of $\phi_{\sigma'}(x)$ for  $\sigma' \in \cal F( \cal B_{< \frak u})$ (recall that $\cal B_{< u} = \{S \in \cal B\,:\, \rho(S)<u \}$), and 
\item $a_{c,\sigma}$ is a constant. 
\end{itemize}
\end{lemma}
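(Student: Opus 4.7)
The proof is by direct algebraic expansion, carried out in two stages: first we relate each factor $\phi_{\sigma^{(i)}}$ to $\psi_{\sigma^{(i)}}$ branch by branch, and then we multiply the factors over all $i\in I(\sigma)$. The starting point is the elementary factorization
\[
	\phi_{\sigma}(x)=\prod_{v\in S(\sigma)}\phi_{\sigma(v)}(x_v)=\prod_{i\in I(\sigma)}\phi_{\sigma^{(i)}}(x),
\]
obtained by grouping the product along the branch decomposition $S(\sigma)=\sqcup_i S_i$ at $u=\rho(\sigma)$.

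The branch-by-branch comparison runs as follows. If $|S_i|=1$ then the single basis factor has zero mean under $\pi$, so $\psi_{\sigma^{(i)}}=\tilde{\phi}_{\sigma^{(i)}}=\phi_{\sigma^{(i)}}$ (the singleton case of $\psi$ being taken by convention). If $|S_i|\geq 2$ the definition reads
\[
	\psi_{\sigma^{(i)}}(x)=\prod_{j\in I(\sigma^{(i)})}\bigl(\phi_{(\sigma^{(i)})^{(j)}}(x)-m_{i,j}\bigr),\qquad m_{i,j}:=\mathbb{E}\,\phi_{(\sigma^{(i)})^{(j)}}(X);
\]
expanding this product and isolating the monomial where every factor contributes its $\phi$-term yields
\[
	\phi_{\sigma^{(i)}}(x)=\psi_{\sigma^{(i)}}(x)+R_i(x),
\]
with $R_i(x)$ a linear combination of $\phi_{\sigma^{(i)}_{J_i}}(x)$ over proper subsets $J_i\subsetneq I(\sigma^{(i)})$ plus a constant (from $J_i=\emptyset$). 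Each such $\sigma^{(i)}_{J_i}$ is supported on $\bigcup_{j\in J_i}S_{i,j}\subseteq T_{u_i}$, so $\rho(\sigma^{(i)}_{J_i})\leq u_i<u$; moreover, each $S_{i,j}$ lies in $\mathcal{B}$ by closure under decomposition.

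Multiplying over branches,
\[
	\phi_\sigma(x)=\prod_i\bigl(\psi_{\sigma^{(i)}}(x)+R_i(x)\bigr)=\prod_i\psi_{\sigma^{(i)}}(x)+\sum_{\emptyset\neq K\subseteq I(\sigma)}\prod_{i\in K}R_i(x)\prod_{i\notin K}\psi_{\sigma^{(i)}}(x),
\]
so the correction equals the $K\neq\emptyset$ sum. To rewrite each cross term in the $\phi_{\sigma'}$ form demanded by the statement, I would substitute $\psi_{\sigma^{(i)}}=\phi_{\sigma^{(i)}}-R_i$ for $i\notin K$ and re-expand fully, whereupon each surviving cross term becomes a scalar multiple of some $\phi_{\sigma'}(x)$, where $\sigma'$ is built by choosing on each branch $i$ either the full contribution $\sigma^{(i)}$ or a restricted contribution $\sigma^{(i)}_{J_i}$ with $J_i\subsetneq I(\sigma^{(i)})$.

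The closing step sorts each surviving $\phi_{\sigma'}$ into one of the three buckets by inspecting $\rho(\sigma')$ and $I(\sigma')$: if $\sigma'$ misses at least one branch of $u$ then $\rho(\sigma')<u$ and the term lives in $a_{<,\sigma}$; if $\sigma'$ reaches a proper non-trivial subset of the branches of $u$ then $\rho(\sigma')=u$ with $I(\sigma')\subsetneq I(\sigma)$, contributing to $a_{\subset,\sigma}$; and any purely constant factor is absorbed into $a_{c,\sigma}$. The main technical obstacle is the combinatorial bookkeeping, namely enumerating which $\sigma'$ arise with which coefficients from the nested expansions and verifying $S(\sigma')\in\mathcal{B}$ in each case; both tasks reduce to the single fact that every $S_{i,j}$ lies in $\mathcal{B}$ by closure under decomposition, from which the unions $\bigcup_{j\in J_i}S_{i,j}$ and the products across distinct branches remain compatible with the branch structure of $\mathcal{B}$.
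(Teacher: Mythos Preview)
Your proof has a genuine gap stemming from a misreading of what the lemma is actually asserting. The paper's notation is admittedly loose here: if you look at its own proof, the expression $\prod_{i\in I(\sigma)}\psi_{\sigma^{(i)}}(x)$ in the displayed identity is being used as a synonym for $\psi_\sigma(x)=\prod_{i\in I(\sigma)}\tilde\phi_{\sigma^{(i)}}(x)$, i.e.\ the centering happens once, at the level of the branches of $u=\rho(\sigma)$. You have instead taken $\psi_{\sigma^{(i)}}$ literally as the product of centered sub-branch factors of $\sigma^{(i)}$ at $\rho(S_i)$, which forces you into a two-level expansion.

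That two-level expansion does not sort into the three buckets. Concretely, take $|I(\sigma)|=2$, say $I(\sigma)=\{1,2\}$, with $|S_1|\ge 2$. Your remainder $R_1$ contains (up to a nonzero constant) the term $\phi_{\sigma^{(1)}_{\{a\}}}$ for each $a\in I(\sigma^{(1)})$, so the cross term $R_1\,\phi_{\sigma^{(2)}}$ contains $\phi_{\sigma^{(1)}_{\{a\}}}\cdot\phi_{\sigma^{(2)}}=\phi_{\sigma'}$ with $S(\sigma')=S_{1,a}\cup S_2$. This $\sigma'$ touches \emph{both} branches of $u$, so $\rho(\sigma')=u$ and $I(\sigma')=\{1,2\}=I(\sigma)$: it is neither rooted strictly below $u$ nor has $I(\sigma')$ a proper subset of $I(\sigma)$. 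A quick check shows this term does not cancel against anything in $\phi_{\sigma^{(1)}}R_2$ or $R_1R_2$. Hence the trichotomy you assert in your last paragraph is incomplete, and the decomposition fails.

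The paper's actual argument is a single-level expansion: write $\psi_\sigma(x)=\prod_{i\in I(\sigma)}\bigl(\phi_{\sigma^{(i)}}(x)-\mathbb{E}\,\phi_{\sigma^{(i)}}(X)\bigr)$, expand into $2^{|I(\sigma)|}$ terms indexed by partitions $I_1\sqcup I_2=I(\sigma)$, and observe that the $I_1=I(\sigma)$ term is exactly $\phi_\sigma$. The remaining terms drop cleanly into the buckets by the size of $I_1$: if $|I_1|\ge 2$ one gets $\phi_{\sigma'}$ with $S(\sigma')=\sqcup_{i\in I_1}S_i\in\mathcal B_u$ and $I(\sigma')=I_1\subsetneq I(\sigma)$ (this is $a_{\subset,\sigma}$); if $|I_1|=1$ then $\sigma'=\sigma^{(i)}$ has $\rho(\sigma')\le u_i<u$ (this is $a_{<,\sigma}$); and $I_1=\emptyset$ gives the constant. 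No second level of decomposition is needed, and no terms with $I(\sigma')=I(\sigma)$ but $\sigma'\ne\sigma$ ever appear.
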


\begin{proof}
Fix ${\sigma }  \in \cal F( \cal B)$ and let $ u = \rho(S)$ and $S = S(\sigma)$. Recall that $I(\sigma) := \{ i \in [d_u]\,:\, S(\sigma) \cap T_{u_i} \neq \emptyset\}$ and 
$\sigmai{i} \in [0,q-1]^T$ is the projection of $\sigma$ to $S_i$.   
We can also decompose the function $\phi_{\sigma }$ according to $\{{\sigmai{i}}\}_{i \in I(\sigma)}$:
\begin{align}
	\phi_{\sigma }(x) 
= 	\prod_{ i \in I(S) } \phi_{\sigmai{i}}(x).
\end{align}
Before we proceed, let us note that by Lemma \ref{lem:AinB} and the definition of $\cal B$, we have $\sigmai{i} \in {\cal F}( {\cal B}_{\le u_i})$. 
Now, let us expand the function $\psi_{\sigma} $ according to its definition: 
\begin{align*}
	\prod_{i \in I(S)} \psi_{\sigmai{i}}(x) 
=& 
	\prod_{i \in I(S)} 
	\Big(\phi_{\sigmai{i}}(x) 
	- \mathbb{E}\phi_{\sigmai{i}}(X) \Big) 
= 
	\sum_{ I_1,I_2 } 
	\Big(\prod_{i\in I_1} \phi_{\sigmai{i}}(x) \Big) 
	\Big(\prod_{i\in I_2}(- \mathbb{E}\phi_{\sigmai{i}}(X)) \Big), 
\end{align*}
where the summation is taken over all possible partition $I_1\sqcup I_2 = I(\sigma)$.   
Next, we can group the summands into four types based on $|I_1|$ and $|I_2|$:
\begin{enumerate}
\item [{\bf Type 1}] 	$|I_1|= |I(\sigma)|$.
	The summand is simply $\phi_{\sigma}(x)$. 

\item [{\bf Type 2}] $2 \le |I_1|  \le |I(\sigma)|-1$.

    Each summand is a constant multiple of $\phi_{\sigma'}(x)$ where $\sigma'$ is the projection of $\sigma$ to the indices $\sqcup_{i \in I_1}S_i$. Clearly, 
    $S(\sigma') = \sqcup_{i \in I_1} S_i$. With $|I_1| \ge 2$, we have $\rho(\sigma')=u$.  
    Further, each $S_i \in \cal A_{\le u_i}$ for $i \in I(\sigma)$, it follows that
    $S(\sigma') \in \cal B_u$, which in turn implies $\sigma' \in {\cal F}({\cal B}_u)$.

	We denote the sum of summands of this type by $a_{\subset,\sigma}(x)$. 

\item [{\bf Type 3}] $ |I_1|=1$.
	Each summand is a constant multiple of $\phi_{\sigmai{i}}(x)$,
	where $i$ is the element in $I_1$.
	Notice that $\sigmai{i} \in {\cal F}(\cal A_{<u}) \subset {\cal F}(\cal B_{<u})$ where the inclusion follows
	from Lemma \ref{lem:AinB}. 
	We denote the sum of summands of this type as $a_{<,\sigma}(x)$. 
\item [{\bf Type 4}] $|I_1|=0$
    There is only one summand, which is a constant. We denote this constant by $a_{c,\sigmai{i}}$. 
\end{enumerate}

With this decomposition, \eqref{LINeq:phiTopsi} follows. 
\end{proof}

Given the expressions for $\psi_{\sigma }(x)$ in terms $\phi_{\sigma }(x)$ and vice-versa, for any given $u \in T\backslash L$, we can convert a linear combination 
of $\phi_{\sigma }(x)$ with ${\sigma} \in \cal F(\cal B_u)$ 
to that of $\psi_{\sigma }(x)$ with ${\sigma } \in \cal F(\cal B_u)$. 

\begin{lemma} \label{LINlem:p_u}
For $u \in T\backslash L$, consider any function of the form
\begin{align*}
	p_u(x)
=	\sum_{{\sigma } \in \cal F(\cal B_u)} c_{\sigma } \phi_{\sigma }(x).
\end{align*}
Then there exists a decomposition
\begin{align*}
	p_u(x) =& \tilde f_u(x) + p_{<,u}(x) + c_u,
\end{align*}
where:
\begin{itemize}
\item $\tilde f_{u}(x) = f_u(x) - \mathbb{E} f_u(X) $ 
	and $f_u(x)$ is a linear combination of $\psi_{\sigma }(x)$ for ${\sigma } \in \cal F(\cal B_u)$,
\item $p_{<, u}(x)$ is a linear combination of $\phi_{\sigma }(x)$ with ${\sigma } \in \cal F(\cal B_{< u})$, and 
\item $c_u$ is a constant. 
\end{itemize}
\end{lemma}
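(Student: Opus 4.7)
The plan is to apply Lemma \ref{LINlem:phiDecomposition} to each summand $\phi_{\sigma}(x)$ appearing in $p_u(x)$, and then iterate on the residual, with the induction parameter being $|I(\sigma)|$. By Definition \ref{defi:FApsi}, the leading product $\prod_{i \in I(\sigma)} \psi_{\sigma^{(i)}}(x)$ that appears in Lemma \ref{LINlem:phiDecomposition} coincides with $\psi_{\sigma}(x)$ (in the degenerate case $|I(\sigma^{(i)})|=1$ we simply read $\psi_{\sigma^{(i)}}$ as $\tilde\phi_{\sigma^{(i)}}$, which is how it is used in the proof of that lemma), so Lemma \ref{LINlem:phiDecomposition} takes the clean form
\begin{align*}
	\phi_{\sigma}(x) \;=\; \psi_{\sigma}(x) - a_{\subset,\sigma}(x) - a_{<,\sigma}(x) - a_{c,\sigma}, \qquad \sigma \in \mathcal{F}(\mathcal{B}_u).
\end{align*}
Substituting this into $p_u(x) = \sum_{\sigma} c_{\sigma} \phi_{\sigma}(x)$, the $\psi_{\sigma}$ contributions will aggregate into the candidate $f_u$; each $a_{<,\sigma}$, being a linear combination of $\phi_{\sigma'}$ with $\sigma' \in \mathcal{F}(\mathcal{B}_{<u})$, feeds into the candidate $p_{<,u}$; and each $a_{c,\sigma}$ contributes to the constant.

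The only unresolved piece after one round of substitution is $\sum_{\sigma} c_{\sigma} a_{\subset,\sigma}(x)$, which by Lemma \ref{LINlem:phiDecomposition} is itself a linear combination of $\phi_{\sigma'}$ with $\sigma' \in \mathcal{F}(\mathcal{B}_u)$, but now with $2 \le |I(\sigma')| < |I(\sigma)|$ strictly smaller. I then iterate: re-apply Lemma \ref{LINlem:phiDecomposition} to this residual, and so on. Since $|I(\sigma)| \le d_u < \infty$ and strictly decreases at each round, the process terminates after at most $d_u$ rounds. Aggregating contributions across all rounds yields
\begin{align*}
	p_u(x) \;=\; f_u(x) + p_{<,u}(x) + c,
\end{align*}
where $f_u$ is a linear combination of $\psi_{\sigma}$ with $\sigma \in \mathcal{F}(\mathcal{B}_u)$, $p_{<,u}$ is a linear combination of $\phi_{\sigma'}$ with $\sigma' \in \mathcal{F}(\mathcal{B}_{<u})$, and $c \in \mathbb{R}$.

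To match the stated form, I set $\tilde f_u(x) := f_u(x) - \mathbb{E} f_u(X)$ and $c_u := c + \mathbb{E} f_u(X)$, so that $p_u(x) = \tilde f_u(x) + p_{<,u}(x) + c_u$ as required. There is no substantive obstacle once Lemma \ref{LINlem:phiDecomposition} is in hand; the proof is essentially a bookkeeping exercise. The two points that deserve a one-line justification are that the $a_\subset$ residuals remain inside $\mathcal{F}(\mathcal{B}_u)$ (so that another round of Lemma \ref{LINlem:phiDecomposition} is legal) and that $|I(\sigma)|$ strictly decreases under the substitution (so that the recursion terminates); both are direct from the statement of Lemma \ref{LINlem:phiDecomposition}.
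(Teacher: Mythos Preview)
Your proof is correct and follows essentially the same approach as the paper's: both apply Lemma~\ref{LINlem:phiDecomposition} and iterate on the maximum value of $|I(\sigma)|$, using that the $a_{\subset,\sigma}$ residuals remain in $\mathcal{F}(\mathcal{B}_u)$ with strictly smaller $|I|$. The paper organizes the argument as a formal induction on $r(p_u) := \max\{|I(\sigma)| : c_\sigma \neq 0\}$, peeling off only the top layer at each step, whereas you process all terms at once and iterate on the residual; the two are equivalent reformulations of the same idea.
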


\begin{proof}
The decomposition is constructed through recursion on the following expression:
\begin{align*}
	r(p_u) 
:= 
	{\rm argmax} \big\{ |I(\sigma)|\,:\, 
		{\sigma } \in \cal F( \cal B_{u}),\, 
		c_{\sigma } \neq 0  
	\big\} .
\end{align*}
Suppose $r(p_u)=2$. Then the statement of simply follows from Lemma \ref{LINlem:phiDecomposition}.

Suppose the statement of the lemma holds whenever $r(p_u)\le r$ for $ 2\le r < Rd$. Consider any function $p_u$ with $r(p_u)=r+1$: 
$$ 	p_u(x) 
= 	
	\sum_{ {\sigma } \in \cal F( \cal B_u ) \,: |I(\sigma)|\le r+1} 
	c_{\sigma } \phi_{\sigma }(x)
=
	\underbrace{\sum_{ {\sigma } \in \cal F( \cal B_u ) \,: |I(\sigma)| = r+1} 
	c_{\sigma } \phi_{\sigma }(x)}_{:= p_{u,r+1}(x)}
+
	\underbrace{\sum_{ {\sigma } \in \cal F( \cal B_u ) \,: |I(\sigma)| \le r} 
	c_{\sigma } \phi_{\sigma }(x)}_{:= p_{u,\le r}(x)}.
$$
According to the decomposition of $\phi_{\sigma }(x)$ in Lemma \ref{LINlem:phiDecomposition}, let 
\begin{align*}
	f_{u,r+1}(x) 
:=& 	\sum_{{\sigma } \in \cal F(\cal B_u) \,:\, |I(\sigma)| = r+1} c_{\sigma } \psi_{\sigma }(x)
\\
	p_{*,u,r+1}(x) 
:=& 	\sum_{{\sigma } \in \cal F (\cal B_u) \,:\, |I(\sigma)| = r+1} c_{\sigma } a_{*,{\sigma }}(x) 
\end{align*}
where $*$ can be $\subset$, $<$, or $c$. Then,
\begin{align}
	p_{u,r+1}(x) = f_{u,r+1}(x) + p_{\subset, u, r+1}(x) + p_{<,u,r+1}(x) + p_{c,u,r+1}(x). 
\end{align}
Observe that $ p_{u, \le r}(x)  + p_{\subset, u, r+1}(x)$ is a linear combination of $\phi_{\sigma }(x)$ with ${\sigma }  \in \cal F (\cal B_{u})$ and $|I(\sigma)| \le r$. 
Thus, by the inductive assumption, the summation can be expressed in the form
$$ p_{u, \le r}(x)  + p_{\subset, u, r+1}(x) = \tilde f'_{u}(x) + p'_{<,u}(x) + c'_{u}.$$ 
Finally, let  
\begin{align*}
	f_u(x) =& f'_{u}(x) + f_{u,r+1}(x), \\
	p_{<,u}(x) =& p'_{<,u}(x) + p_{<,u,r+1}(x), \\
	c_u =& c'_{u} +  p'_{c,u} + \mathbb{E}\big[ f_{u,r+1}(X)\big],
\end{align*}
and we have 
\begin{align*}
	p_u(x) =& f_{u,r+1}(x) + p_{<,u,r+1}(x) + p_{c,u,r+1}(x) + \tilde f'_{u}(x) + p'_{<,u}(x) + c'_{u}\\
	=& \tilde f_u(x) +  p_{<,u}(x) + c_u. 
\end{align*}

\end{proof} 

\vspace{1cm}

\begin{proof}[Proof of Lemma \ref{LINlem:fdecomposition}]
We will construct $f_u(x)$ for $u$ starting from top layer ($u=\rho'$) to bottom layer.

For $k \in [k_1,l'-1]$, when $f_u(x)$ is constructed for $u \in T_{\rho'}$ with $\h(u) > k$, we define 
\begin{align}
	f_{\le k}(x) = f(x) - \sum_{u \,:\, \h(u) > k+1} \tilde f_u(x),
\end{align}
where $\tilde f_u(x) = f_u(x) - \E f_u(X)$. 
Without lose of generality, let $f_{\le l'}(x) =f(x)$. 

Fix $k \in [k_1+1,l']$. For the induction step, suppose $\{f_u(x)\}_{u \in T_{\rho'}\,:\, \h(u)>k}$ 
$\{f_{\le s}(x)\}_{s \in [k,l']}$ have been constructed such that $f_{\le k}(x)$ can be expressed in the form 
\begin{align} \label{eq:f_k}
 	f_{\le k}(x) 
= 
c' + \sum_{{\sigma } \in \cal F( \cal B)\,:\, k_1 <  \h(\rho(\sigma)) \le k } c'_{\sigma }\phi_{\sigma }(x)
+  \sum_{{\sigma } \in \cal F( {\bf 2}^{L}) \,:\, \h(\rho(\sigma)) \le k_1 } c'_{\sigma }\phi_{\sigma }(x).
\end{align}
Clearly, this holds when $k=l'$. 

For each $u$ with $\h(u)= k$, let $p_{u}(x) =  \sum_{{\sigma } \in \cal F(\cal B_u)} c'_{\sigma }\phi_{\sigma }(x)$, and define $\tilde f_u(x),  p_{<,u}(x)$, and $c_u$ according to Lemma \ref{LINlem:p_u}. 
Then,  \begin{align*}
	f_{\le k-1}(x) 
=&
	f_{\le k}(x) - \sum_{u\,:\,\h(u)=k_1} \tilde f_u(x)\\
=& 	
c' + \sum_{{\sigma } \in \cal F(\cal B)\,:\, k_1 < \h(\rho(\sigma)) \le k-1 } c'_{\sigma }\phi_{\sigma }(x)
+  \sum_{ {\sigma } \in \cal F( {\bf 2}^{L}) \,:\, \h(\rho(\sigma)) \le k_1 } c'_{\sigma }\phi_{\sigma }(x) \\
&	+ \sum_{u \,:\, \h(u) = k} (p_{<,u}(x) + c_u).
\end{align*}
Recall from Lemma \ref{LINlem:p_u} that $p_{<, u}(x)$ is a linear combination of $\phi_{\sigma }(x)$ with ${\sigma } \in \cal F(\cal B_{< u})$, the function $f_{\le k-1}(x)$ satisfies \eqref{eq:f_k} as well (with $k$ been replaced by $k-1$). 

Once the induction terminated at layer $k_1$, we obtain 
$$	f_{k_1}(x)
=	c + \sum_{ u \in D_{k_1}(\rho')} 
	\sum_{{\sigma } \in \cal F( 2^{L_u} \backslash \emptyset ) }  c_{\sigma } \phi_{\sigma }(x).  $$

Now, observe that for $k_1<k \le \h(\rho')$, we have $f_k(x)$ is defined as the sum of $\tilde f_u$ for $u \in D_k(\rho')$, which are functions of mean $0$. Together with the assumption that $\E f(X)=0$, we have 
$$
    \E f_{k_1}(X) 
= 
    \E f(X) - \sum_{k=k_1+1}^{\h(\rho')} \E f_k(X)
= 0. 
$$

Notice that $f_{k_1}$ satisfies the assumption of the function stated in Lemma \ref{lem: f_kSubstitute} with $m=k_1$. 
By replacing $f_{k_1}$ by $\bar f_{k_1}$ and $f_u$ by $\bar f_u$ for each $u \in D_{k_1}(\rho')$, the second statement follows while the third statement of the Lemma remains true. Hence, the proof is completed. 
\end{proof}

\bigskip

\section{Induction Step 1: Decay of $f_u$}
\label{sec: ind1}
The goal this section and next section is to prove Theorem  \ref{LINthm:inductionSimple}. Let us restate the theorem here. 
\begin{theorem*} 
    Given the rooted tree $T$ and the transition matrix $M$ described in Theorem \ref{thm:main}, and under  
    the additional assumption that $c_M = \min_{i,j\in[q]}M_{ij}>0$.
    Suppose $\cal A$ is a collection of subsets satisfying Assumption \ref{assume: A} with parameters $\lAs$ and $c^*$.
    Then, there exists $C=C(M,d, c^*) \ge 1$ such that $\cal B = \cal B(\cal A)$ satisfies Assumption \ref{assume: A} with parameters $\lAs + C(\log(R) + 1)$ and $c^*$. 
    
\end{theorem*}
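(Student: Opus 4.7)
Part 2 of Assumption \ref{assume: A} for $\cal B$ is automatic from Lemma \ref{lem: c^*}: since $c_M > 0$, the value $c^* = \min\{c_M, 1/\min_j \pi(j)\}$ depends only on $M$, so $\cal B$ inherits Part 2 with the same $c^*$ once Part 1 is proved. The plan is therefore to prove Part 1 for $\cal B$ with the enlarged threshold $\lAs' := \lAs + C(\log R + 1)$, for some $C = C(M,d,c^*)$ fixed at the end. That is, for $v$ with $\h(v) \ge \lAs'$ and a centered $\cal B_{\le v}$-polynomial $f$, one must show
\begin{align*}
\var\big[\ConE{v}{f}(X)\big] \le \exp\big(-\varepsilon(\h(v) - \lAs')\big) \var[f(X)].
\end{align*}

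The first move is to invoke Lemma \ref{LINlem:fdecomposition} with $\rho' = v$ and a threshold $k_1 = \lAs + O(\log R + 1)$, tuned to match the base-case decay below, obtaining the almost-sure decomposition $f(X) = \sum_{k=k_1}^{\h(v)} f_k(X)$, with $f_k = \sum_{u \in D_k(v)} \tilde f_u$. Each $f_u$ for $\h(u) > k_1$ is a linear combination of $\psi_\sigma$'s with $\sigma \in {\cal F}({\cal B}_u)$, while $f_{k_1}$ is, after the substitution in Lemma \ref{lem: f_kSubstitute}, a degree-1 function of the variables at $D_{k_1}(v)$. A Cauchy-Schwarz across the $\h(v)-k_1+1$ layers reduces the task to bounding each $\var[\ConE{v}{f_k}(X)]$ separately, the $\text{poly}(\ell)$ prefactor being absorbed into $\exp(\varepsilon C(\log R + 1))$. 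The bottom layer $f_{k_1}$ is handled directly by Lemma \ref{lem:deg1}, which provides decay $R^{O(1)}\text{poly}(\h(v)-k_1)(d\lambda^2)^{\h(v)-k_1}\var[f_{k_1}]$; with $k_1$ chosen as above this is at most $\exp(-\varepsilon(\h(v)-\lAs'))\var[f_{k_1}]$.

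For each upper layer $k > k_1$ the plan, executed in Section \ref{sec: ind1}, is to establish a per-layer bound of the shape $\var[\ConE{v}{f_k}(X)] \le R^{O(1)}\exp(-\varepsilon(\h(v)-\lAs))\var[f_k]$. The starting observation is that for each $u \in D_k(v)$, every $\psi_\sigma = \prod_{i \in I(\sigma)} \tilde\phi_{\sigmai{i}}$ is a product of mean-zero $\cal A_{\le u_i}$-polynomials. Applying the inductive hypothesis (Part 1 of Assumption \ref{assume: A} for $\cal A$) at each child $u_i$ yields $\var[\E[\tilde\phi_{\sigmai{i}}|X_{u_i}]] \le \exp(-\varepsilon(\h(u_i)-\lAs))\var[\tilde\phi_{\sigmai{i}}]$. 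Combining this with the Markov factorization (given $X_u$ the subtrees $T_{u_i}$ are conditionally independent, so $\E[\psi_\sigma|X_u] = \prod_i \E[\tilde\phi_{\sigmai{i}}|X_u]$) and the chain-decay Lemma \ref{lem:Mbasis} along the path from $u$ up to $v$ produces product-level decay for each $\psi_\sigma$ under $\ConE{v}{\cdot}$; this is exactly Property III from the proof overview.

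The main obstacle, addressed in Section \ref{sec: ind2}, is passing from the single-$\psi_\sigma$ estimate to the full $f_u = \sum_\sigma c_\sigma \psi_\sigma$ and then summing cleanly over $u \in D_k(v)$. Two kinds of cross-terms must be controlled. Within a fixed $u$, approximate orthogonality of $\psi_\sigma$ and $\psi_{\sigma'}$ after conditioning on $X_v$ follows from the Markov factorization and the mutual orthogonality of the $\tilde\phi$'s on disjoint child-subtrees of $u$; Part 2 of Assumption \ref{assume: A} is indispensable here, because the product-level second moment of $\psi_\sigma$ must be decomposed through the pointwise conditional second moments of its factors, and the two-sided inequality \eqref{eq: assumeSimple1} is precisely what keeps these bounds tight uniformly over the conditioning value. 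Across distinct $u,u' \in D_k(v)$, the cross-term analysis mirrors the degree-1 argument of Lemma \ref{lem:deg1T}: conditional covariance at the common ancestor decays geometrically, and the combinatorial factor $|D_k(v)| \le Rd^{\h(v)-k}$ feeds into the $R^{O(1)}$ prefactor through a second-moment bound. Finally, choosing $C = C(M, d, c^*)$ large enough that all the accumulated $R^{O(1)}\text{poly}(\ell)$ factors are dominated by $\exp(\varepsilon C(\log R + 1))$ closes the induction.
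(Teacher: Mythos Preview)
Your outline matches the paper's approach closely: invoke Lemma~\ref{lem: c^*} for Part~2, decompose $f$ via Lemma~\ref{LINlem:fdecomposition} into layers $f_k$, handle the bottom layer $f_{k_1}$ with the degree-1 machinery, and for $k>k_1$ exploit the product structure of $\psi_\sigma$ together with the inductive hypothesis on $\mathcal{A}$. Your discussion of within-$u$ and within-layer cross terms is also on target.

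There is, however, one substantive gap. After Cauchy--Schwarz across layers you obtain
\[
\var\big[\ConE{v}{f}\big]\;\le\;(\h(v)-k_1+1)\sum_{k}\var\big[\ConE{v}{f_k}\big]\;\le\;(\h(v)-k_1+1)\,\delta\sum_{k}\var[f_k],
\]
but to reach $\delta'\var[f]$ you still need the reverse comparison $\sum_k\var[f_k]\lesssim\var[f]$. Since the decomposition $f=\sum_k f_k$ is \emph{not} orthogonal, this requires controlling the cross-layer covariances $\mathbb{E}[f_k f_m]$ for $k\neq m$, which your proposal does not mention. In the paper this is exactly inequality~\eqref{LINeq:hkhm} of Proposition~\ref{prop: fk}, and its proof hinges on a property you have not isolated: estimate~\eqref{LINeq:huwinwin} of Proposition~\ref{prop: fu}, which says that for any function $a(x)$ depending only on a \emph{single} child subtree $T_{u_i}$,
\[
\big|\mathbb{E}[\tilde f_u(X)\,a(X)]\big|\;\le\;\exp\Big(-\tfrac{\varepsilon}{2}(\h(u)-C(\log R+1)-\lAs)\Big)\,\|\tilde f_u\|_2\,\|a\|_2.
\]
This is stronger than the decay of $\ConE{u}{\tilde f_u}$ alone: it exploits that every $\psi_\sigma$ in $f_u$ branches into at least two children at $u$, so integrating out the subtrees \emph{other} than $T_{u_i}$ already forces decay. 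Applied with $a=f_{m,u_i}$ (the portion of the lower layer $f_m$ sitting under $u_i$), it yields the cross-layer bound. Without this step the argument does not close; the paper's Section~\ref{sec: ind2} is largely devoted to it, and your sketch should at least flag the need for~\eqref{LINeq:huwinwin} and the resulting near-orthogonality $\mathbb{E}f^2\ge\tfrac12\sum_k\mathbb{E}f_k^2$ (equation~\eqref{eq: ftofk}).
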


{\bf In this and the following section}, 
we will fix a collection $\CA$ that meets Assumption \ref{assume: A} with some parameters $l^*$ and $c^*$. Additionally, we abbreviate $$\cal B = \cal B(\CA). $$
Further, we will fix $\rho' \in T$ and a function $f$ described in the Assumption \ref{assume: A}, and assume 
$$
    \E f(X)=0.
$$

The proof is grounded in the decomposition of $f$ as described in Lemma \ref{LINlem:fdecomposition}, 
which splits $f$ into summation of $f_k$ and subsequently into summations of $\tilde f_u$. Accordingly, this section is devoted to derive the variance decay properties of $f_u$ stated as Proposition \ref{prop: fu} below. The proposition will be used to derive variance decay properties of $f_k$, and toward the proof of Theorem \ref{LINthm:inductionSimple} in next section. 

\begin{prop} \label{prop: fu}
There exists $C = C(M, c^*) \ge 1$ so that the following holds.  
For any $u \in T$, consider a function $f_u$ of the form 
$$f_u(x) = \sum_{ 0 \neq {\sigma } \in {\cal F}({\cal B}_u)} c_{\sigma } \psi_{\sigma }(x).$$

Then, for $\theta \in [q]$, we have 
the following bounds on $(\E_uf_u)(x)$
(recall that that by the \MP, $(\E_uf_u)(x)$ is a function of $x_u$): 
\begin{align} \label{eq:hucondhuNotilde}
	 \ConE{u}{ f_u}^2(\theta) 
\le	\exp( - 2\varepsilon ( \h(u) - C(\log(R) + 1) - \lAs ) )  (\E_u f_u^2)(\theta)
\end{align}
and 
\begin{align} \label{eq:hucondhu}
    \E \big[ (\E_u \tilde f_u)^2(X_u) \big]
\le	
    \exp( - 2\varepsilon ( \h(u) - C(\log(R) + 1) - \lAs ) ) \E \tilde f^2_u(X).
\end{align}
Additionally, for any function $a(x)$ having inputs involving only $(x_v\,:\,v \in T_{u_i})$
for some $i \in [d_u]$:
\begin{align} \label{LINeq:huwinwin}
	\mathbb{E} \big[ |\tilde f_u(X) a(X)| \big] 
\le 
	\exp \Big( - \frac{\varepsilon}{2} ( \h(u) - C(\log(R) + 1) - \lAs) \Big)
	(\mathbb{E} \tilde f^2_u(X))^{1/2} (\mathbb{E} a^2(X))^{1/2}.
\end{align}
\end{prop}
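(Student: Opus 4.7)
My plan is to exploit the product structure of $\psi_\sigma$ across disjoint subtrees. Conditional independence of $\{T_{u_i}\}_{i \in [d_u]}$ given $X_u$ will yield the key factorizations
\begin{align*}
(\E_u \psi_\sigma)(X_u) = \prod_{i \in I(\sigma)} \alpha_{\sigmai{i}}(X_u),
\qquad
(\E_u \psi_\sigma^2)(X_u) = \prod_{i \in I(\sigma)} \beta_{\sigmai{i}}(X_u),
\end{align*}
where $\alpha_\tau(\theta) := \E[\tilde\phi_\tau(X)\mid X_u=\theta]$ and $\beta_\tau(\theta) := \E[\tilde\phi_\tau^2(X)\mid X_u=\theta]$. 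Every $\sigma \in \cal F(\cal B_u)$ has $|I(\sigma)| \ge 2$, so each product carries at least two decaying factors.

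The first step I would take is pointwise single-branch decay. For each $\tau$ appearing as some $\sigmai{i}$, we have $S(\tau) \in \cal A$, so $\phi_\tau$ is a (single-monomial) $\CA_{\le u}$-polynomial and Assumption \ref{assume: A}(1) at $u$ gives $\var[\alpha_\tau(X_u)] \le \exp(-\varepsilon(\h(u)-\lAs))\var[\phi_\tau(X)]$. I would promote this $L^2$ estimate to the uniform bound $\alpha_\tau(\theta)^2 \le C\exp(-\varepsilon(\h(u)-\lAs))\var[\phi_\tau(X)]$ via the $L^2$--$L^\infty$ equivalence \eqref{eq: VarLinfty}. Simultaneously, the assumption $c_M>0$ forces $\beta_\tau(\theta) \ge c_M\var[\phi_\tau(X)]$ pointwise, because the conditional distribution of $X_{T_{u_i}}$ given $X_u=\theta$ dominates a $c_M$-multiple of its $\pi$-averaged counterpart. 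Hence the ratio $\alpha_\tau^2(\theta)/\beta_\tau(\theta)$ decays like $\exp(-\varepsilon(\h(u)-\lAs))$ uniformly in $\theta$ and $\tau$.

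Next I would deduce the pointwise inequality \eqref{eq:hucondhuNotilde} by expanding both sides in $(\sigma,\sigma')$: the quantity $(\E_u f_u)^2(\theta)$ is a sum of products of $\alpha$'s over $I(\sigma)\cup I(\sigma')$, while $(\E_u f_u^2)(\theta)$ has $\beta$'s on $I(\sigma)\cap I(\sigma')$ and $\alpha$'s on the symmetric difference. Dividing out the common structure and using the ratio bound on each of the at least two shared diagonal branches produces the advertised $\exp(-2\varepsilon(\h(u)-\lAs))$ decay. Combinatorial losses from the sum over $\sigma$ are bounded by polynomials in $d_u \le Rd$ and absorbed into the $C(\log R+1)$ correction in the exponent. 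The integrated version \eqref{eq:hucondhu} will then follow by integrating the pointwise bound over $X_u\sim\pi$, with $\tilde f_u = f_u - \E f_u$ absorbing the constant part under variance. For the win-win estimate \eqref{LINeq:huwinwin} I would split $a = \bar a(X_u) + (a - \bar a)$ with $\bar a(X_u) := \E[a(X)\mid X_u]$: the $\bar a$ piece is controlled by Cauchy--Schwarz against \eqref{eq:hucondhu}; the centered remainder lives in a single branch $T_{u_i}$, so conditioning on $X_u$ and using conditional independence isolates $\tilde\phi_{\sigmai{i}}$ from each $\psi_\sigma$, leaving a product over $j\neq i$ that still carries at least one $\alpha$-decay factor, and a final Cauchy--Schwarz closes the estimate with the square-root decay.

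The main obstacle I anticipate is the step deducing the sharp pointwise bound \eqref{eq:hucondhuNotilde} without losing more than $\mathrm{poly}(R)$ in the constant while summing over a potentially very large index set of $\sigma$'s. A clean workaround I expect to use is to first prove the integrated version \eqref{eq:hucondhu} via a second-moment argument modeled on the degree-1 analysis from Section~\ref{sec: deg1}, and then upgrade to the pointwise form \eqref{eq:hucondhuNotilde} using Assumption \ref{assume: A}(2) at $v=u$, which provides precisely the $L^2$-to-$L^\infty$ comparison needed, with the factor $1/c^*$ absorbed into the overall constant $C(M,c^*)$.
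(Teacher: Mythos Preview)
Your single-branch analysis is sound: the factorizations of $(\E_u\psi_\sigma)(\theta)$ and $(\E_u\psi_\sigma^2)(\theta)$ hold, and the ratio $\alpha_\tau^2(\theta)/\beta_\tau(\theta)\lesssim \exp(-\varepsilon(\h(u)-\lAs))$ is correct. The gap is in the step where you pass from a single $\psi_\sigma$ to the linear combination $f_u=\sum_\sigma c_\sigma\psi_\sigma$. When you expand $(\E_u f_u^2)(\theta)$ in $(\sigma,\sigma')$, the off-diagonal contributions carry factors $\E[\tilde\phi_{\sigmai{i}}\tilde\phi_{\sigma'_i}\mid X_u=\theta]$ on the common branches $i\in I(\sigma)\cap I(\sigma')$; these can be negative and are not dominated by anything you can ``divide out''. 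In particular, $\sum_\sigma c_\sigma^2(\E_u\psi_\sigma^2)(\theta)$ and $(\E_u f_u^2)(\theta)$ are \emph{not} comparable in general, so a Cauchy--Schwarz after the single-$\sigma$ bound does not close. A second, related problem: the number of $\sigma\in\cal F(\cal B_u)$ is not polynomial in $d_u$---many $\sigma$'s share the same $I(\sigma)$---so ``combinatorial losses bounded by polynomials in $d_u$'' is false at the level of individual $\sigma$'s. Finally, your proposed workaround of upgrading an integrated bound to the pointwise form via Assumption~\ref{assume: A}(2) does not apply: that hypothesis is stated for $\CA_{\le v}$-polynomials, whereas $f_u$ is a $\cal B_u$-polynomial, which is exactly what the induction is trying to establish.

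The paper resolves all three issues simultaneously by \emph{grouping} the $\sigma$'s according to $I(\sigma)$: set $f_{u,I}=\sum_{\sigma:\,I(\sigma)=I}c_\sigma\psi_\sigma$. The key observation (Proposition~\ref{prop:Bwinwin}) is that, for fixed $I$, if one freezes all branches but one---say branch~$k$---then the map $y_k\mapsto f_{u,I}(x_0,\ldots,x_{k-1},y_k,\ldots)$ is, for each frozen configuration, a linear combination of $\tilde\phi_\tau$ with $\tau\in\cal F(\CA_{\le u_k})$; the coefficients depend on the frozen variables but the function is an $\CA$-polynomial in $y_k$, so Assumption~\ref{assume: A} applies and yields one factor of decay. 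Iterating over the branches in $I'\subseteq I$ gives $((\E_U f_{u,I})(x))^2\le\exp(-\varepsilon|I'|(\h(u)-C\lAs))(\E_U f_{u,I}^2)(x)$. From this one deduces (Corollary~\ref{cor: fuI}) pointwise bounds on $(\E_u f_{u,I})(\theta)$ and on the cross terms $(\E_u f_{u,I}f_{u,J})(\theta)$ with decay governed by $|I\Delta J|$, which in turn gives the near-orthogonality $\sum_I(\E_u f_{u,I}^2)(\theta)\le 2(\E_u f_u^2)(\theta)$ (Corollary~\ref{cor: fufuI}). The proof of \eqref{eq:hucondhuNotilde}--\eqref{eq:hucondhu} then reduces to a Cauchy--Schwarz over $I$ together with the geometric sum $\sum_{t\ge2}d_u^t\exp(-\varepsilon t(\cdots))$, which is where the $\log(Rd)$ correction enters legitimately. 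The win-win bound \eqref{LINeq:huwinwin} follows by applying Proposition~\ref{prop:Bwinwin} with $I'=I\setminus\{i\}$ to each $f_{u,I}$ and summing.
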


\begin{rem}
The statement of the Proposition \ref{prop: fu} is exactly the statement of Theorem \ref{LINthm:inductionSimple} restricted to functions all of whose non-zero $c_{\sigma}$ have $\rho(\sigma) = \rho'$. Thus in some sense in this section we prove the Theorem for the most complex terms. And in the next section we will control the correlations between different terms.

This is an analogue in our setting to the classical fact in Fourier analysis that high amplitude functions have sharp decay under noise.
\end{rem}

\begin{rem} \label{rem:du>1}
We remark that the proposition holds immediately whenever $|d_u|\le 1$, since $\cal B_u = \emptyset$.  
\end{rem}

Before we proceed further, we need to decompose $f_u(x)$. 
\begin{defi} \label{defi: fu}
For $u \in T\backslash L$ and $f_u(x) = \sum_{{\sigma } \in \cal F(\cal B_u)} c_{\sigma } \psi_{\sigma }(x)$,  let
\begin{align} \label{LINeq:fuI}
	f_{u,I}(x) 
:=& 
	\sum_{{\sigma } \in {\cal F}( {\cal B}_u ) \,:\, I(\sigma)=I} c_{\sigma } \psi_{\sigma }(x) , \quad \mbox{and}\\
	\tilde f_{u,I}(x) 
:=& 
	f_{u,I}(x) -\mathbb{E} f_{u,I}(X) 
\end{align}
for each $I \subseteq [d_u]$ with $|I|\ge 2$. 
\end{defi}
Given the above definition, we have 
\begin{align*}
    f_u(x) = \sum_{I \subseteq [d_u]\,: |I|\ge 2} f_{u,I}(x).  
\end{align*}

\begin{prop} \label{prop:Bwinwin}
There exists $C = C(M, c^*) \ge 1$ so that the following holds.  
For any \(u \in T \setminus L\) and $I \subseteq [d_u]$ with $|I|\ge 2$. Consider a function of the form 
$$a(x) 
= 
    \sum_{{\sigma } \in {\cal F}( {\cal B}_u ) \,: I(\sigma)=I} 
    c_{\sigma } \psi_{\sigma }(x).
$$

Then, for $I' \subseteq I$, let 
$$
    U =  T \setminus \big( \bigcup_{i \in I'} T_{u_i} \big),
$$
and we have
\begin{align}
\label{LINeq:Bwinwin}
    \big( (\E_{U}a)(x)\big)^2
\le &
    \exp \big( - \varepsilon |I'| ( \h(u)-C \h^*) \big) 
    (\E_{U}a^2)(x).
\end{align}

\end{prop}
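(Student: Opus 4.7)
The plan is to exploit the conditional independence of the broadcast on the branches $T_{u_i}$ given $X_u$, reduce the claimed pointwise bound to a Kronecker-product matrix inequality indexed by $i\in I'$, and then verify each per-branch factor using the decay of $\CA$-polynomials supplied by Assumption~\ref{assume: A}.  As a first step I compute both sides of the inequality explicitly.  For each $\sigma$ with $I(\sigma)=I$, write $\psi_\sigma=\prod_{i\in I}\tilde\phi_{\sigma_i}$, where $\tilde\phi_{\sigma_i}$ depends only on $X_{T_{u_i}}$.  By the \MP\ and conditional independence of the subtrees given $X_u$, for $i\in I'$ we have $\E[\tilde\phi_{\sigma_i}\mid X_U]=g_{\sigma_i}(X_u)$ with $g_{\sigma_i}(\theta):=\E[\tilde\phi_{\sigma_i}\mid X_u=\theta]$, while for $i\in I\setminus I'$ the factor $\tilde\phi_{\sigma_i}$ is already $X_U$-measurable.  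This gives
\begin{align*}
(\E_U a)(x) &= \sum_\sigma c_\sigma\Big(\prod_{i\in I'} g_{\sigma_i}(x_u)\Big)\prod_{i\in I\setminus I'}\tilde\phi_{\sigma_i}(x),\\
(\E_U a^2)(x) &= \sum_{\sigma,\sigma'} c_\sigma c_{\sigma'}\Big(\prod_{i\in I'} h_{\sigma_i,\sigma'_i}(x_u)\Big)\prod_{i\in I\setminus I'}\tilde\phi_{\sigma_i}\tilde\phi_{\sigma'_i},
\end{align*}
with $h_{\sigma_i,\sigma'_i}(\theta):=\E[\tilde\phi_{\sigma_i}\tilde\phi_{\sigma'_i}\mid X_u=\theta]$.

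Because $\cal B=\cal B(\CA)$ forces $S(\sigma)\in\cal B_u$ with $I(\sigma)=I$ precisely when each $S(\sigma_i)\in\CA$ for $i\in I$, the multi-index $\sigma$ factors as $(\tau,\xi)$ with $\tau=(\sigma_i)_{i\in I'}$ and $\xi=(\sigma_i)_{i\in I\setminus I'}$ varying independently.  Setting $d_\tau(x):=\sum_{\xi} c_{(\tau,\xi)}\prod_{j\in I\setminus I'}\tilde\phi_{\xi_j}(x)$ and collecting terms, the two expressions become the tensor quadratic forms
\[
((\E_U a)(x))^2 = d^T\Big(\bigotimes_{i\in I'} g_i(x_u)g_i(x_u)^T\Big) d,\qquad (\E_U a^2)(x) = d^T\Big(\bigotimes_{i\in I'} H_i(x_u)\Big) d,
\]
with $g_i(\theta)=(g_{\sigma_i}(\theta))_{\sigma_i}$ and $H_i(\theta)=(h_{\sigma_i,\sigma'_i}(\theta))_{\sigma_i,\sigma'_i}$ both PSD.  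By the standard telescoping identity for PSD Kronecker products (if $A_i\preceq\varepsilon_i B_i$ with $A_i,B_i\succeq 0$ then $\bigotimes A_i\preceq(\prod\varepsilon_i)\bigotimes B_i$), the proposition reduces to the pointwise per-branch bound $g_i(\theta)g_i(\theta)^T\preceq \exp(-\varepsilon(\h(u)-C\lAs))\,H_i(\theta)$ for every $\theta\in[q]$ and $i\in I'$.

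Testing this matrix inequality against an arbitrary vector $v=(v_{\sigma_i})_{\sigma_i}$ and observing $v^T g_i(\theta)=(\E_u f_v)(\theta)$ and $v^T H_i(\theta) v=(\E_u f_v^2)(\theta)$ for $f_v:=\sum_{\sigma_i}v_{\sigma_i}\tilde\phi_{\sigma_i}$ --- which is a $\CA_{\le u}$-polynomial of mean zero --- reduces the matrix bound to the scalar pointwise statement $((\E_u f_v)(\theta))^2\le \exp(-\varepsilon(\h(u)-C\lAs))(\E_u f_v^2)(\theta)$ for every such $f_v$.  I would establish this scalar bound in three combined steps: (i) Assumption~\ref{assume: A}(1) with $v=u$ gives the averaged decay $\E[(\E_u f_v)^2(X_u)]\le e^{-\varepsilon(\h(u)-\lAs)}\E f_v^2$; (ii) since $\theta\mapsto(\E_u f_v)(\theta)$ is a mean-zero function of one vertex, inequality \eqref{eq: VarLinfty} of Lemma~\ref{lem:Mbasis} upgrades this to a pointwise bound on the numerator; (iii) for the denominator, $c_M>0$ gives $(\E_u f_v^2)(\theta)=\sum_{\theta'}M_{\theta\theta'}\E[f_v^2\mid X_{u_i}=\theta']\ge (c_M/\max_\theta\pi(\theta))\,\E f_v^2$ uniformly in $\theta$.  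The $M$-dependent multiplicative constants produced by (ii) and (iii) are absorbed into the exponent by choosing $C$ large enough that $(C-1)\lAs\,\varepsilon$ dominates their logarithm, which is legitimate since $\lAs$ is already bounded below by the base-case constant.

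The main technical obstacle is precisely this passage from the averaged decay of Assumption~\ref{assume: A}(1) to the fully pointwise bound needed for the Kronecker tensorisation.  Both the $L^\infty$ control of $\E_u f_v$ and the uniform pointwise lower bound on $\E_u f_v^2$ rest on the hypothesis $c_M>0$ in force throughout this section; in the general setting of Theorem~\ref{thm:main} these would have to be replaced by pointwise comparisons built out of Assumption~\ref{assume: A}(2), which is the source of the dependence of the constant $C$ on $c^*$ in the statement.
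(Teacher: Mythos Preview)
Your argument is correct and constitutes a genuinely different, more structural route to the same bound.  The paper does \emph{not} pass through a Kronecker-product reduction; instead it introduces the partial conditional expectations $a_{\le k}(x)=\E[a(X)\mid X_u=x_u,\,X_{\le k}=x_{\le k}]$ and telescopes the chain
\[
(\E_U a)^2=(\E_U a_{\le 0}^2)\le (\E_U a_{\le 1}^2)\le\cdots\le(\E_U a_{\le s}^2)=(\E_U a^2),
\]
improving each link by freezing $(x_u,x_{\le k-1})$, viewing $y_k\mapsto a_{\le k}(x_u,x_{\le k-1},y_k)$ as a $\CA_{\le u_k}$-polynomial, and applying Assumption~\ref{assume: A}.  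The pointwise conversion is done differently as well: rather than your combination of \eqref{eq: VarLinfty} for the numerator and the direct $c_M>0$ bound for the denominator, the paper applies \eqref{eq: assumeSimple0} to an \emph{independent} copy $Y_k$, then extracts the value $Y_u=x_u$ by dividing by $\pi(x_u)$ and invokes \eqref{eq: assumeSimple1} to compare the conditional and unconditional second moments.  In effect the paper uses both halves of Assumption~\ref{assume: A}, while you use only \eqref{eq: assumeSimple0} together with single-vertex facts (Lemma~\ref{lem:Mbasis} and $c_M>0$).

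What each approach buys: your tensor reduction makes the product-over-branches form of the bound completely transparent and reduces everything to a single scalar per-branch inequality, at the cost of invoking the (standard but nontrivial) monotonicity of Kronecker products of PSD matrices.  The paper's sequential telescoping is more elementary and, because it routes the pointwise comparison through \eqref{eq: assumeSimple1}, is already written in a form that carries over verbatim to the general case of Section~\ref{sec: generalInduction}, where $c_M>0$ is replaced by the abstract parameter $c^*$ (via Lemma~\ref{lem: ellAtoell*}); your step~(iii) as written is specific to $c_M>0$, which you correctly flag at the end.
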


Roughly speaking the proposition states that under the decay of correlation in Assumption \ref{assume: A}, for functions all of whose coefficient $c_{\sigma}$ have $S(\sigma) = I$ for some large set $I$ we get a variance decay of the form 
$\exp(-\eps |I| \h(u))$. For later applications the statement is more general allowing to condition on some of the subtrees. This is an analogue in our setting to the classical fact in Fourier analysis that high amplitude functions have sharp decay under noise.

\begin{proof}

    We fix $u \in T \backslash L$ and $I \subseteq [d_u]$. Without lose of generality, we assume $I'=[s]$. 
    
Let $C_0=C_0(M,c^*)$ denote the constant described in the statement of the Proposition. The precise value of $C_0$ will be determined during the proof. 

    For brevity, we introduce some notations that are only used in this proof. 

\begin{enumerate}
    \item Decomposition of $x \in [q]^T$:
    Consider the representation of $x$ as 
    $$x = (x_u,x_0,x_1,\dots,x_s),$$
where, $\forall k \in [s]$, $x_k := ( x_v\,:\, v \le u_k )$, and $x_0 = (x_v\,: v \in U \setminus \{u\})$.
\end{enumerate}
    Further, let $$ x_{\le k} = (x_0,x_1,\dots, x_k).$$   
\item For $k \in [0,s]$,
$$
    a_{\le k}(x_{\le k}) 
:= 
    \mathbb{E} \Big[
    a(X) \, \Big\vert \, 
    X_u = x_u \mbox{ and } X_{\le k} = x_{\le k}
    \Big].
$$

Before we proceed to the proof, observe that applying Jenson's inequality on conditional expectation, 
we can form a chain of inequalities 
\begin{align*}
   (\E_{U} a)^2(x) 
=
    (\E_U a_0^2)(x)
\le  
    (\E_U a_1^2)(x)
\le 
    (\E_U a_2^2)(x)
\le 
    \ldots
\le 
    (\E_U a_s^2)(x)
=
    (\E_U a^2)(x).
\end{align*}
If $\h(u) \le C_0 + \lAs$, then the statement of the Proposition is weaker than 
the inequality $(\E_{U} a)^2(x) \le ( \E_{U} a^2)(x)$ stated above. So the lemma follows immediately in that case. From now on we assume 
\begin{align}
\label{eq: Bwinwinhu}
 \h(u) > C_0 + \lAs.  
\end{align}

 We will improve each inequality in the above chain by leveraging the assumption \eqref{eq: assumeSimple0}.

Given the definition of $a(x)$,  
\begin{align*}
a(x) = \sum_{\sigma } c_{\sigma } 
    \prod_{i \in I\backslash [s]}  \tilde \phi_{ \sigmai{i}}(x_0)
    \prod_{i \in [s]} \tilde \phi_{\sigmai{i}}(x_i)
\end{align*}
By the \MP, the random variables \( (X_i\vert X_u = x_u)_{i\in [0,s]} \) are independent. This gives rise to:  
\begin{align*}
    a_{\le k}(x) 
= &  \E \Big[\sum_{{\sigma }} c_{\sigma } 
    \prod_{i \in I\backslash [s]} \tilde \phi_{\sigmai{i}}(X_0)
    \prod_{i \in [s]} \tilde \phi_{\sigmai{i}}(X_i)
    \, \Big\vert\,
       X_u=x_u \mbox{ and } X_{\le k} = x_{\le k} 
    \Big] \\
=& 
    \sum_{{\sigma }} c_{\sigma } 
    \underbrace{ 
    \prod_{i \in I\backslash [s]} \tilde \phi_{{\sigmai{i}} }(x_0)
    \prod_{i \in [k]} \tilde \phi_{{\sigmai i }}(x_k)}_{\mbox{ This part is freezed.}}
    \underbrace{ \prod_{i \in [k+1,s]} (\E_{u}\tilde \phi_{{\sigmai i }})(x_u).}_{\mbox{This part is a function of $x_u$}}
\end{align*}

\vspace{0.6cm}

Now, fix $k \in [s]$ and express \( a_{\le k}(x) = a_{\le k}(x_u, x_{\le k-1},x_{k}) \). An essence of this proof is that the mapping:
\begin{align*}
y_k \mapsto a_{\le k}(x_u,x_{\le k-1},y_k) 
\end{align*}
is a linear combination of \( \tilde \phi_{\sigma}(y_k) \) with \( \sigma \in {\cal F}({\cal A}_{u_k})\) and the coefficients are functions of $(x_u,x_{\le k-1})$, which gives us room to apply the inductive assumption, or \eqref{eq: assumeSimple0} from Assumption \ref{assume: A}.

To aid our analysis, we introduce $Y_k$, an independent copy of $X_k$. 
By \eqref{eq: assumeSimple0}, we have  
\begin{align}
\label{eq: Bwinwin00}
    \E \Big[ 
      \big(\E [a_{\le k}(x_u,x_{\le k-1}, Y_k) \vert Y_u]\big)^2 
    \Big]
\le &
    \exp( - \varepsilon ( \h(u) - \lAs ))
    \E_{Y_k} \big[ a^2_{\le k}(x_u,x_{\le k-1}, Y_k) \big]. 
\end{align}

The reason we introduce $Y_k$ is that the L.H.S. and R.H.S. of the above inequality are not related 
to (any moments of) conditional expectation of $a(X)$. However, it can still be used with some adjustment, relying on \eqref{eq: assumeSimple1} from Assumption \ref{assume: A}.

Given the assumption on $C_0$ being greater than or equal to $1$, we have 
$$
    \h(u_k) = \h(u)-1 \overset{\eqref{eq: Bwinwinhu}}{\ge} \lAs + C_0 - 1 \ge \lAs.
$$
Applying \eqref{eq: assumeSimple1} to our function $y_k \mapsto a_{\le k}(x_u,x_{\le k-1},y_k)$
we get 

\begin{align}
\nonumber
  \E_{Y_k} \big[ a^2_{\le k}(x_u,x_{\le k-1}, Y_k ) \big]
\le &
  \frac{1}{c^*} \min_{\theta \in [q]} 
   \E_{Y_k} \big[ a^2_{\le k}(x_u,x_{\le k-1}, Y_k ) \big \vert Y_{u} =\theta \big]\\
\label{eq: Bwinwin01}
\le & \frac{1}{c^*} 
  \E_{Y_k} \big[ a^2_{\le k}(x_u,x_{\le k-1}, Y_k ) \big \vert Y_{u} = x_u\big] .
\end{align}

On the other hand, 
\begin{align*}
  \pi(x_u)   \big(
      \E_{Y_k} [a_{\le k}(x_u,x_{\le k-1}, Y_k) \vert Y_u = x_u] 
    \big)^2 
\le 
    \E \Big[ \big( 
      \E_{Y_k} [a_{\le k}(x_u,x_{\le k-1}, Y_k) \vert Y_u] 
    \big)^2 \Big].
\end{align*}
Combining the above expression, \eqref{eq: Bwinwin00}, and \eqref{eq: Bwinwin01}, we conclude that 
\begin{align}
\label{eq: Bwinwin02}
    \big( \E \big[ a_{\le k}(x_u,x_{\le k-1}, Y_k) \, \big\vert\, Y_{u}=x_u\big] \big)^2
\le 
    \frac{1}{c^* \pi(x_u)} 
    \exp( - \varepsilon ( \h(u) - \lAs))
    \E_{Y_k} \big[ a^2_{\le k}(x_u,x_{\le k-1}, Y_k) 
      \,\big \vert \,  Y_u =x_u \big].
\end{align}

Notice that the expression inside the square in L.H.S. is 
\begin{align*}
& \E \big[ a_{\le k}(x_u,x_{\le k-1}, Y_k) \,\big\vert\, Y_{u}=x_u \big] \\
=& 
    \E \big[ a_{\le k}(x_u,x_{\le k-1}, X_k) \, \big\vert \, X_{u}=x_u \big] \\
=& 
    \E \big[ a_{\le k}(X_u,X_{\le k-1}, X_k) \, \big\vert \, X_{u}=x_u, X_{\le k-1}=x_{\le k-1} \big] \\
=&
    a_{\le k-1}(x).
\end{align*}
Similarly, 
\begin{align*}
    \E_{Y_k} \big[ a^2_{\le k}(x_u,x_{\le k-1}, Y_k) 
      \,\big \vert \,  Y_u =x_u \big]
=&
    \E \big[ a^2_{\le k}(x_u,x_{\le k-1}, X_k) 
      \,\big \vert \,  X_u =x_u \big] \\
=&   \E \big[ a^2_{\le k}(X_u,X_{\le k-1}, X_k) 
      \,\big \vert \,  X_u =x_u, X_{\le k-1} = x_{\le k-1} \big] \\
=&   
    a_{\le k-1}^2(x).
\end{align*} 

By imposing the {\bf first assumption} on $C_0$ that 
$$
    C_0 \ge \frac{1}{\varepsilon} \log \Big( \frac{1}{c^* \min_{j\in [q]}\pi(j)} \Big),
$$ 
it follows from \eqref{eq: Bwinwin02} that 
\begin{align*}
     a^2_{\le k-1}(x)
\le 
    \exp( - \varepsilon ( \h(u) -C_0- \lAs))
    \E \big[ 
        a^2_{\le k}(X) \, \big\vert\, X_u = x_u \mbox{ and } 
        X_{\le k-1}=x_{\le k-1}
    \big]
\end{align*}
By taking Conditional Expectation on both sides, 
\begin{align*}
  (\E_{U}  a^2_{\le k-1}) (x) 
\le 
    \exp( - \varepsilon (\h(u) - C_0 - \lAs))
\big( \E_{U} a^2_{\le k}\big)(x).
\end{align*}
 Finally, we apply this inequality consecutively for $k \in [s]$ we obtain 
\begin{align*}
     (\E_{U} a)^{2}(x) 
\le 
    \exp( - \varepsilon |I'|(\h(u) - C_0 - \lAs))
    (\E_{U} a^2)(x).
\end{align*}

\end{proof}

\begin{cor} \label{cor: fuI}
Fix $u \in T\backslash L$ and a function $f_u(x)$ following the form described in Definition \ref{defi: fu}.  
If $I,J \subseteq [d_u]$ are subsets of $[d_u]$ of size at least $2$, then 
for every $\theta \in [q]$, 
\begin{align} \label{LINeq:huIl1l2}
  | \ConE{u}{f_{u,I}}(\theta)| 
\le& 
   \exp\Big( - \frac{\varepsilon |I|}{2} (\h(u) - C - \lAs\Big) 
	\sqrt{\ConE{u}{f^2_{u,I}}(\theta) }\\
\label{LINeq:fuIfuJ}
  | \ConE{u}{f_{u,I}\cdot f_{u,J}}(\theta)|
\le&
  \exp\Big( - \frac{\varepsilon |I\Delta J|}{2} (\h(u) - C - \lAs )\Big) 
  \sqrt{ \ConE{u}{f^2_{u,I}}(\theta) } \cdot 
	 \sqrt{ \ConE{u}{f^2_{u,J}}(\theta) },
\end{align}
where $C=C(M,c^*)$ is the constant introduced in Proposition \ref{prop:Bwinwin},
and $I\Delta J := (I \setminus J) \cup (J \setminus I)$.
\end{cor}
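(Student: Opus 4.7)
The plan is to derive both inequalities from Proposition~\ref{prop:Bwinwin} combined with the Markov random field structure of the tree.

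For \eqref{LINeq:huIl1l2}, I would apply Proposition~\ref{prop:Bwinwin} to $a = f_{u,I}$ with the full choice $I' = I$. Setting $W = T \setminus \bigcup_{i \in I} T_{u_i}$, the proposition yields
\[
    \big((\E_W f_{u,I})(x)\big)^2 \le \exp\!\big(-\varepsilon |I|(\h(u) - C - \lAs)\big)\, (\E_W f_{u,I}^2)(x).
\]
Since $f_{u,I}$ depends only on the variables in $\bigcup_{i \in I} T_{u_i}$, the \MP\ implies that the extra conditioning in $\E_W$ (on the sibling subtrees $T_{u_k}$ for $k \notin I$ and on everything above $u$) is irrelevant: both $(\E_W f_{u,I})(x)$ and $(\E_W f_{u,I}^2)(x)$ reduce to $(\E_u f_{u,I})(x_u)$ and $(\E_u f_{u,I}^2)(x_u)$ respectively. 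Setting $x_u = \theta$ and taking a square root then gives \eqref{LINeq:huIl1l2}.

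The covariance bound \eqref{LINeq:fuIfuJ} rests on a conditional factorisation. I would introduce the intermediate conditioning $\tilde X := (X_v)_{v \in W_\Delta}$ with $W_\Delta = T \setminus \bigcup_{k \in I \Delta J} T_{u_k}$; after conditioning on $\tilde X$, the only unobserved variables lie in the subtrees $T_{u_k}$ for $k \in I \Delta J$. Under this conditioning the random part of $f_{u,I}$ is supported on $\bigcup_{k \in I \setminus J} T_{u_k}$ and the random part of $f_{u,J}$ on $\bigcup_{k \in J \setminus I} T_{u_k}$. These two families of subtrees are disjoint and, because $X_u \in \tilde X$, conditionally independent by the \MP. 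Consequently,
\[
    \E[f_{u,I}(X)\, f_{u,J}(X) \mid \tilde X] = \E[f_{u,I}(X) \mid \tilde X] \cdot \E[f_{u,J}(X) \mid \tilde X].
\]

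To each factor I would apply Proposition~\ref{prop:Bwinwin}: to $f_{u,I}$ with $I' = I \setminus J$, and to $f_{u,J}$ with $I' = J \setminus I$. The conditioning sets appearing in the proposition include sibling subtrees on which the corresponding function does not depend, but by the \MP\ these extra variables are irrelevant given $X_u$, so the bounds apply with $\tilde X$ in place of the proposition's $W$. This yields $(\E[f_{u,I}(X) \mid \tilde X])^2 \le \exp(-\varepsilon |I \setminus J|(\h(u) - C - \lAs))\, \E[f_{u,I}^2(X) \mid \tilde X]$ and the analogous inequality for $f_{u,J}$ with exponent $|J \setminus I|$. Multiplying, taking a square root, and substituting into the factorisation gives
\[
    |\E[f_{u,I}(X) f_{u,J}(X) \mid \tilde X]| \le \exp\!\Big(-\tfrac{\varepsilon}{2} |I \Delta J|(\h(u) - C - \lAs)\Big) \sqrt{\E[f_{u,I}^2(X) \mid \tilde X]\, \E[f_{u,J}^2(X) \mid \tilde X]},
\]
since $|I \setminus J| + |J \setminus I| = |I \Delta J|$. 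Finally, taking outer expectation under $X_u = \theta$ and applying Cauchy--Schwarz to the square root on the right produces $\sqrt{(\E_u f_{u,I}^2)(\theta)\, (\E_u f_{u,J}^2)(\theta)}$, completing \eqref{LINeq:fuIfuJ}. The main obstacle is the conditioning bookkeeping: verifying via the \MP\ that all the extra variables appearing in the various conditioning sets are independent of the relevant function given $X_u$, so that $\tilde X$ may be freely swapped with the conditioning set required by Proposition~\ref{prop:Bwinwin}.
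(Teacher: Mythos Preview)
Your proposal is correct and follows essentially the same route as the paper: for \eqref{LINeq:huIl1l2} you apply Proposition~\ref{prop:Bwinwin} with $I'=I$ and use the \MP\ to reduce $\E_W$ to $\E_u$, and for \eqref{LINeq:fuIfuJ} you condition on $\tilde X$ supported on $T\setminus\bigcup_{k\in I\Delta J}T_{u_k}$, factorise the product by conditional independence, apply the proposition to each factor with $I'=I\setminus J$ and $I'=J\setminus I$, and finish with Cauchy--Schwarz under the outer conditioning on $X_u=\theta$. The paper's proof is organised identically, with the same factorisation and the same use of H\"older/Cauchy--Schwarz at the end.
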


\begin{proof}
For the first statement, it follows from Proposition \ref{prop:Bwinwin} with $a(x)=f_{u,I}(x)$ and $I=I'$.

To prove the second statement, we begin by noting that the inputs of $f_{u,I}(x)$ and $f_{u,J}(x)$ do not include  
$\big(x_v\,:\, v \in  \bigcup_{i \in J\backslash I} T_{u_i}\big)$ and 
$\big(x_v\,:\, v \in  \bigcup_{i \in I\backslash J} T_{u_i} \big)$, respectively. 
Thus, we can apply the \MP\, and that fact that if $Y,Z,W$ are ind pendent then:
\[
\E[g(Y,Z) h(Z,W)] = \E[\E[g(Y,Z) h(Z,W) | Z]] = \E[\E[g(Y,Z) | Z] h(Z,W)]\]
and this in turn becomes:
\[
\E[\E[g(Y,Z) | Z] h(Z,W) | W] = 
\E[\E[g(Y,Z) | Z] \E[ h(Z,W) | W]], 
\]
to obtain 
\begin{align*}
    & \big(\E_u f_{u,I} f_{u,J}\big)(x)  \\
=
	& \E \bigg[
        \E\Big[ f_{u,I}(X) \,\Big\vert\, X_v : v \notin \bigcup_{i \in I \backslash J}T_{u_i}  \Big]
    \cdot 
        \E\Big[ f_{u,J}(X) \,\Big\vert\, X_v : v \notin \bigcup_{i \in J \backslash I}T_{u_i}  \Big]
    \, \bigg\vert \, X_v = x_v : v \not < u  \bigg] .
\end{align*}
In terms of absolute value, by Proposition \ref{prop:Bwinwin} we have
\begin{align*}
	&\big| \ConE{u}{f_{u,I} f_{u,J}}(x) \big| \\
\le &	 
    \E \bigg[
		\Big|\E\Big[ f_{u,I}(X) f_{u,J}(X) 
		 \,\Big\vert\, X_v \,:\,  v \notin \bigcup_{i \in I \Delta J} T_{u_i} \Big] \Big|
	\,\bigg\vert\,  X_v=x_v : v \not < u  \bigg] \\
 =& 
 \E \bigg[
		\Big|\E\Big[ f_{u,I}(X)  
		 \,\Big\vert\, X_v \,:\,  v \notin \bigcup_{i \in I \setminus J} T_{u_i} \Big] \Big| \cdot 
   	\Big|\E\Big[ f_{u,J}(X)  
		 \,\Big\vert\, X_v \,:\,  v \notin \bigcup_{i \in J \setminus I} T_{u_i} \Big] \Big|
	\,\bigg\vert\,  X_v=x_v : v \not < u  \bigg] \\
\le &
	\mathbb{E} \bigg[ 
		\sqrt{ \exp( - \varepsilon |I\backslash J| (\h(u) - C - \lAs))
	\cdot 
         \E\Big[ f_{u,I}^2(X)  
		 \,\Big\vert\, X_v \,:\,  v \notin \bigcup_{i \in I \setminus J} T_{u_i} \Big] 
} \\
&\phantom{AAA } 
	\cdot  \sqrt{ \exp( - \varepsilon |J \backslash I| (\h(u) - C - \lAs))
	\cdot 
        \E\Big[ f_{u,J}^2(X)  
		 \,\Big\vert\, X_v \,:\,  v \notin \bigcup_{i \in J \setminus I} T_{u_i} \Big] }
      \, \bigg\vert \, X_v=x_v : v \not < u   \bigg] \\
\le &
	 \exp \Big( - \frac{\varepsilon}{2} |I\Delta J| (\h(u) - C - \lAs) \Big) 
		\sqrt{ 
  \E \bigg[ \E\Big[ f_{u,I}^2(X)  
		 \,\Big\vert\, X_v \,:\,  v \notin \bigcup_{i \in I \setminus J} T_{u_i} \Big] 
   \,\bigg\vert\, X_v=x_v\,:\, v \not < u\bigg]
    } \\
&\phantom{AAA AAA AAA AAA AAA AAA AA} 
		\cdot \sqrt{ 
    \E \bigg[
     \E\Big[ f_{u,J}^2(X)  
		 \,\Big\vert\, X_v \,:\,  v \notin \bigcup_{i \in J \setminus I} T_{u_i} \Big]
  \,\bigg\vert\, X_v=x_v\,:\, v \not < u\bigg]
  } \\
= &
 \exp \Big( - \frac{\varepsilon}{2} |I\Delta J| (\h(u) - C - \lAs) \Big) 
    \sqrt{(\E_u f_{u,I}^2)(x) \cdot (\E_u f_{u,J}^2)(x)},
\end{align*}
where the second to last inequality follows from H\"older's inequality. 
\medskip 
\end{proof}

\begin{cor} \label{cor: fufuI}
There exists $C = C(M, d, c^*) \ge 1$ so that the following holds. 
If $u\in T \backslash L$ with  $\h(u) \ge \lAs +C(1 +  \log(R))$, then 
for any $f_u(x)$ in the form as described in Definition \ref{defi: fu}, 
\begin{align} \label{eq:fufuI00}
\forall \theta \in [q],\,\, \frac{1}{2} \cdot \sum_{I \subset [d_u]\,: \, |I|\ge 2}  
 \ConE{u}{f^2_{u,I}}(\theta) 
\le
	\ConE{u}{ f^2_u}(\theta).
\end{align}
\end{cor}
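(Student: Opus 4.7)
The plan is to expand $f_u^2 = \sum_{I,J} f_{u,I}\, f_{u,J}$ (summed over $I,J \subseteq [d_u]$ with $|I|,|J|\ge 2$, which is the full decomposition because every $\sigma \in \mathcal{F}(\mathcal{B}_u)$ has $|I(\sigma)|\ge 2$), apply $\ConE{u}{\cdot}(\theta)$ termwise, and show the off-diagonal contribution is at most half the diagonal. Pointwise in $\theta$,
\[
\ConE{u}{f_u^2}(\theta) \;\ge\; \sum_{I:\,|I|\ge 2} \ConE{u}{f_{u,I}^2}(\theta) \;-\; \sum_{I \ne J} \bigl|\ConE{u}{f_{u,I}\, f_{u,J}}(\theta)\bigr|,
\]
so it suffices to bound the second sum by $\tfrac{1}{2}\sum_I \ConE{u}{f_{u,I}^2}(\theta)$.

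For each ordered pair $I \ne J$, Corollary~\ref{cor: fuI} supplies
\[
\bigl|\ConE{u}{f_{u,I}\, f_{u,J}}(\theta)\bigr| \;\le\; \mu^{|I\Delta J|}\sqrt{\ConE{u}{f_{u,I}^2}(\theta)\cdot \ConE{u}{f_{u,J}^2}(\theta)},
\]
where $\mu := \exp\bigl(-\tfrac{\varepsilon}{2}(\h(u) - C_{\ref{cor: fuI}} - \lAs)\bigr)$ and $C_{\ref{cor: fuI}}$ is the $(M,c^*)$-constant from that corollary. Applying AM-GM ($\sqrt{ab}\le (a+b)/2$) and using the symmetry between $I$ and $J$, I would collapse the double sum to
\[
\sum_{I \ne J} \bigl|\ConE{u}{f_{u,I}\, f_{u,J}}(\theta)\bigr| \;\le\; \Bigl(\sup_{I}\sum_{J:\,J\ne I,\,|J|\ge 2} \mu^{|I\Delta J|}\Bigr)\sum_{I} \ConE{u}{f_{u,I}^2}(\theta).
\]

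For the inner sum, relax to all $J \subseteq [d_u]$ and use the element-wise factorization
\[
\sum_{J \subseteq [d_u]} \mu^{|I\Delta J|} \;=\; (1+\mu)^{d_u},
\]
which holds because choosing $J$ amounts to independently deciding, for each $i \in [d_u]$, whether $i \in I\Delta J$ (contributing $\mu$) or not (contributing $1$). Thus the supremum is at most $(1+\mu)^{d_u}-1$, and the whole argument closes once $(1+\mu)^{d_u}\le 3/2$. Using $d_u \le Rd$ (degree domination with $k=1$) together with $(1+\mu)^{d_u}\le e^{d_u\mu}$, it suffices to ensure $Rd\cdot\mu \le \log(3/2)$, i.e.
\[
\h(u) \;\ge\; \lAs + C_{\ref{cor: fuI}} + \tfrac{2}{\varepsilon}\bigl(\log(Rd) + \log(1/\log(3/2))\bigr).
\]
Absorbing the $M,d,c^*$-dependent constants (recall $\varepsilon=\varepsilon(M,d)$ and $C_{\ref{cor: fuI}}=C(M,c^*)$) into a single $C=C(M,d,c^*)$, the hypothesis $\h(u) \ge \lAs + C(1+\log R)$ is enough.

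The only substantive ingredient beyond Corollary~\ref{cor: fuI} and AM-GM is the factorization $\sum_J \mu^{|I\Delta J|} = (1+\mu)^{d_u}$, which decouples the sum across the $d_u$ children; the remainder is routine bookkeeping of constants, and I do not expect any real obstacle.
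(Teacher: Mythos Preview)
Your proof is correct and follows essentially the same approach as the paper: expand $f_u^2=\sum_{I,J}f_{u,I}f_{u,J}$, bound the off-diagonal terms via Corollary~\ref{cor: fuI}, apply AM--GM, and then control $\sup_I\sum_{J\ne I}\mu^{|I\Delta J|}$ under the height assumption. The only difference is cosmetic: the paper bounds that sum by grouping according to $|I\Delta J|=i$ and using $\binom{d_u}{i}\le d_u^i$ to obtain a geometric series, whereas your closed-form identity $\sum_{J\subseteq[d_u]}\mu^{|I\Delta J|}=(1+\mu)^{d_u}$ is a slightly cleaner way to reach the same conclusion.
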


\begin{proof}
Let $C_0 = C_0(M,d,c^*)$ denote the constant introduced in the statement of the Lemma. Its value will be determined along the proof. 

The statement of the Corollary is trivial when $d_u <2$ since in that case $\cal B_u =\emptyset$, implying 
$f_u = 0$. From now on, we assume $d_u \ge 2$. 

First, 
$$
  \ConE{u}{ f^2_u}(x)  
	- \sum_{I \in [d_u]\,:\, |I|\ge 2} \ConE{u}{ f^2_{u,I}}(x) 
=
	\sum_{ \{I,J\} } 2 \ConE{u}{f_{u,I}\cdot f_{u,J}}(x) 
$$
where $\sum_{ \{I,J\} }$ refers to the sum over all unordered pairs $\{I,J\}$ with 
$I$ and $J$ being distinct subsets of $[d_u]$ of size at least $2$.

We can apply \eqref{LINeq:fuIfuJ} to estimate the absolute value of the difference. 
\begin{align}
	\nonumber
	&
	\Big| \sum_{ \{I,J\} } 2 \ConE{u}{f_{u,I}\cdot f_{u,J}}(x)
  \Big| \\
	\label{LINeq:h_unorm00}
\le&
	\sum_{ \{I,J\} } 2 
   \exp\Big( - \frac{\varepsilon |I\Delta J|}{2} (\h(u) - C_{\ref{prop:Bwinwin}} - \lAs)\Big) 
  \sqrt{ (\E_u f_{u,I}^2)(x) } \cdot  \sqrt{ (\E_u f_{u,J}^2)(x) },
\end{align}
where the constant $C_{\ref{prop:Bwinwin}}$ is the constant $C_{\ref{prop:Bwinwin}}$ introduced in Proposition \ref{prop:Bwinwin}. 
By $ 2|ab| \le a^2 + b^2$ for $a,b \in \mathbb{R}$, 
$$	
    2  \sqrt{ (\E_u f_{u,I}^2)(x) } \cdot  \sqrt{ (\E_u f_{u,J}^2)(x) }
\le 
    (\E_u f_{u,I}^2)(x) + (\E_u f_{u,J}^2)(x).$$
Hence, 
\begin{align*}
	\eqref{LINeq:h_unorm00} 
\le
	\sum_{I \subset [d_u]\,:\, |I|\ge 2}  (\E_u f_{u,I}^2)(x) 
		\Big( \sum_{ J \subset [d_u]\,:\,  I\neq J}
			\exp\Big( - \frac{\varepsilon |I\Delta J|}{2} (\h(u)-C_{\ref{prop:Bwinwin}}-\lAs) \Big) \Big).
\end{align*}

With $|\{ J\subseteq [d_u]\,:\, |I \Delta J| = i \}| =  { d_u \choose i } \le d_u^i$, 
\begin{align}
	\sum_{ J \subset [d_u]\,:\,  I\neq J}
			\exp\Big( - \frac{\varepsilon |I\Delta J|}{2} (\h(u) - C_{\ref{prop:Bwinwin}} -\lAs) \Big) 
\le
	\sum_{i =1}^\infty d_u^i \exp\Big( - \frac{\varepsilon i}{2} (\h(u) - C_{\ref{prop:Bwinwin}} - \lAs) \Big)
\le 
	1/4, 
\end{align}
provided that $\h(u) - C_{\ref{prop:Bwinwin}} - \frac{\log(d_u)}{\varepsilon} - \lAs \ge \frac{16}{\varepsilon}$. 

Now, we impose the {\bf first assumption} on $C_0$ that 

$$
    C_0 \ge C_{\ref{prop:Bwinwin}} + \frac{\log(d)}{\varepsilon} + \frac{1}{\varepsilon} + \frac{16}{\varepsilon},
$$
then 
$$
    \h(u) \ge \lAs + C_0(\log(R) + 1) \ge  C_{\ref{prop:Bwinwin}} + \frac{\log(d_u)}{\varepsilon} + \frac{16}{\varepsilon} +  \lAs .
$$

Hence, 
\begin{align}
	\Big| \ConE{u}{ f^2_u }(x)
	- \sum_{I \in [d]\,:\, |I|\ge 2}\ConE{u}{ f^2_{u,I}}(x)   \Big|
\le & 	\frac{1}{4} \sum_{I \in [d]\,:\, |I|\ge 2}\ConE{u}{ f^2_{u,I}}(x)
\end{align}
and the proof follows.
\end{proof}

\vspace{1cm}

\begin{proof} [Proof of Proposition \ref{prop: fu}]
Let $C_0 = C_0(M,d,c^*)$ denote the constant introduced in the statement of the Proposition. 
Its precise value will be determined along the proof. From Remark \ref{rem:du>1}, it is sufficient to consider the case when $|d_u|\ge 2$. Further, it suffices to prove in the case when 
\begin{align}
\label{eq: h(u)C0}
    \h(u) \ge \lAs + C_0(\log(R) + 1), 
\end{align}
since otherwise the statements follow from either 
Cauchy-Schwarz or Jenson's inequality. 

\medskip

{\bf Part I: Derivation of \eqref{eq:hucondhuNotilde} and \eqref{eq:hucondhu}.}

First, by \eqref{LINeq:huIl1l2},
\begin{align} \label{eq:corhu01}
	(\E_u f_u)^2(\theta))
=&
	\Big( \sum_{ I\subseteq [d_u] \,:\, |I|\ge 2}  (\E_{u}f_{u,I})(\theta)  \Big)^2 \\
\le&
	\nonumber	
	\bigg( 
		\sum_{ I\subseteq [d_u] \,:\, |I|\ge 2} 
			\exp\Big( - \frac{\varepsilon |I|}{2}  (\h(u) - C_{\ref{prop:Bwinwin}} - \lAs )\Big)
			\cdot
            \sqrt{(\E_{u}f^2_{u,I})(\theta)}
	\bigg)^2  \\
\le &
	\nonumber
 	\Big( \sum_{ I \subseteq [d_u] : |I| \ge 2} \exp\Big( - \varepsilon |I|  (\h(u) - C_{\ref{prop:Bwinwin}} - \lAs )\Big) \Big)
\cdot 	\Big( \sum_{ I \subseteq [d_u] : |I| \ge 2} (\E_{u}f^2_{u,I})(\theta) \Big), 
\end{align}
where we applied Cauchy-Schwarz inequality in the last inequality;
the constant $C_{\ref{prop:Bwinwin}}$ is the constant $C$ introduced in Proposition \ref{prop:Bwinwin}. 

With the coarse estimate  
$$\big|\big\{I \subseteq [d_u] : |I|=t \big\} \big| = {d_u \choose i } \le d_u^t \le (Rd)^t,$$
we have 
\begin{align}
\nonumber
	& \Big( \sum_{ I \subseteq [d_u] : |I| \ge 2} \exp\Big( - \varepsilon |I|  (\h(u) - C_{\ref{prop:Bwinwin}} - \lAs )\Big) \Big) \\
\label{eq: propfu000}
\le&
	\sum_{t=2}^\infty 
    \exp \bigg( -\varepsilon t 
        \Big(\h(u) - C_{\ref{prop:Bwinwin}} - \frac{\log(R)+\log(d)}{\varepsilon} - \lAs \Big) \bigg). 
\end{align}
The geometric series above is finite if $\h(u)$ is large enough, and this can be achieved by imposing assumption of $C_0$ and relying on \eqref{eq: h(u)C0}. 
Now, let us impose the {\bf first assumption} on $C_0$:
\begin{align} \label{eq: propfu01}
    C_0 \ge C_{\ref{prop:Bwinwin}} + (2+2\log(d)+100)/\varepsilon. 
\end{align}
Then, by \eqref{eq: h(u)C0} we have 
$$
    \h(u) \ge \lAs + C_0(\log(R) + 1) \ge \lAs + C_{\ref{prop:Bwinwin}} + 2\frac{\log(R)+\log(d)}{\varepsilon} + \frac{100}{\varepsilon}, 
$$
which in term implies the R.H.S. of \eqref{eq: propfu000} is
\begin{align*}
&    
    \frac{
        \exp \bigg( - 2\varepsilon
        \Big(\h(u) - C_{\ref{prop:Bwinwin}} - \frac{\log(R)+\log(d)}{\varepsilon} - \lAs \Big) \bigg)
    }{
        1
    -
        \exp \bigg( - \varepsilon
        \Big(\h(u) - C_{\ref{prop:Bwinwin}} - \frac{\log(R)+\log(d)}{\varepsilon} - \lAs \Big) \bigg)
    } \\
  \le &
\frac{
        \exp \bigg( - 2\varepsilon
        \Big(\h(u) - C_{\ref{prop:Bwinwin}} - \frac{\log(R)+\log(d)}{\varepsilon} - \lAs \Big) \bigg)
    }{
        1
    -
        e^{-100}    
    } \\
\le & 
    2\exp \bigg( - 2\varepsilon
        \Big(\h(u) - C_{\ref{prop:Bwinwin}} - \frac{\log(R)+\log(d)}{\varepsilon} - \lAs \Big) \bigg) \\
= & 
    \frac{1}{4} 
    \exp \bigg( - 2\varepsilon
        \Big(\h(u) - C_{\ref{prop:Bwinwin}} - \frac{\log(R)+\log(d)}{\varepsilon} - \lAs - \frac{\log(8)}{2\varepsilon}\Big) \bigg) \\
\le & 
     \frac{1}{4} 
    \exp \Big( - 2\varepsilon \big(\h(u) - C_0(\log(R) + 1) - \lAs \big) \Big) 
\end{align*}

Substituting the above estimate into \eqref{eq:corhu01}, together with \eqref{eq:fufuI00} we have
\begin{align}
	\nonumber 
	(\E_u f_u)^2(\theta)
\le&
	\frac{1}{4}
    \exp \Big( - 2\varepsilon \big(\h(u) - C_0(\log(R) + 1) - \lAs \big) \Big) 
    \cdot	\Big( \sum_{ I \subseteq [d_u] : |I| \ge 2} (\E_{u}f^2_{u,I})(\theta) \Big)\\
\le&
    \frac{1}{2}
    \exp \Big( - 2\varepsilon \big(\h(u) - C_0(\log(R) + 1) - \lAs \big) \Big) 
(\E_{u}f^2_u)(\theta).
	\label{eq:corhu02}
\end{align}
Therefore, we have derived an inequality which is slightly stronger than \eqref{eq:hucondhuNotilde}.

\medskip

To derive \eqref{eq:hucondhu}, let us first show $\mathbb{E} f_u(X)$ is relatively small 
using \eqref{eq:corhu02} and Jesnon's inequality:
\begin{align*}
	\big(\mathbb{E} [ f_u(X) ] \big)^2	
\le 
	\E \big[ (\E_uf_u)^2(X)	\big]
\le
    \frac{1}{2}
    \exp \Big( - 2\varepsilon \big(\h(u) - C_0(\log(R) + 1) - \lAs \big) \Big) 
	\mathbb{E}& \big[ f_u^2(X) \big] \\
\le &
    \frac{1}{2}\mathbb{E} \big[f_u^2(X) \big].
\end{align*}
Thus, the variance and the second moment of $f_u(X)$ are the same up to a factor of $2$:
\begin{align} \label{eq:tildehuhu}
	\mathbb{E}  \big[ \tilde f_u^2(X) \big]
= 	
    \mathbb{E}  \big[ f_u^2(X) \big] - \big(\mathbb{E} [ f_u(X)]\big)^2
\ge 	
    \frac{1}{2} \mathbb{E} \big[f_u^2(X) \big].
\end{align}
 	
We conclude that 
\begin{align*}
    \mathbb{E}  \big[(\E_u \tilde f_u)^2(X)\big]
\le&	
    \E \big[(\E_uf_u)^2(X) \big]	\\
\le&	
    \frac{1}{2}
    \exp \Big( - 2\varepsilon \big(\h(u) - C_0(\log(R) + 1) - \lAs \big) \Big) 
	\mathbb{E}  \big[f_u^2(X) \big] \\
\le& 	
    \exp( -2 \varepsilon (\h(u) - C_0(1+  \log(R)) - \lAs) )\mathbb{E}  \big[\tilde f_u^2(X) \big].
\end{align*}
 Therefore, we complete the proof of \eqref{eq:hucondhu}. 

\medskip

{\bf Part II: Derivation of \eqref{LINeq:huwinwin}.}

It remains to show \eqref{LINeq:huwinwin} and the proof is similar. 
Fix $I \subset [d_u]$ with $|I|\ge 2$, let $I' = I\backslash \{i\}$
and we represent $x \in [q]^T$ as $(x_0,x_1)$, where 
\begin{align*}
    x_0 :=& \big( x_v \,:\, v \notin \bigcup_{j \in I'} T_{u_j} \big)&  &\mbox{and}&
    x_1 :=& \big( x_v \,:\, v \in \bigcup_{j \in I'} T_{u_j} \big).
\end{align*}
With this notation, we have $a(x)=a(x_0)$. Thus,  
\begin{align*}
	\mathbb{E} \big[ |\tilde f_{u,I}(X) a(X)| \big]
=&
	\mathbb{E} \Big[ \big|\CE{ \tilde f_{u,I}(X)}{X_0}\cdot a(X_0) \big|\Big] \\
\le&
	\sqrt{ \mathbb{E} \Big[ \big(  
        \E\big[ \tilde f_{u,I}(X)\,\big\vert\, X_0 \big]\big)^2 \Big] }
	\cdot  \sqrt{\mathbb{E} \big[ a^2(X_0) \big] }\\
\le &
    \sqrt{ \mathbb{E} \Big[ \big(  
        \E\big[  f_{u,I}(X)\,\big\vert\, X_0 \big]\big)^2 \Big] }
	\cdot  \sqrt{\mathbb{E} \big[a^2(X_0) \big]}\\
\le&
	\exp \Big( - \frac{\varepsilon}{2} |I\setminus \{i\}| (\h(u) - C_{\ref{prop:Bwinwin}} - \lAs ) \Big) 
    \sqrt{ \mathbb{E} \big[f^2_{u,I}(X)\big] }
	\cdot \sqrt{\mathbb{E} \big[ a^2(X) \big]},
\end{align*}
where the last inequality follows from Proposition \ref{prop:Bwinwin}. 
Hence, 
\begin{align} 
\nonumber
&	 \mathbb{E} \big[ | \tilde f_u(X) a(X) | \big] \\
\nonumber
\le&
	 \sum_{I\subseteq [d_u]\,:\, |I|\ge 2}
  \exp \Big( - \frac{\varepsilon}{2} |I \setminus \{i\}| (\h(u) - C_{\ref{prop:Bwinwin}} - \lAs ) \Big) 
    \sqrt{ \mathbb{E} \big[f^2_{u,I}(X)\big] }
	\cdot \sqrt{\mathbb{E} \big[ a^2(X) \big]} \\
\label{eq: propfu04}
\le&
	\bigg( \sum_{I\subseteq [d_u]\,:\, |I|\ge 2}
        \exp \big( - \varepsilon |I\setminus \{i\}| (\h(u) - C_{\ref{prop:Bwinwin}} - \lAs )\big) 
    \bigg)^{1/2}
	\cdot \sqrt{
        \sum_{I\subseteq [d_u]\,:\, |I|\ge 2} 
            \mathbb{E} \big[f^2_{u,I}(X)\big] }
    \cdot \sqrt{\mathbb{E} \big[a^2(X) \big]}.
\end{align}

Next, we impose the {\bf second assumption} on $C_0$ that 
$$
    C_0 \ge C_{\ref{cor: fufuI}},
$$
where $C_{\ref{cor: fufuI}}$ is the constant introduced in Corollary \ref{cor: fufuI}.  
Together our assumption $\h(u) \ge \lAs + C_0(\log(R) + 1)$ at the beginning of the proof, 
we can apply the Corollary and \eqref{eq:tildehuhu} to get   
\begin{align} \label{eq: propfu02}
    \sum_{I\subseteq [d_u]\,:\, |I|\ge 2} \mathbb{E} f^2_{u,I}(X)  
\le  	
    2\mathbb{E} (f_u(X))^2  \le 4 \E (\tilde f_u(X))^2.
\end{align}

Repeating the same argument as in the proof of \eqref{eq:hucondhu} and relying on the assumption \eqref{eq: propfu01} of $C_0$,
\begin{align}     
\nonumber
 \sum_{I\subseteq [d_u]\,:\, |I|\ge 2}
        \exp( - \varepsilon |I\backslash \{i\}| (\h(u) - C_{\ref{prop:Bwinwin}} - \lAs )) 
\le &
    \sum_{t =1}^\infty        
        \exp\bigg( - \varepsilon t  
        \Big(\h(u) - C_{\ref{prop:Bwinwin}} 
            - \lAs - 2 \frac{\log(Rd)}{\varepsilon} \Big) 
        \bigg)\\
\label{eq: propfu03}
\le &
    \frac{1}{4} \exp \bigg( - \varepsilon \Big(\h(u) - C_0(\log(R) + 1) - \lAs  \Big) \bigg).
\end{align}
Therefore, combining \eqref{eq: propfu02}, \eqref{eq: propfu03}, and \eqref{eq: propfu04} we get
\begin{align*}
    \mathbb{E} \big[ | \tilde f_u(X) a(X) | \big] 
\le 
    \exp \Big( - \frac{\varepsilon}{2} \big(\h(u) - C_0(\log(R) + 1) - \lAs  \big) \Big)
    \cdot \sqrt{\E \big[\tilde f_u^2(X)\big]}
    \cdot \sqrt{\mathbb{E} \big[a^2(X)\big]}.
\end{align*}

\end{proof}

\vspace{1cm}

\section{ Induction Step 2: Decay of $f_k$ and the proof of Theorem \ref{LINthm:inductionSimple} }
\label{sec: ind2}
As a continuation of the inductive step, we adapt the notation introduced in the previous section. Building on the properties of an single component $f_u$ from Proposition \ref{prop: fu}, our objective is to deduce variance and covariance decay of $f_k$, which is stated in Proposition \ref{prop: fk} below. Once it is established, we will be ready to prove Theorem \ref{LINthm:inductionSimple}. 

\subsection{Properties of $f_k$}
The main goal of this subsection is to derive the following Proposition. 

\begin{prop} \label{prop: fk}
There exists $C = C(M,d, c^*) \ge 1$ so that the following holds. 
For any $\rho' \in T$ satisfying 
$$
    \h(\rho') \ge \lAs + C( 1 +  \log(R)). 
$$
Fix a positive integer $k_1$ such that 
\begin{align*}
    \h(\rho') \ge k_1 \ge \lAs + C( 1 +  \log(R)). 
\end{align*}
Consider a function 
\begin{align*}
	 f(x) =& c + \sum_{{\sigma} \in \cal F(\cal B_{\le \rho'})} c_\sigma \phi_\sigma(x)\\
\end{align*}
with $\E f(X)=0$. We decompose $f$ according to Lemma \ref{LINlem:fdecomposition} with the given $k_1$. 
Then, the following holds: 
\begin{itemize}
\item for $k \in [k_1+1,\h(\rho')]$, 
\begin{align} \label{LINeq:hkrho}
    \E \big[(\E_{\rho'}f_k)^2(X_{\rho'})\big]
\le
	\exp \Big(- \varepsilon \big( \h(\rho') +k - C(\log(R) + 1)  - 2\lAs \big)\Big)
	\mathbb{E} f_k^2(X) ,
\end{align}
\item for $k=k_1$, 
\begin{align} \label{LINeq:hk1rho}
    \E \big[(\E_{\rho'}f_{k_1})^2(X_{\rho'})\big]
\le
	\exp \Big(- \varepsilon \big( \h(\rho') -k - C(\log(R) + 1) \big)\Big)
	\mathbb{E} f_{k_1}^2(X) , \mbox{ and }
\end{align}
\item for $k_1 \le m < k \le \h(\rho')$,
\begin{align} \label{LINeq:hkhm}
    \big| \mathbb{E} \big[f_k(X) f_{m}(X) \big] \big|
    \le&
        \exp \Big( - \frac{\varepsilon}{2} \big(k-C (\log(R) + 1) -\lAs\big) \Big)	
        \sqrt{\E \big[ f_k^2(X)\big] \E \big[  f_{m}^2(X) \big]} ,
\end{align}
\end{itemize}

\end{prop}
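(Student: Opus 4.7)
The plan is to combine three ingredients: the single-vertex decay estimates from Proposition \ref{prop: fu} applied to each $\tilde f_u$, Markov-chain decay along the path from $u$ up to $\rho'$ via Lemma \ref{lem:Mbasis}, and a quasi-orthogonality of the $\tilde f_u$'s at a fixed level which I would derive from the tree conditional variance identity.

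I would first handle the case $k=k_1$ in \eqref{LINeq:hk1rho}, which is the cleanest. By the tower property $\E_{\rho'} f_{k_1} = \E_{\rho'}(\E_{k_1} f_{k_1})$, and the Markov property forces $\E_{k_1} f_{k_1}$ to be a degree-$1$ function of $(X_v)_{v \in D_{k_1}(\rho')}$; after the $\bar f_u$-substitution from Lemma \ref{lem: f_kSubstitute} its summands are exactly $(\E_u \bar f_u)(X_u)$. Invoking the base case Proposition \ref{prop: inductionBaseSimple} directly, together with $\var(\E_{k_1} f_{k_1}) \le \var f_{k_1}$ from conditioning, yields the claimed exponent immediately.

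For \eqref{LINeq:hkrho} with $k > k_1$, write
\begin{align*}
\E_{\rho'} f_k(X_{\rho'}) = \sum_{u \in D_k(\rho')} \E_{\rho'}\bigl( (\E_u \tilde f_u)(X_u)\bigr),
\end{align*}
where each $\E_u \tilde f_u$ is a function of $X_u$ alone by the Markov property. Cauchy--Schwarz contributes a size factor $|D_k(\rho')| \le R d^{\h(\rho')-k}$, Lemma \ref{lem:Mbasis} contributes $\lambda^{2(\h(\rho')-k)}$, and the two combine into $(d\lambda^2)^{\h(\rho')-k} \le e^{-1.1\varepsilon(\h(\rho')-k)}$; meanwhile \eqref{eq:hucondhu} of Proposition \ref{prop: fu} contributes an additional $\exp(-2\varepsilon(k - C(1+\log R) - \lAs))$ inside each summand, leaving $\sum_u \var \tilde f_u$ on the right. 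To upgrade this to $\var f_k$ I use the tree conditional variance identity
\begin{align*}
\var f_k = \var\!\Bigl(\textstyle\sum_{u} (\E_u \tilde f_u)(X_u)\Bigr) + \sum_{u \in D_k(\rho')} \E\var(\tilde f_u \mid X_u),
\end{align*}
valid because, conditional on $X$ at level $k$, the subtrees $T_u$ are mutually independent (Markov Property) and each $\tilde f_u$ is supported in $T_u$. Since $\E\var(\tilde f_u \mid X_u) = \var \tilde f_u - \var((\E_u \tilde f_u)(X_u)) \ge \tfrac12 \var \tilde f_u$ once $k \ge \lAs + C(1+\log R)$ is large enough, by \eqref{eq:hucondhu}, we obtain the desired quasi-orthogonality $\sum_u \var \tilde f_u \le 2\var f_k$. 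Multiplying everything produces an exponent $1.1\h(\rho') + 0.9k - O(\lAs + \log R) \ge \h(\rho') + k - O(\lAs + \log R)$ (using $k \le \h(\rho')$), matching \eqref{LINeq:hkrho}.

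For \eqref{LINeq:hkhm} I would expand $\E[f_k f_m] = \sum_{u \in D_k(\rho'),\, v \in D_m(\rho')} \E[\tilde f_u \cdot f^{(m)}_v]$, where $f^{(m)}_v$ stands for $\tilde f_v$ when $m > k_1$ and for $\bar f_v$ when $m = k_1$, and split pairs by whether $v < u$. When $v < u$, the function $f^{(m)}_v$ is supported in $T_{u_i}$ for the unique index $i \in [d_u]$ with $v \in T_{u_i}$, so the win--win estimate \eqref{LINeq:huwinwin} directly supplies $|\E[\tilde f_u f^{(m)}_v]| \le \exp(-\tfrac{\varepsilon}{2}(k - C(1+\log R) - \lAs))\sqrt{\var \tilde f_u \cdot \var f^{(m)}_v}$. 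When $v \not< u$, take the LCA $w$, which lies strictly above both; Markov factorization and Cauchy--Schwarz give $|\E[\tilde f_u f^{(m)}_v]| \le \sqrt{\E (\E_w \tilde f_u)^2 \cdot \E(\E_w f^{(m)}_v)^2}$, and a further application of Lemma \ref{lem:Mbasis} to $\E_w \tilde f_u = \E_w(\E_u \tilde f_u)$ combined with Proposition \ref{prop: fu} yields comparable exponential decay in $k$. Summing via Cauchy--Schwarz and the quasi-orthogonality established above (applied at both levels $k$ and $m$) yields \eqref{LINeq:hkhm}. The main obstacle, and the technically most delicate step, is the bookkeeping: ensuring that the polynomial-in-$\h$ prefactors and the powers of $R$ arising from $|D_k(\rho')|$, from summations over LCA heights, and from Cauchy--Schwarz at multiple levels can all be absorbed into a single $C(1+\log R)$ correction in the exponent; this is where the $0.1\varepsilon$ slack built into Definition \ref{defi:epsilonkA} does the work.
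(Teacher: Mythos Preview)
Your treatment of \eqref{LINeq:hk1rho} and \eqref{LINeq:hkrho} is correct and matches the paper's; the quasi-orthogonality $\sum_u \var \tilde f_u \le 2\var f_k$ that you extract from the conditional-variance identity is exactly the content of Lemma~\ref{lem: fhku}.

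The argument for \eqref{LINeq:hkhm}, however, has a real gap. In the $v<u$ case you apply \eqref{LINeq:huwinwin} to each $f^{(m)}_v$ individually, obtaining $|\E[\tilde f_u f^{(m)}_v]| \le e^{-\varepsilon(k-\cdots)/2}\sqrt{\var\tilde f_u\,\var f^{(m)}_v}$, and then plan to sum over $v\in D_m(u)$ by Cauchy--Schwarz. That step costs a factor $\sqrt{|D_m(u)|}\le \sqrt{Rd^{\,k-m}}$; since $k-m$ can be of order $\h(\rho')$, the $0.1\varepsilon$ slack cannot absorb it (that slack trades polynomial prefactors for exponent, not $d^{\Theta(\ell)}$). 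A parallel obstruction hits your $v\not<u$ branch: invoking Lemma~\ref{lem:Mbasis} only on the $\tilde f_u$ side, as you state, the LCA sum becomes $d^{(k-m)/2}\sum_{t\ge 1}(d\lambda)^t$, which need not converge under $d\lambda^2<1$ alone. The paper circumvents both problems by aggregating the level-$m$ pieces at level $k$ \emph{before} estimating: with $f_{m,u}=\sum_{v\in D_m(u)}\tilde f_v$ it writes
\[
\E[f_k f_m] \;=\; \E\bigl[(\E_k f_k)(\E_k f_m)\bigr] \;+\; \sum_{u} \E[\tilde f_u\, f_{m,u}] \;-\; \sum_{u} \E\bigl[(\E_u\tilde f_u)(\E_u f_{m,u})\bigr].
\]
The first and third terms are handled by a single Cauchy--Schwarz together with Proposition~\ref{prop: fu}; for the middle term one splits $f_{m,u}=\sum_{i\in[d_u]} f_{m,u_i}$ (only $d_u\le Rd$ summands) and applies \eqref{LINeq:huwinwin} to each $f_{m,u_i}$, so the Cauchy--Schwarz loss is merely $\sqrt{Rd}$. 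Your pairwise expansion can be salvaged by performing exactly this aggregation before invoking the win--win bound, but as written the summation does not close.
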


Before we prove the Proposition, let us prove the following second moment bounds for the partial sums of $\tilde f_u$.  
\begin{lemma} \label{lem: fhku}
   There exists a constant $C=C(M,d,c^*)\ge 1$ so that the following holds. 
   Consider the same description as stated in Proposition \ref{prop: fk}
   and $k_1 \ge \lAs + C(\log(R) + 1)$. 
   Let  $(h,k)$ be a pair of integers satisfying $k_1 < h \le k \le l'$. 
   For $u \in D_{k}(\rho')$, let 
   \begin{align*}
       f_{h,u}(x) = \sum_{v \in D_{h}(u)} \tilde f_v(x).
   \end{align*}
   In other words, 
   $$f_{h}(x) = \sum_{ u \in D_k(\rho')} f_{h,u}(x).$$

   The following holds:
    First, for $u \in D_{k}(\rho')$, 
   \begin{align*}
     \frac{1}{2}\sum_{v \in D_{h}(u)}  \E \tilde f_{v}^2(X) 
    \le \E f_{h,u}^2(X) \le 2\sum_{v \in D_{h}(u)}  \E \tilde f_{v}^2(X).
   \end{align*}
   Second, 
   \begin{align*}
     \frac{1}{4}\sum_{u \in L_{k}(\rho')}  \E f_{h,u}^2(X) 
    \le \E f_{h}^2(X) \le  4 \sum_{u \in L_{k}(\rho')}  \E f_{h,u}^2(X).
   \end{align*}
\end{lemma}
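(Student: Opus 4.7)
The plan is to attack both inequalities by expanding the second moment as a diagonal plus a cross-term,
$$ \E f_{h,u}^2(X) = \sum_{v \in D_h(u)} \E \tilde f_v^2(X) + \sum_{v \ne v' \in D_h(u)} \E[\tilde f_v(X) \tilde f_{v'}(X)], $$
and likewise for $\E f_h^2(X)$. Each statement then reduces to showing the cross-sum is bounded by a small fraction of the diagonal. I will choose the constant $C=C(M,d,c^*)$ in the hypothesis $k_1 \ge \lAs + C(\log R + 1)$ large enough to force the cross-sums to be at most, say, $\tfrac14$ of the diagonal; the bracket ratios in the two claims will then follow by putting the two diagonals alongside one another.

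The core estimate controls $|\E[\tilde f_v(X)\tilde f_{v'}(X)]|$ for distinct $v, v' \in D_h(u)$. Let $w = \rho(\{v,v'\})$ and $s := \h(w) - h \ge 1$. Since $\tilde f_v$ lives on $T_v \subset T_{w_i}$ and $\tilde f_{v'}$ on $T_{v'} \subset T_{w_{i'}}$ for different children of $w$, the \MP\ factors the expectation after conditioning on $X_w$:
$$ \E[\tilde f_v \tilde f_{v'}] = \E\big[\E[\tilde f_v\mid X_w]\cdot \E[\tilde f_{v'}\mid X_w]\big]. $$
Another use of the Markov property and the tower rule identifies $\E[\tilde f_v \mid X_w] = (\E_w g_v)(X_w)$, where $g_v(x_v) := (\E_v \tilde f_v)(x_v)$ is a centered function of $X_v$. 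I will then chain two decay bounds: Lemma \ref{lem:Mbasis} applied to $g_v$ across the $s$ Markov steps from $v$ up to $w$ gives $\E (\E_w g_v)^2(X_w) \le C_1 s^{2q} \lambda^{2s} \E g_v^2$, and \eqref{eq:hucondhu} of Proposition \ref{prop: fu} (valid because $\h(v) = h > k_1 \ge \lAs + C(\log R + 1)$) gives $\E g_v^2 \le \exp(-2\varepsilon(h - C(\log R + 1) - \lAs))\,\E \tilde f_v^2$. Cauchy--Schwarz over $X_w$ combines these into
$$ |\E[\tilde f_v \tilde f_{v'}]| \le C_1\, s^{2q} \lambda^{2s}\,\exp\big(-2\varepsilon(h - C(\log R + 1) - \lAs)\big)\,\sqrt{\E \tilde f_v^2 \cdot \E \tilde f_{v'}^2}. $$

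To aggregate the cross-sum, I will partition pairs by the height of their common ancestor: for each $s \ge 1$, pairs with $\rho(\{v,v'\})$ at height $h+s$ sit inside $\bigsqcup_{w \in D_{h+s}(u)} D_h(w) \times D_h(w)$, with $|D_h(w)| \le R d^s$. Using $\sqrt{ab} \le (a+b)/2$ and the partition $D_h(u) = \bigsqcup_{w \in D_{h+s}(u)} D_h(w)$, the total cross-sum is bounded by
$$ R\, e^{-2\varepsilon(h - C(\log R + 1) - \lAs)} \Big(\sum_{s \ge 1} C_1 s^{2q} (d\lambda^2)^s\Big) \sum_{v \in D_h(u)} \E \tilde f_v^2. $$
The series converges because $d\lambda^2 \le e^{-1.1\varepsilon} < 1$, yielding an $M$-dependent constant $C_2$; I then enlarge $C$ so that the surviving prefactor $R C_2$ is absorbed into the exponent via $C(\log R + 1)$. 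This drives the cross-sum below $\tfrac14\sum_v \E \tilde f_v^2$ once $h \ge k_1$, giving the first claim. The second claim is proved by exactly the same argument applied to pairs in $D_h(\rho')$, with $s$ now ranging up to $l' - h$; the same infinite geometric tail bound applies, producing $\E f_h^2 \in [\tfrac34,\tfrac54]\sum_{v \in D_h(\rho')} \E \tilde f_v^2$. Combining with the first claim summed over $u \in D_k(\rho')$ yields the $[\tfrac14, 4]$ bracket.

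The main obstacle is the combinatorial bookkeeping: one must carefully index pairs by their nearest common ancestor (to avoid overcounting) and verify that the explosive factor $|D_h(w)| \le R d^s$ is overwhelmed by the decay $(d\lambda^2)^s$ from the Markov chain mixing --- a direct manifestation of the Kesten--Stigum assumption $d\lambda^2 < 1$ --- while the remaining lone factor $R$ is absorbed into the $C(\log R + 1)$ shift in the exponent. Everything else is a routine application of the \MP, Lemma \ref{lem:Mbasis}, and Proposition \ref{prop: fu}.
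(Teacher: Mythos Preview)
Your argument is correct and uses the same two inputs as the paper: the decay $\E(\E_v \tilde f_v)^2 \le \exp(-2\varepsilon(h-C_{\ref{prop: fu}}(\log R+1)-\lAs))\E\tilde f_v^2$ from Proposition~\ref{prop: fu}, and the Markov-chain contraction from Lemma~\ref{lem:Mbasis} together with $d\lambda^2<1$ to sum the geometric tail.

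The organization differs slightly. The paper does not bound the cross terms one by one; instead it uses the identity
\[
\E f_{h,u}^2(X)=\E\Big[\Big(\sum_{v\in D_h(u)}(\E_v\tilde f_v)(X_v)\Big)^2\Big]+\sum_{v\in D_h(u)}\big(\E\tilde f_v^2-\E(\E_v\tilde f_v)^2\big),
\]
obtained by noting that $\E[\tilde f_v\tilde f_{v'}]=\E[(\E_v\tilde f_v)(\E_{v'}\tilde f_{v'})]$ for $v\ne v'$. For the lower bound the first term is simply dropped (it is a square), so no geometric sum is needed there; for the upper bound the first term is handled by invoking Lemma~\ref{lem:deg1T} as a black box rather than redoing the ancestor-height partition. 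Your route unrolls Lemma~\ref{lem:deg1T} inline and applies it symmetrically to both inequalities, which is slightly more work but perfectly valid and arguably more self-contained. The second statement is then immediate in both approaches, since $f_h=f_{h,\rho'}$ and the first display applied with $u=\rho'$ and then summed over $u\in D_k(\rho')$ gives the $[\tfrac14,4]$ bracket.
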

\begin{proof}
    Let $C_0 = C_0(M,d, \varepsilon')$ denote the constant introduced in the statement of the Lemma. Its precise value will be determined along the proof. 

    \medskip

    Let us fix $u \in D_{k}(\rho')$. Consider the following conditional expectation of $f_{h,u}(x)$.  
    \begin{align*}
        (\E_hf_{h,u})(x)
    =   
        \CE{f_{h,u}(X)}{ X_v=x_v\,:\, \h(v) \ge h }	
    =
        \sum_{v \in D_{h}(\rho') } (\E_v \tilde f_v)(x_v).
    \end{align*}

    Comparing the second moments of $f_{h,u}(x)=\sum_{v \in D_{h}(u) }\tilde f_v(x)$ and $\sum_{v \in D_{h}(u) } (\E_v \tilde f_v)(x_v)$ we get 
    \begin{align}
    \nonumber
        \mathbb{E} \Big[\Big(\sum_{v \in D_{h}(u) }\tilde f_v(X) \Big)^2 \Big]
    =&
        \sum_{v \in D_{h}(u)} \mathbb{E} \big[\tilde f^2_v(X)\big]
        +	\sum_{v,v' \in D_{h}(u)\,:\,v\neq v'} \Eb{\tilde f_v(X)\tilde f_{v'}(X)}\\
     \nonumber
    =&
        \sum_{v \in D_{h}(u)} \Eb{\tilde f_v^2(X)}
        +	\sum_{(v,v') \in (D_{h}(u))^2\,:\,v\neq v'} 
       \E\bigg[\E \Big[ 
         (\E_{v}\tilde f_{v})(X)(\E_{v'}\tilde f_{v'})(X) \,\Big\vert\, X_{\fp(v,v')}\Big] \bigg] \\
     \nonumber
    =&
        \mathbb{E} \Big[\Big(\sum_{v \in D_{h}(u)} (\E_{v}\tilde f_{v})(X)\Big)^2 \Big]
    +	\sum_{v \in D_{h}(u)} \Big( \Eb{\tilde f_v^2(X)}- \Eb{(\E_{v}\tilde f_{v})^2(X) } \Big)\\ 
    \label{eq:f_kf_u}
    \ge&
        \sum_{v \in D_{h}(u)} 
        \Big( 1 - \exp\Big( - \varepsilon \big( h- C_{\ref{prop: fu}}(1+  \log(R)) -\lAs \big) \Big) \Big)   \Eb{\tilde f_v^2(X)},
    \end{align}
    where the last inequality follow from Proposition \ref{prop: fu} and 
    $ C_{\ref{prop: fu}}$ is the constant $C$ introduced in Proposition \ref{prop: fu}.
    
    \medskip

    Here we impose the {\bf first assumption} on $C_0$: 
    $$ C_0 > 10\max\{ \varepsilon^{-1},  C_{\ref{prop: fu}}\}.$$ 
    Then, due to $ k_1 \ge \lAs + C_0(\log(R) + 1) $, we have 
    $$
        \exp( - \varepsilon ( h-  C_{\ref{prop: fu}}(1+  \log(R)) -\lAs )))
    \le  \exp( - \varepsilon ( k_1-  C_{\ref{prop: fu}}(1+  \log(R)) -\lAs )))
    \le \exp(-\varepsilon \cdot 0.9C_0) \le 1/2, 
    $$
    and thus \eqref{eq:f_kf_u} can be simplified to
    \begin{align} \label{eq:huhk}
        \mathbb{E} \big[f_{h,u}^2(X) \big]
    \ge&
        \frac{1}{2} \sum_{v \in D_{h}(u)} \mathbb{E} \big[\tilde f_v^2(X)\big].
    \end{align}

    With the lower bound been established, the upper bound can also be derived in the same fashion. Let us first recycle the first three lines of \eqref{eq:f_kf_u}:
    \begin{align*}
             \mathbb{E} \Big[ \Big(\sum_{v \in D_{h}(u) }\tilde f_v(X) \Big)^2  \Big]
    =&
     \mathbb{E} \Big(\sum_{v \in D_{h}(u)} (\E_{v}\tilde f_{v})(X)\Big)^2 
    +	\sum_{v \in D_{h}(u)} \mathbb{E} (\tilde f_v(X))^2- \mathbb{E} (\E_{v}\tilde f_{v})^2(X) \\
    \le &
        \mathbb{E} \Big(\sum_{v \in D_{h}(u)} (\E_{v}\tilde f_{v})(X)\Big)^2 
    +	\sum_{v \in D_{h}(u)} \mathbb{E} (\tilde f_v(X))^2 .
    \end{align*}

    Notice that we can apply Lemma \ref{lem:deg1T} for the first summand in the above expression.  
    \begin{align*}
        \mathbb{E} \Big(\sum_{v \in D_{h}(u)} (\E_{v}\tilde f_{v})(X)\Big)^2 
    =    
        \mathbb{E} \Big(\sum_{v \in D_{h}(u)} (\E_{v}\tilde f_{v})(X_v)\Big)^2 
    \le &
        C_{\ref{lem:deg1T}} R  \sum_{v \in D_{h}(u)} (\E_{v}\tilde f_{v})^2(X_v) 
    \end{align*}
    where $C_{\ref{lem:deg1T}}$ is the constant introduced in Lemma \ref{lem:deg1T}. 
    Again, applying the estimate from Proposition \ref{prop: fu} we have 
    $$
   C_{\ref{lem:deg1T}}R \sum_{v \in D_{h}(u)} (\E_{v}\tilde f_{v})^2(X_v)
    \le   
    C_{\ref{lem:deg1T}}R
    \exp\Big( - 2\varepsilon \big( h-  C_{\ref{prop: fu}}(1+  \log(R)) -\lAs \big) \Big)
    \sum_{v \in D_{h}(u)} \E \tilde f_{v}^2(X).
    $$

    Here we impose the {\bf second assumption} on $C_0$ that  
    \begin{align}
        C_0 \ge C_{\ref{prop: fu}}+ \frac{1+\log(C_{\ref{lem:deg1T}})}{2\varepsilon}.
    \end{align}
    Then, relying on $h > k_1 \ge \lAs + C_0(\log(R) + 1)$, 
    \begin{align*}
        & C_{\ref{lem:deg1T}}R
        \exp\Big( - 2\varepsilon \big( h-  C_{\ref{prop: fu}}(1+  \log(R)) -\lAs \big) \Big) \\
    \le &
        \exp\Big( - 2\varepsilon \Big( h-  C_{\ref{prop: fu}}(1+  \log(R)) -\lAs - \frac{\log(C_{\ref{lem:deg1T}})+ \log(R)}{2\varepsilon}\Big)  \Big) \\
    \le & 
        1,
    \end{align*}
    which in turn implies 
    $$
    \mathbb{E} (f_{h,u}(X))^2 
    \le
        2 \sum_{v \in D_{h}(u)} \mathbb{E} (\tilde f_v(X))^2.
    $$

    Now it remains to show the second statement. Notice that $f_h = f_{h,\rho'}$, we immediately have 
    \begin{align*}
     \frac{1}{2}\sum_{v \in D_{h}(\rho')}  \E \tilde f_{v}^2(X) 
    \le \E f_{h}^2(X) \le  2 \sum_{v \in D_{h}(\rho')}  \E \tilde f_{v}^2(X) 
   \end{align*}

    Together with 
    \begin{align*}
        \frac{1}{2} \sum_{u \in D_k(\rho')} \E f_{k,u}^2(X)
        \le \sum_{v \in D_{h}(\rho')}  \E \tilde f_{v}^2(X)  \le 2\sum_{u \in D_k(\rho')} \E f_{k,u}^2(X),
    \end{align*}
    the second statement of the lemma follows. 

\end{proof}

\medskip

\begin{proof} [Proof of  Proposition \ref{prop: fk}]

Let $C_0 = C_0(M,d,c^*)$ denote the constant introduced in the statement of the Lemma. 
Its precise value will be determined along the proof. 
Let us make the {\bf first assumption} on $C_0$ that 
$$
    C_0 \ge C_{\ref{lem: fhku}},
$$
where $C_{\ref{lem: fhku}}$ is the constant introduced in Lemma \ref{lem: fhku}. Now, we could apply 
the statements of the Lemma.  
\medskip

{\bf Part 1: Derivation of \eqref{LINeq:hkrho}.}

Fix $k \in [k_1+1,l']$. Applying Lemma \ref{lem: fhku} with the parameters $h$ and $k$ in the Lemma 
setting to be $k$, 

\begin{align} \label{eq:huhk}
	\mathbb{E} (f_k(X))^2 
\ge&
	\frac{1}{2} \sum_{u \in D_{k}(\rho')} \mathbb{E} (\tilde f_u(X))^2.
\end{align}

\medskip

The next step is to compare the sum of $\mathbb{E} \tilde f_u^2(X)$ with 
$\E\big[ (\E_{\rho'}f_k)^2(X_{\rho'})\big]$. By Jenson's inequality,
\begin{align*}
\E\big[ (\E_{\rho'}f_k)^2(X_{\rho'})\big]
= &
  \mathbb{E} \Big[ \Big(\sum_{u \in D_{k}(\rho')}
   (\E_{\rho'} \tilde f_u )(X_{\rho'})
  \Big)^2 \Big]  \\
\le&
\mathbb{E} \Big[ |D_{k}(\rho')| \sum_{u \in D_{k}(\rho')}
 (\E_{\rho'} \tilde f_u)^2(X_{\rho'})\Big]\\
=&
|D_{k}(\rho')| \sum_{u \in D_{k}(\rho')}
 \mathbb{E} \big[ (\E_{\rho'} \tilde f_u)^2(X_{\rho'}) \big].
\end{align*}

For each summand, we can apply \eqref{eq: MCvarDecay} from Lemma \ref{lem:Mbasis}
to get the following estimate. 
\begin{align*}
    \mathbb{E} \big[ (\E_{\rho'} \tilde f_u)^2(X_{\rho'}) \big]
\le&
	C_{\ref{lem:Mbasis}} (l'-k)^{2q}\lambda^{2(l'-k)} 
    \mathbb{E} \big[(\E_{u}\tilde f_{u})^2(X_u) \big]
\end{align*}
where $ C_{\ref{lem:Mbasis}}$ is the constant introduced in the Lemma. 
Together with $|D_{k}(\rho')|\le Rd^{l'-k}$ from the assumption on $T$
and $ d\lambda^2 \le \exp(-1.1\varepsilon)$ from the definiton of $\varepsilon$, 
\begin{align}
    \nonumber
\E\big[ (\E_{\rho'}f_k)^2(X_{\rho'})\big]
\le&
    C_{\ref{lem:Mbasis}}
	(l'-k)^{2q} R \exp(- 1.1 \varepsilon( l'-k) )		
	\sum_{u \in D_{k}(\rho')}
    \mathbb{E} \big[(\E_{u}\tilde f_{u})^2(X_u) \big] \\
\label{eq: hk00}
\le &
    \frac{1}{2}
    \exp \Big(- \varepsilon \Big(l' - C'(1+  \log(R)) -k \Big) \Big)
    \sum_{u \in D_{k}(\rho')} \mathbb{E} \big[(\E_{u}\tilde f_{u})^2(X_u) \big]
\end{align}
where we set 
$$
    C' = 1 + \log \big( 1 + 2C_{\ref{lem:Mbasis}} \max_{n \in \mathbb{N}} n^{2q} \exp( - 0.1 \varepsilon n ) \big) < +\infty.
$$

By Proposition \ref{prop: fu} we have 
\begin{align} \label{eq:gu}
	\mathbb{E} \big[(\E_{u}\tilde f_{u})^2(X)\big] 
\le  
    \exp( - 2\varepsilon (k -  C_{\ref{prop: fu}}(1+  \log(R)) - \lAs)) \mathbb{E} (\tilde f_u(X))^2,
\end{align}
where $C_{\ref{prop: fu}}$ is the constant $C$ introduced in the Proposition. 
Substituting this inequality into \eqref{eq: hk00}, together with \eqref{eq:huhk}
from first step, 
\begin{align*}
    \E\big[ (\E_{\rho'}f_k)^2(X_{\rho'})\big]
\le&
	\frac{1}{2}\exp \Big( - \varepsilon \Big( l' + k   
			- (C'+ C_{\ref{prop: fu}}) (\log(R) + 1) - 2\lAs \Big) \Big)
	\sum_{u \in D_{k}(\rho')} \mathbb{E} (\tilde f_u(X))^2 \\
\le&
    \exp( - \varepsilon ( l' + k   
			- (C'+ C_{\ref{prop: fu}})(\log(R) + 1) - 2\lAs ) )
	\mathbb{E} (f_k(X))^2.
\end{align*}
Now, we impose the {\bf second assumption} on $C_0$ that 
\begin{align*}
	C_0 \ge (C'+ C_{\ref{prop: fu}}),
\end{align*}
we finished the proof of \eqref{LINeq:hkrho}.

\vspace{0.6cm}

{\bf Part 2: Derivation of \eqref{LINeq:hk1rho}.}

Let us consider 
$$
h_{k_1}(x):= (\E_{k_1}f_{k_1})(x)=\CE{f_{k_1}(X)}{ X_u=x_u\,:\, u \in D_{k_1}(\rho')}.
$$
In other words, we may view $h_{k_1}(x)$ as a linear function with variables $x_u$ for $u \in D_{k_1}(\rho')$
with $\E h_{k_1}(X) =  \E f_{k_1}(X)=0$. Then, 
\begin{align*}
\E\big[ (\E_{\rho'}f_{k_1})^2(X_{\rho'}) \big]
=
   \E\big[ (\E_{\rho'}h_{k_1})^2(X_{\rho'}) \big]
\le
    \exp&\big(-\varepsilon ( \h(\rho')-k_1 - C_{\ref{prop: inductionBaseSimple}})\big)
    \E h_{k_1}^2(X) \\
&\le 
\exp\big(-\varepsilon ( \h(\rho')-k_1 - C_{\ref{prop: inductionBaseSimple}})\big)
    \E f_{k_1}^2(X) 
\end{align*}
The first inequality follows from Proposition \ref{prop: inductionBaseSimple}. The second inequality follows from Jensen's inequality.   
%
Here we impose the {\bf third assumption} on $C_0$ that 
\begin{align*}
	C_0 \ge C_{\ref{prop: inductionBaseSimple}},
\end{align*}
the derivation of \eqref{LINeq:hk1rho} follows.  

\medskip

{\bf Part 3: Derivation of \eqref{LINeq:hkhm} }

For $w \le \rho'$ with $ m \le  \h(w) \le k$, let 
\begin{align*}
	  f_{m,w}(x) = \sum_{v \in D_{k}(w)}   \tilde f_v(x).
\end{align*}
Let us make a remark that either by second property of $f$ from Lemma \ref{LINlem:fdecomposition} when $m=k_1$ or by 
Lemma \ref{lem: fhku} in the case when $m>k_1$, we have the following: 
For $w \le \rho'$ and $m \le k'\le \h(w)$, 
\begin{align} \label{eq: fm00}
    \Big(\sum_{u \in D_{k'}(w)}\mathbb{E}   f^2_{m,u}(X) \Big)^{1/2}     
\le
    C_{\ref{LINlem:fdecomposition}} R^2 \big( \mathbb{E} f_{m,w}^2(X) \big)^{1/2}.
\end{align}

With this notation,  
\begin{align}
	\nonumber
	&\mathbb{E} f_k(X)   f_{m}(X)\\
	\nonumber
=&
	\sum_{ u \in D_{k}(\rho')} \mathbb{E} \tilde f_u(X)   f_{m,u}(X)
	+\sum_{ u,u' \in D_{k}(\rho')\,:\, u\neq u'} \mathbb{E} \tilde f_u(X)   f_{m,u'}(X)	\\
	\nonumber
=&
	\sum_{ u \in D_{k}(\rho')} \mathbb{E}\tilde f_u(X)   f_{m,u}(X)
	+\sum_{ u,u' \in D_{k}(\rho')\,:\, u\neq u'} \E \bigg[ \E\Big[(\E_{u}\tilde f_{u})(X) (\E_{u'}f_{m,u'})(X) \,\Big\vert\, X_{\fp(u,u')}\Big] \bigg]	\\
	\label{eq:hk02}
=&
	\mathbb{E} \Big[
        \Big(\sum_{ u \in D_{k}(\rho')} (\E_{u}\tilde f_{u})(X) \Big) 
        \Big(\sum_{ u \in D_{k}(\rho')} (\E_u   f_{m,u})(X) \Big)\Big]
	+ \sum_{u \in D_{k}(\rho')} \mathbb{E} \tilde f_u(X)   f_{m,u}(X) \\
&
\nonumber
	- \sum_{u \in D_{k}(\rho')}\mathbb{E} \Big[(\E_{u}\tilde f_{u})(X) (\E_{u}   f_{m,u})(X)\Big].
\end{align}
We will estimate the three summands individually. 
\medskip

{\bf Part 3.1: Estimating first summand of \eqref{eq:hk02}}
First, we apply Cauchy-Schwarz inequality, 
\begin{align}
	\nonumber
	 \bigg| \mathbb{E} \Big[ (\sum_{ u \in D_{k}(\rho')} (\E_{u}\tilde f_{u})(X) )
        (\sum_{ u \in D_{k}(\rho')} (\E_u   f_{m,u})(X) ) \Big] \bigg| 
=&	
    \Big| \E \Big[
         (\E_k f_k)(X)       
         (\E_k f_m)(X)
    \Big] \Big| \\
\le &
	\label{eq:hk03}
    \sqrt{\E\big[ (\E_k f_k)^2(X) \big]}
    \sqrt{\E\big[ (\E_k f_m)^2(X) \big]}.
\end{align}
Now, combining \eqref{eq:gu} and \eqref{eq:huhk}, we have 
\begin{align}
\label{eq: fusum00}
    \sqrt{\E\big[ (\E_k f_k)^2(X) \big]}
\le
    \sqrt{2} 
    \exp\big( - \varepsilon\big( k -   C_{\ref{prop: fu}}(1+  \log(R)) - \lAs \big) \big)    
    (\E f_k^2(X))^{1/2}. 
\end{align}
By setting 
$$
    C_1 = C_{\ref{prop: fu}} + \frac{1}{\varepsilon} \Big(\frac{1}{2}\log(2) + \log( 2C_{\ref{LINlem:fdecomposition}}) + 2 \Big),
$$
we can conclude that 
\begin{align} 
\nonumber
&   \bigg| \mathbb{E} \Big[ (\sum_{ u \in D_{k}(\rho')} (\E_{u}\tilde f_{u})(X) ) 
    (\sum_{ u \in D_{k}(\rho')} (\E_u   f_{m,u})(X) ) \Big] \bigg| \\
\label{eq:hk04}
\le & 
	 \exp( - \varepsilon(k-C_1(\log(R) + 1) -\lAs))	
    \sqrt{\E f_k^2(X) \E f_{m}^2(X)},
\end{align}

{\bf Part 3.2: Estimating second summand of \eqref{eq:hk02}}
For the second summand of \eqref{eq:hk02}, we begin with the estimate for each $u \in D_k(\rho')$:
\begin{align*}
	\mathbb{E} |\tilde f_u(X)   f_{m,u}(X)|
\le&
	\sum_{i \in [d_u]}\mathbb{E} | \tilde f_u(X)   f_{m,u_i}(X)|.
\end{align*}
Since for each $i \in [d_u]$ we have $f_{m,u_i}(x) =  f_{m,u_i}(x_{\le u_i})$, we apply \eqref{LINeq:huwinwin}
from Proposition \ref{prop: fu} to $\tilde f_u$ and $a(x) =  f_{m,u_i}(x)$ to get 
\begin{align*}
	\sum_{i \in [d_u]} \mathbb{E} | \tilde f_u(X)   f_{m,u_i}(X)|
\le 
	\sum_{i \in [d_u]}
        \exp\Big( - \frac{\varepsilon}{2} \big( k-  C_{\ref{prop: fu}}(\log(R) + 1) -  \lAs \big) \Big)
	    (\mathbb{E} \tilde f^2_u(X))^{1/2} (\mathbb{E}   f^2_{m,u_i}(X))^{1/2},
\end{align*}
where $C_{\ref{prop: fu}}$ is the constant introduced in the Proposition.  
Applying Jensen's inequality and \eqref{eq: fm00} with $w=u$ and $k'=k-1$, 
$$
    \sum_{i \in [d_u]}(\mathbb{E}   f^2_{m,u_i}(X))^{1/2}
\le
    d_u^{1/2} 
    \Big( \sum_{i \in [d_u]} \mathbb{E}   f^2_{m,u_i}(X) \Big)^{1/2}
\le 
    (Rd)^{1/2} C_{\ref{LINlem:fdecomposition}} R^2 \big(  \mathbb{E}   f_{m,u}^2(X) \big)^{1/2}.
$$
Hence, 
\begin{align}
\nonumber 
    \mathbb{E} | \tilde f_u(X)   f_{m,u}(X)|
\le&
    (Rd)^{1/2} C_{\ref{LINlem:fdecomposition}} R^2
    \exp\Big( - \frac{\varepsilon}{2} \big( k-  C_{\ref{prop: fu}}(\log(R) + 1) -  \lAs \big) \Big)
    (\mathbb{E} \tilde f^2_u(X))^{1/2} 
     \big(  \mathbb{E}   f_{m,u}^2(X) \big)^{1/2} \\
\label{eq:hk01}
\le & 
    \exp \Big( - \frac{\varepsilon}{2} \big( k - C_2(\log(R) + 1) -  \lAs \big) \Big)
        (\mathbb{E} \tilde f^2_u(X))^{1/2} 
        (\mathbb{E}   f^2_{m,u}(X))^{1/2},
\end{align}
where 
$$
    C_2 
=  
    \frac{2}{\varepsilon}\Big(\frac{3}{2} + \frac{1}{2}\log(d) +  \log(C_{\ref{LINlem:fdecomposition}})\Big) + 
    C_{\ref{prop: fu}}.
$$

Now, returning to the summation, we apply \eqref{eq:hk01} and Cauchy-Schwarz inequality to get 
\begin{align}
\nonumber
& \Big|\sum_{u \in D_{k}(\rho')} 
	 \mathbb{E} \tilde f_u(X)   f_{m,u}(X)\Big| \\
\nonumber
\le&
\sum_{u \in D_{k}(\rho')}
    \exp\Big( - \frac{\varepsilon}{2} ( k -  C_2(\log(R) + 1) -  \lAs )\Big)
        (\mathbb{E} \tilde f^2_u(X))^{1/2} 
        (\mathbb{E}   f^2_{m,u}(X))^{1/2}  \\
\nonumber 
\le &	
    \exp\Big( - \frac{\varepsilon}{2} ( k -  C_2(\log(R) + 1) -  \lAs )\Big)
	\Big(\sum_{u \in D_{k}(\rho')} \mathbb{E} \tilde f^2_u(X) \Big)^{1/2} 
	\Big(\sum_{u \in D_{k}(\rho')}\mathbb{E}   f^2_{m,u}(X) \Big)^{1/2} \\
\nonumber 
\le &	
    2 C_{\ref{LINlem:fdecomposition}} R^2 
\exp\Big( - \frac{\varepsilon}{2}( k-  C_2(\log(R) + 1) -  \lAs )\Big)
	\big( \mathbb{E} f_k^2(X) \big)^{1/2} 
    \big( \mathbb{E} f_{m}^2(X)\big)^{1/2} \\
\le & 
\label{eq: propfk2nd}
  \exp\Big( - \frac{\varepsilon}{2} ( k- C_3(\log(R) + 1) -  \lAs )\Big)
  \sqrt{\E f_k^2(X) \E f_{m}^2(X)}.
\end{align}
In the derivation above, 
we applied Lemma \ref{lem: fhku} for the term  
$\Big(\sum_{u \in D_{k}(\rho')} \mathbb{E} \tilde f^2_u(X) \Big)^{1/2}$ and \eqref{eq: fm00} for 
the term $\Big(\sum_{u \in D_{k}(\rho')}\mathbb{E}   f^2_{m,u}(X) \Big)^{1/2}$
in the secont to last inequality. The constant $C_2$ in the last inequality is defined as 
$$
    C_3 
=  
    \frac{2}{\varepsilon}( \log(2C_{\ref{LINlem:fdecomposition}}) + 2)
+ 
    C_2.
$$

\medskip
{\bf Part 3.3: Estimating third summand of \eqref{eq:hk02}}
It remains to bound the third summand, it can be reduced to the upper bound 
for first summand. Applying the Cauchy-Schwarz inequality and H\"older's inequality we have 
\begin{align*}
 &\Big|\sum_{u \in D_{k}(\rho')}\mathbb{E} \Big[(\E_{u}\tilde f_{u})(X) (\E_{u}   f_{m,u})(X)\Big] \Big| \\
\le	&  \mathbb{E}\Big[  \Big|
        \sum_{u \in D_{k}(\rho')}(\E_{u}\tilde f_{u})(X) (\E_{u}   f_{m,u})(X) \Big|
     \Big]  \\
\le &
    \E \Big[
        \Big(\sum_{u \in D_{k}(\rho')}(\E_{u} \tilde f_{u})^2(X)\Big)^{1/2}
        \Big(\sum_{u \in D_{k}(\rho')}(\E_{u}   f_{m,u})^2(X)\Big)^{1/2}
    \Big] \\
\le &
        \Big(\E \sum_{u \in D_{k}(\rho')}(\E_{u} \tilde f_{u})^2(X)\Big)^{1/2}
        \Big(\E \sum_{u \in D_{k}(\rho')}(\E_{u}   f_{m,u})^2(X)\Big)^{1/2}\\
\le &
\sqrt{2} 
    \exp\big( - \varepsilon\big( k -   C_{\ref{prop: fu}}(1+  \log(R)) - \lAs \big) \big)   \cdot C_{\ref{LINlem:fdecomposition}} R^2 
    	\sqrt{\E f_k^2(X) \E   f_{m}^2(X)}.
\end{align*}
where in the last inequality we applied \eqref{eq: fusum00} and  \eqref{eq: fm00}.

By setting 
$$
    C_4 
= 
    \frac{1}{\varepsilon} 
    \Big( \frac{1}{2}\log(2) + \log(C_{\ref{LINlem:fdecomposition}}) + 2 
    \Big),
$$
we conclude that 
\begin{align}
\label{eq: propfk3rd}
    \Big|\sum_{u \in D_{k}(\rho')}\mathbb{E} \Big[(\E_{u}\tilde f_{u})(X) (\E_{u}   f_{m,u})(X)\Big] \Big| 
\le &
    \exp\big( - \varepsilon\big( k -   C_4 (1+  \log(R)) - \lAs \big) \big)       	
    \sqrt{\E f_k^2(X) \E   f_{m}^2(X)}.
\end{align}

\medskip

Now, combining the three estimates of the summands \eqref{eq:hk04}, \eqref{eq: propfk2nd}, and \eqref{eq: propfk3rd} for 
\eqref{eq:hk02} we conclude that 
\begin{align*}
	|\mathbb{E} f_k(X) f_{m}(X)|
\le&
    \exp \Big( - \frac{\varepsilon}{2} \big(k-C_5 (\log(R) + 1) -\lAs\big) \Big)	
    \sqrt{\E f_k^2(X) \E   f_{m}^2(X)},
\end{align*}
where 
$$
    C_5:= \frac{2}{\varepsilon} \log(3) + \max\{C_1,C_3,C_4 \}.  
$$

Now we impose the {\bf forth assumption} on $C_0$ that
$$
    C_0  \ge C_5,
$$
and \eqref{LINeq:hkhm} follows. 

\end{proof}

\subsection{Proof of Theorem \ref{LINthm:inductionSimple}}
Let $C_0 = C_0(M,d,c^*)$ denote the constant introduced in the statement of the Lemma. 
Its precise value will be determined along the proof. 

Let $k_1$ be a positive integer with the precise value to be determined later. 
Here we impose our {\bf first assumption} on $k_1$ that 
$$
k_1 \ge C_{\ref{prop: fk}}(\log(R) + 1) + \lAs
$$
where $C_{\ref{prop: fk}}$ is a constant that appears in Proposition \ref{prop: fk}.  

Now, let us consider a function $f$ described in the Theorem. Without lose of generality, 
we may assume $\E f(X)=0$. Then, it is equivalent to estimate the second moments. 

Further, let us assume $\h(\rho') \ge k_1$ and the decomposition of $f$ according to Lemma \ref{LINlem:fdecomposition}:
\begin{align}
	f(x) = \sum_{k \in [k_1,\h(\rho')]} f_k(X).
\end{align}

Our first goal is to show 
$$ \mathbb{E} f(X)^2 \simeq  \sum_{k \in [k_1,\h(\rho')]} f_k^2(X),$$
by showing $\mathbb{E}  f_k(X)  f_{m}(X)$ is insignificant whenever $k \neq m$. 

For $k_1 \le m < k$, by Propostion \ref{prop: fk},
\begin{align*}
	2|\mathbb{E} f_k(X)f_{m}(X)|
\le&
    2
    \exp \Big( - \frac{\varepsilon}{2} \big( k- C_{\ref{prop: fk}} (\log(R) + 1) -\lAs\big) \Big)	
	(\mathbb{E} f_k^2(X))^{1/2} (\mathbb{E} f_{m}^2(X))^{1/2} \\
\le&
    \exp \Big( - \frac{\varepsilon}{2} \big(k - C_{\ref{prop: fk}} (\log(R) + 1) -\lAs\big) \Big)	
    \mathbb{E} f_k^2(X)\\
&+ 
    \exp \Big( - \frac{\varepsilon}{2} \big(k - C_{\ref{prop: fk}} (\log(R) + 1) -\lAs\big) \Big)	
    \mathbb{E} f_{m}^2(X).
\end{align*}

Applying the above inequality to bound the second moment of $f(X)$ we get 
\begin{align*}
	\mathbb{E} f^2(X)
=&
	\mathbb{E} \sum_{k,m \in [k_1,\h(\rho')]} \mathbb{E} f_k(X) f_{m}(X)  \\
\ge& 
	\sum_{k \in [k_1, \h(\rho') ]} \mathbb{E} f_k^2(X) \cdot 
	\Big( 1- 
       \sum_{s \in [k_1, \h(\rho') ]} 
       \exp \Big( - \frac{\varepsilon}{2} \big(s - C_{\ref{prop: fk}} (\log(R) + 1) -\lAs\big) \Big)	
     \Big).
\end{align*}

Notice that there exists $t_0$ which depends on $\varepsilon$ so that  
$$ \sum_{t =t_0}^{\infty}\exp\Big( - \frac{\varepsilon}{2} t \Big) \le \frac{1}{2}. $$
By setting 
$$
	k_1 :=  \lceil \lAs  + C_{\ref{prop: fk}}(\log(R) + 1) + t_0 \rceil,
$$	
we get 
\begin{align} \label{eq: ftofk}
    \mathbb{E} f^2(X) \ge \frac{1}{2} \sum_{k\in [k_1,\h(\rho')]} \mathbb{E} f_k^2(X). 
\end{align}

\medskip

Our second goal is comparing $\E \big[(\E_{\rho'}f_k)^2(X_{\rho'})\big]$
and 
$\sum_{k\in [k_1,\infty]} \mathbb{E} f_k^2(X)$.
Starting with the variance and $\ell_\infty$ norm comparison from Lemma \ref{lem:Mbasis},
\begin{align*}
    \E\big[ (\E_{\rho'}f)^2(X_{\rho'})] 
\le& 
    \E \Big[ \Big(
        \sum_{k \in [k_1,\h(\rho')]} \max_{\theta_k \in [q]} \big| (\E_{\rho'}f_k)(\theta_k) \big| \Big)^2
    \Big] \\
= &
    \Big(
        \sum_{k \in [k_1,\h(\rho')]} \max_{\theta_k \in [q]} \big| (\E_{\rho'}f_k)(\theta_k) \big| \Big)^2 \\
\le & 
    C_{\ref{lem:Mbasis}}  
    \Big(
        \sum_{k \in [k_1,\h(\rho')]}  \sqrt{ \E (\E_{\rho'}f_k)^2(X_\rho)}
     \Big)^2 ,
\end{align*}
where $C_{\ref{lem:Mbasis}}$ is the constant introduced in the Lemma. 

By \eqref{LINeq:hkrho}, from Proposition \ref{prop: fk}, for $k \in [k_1+1, \h(\rho')]$, 
\begin{align*}
    \E \big[(\E_{\rho'}f_k)^2(X_{\rho'})\big]
\le 
    \exp\big( - \varepsilon ( k - \lAs ) \big) 
    \cdot
    \exp\Big( - \varepsilon ( \h(\rho') - C_{\ref{prop: fk}}(\log(R) + 1) - \lAs ) \Big) 
        \E f_k^2(X)
\end{align*}    

And for $k=k_1 = \lceil \lAs  + C_{\ref{prop: fk}}(\log(R) + 1) + t_0 \rceil$, we apply \eqref{LINeq:hk1rho} to get 
\begin{align*}
\E \big[(\E_{\rho'}f_{k_1})^2(X_{\rho'})\big]
\le &
    \exp\big( - \varepsilon ( \h(\rho') - k_1 - C_{\ref{prop: fk}}(\log(R) + 1)  ) \big) 
        \E f_{k_1}^2(X)\\
\le &
    \exp\big( 
        -\varepsilon(  - t_0 - 1 )
    \big)\\
    &\cdot
    \exp\Big( - \varepsilon ( \h(\rho') - 2C_{\ref{prop: fk}}(\log(R) + 1) - \lAs ) \Big)
      \E f_{k_1}^2(X).
\end{align*}
 
Substituting these estimate and by Cauchy-Schwarz inequality we have 
\begin{align*}
     \E\big[ (\E_{\rho'}f)^2(X_{\rho'})] 
\le& 
    C_{\ref{lem:Mbasis}}  
    \exp\Big( - \varepsilon ( \h(\rho') - 2C_{\ref{prop: fk}}(\log(R) + 1) - \lAs ) \Big) \\
& \cdot 
    \underbrace{\Big(  
            \exp( \varepsilon(t_0 +1 ))
        + \sum_{t=0}^\infty \exp(-\varepsilon t)
    \Big)}_{:=C_1}
    \cdot \sum_{k\in [k_1,\h(\rho')]} \mathbb{E} f_k^2(X) \\
\le & 
  C_{\ref{lem:Mbasis}}  
  C_1 2 
  \exp\Big( - \varepsilon ( \h(\rho') - 2C_{\ref{prop: fk}}(\log(R) + 1) - \lAs ) \Big) 
  \E f^2(X). 
\end{align*}

Now, by taking
\begin{align*}    
        C_0 \ge  \max \Big\{
            2C_{\ref{prop: fk}} + \frac{1}{\varepsilon}\log(C_{\ref{lem:Mbasis}}  C_1 2 )\,,\,
            C_{\ref{prop: fk}} + t_0 +1 \Big\}, 
\end{align*}
we conclude that 
\begin{align*}
  \E\big[ (\E_{\rho'}f)^2(X_{\rho'})] 
\le     
  \exp\Big( - \varepsilon ( \h(\rho') - C_{0}(\log(R) + 1) - \lAs ) \Big) 
  \E f^2(X). 
\end{align*}

It remains to show the case when $\h(\rho')\le k_1$. From the assumption that  
$ C_0 \ge C_{\ref{prop: fk}} + t_0 +1$ and $k_1 \le  \lAs + C_{\ref{prop: fk}}(\log(R) + 1) + t_0 +1$, 
we have 
$$
    \exp\Big( - \varepsilon ( \h(\rho') - C_{0}(\log(R) + 1) - \lAs \Big)
\ge 1.
$$
Hence, the statement follows directly from Jensen's inequality. 

\bigskip

\section{General Case: Base Case}
\label{sec: generalBase}
Now, we want to establish Theorem \ref{thm:main}, which does not rely on the assumption $c_M>0$. 
Let us first establish analogues of Assumption \ref{assume: A} (the inductive assumption), Proposition \ref{prop: inductionBaseSimple} (the base case), and Theorem \ref{thm: mainSimple} (the inductive step) in the general case.  
\begin{assumption} \label{assume: Ag}
    By stating that $\cal A$ satisfies this assumption with given parameter $\lA$, we mean  
 $ \CA_1 \subseteq \cal A \subseteq {\bf 2}^L\backslash \emptyset $ is closed under decomposition, and the following holds: 

For every $u \in T$ and any $\CA_{\le u}$-polynomials functions $f$ and $g$, we have
\begin{align}
\label{eq: kA0}
	{\rm Var} \big[ \ConE{u}{f}(X) \big]
\le 	\exp( - \varepsilon( \h(u) -\lA)) {\rm Var}\big[ f(X)\big].
\end{align}
Further, suppose $ \E f = \E g = 0$ and $\h(u) \ge \lA$. 
Notice that by the \MP, $(\E_ufg)(x)$, $(\E_u f^2)(x)$, and $(\E_u g^2)(x)$
are functions of $x_u$. 
Then, 
\begin{align} \label{eq: kA1}
  \max_{\theta  \in [q]}|(\E_u f  g)(\theta) - \E f g| 
\le 
    \exp\Big( - \frac{\varepsilon}{2} (\h(u) -\lA) \Big)
    \sqrt{\min_\theta (\E_u  f^2)(\theta) \min_{\theta'}( \E_u  g^2)(\theta)}.
\end{align}
\end{assumption}
The main difference of this assumption and Assumption \ref{assume: A} is the difference of 
\eqref{eq: kA1} and \eqref{eq: assumeSimple1}. 

\begin{prop} \label{prop: inductionBase}
Consider the rooted tree $T$ and transition matrix $M$ described in Theorem \ref{thm:main}.
There exists $C = C(M,d) \ge 1$ such that $\CA_1$ satisfies Assumption \ref{assume: Ag}
with some parameter $\lA$ satisfying 
\begin{align}
	\lA \le C(\log(R) + 1).
\end{align}
\end{prop}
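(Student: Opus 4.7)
The plan is to verify both halves of Assumption \ref{assume: Ag} for the base collection $\CA_1 = \{\{v\}: v \in L\}$, in which case a $\CA_{\le u}$-polynomial is exactly a degree-$1$ function $f(x) = \sum_{v \in L_u} f_v(x_v)$. I will take $\lA = C(\log R + 1)$ for a sufficiently large constant $C = C(M,d)$, chosen so that the polynomial-in-$\h(u)$ and $R^{O(1)}$ overhead produced by Lemma \ref{lem:Mbasis} together with the growth factor $R$ can be absorbed into $\exp(\varepsilon \lA)$.

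For the variance-decay statement \eqref{eq: kA0} I first apply Proposition \ref{prop: CVbasis4} in its general form (whose proof is deferred to Appendix \ref{sec: deg1Var}) to replace $f$ almost surely by $\sum_{v \in L_u} \bar f_v(X_v)$ with $\sum_v \var[\bar f_v(X_v)] \le CR^3 \var[f(X)]$. The standard second-moment expansion from the proof of Lemma \ref{lem:deg1T}, together with the variance decay \eqref{eq: MCvarDecay} and a Cauchy--Schwarz over $L_u$, gives
$$\var[(\E_u f)(X_u)] \le CR^4 \h(u)^{2q}(d\lambda^2)^{\h(u)} \var[f(X)],$$
and absorbing the $R^4 \h(u)^{2q}$ prefactor into $\exp(\varepsilon \lA)$ yields \eqref{eq: kA0}. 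This is essentially Lemma \ref{lem:deg1} in the general setting.

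For \eqref{eq: kA1}, I first reduce the $L^\infty$ norm on the LHS to an $L^2$ norm via \eqref{eq: VarLinfty}, so that it suffices to bound $\var[(\E_u fg)(X_u)]$. After substituting $f,g$ via Proposition \ref{prop: CVbasis4}, I expand
$$\E[fg \mid X_u] = \sum_{v,v'} \E[\bar f_v(X_v) \bar g_{v'}(X_{v'}) \mid X_u]$$
and analyze diagonal and off-diagonal pairs separately. The diagonal $v = v'$ contributions are controlled by applying the $L^\infty$ Markov-chain decay \eqref{eq: MClinftyDecay} to $\bar f_v \bar g_v - \E \bar f_v \bar g_v$, giving a $\lambda^{\h(u)}$ factor per term. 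For off-diagonal pairs, the Markov property at the common ancestor $w = \rho(v,v')$ factorizes $\E[\bar f_v(X_v) \bar g_{v'}(X_{v'}) \mid X_w] = \bar F_v(X_w)\bar G_{v'}(X_w)$, each factor having $L^\infty$ norm of order $\h(w)^q \lambda^{\h(w)}$ times a variance-like quantity. Taking the variance of this product as a function of $X_u$ brings an additional $\lambda^{2(\h(u)-\h(w))}$ factor by \eqref{eq: MCvarDecay}; summing over $w$'s at height $j$ against $|L_w| \le Rd^j$ and then over $j$ collapses to overall decay $(d\lambda^2)^{\h(u)}$ with $R^{O(1)}$ constants.

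The main obstacle is the final step: relating the resulting estimate, whose natural form is of order $\exp\bigl(-\varepsilon(\h(u) - O(\log R))\bigr) \bigl(\sum_v \var[\bar f_v]\bigr)\bigl(\sum_{v'} \var[\bar g_{v'}]\bigr)$, to the $\min_\theta(\E_u f^2)(\theta)\, \min_{\theta'}(\E_u g^2)(\theta')$ appearing on the RHS of \eqref{eq: kA1}. In the $c_M > 0$ case this is immediate via Lemma \ref{lem: c^*}, but without that assumption the minima over $\theta$ can be substantially smaller than the expectation $\E(\E_u f^2)(X_u) = \var f$. I expect the resolution to come from a refinement of the Appendix \ref{sec: deg1Var} argument: the substitution $\bar f_v$ can be chosen (possibly depending on $u$) so that $\sum_v \var[\bar f_v]$ is bounded by $R^{O(1)} \min_\theta (\E_u f^2)(\theta)$ rather than by $R^{O(1)} \var f$, with the extra $R$-factors absorbed into $\exp(\varepsilon \lA)$. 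Applying this refined substitution to both $f$ and $g$ and combining with the variance estimate above then yields \eqref{eq: kA1}, completing the verification of Assumption \ref{assume: Ag} with $\lA = O(\log R + 1)$.
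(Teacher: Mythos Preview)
Your approach to \eqref{eq: kA0} and the main computation for \eqref{eq: kA1} are essentially the same as the paper's (your expansion over pairs $(v,v')$ with common ancestor $w$ is Lemma \ref{lem:DotDeg1}, and combining with Proposition \ref{prop: CVbasis4} gives Corollary \ref{cor: DotDeg1}). The gap is in the final step, where you try to convert $\E f^2$ into $\min_\theta (\E_u f^2)(\theta)$.

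You do not need any refinement of the Appendix decomposition. The conversion is a one-line bootstrap using the very estimate you have just proved: apply your $L^\infty$ bound for $fg$ with $g = f$ to obtain
\[
\max_{\theta}\bigl|(\E_u f^2)(\theta) - \E f^2\bigr| \;\le\; CR^{4}\exp\bigl(-\varepsilon\,\h(u)\bigr)\,\E f^2.
\]
Once $\h(u) \ge C'(\log R + 1)$ for a suitable $C'=C'(M,d)$, the right-hand side is at most $\tfrac14 \E f^2$, hence
\[
\min_\theta (\E_u f^2)(\theta) \;\ge\; \tfrac34\,\E f^2,
\]
i.e.\ $\E f^2 \le 2\min_\theta (\E_u f^2)(\theta)$, and likewise for $g$. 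This is exactly Lemma \ref{lem: deg1f^2} in the paper. Substituting back into your bound for $\max_\theta|(\E_u fg)(\theta)-\E fg|$ finishes \eqref{eq: kA1} with $\lA = C(\log R+1)$. Your proposed route through a $\theta$-dependent modification of the $\bar f_v$ is unnecessary and it is unclear how it could be made to work, since the decomposition in Proposition \ref{prop: CVbasis4} is constructed from the global variance of $f$, not from any particular conditional second moment.
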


\begin{theor} \label{LINthm:induction}
Consider the rooted tree $T$ and transition matrix $M$ described in Theorem \ref{thm:main}.
There exists $C=C(M,d)>1$ so that the following holds. 
Suppose $\cal A$ satisfies Assumption \ref{assume: Ag} with some parameter $\lA$.
Let $\cal B = \cal B(\cal A)$ (see Definition \ref{LINdef:BfromA}). Then, $\cal B$ satisfies 
Assumption \ref{assume: Ag} with parameter 
	$$\lA + C(\log(R) + 1).$$ 
\end{theor}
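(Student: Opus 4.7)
The plan is to mirror the proof of Theorem \ref{LINthm:inductionSimple} from Sections \ref{sec: ind1}--\ref{sec: ind2}, but to carry $\ell_\infty$-in-$\theta$ information throughout every estimate so that the pointwise covariance bound \eqref{eq: kA1} plays the role that the sandwich \eqref{eq: assumeSimple1} played under the extra assumption $c_M > 0$. The key observation is that \eqref{eq: kA1} applied with $f = g$ (and $\E f = 0$) yields
\begin{align*}
    \max_\theta \bigl| (\E_u f^2)(\theta) - \E f^2 \bigr| \le \exp\bigl(-\tfrac{\varepsilon}{2}(\h(u) - \lA)\bigr) \cdot \min_\theta (\E_u f^2)(\theta),
\end{align*}
which, once $\h(u) \ge \lA + O(1)$, is effectively a sandwich of the type \eqref{eq: assumeSimple1} with a constant close to $1$. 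Thus every moment comparison from Section \ref{sec: ind1} remains valid after paying an additive $C(\log(R)+1)$ in the height.

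Concretely, I first strengthen Proposition \ref{prop:Bwinwin} to an $\ell_\infty$ form: with the same $u,I,I',U,a$,
\begin{align*}
    \max_\theta \bigl| (\E_U a)(\theta) - \E a \bigr|^2 \le \exp\bigl(-\varepsilon |I'|(\h(u) - C\lA)\bigr) \cdot \min_\theta (\E_U a^2)(\theta).
\end{align*}
The branch-by-branch iteration is as in Proposition \ref{prop:Bwinwin}; at each freezing step the residual function on $T_{u_k}$ is a linear combination of $\tilde\phi_{\sigma^{(k)}}$ with $\sigma^{(k)} \in \cal F(\cal A_{\le u_k})$ whose coefficients depend only on the already-frozen variables, so \eqref{eq: kA1} applies directly to pairs of such residuals and produces the $\ell_\infty$ decay without ever invoking \eqref{eq: assumeSimple1}. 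From this I derive the $\ell_\infty$ analogues of Corollary \ref{cor: fuI}, Corollary \ref{cor: fufuI}, and Proposition \ref{prop: fu} by the same combinatorial sums over $I \subseteq [d_u]$ followed by Cauchy--Schwarz, obtaining both $\max_\theta |(\E_u \tilde f_u)(\theta)|^2 \le \exp(-2\varepsilon(\h(u) - C(\log(R)+1) - \lA)) \min_\theta (\E_u \tilde f_u^2)(\theta)$ and an $\ell_\infty$ version of the winwin inequality \eqref{LINeq:huwinwin}.

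Next I establish the $\ell_\infty$ analogue of Proposition \ref{prop: fk}. Applying Lemma \ref{lem:Mbasis} in its $\ell_\infty$ form \eqref{eq: MClinftyDecay} to the sums $\sum_{u \in D_k(\rho')} \tilde f_u$ and combining with the previous step yields, for $k > k_1$,
\begin{align*}
    \max_\theta \bigl| (\E_{\rho'} f_k)(\theta) \bigr|^2 \le \exp\bigl(-\varepsilon(\h(\rho') + k - C(\log(R)+1) - 2\lA)\bigr) \cdot \min_\theta (\E_{\rho'} f_k^2)(\theta),
\end{align*}
with the case $k = k_1$ handled through Proposition \ref{prop: inductionBase}. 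The $\ell_\infty$ cross-term bound between $f_k$ and $f_m$ is obtained by decomposing as in \eqref{eq:hk02} and controlling the three summands via (i) the refined winwin inequality applied to $(\tilde f_u, f_{m,u_i})$ on each branch, (ii) Cauchy--Schwarz in $u \in D_k(\rho')$, and (iii) the $\ell_\infty$ bound on $(\E_u \tilde f_u)(\theta)$ paired with \eqref{eq: kA1}. To conclude, the diagonal lower bound $\E f^2(X) \ge \tfrac12 \sum_k \E f_k^2(X)$ from Section \ref{sec: ind2} still goes through, after which both conclusions \eqref{eq: kA0} and \eqref{eq: kA1} for the $\cal B$-polynomial $f$ at height $\lA + C(\log(R)+1)$ follow by summing the $\ell_\infty$-in-$\theta$ estimates on the $f_k$'s via the triangle inequality and Cauchy--Schwarz.

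The main obstacle is the propagation of $\ell_\infty$ information through the iteration in the refined Proposition \ref{prop:Bwinwin}: one must verify that after each freezing step the residual function remains a mean-zero $\cal A_{\le u_k}$-polynomial so that \eqref{eq: kA1} is applicable, and that the $\min_\theta$ on the right-hand side propagates through successive iterations without degenerating into a $\max_\theta$ or an $\E_\pi$ in $\theta$, which would wreck the final exponent. This forces a careful use of the mean-zero structure of the basis functions $\tilde\phi_\sigma$ together with the conditional independence of branches given $X_u$ from the \MP, and is the technical heart of the generalization from the $c_M > 0$ case to the general case.
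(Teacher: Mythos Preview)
Your plan is correct in spirit and hits the essential ideas, but it is organized less efficiently than the paper's proof and it duplicates work that can be reused wholesale.

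The paper's route is to isolate your first observation as a standalone lemma (Lemma~\ref{lem: ellAtoell*}): from \eqref{eq: kA1} with $f=g$ one reads off that $\cal A$ already satisfies Assumption~\ref{assume: A} with $\lAs=\lA+\tfrac{2}{\varepsilon}\log 2$ and $c^*=\tfrac12$. This single line lets the paper import \emph{all} of Sections~\ref{sec: ind1}--\ref{sec: ind2} unchanged---in particular Theorem~\ref{LINthm:inductionSimple} itself---so the variance clause \eqref{eq: kA0} for $\cal B$ is free. The only genuinely new work is the product bound \eqref{eq: kA1} for $\cal B$, and here the paper does not attempt to strengthen Proposition~\ref{prop:Bwinwin} to an $\ell_\infty$ statement. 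Instead it proves a separate Lipschitz-in-Hamming-distance lemma (Lemma~\ref{lem: fuIguI}): for $a,b$ supported on the same $I$, the map $x_{u,I}\mapsto(\E_{u,I}ab)(x_{u,I})$ has Lipschitz constant $2\delta\sqrt{\max(\E_{u,I}a^2)\max(\E_{u,I}b^2)}$, obtained by freezing all but one branch and applying \eqref{eq: kA1} to the one-branch residuals. This feeds into Proposition~\ref{prop: fugu} (the $f_u g_u$ level) and then Proposition~\ref{prop: fkgm} (the $f_k g_m$ level, where the degree-one $\ell_\infty$ estimate Lemma~\ref{lem:DotDeg1} handles the first summand of the analogue of \eqref{eq:hk02}).

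Your proposal instead re-proves the whole Section~\ref{sec: ind1}--\ref{sec: ind2} chain in $\ell_\infty$ form. That works, but your ``$\ell_\infty$ strengthening of Proposition~\ref{prop:Bwinwin}'' with $\min_\theta$ on the right is exactly the content of Lemma~\ref{lem: fuIguI} plus the original Proposition~\ref{prop:Bwinwin} combined: the $\min_\theta$ cannot be obtained from the variance iteration alone, it requires precisely the branch-Lipschitz argument you allude to as the ``main obstacle'' but do not spell out. Also note that $(\E_U a)(x)$ in Proposition~\ref{prop:Bwinwin} is a function of several coordinates when $I'\subsetneq I$, so your displayed $\max_\theta$ formulation is only well-posed for $I'=I$. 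The paper sidesteps this by never stating an $\ell_\infty$ version of Proposition~\ref{prop:Bwinwin} and instead working directly with products via Lemma~\ref{lem: fuIguI}.
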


\begin{proof}[Proof of Theorem \ref{thm:main}]
 The proof of Theorem \ref{thm:main} is analogous to that of Theorem \ref{thm: mainSimple}, employing a similar strategy by leveraging Proposition \ref{prop: inductionBase} and Theorem \ref{LINthm:induction} in the former, and Proposition \ref{prop: inductionBaseSimple} and Theorem \ref{LINthm:inductionSimple} in the latter.

\end{proof}

\bigskip

In this section we will prove the Base Case Proposition \ref{prop: inductionBase}. 

\begin{lemma}
\label{lem:DotDeg1}
There exists a constant $C=C(M,\varepsilon)\ge 1$ so such that for any $\rho' \in T$ and  
$0 \le m \le  \h(\rho')$: 

Consider two degree 1 polynomials $f$ and $g$ with variables $(x_u\,:\, u \in D_m(\rho'))$.  
Suppose
$$f(X) = \sum_{u \in D_m(\rho')} f_u(X) \mbox{ almost surely,}$$  
where $f_u(x) = f_u(x_u)$ and $\E[f_u(X)] = 0$, and we assume the same conditions for the polynomial $g$ and $g_u$. Then,  
\begin{align*}
\max_{\theta\in [q]}
    \big| (\mathbb{E}_{\rho'}fg)(\theta) - \mathbb{E} fg \big|
\le 
    C R \exp( - \varepsilon (\h(\rho')-m) ) \sqrt{\sum_{u\in D_m(\rho')}\E f_u^2(X)}
\sqrt{ \sum_{u\in D_m(\rho')}\E g_u^2(X)}.
\end{align*}
\end{lemma}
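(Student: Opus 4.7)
The plan is to decompose $fg = A + B$ where $A := \sum_{u \in D_m(\rho')} f_u(X_u) g_u(X_u)$ is the ``diagonal'' part and $B := \sum_{u \ne v} f_u(X_u) g_v(X_v)$ is the ``off-diagonal'' part, and bound each contribution to $\max_\theta|(\E_{\rho'}(\cdot))(\theta) - \E(\cdot)|$ directly in $L^\infty$ by two applications of the Markov decay bound \eqref{eq: MClinftyDecay}. Throughout set $\h' := \h(\rho') - m$. Since $(\E_{\rho'} fg)(\theta) - \E fg = [(\E_{\rho'} A)(\theta) - \E A] + [(\E_{\rho'} B)(\theta) - \E B]$, it suffices to bound each piece.

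For the diagonal $A$: each summand $a_u := f_u g_u$ depends only on $X_u$. Because $\E f_u = 0$, \eqref{eq: VarLinfty} gives $\|f_u\|_\infty \lesssim \sqrt{\E f_u^2}$, similarly for $g_u$, hence $\|a_u\|_\infty \lesssim \sqrt{\E f_u^2\,\E g_u^2}$. Applying \eqref{eq: MClinftyDecay} to $a_u$ along the chain of length $\h'$ gives $\max_\theta|(\E_{\rho'} a_u)(\theta) - \E a_u| \lesssim \h'^q\lambda^{\h'}\sqrt{\E f_u^2\,\E g_u^2}$. Summing over $u$ and using $\sum_u \sqrt{\E f_u^2\,\E g_u^2} \le \sqrt{(\sum \E f_u^2)(\sum \E g_u^2)}$ (Cauchy--Schwarz), then absorbing the polynomial factor into the exponential via $\lambda \le \exp(-1.1\varepsilon)$, yields a bound of the desired form $C\exp(-\varepsilon \h')\sqrt{\sum \E f_u^2\sum \E g_u^2}$, \emph{without} any $R$ factor.

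For the off-diagonal $B$: partition the pairs $(u,v)$ with $u \ne v$ according to their nearest common ancestor $w := \rho(u,v) \in T_{\rho'}$, and set $s := \h(w) - m \in [1,\h']$, $t := \h' - s$. By the Markov property, $\E[f_u(X_u) g_v(X_v)\mid X_w] = F_{u,w}(X_w)\,G_{v,w}(X_w)$, where $F_{u,w}(x_w) := (\E_w f_u)(x_w)$ and likewise for $G_{v,w}$. Apply \eqref{eq: MClinftyDecay} over the chain of length $s$ from $u$ (resp.\ $v$) up to $w$ to get $\|F_{u,w}\|_\infty \lesssim s^q\lambda^s \sqrt{\E f_u^2}$ and similarly for $G_{v,w}$; hence $H := F_{u,w}G_{v,w}$ satisfies $\max|H - \E H| \lesssim s^{2q}\lambda^{2s}\sqrt{\E f_u^2\,\E g_v^2}$. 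A second application of \eqref{eq: MClinftyDecay} (or the trivial bound when $t=0$) over the chain of length $t$ from $w$ to $\rho'$ yields the per-pair bound $\max_\theta|(\E_{\rho'} H)(\theta) - \E H| \lesssim (t+1)^q s^{2q}\lambda^{t+2s}\sqrt{\E f_u^2\,\E g_v^2}$.

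To sum the per-pair bounds, for $w$ at height $m+s$ first bound $\sum_{u \ne v \in D_m(w)} \sqrt{\E f_u^2\,\E g_v^2} \le \bigl(\sum_{D_m(w)}\sqrt{\E f_u^2}\bigr)\bigl(\sum_{D_m(w)}\sqrt{\E g_v^2}\bigr)$, then sum over $w$ using Cauchy--Schwarz together with the partition $D_m(\rho') = \bigsqcup_w D_m(w)$ and the tree-growth bound $|D_m(w)| \le Rd^s$; this collapses the sum at level $s$ to $Rd^s\sqrt{\sum\E f_u^2\sum\E g_v^2}$. The crucial algebraic identity is
\begin{align*}
\lambda^{t+2s}\,d^{s} \;=\; \lambda^{t}\,(d\lambda^{2})^{s} \;\le\; \exp(-1.1\varepsilon(t+s)) \;=\; \exp(-1.1\varepsilon\,\h'),
\end{align*}
which fuses the two decay factors into a single $s$-independent exponential. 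Summing over $s \in [1,\h']$ incurs only polynomial-in-$\h'$ overhead, absorbed by the slack between $1.1\varepsilon$ and $\varepsilon$, giving $CR\exp(-\varepsilon \h')\sqrt{\sum\E f_u^2\sum\E g_u^2}$ for $B$ and hence the lemma. The main obstacle is exactly this balance: a naive estimate bounding all $Rd^{\h'}$ off-diagonal pairs against a single length-$\h'$ chain of decay $\lambda^{\h'}$ gives $R(d\lambda)^{\h'}$, which need not decay; splitting the chain at the NCA $w$ so that $d^s$ pairs with $\lambda^{2s}$ and invoking the Kesten--Stigum condition $d\lambda^{2} \le \exp(-1.1\varepsilon)$ is precisely what makes the linear-in-$R$ bound work.
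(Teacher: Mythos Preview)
Your proposal is correct and follows essentially the same route as the paper: expand $fg$ into pairs, group by the nearest common ancestor $w=\rho(u,v)$, apply \eqref{eq: MClinftyDecay} once from $u,v$ up to $w$ (giving $\lambda^{2s}$) and once from $w$ up to $\rho'$ (giving $\lambda^{t}$), then sum over $w$ using Cauchy--Schwarz and $|D_m(w)|\le Rd^{s}$. Your explicit diagonal/off-diagonal split and the identity $\lambda^{t+2s}d^{s}=\lambda^{t}(d\lambda^{2})^{s}\le\exp(-1.1\varepsilon\h')$ are cosmetic variants of the paper's treatment (which keeps $u=v$ in the same sum with $s=0$ and phrases the final bound via $(\max\{d\lambda^{2},\lambda\})^{\h'}$); the substance is identical.
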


\begin{proof}
Let $C_0 = C_0(M,d)$ denote the constant introduced in the statement of the Lemma. 
Its value will be determined along the proof. 

First of all, 
\begin{align*}
    \max_{\theta \in [q]} \big| \ConE{\rho'}{fg}(\theta)  - \E fg \big|
=&
     \max_{\theta \in [q]} \Big| \sum_{u,v \in D_m(\rho')} \left( \ConE{\rho'}{f_ug_v}(\theta)  - \E f_ug_v \Big|\right) \\
\le  &
    \sum_{u,v \in D_m(\rho')} 
    \max_{\theta \in [q]} \big| \ConE{\rho'}{f_ug_v}(\theta)  - \E f_ug_v \big|.
\end{align*}
Our proof will be carried out by estimating each summand. 
Fix any pair $u,v \in D_m(\rho')$ and consider 
$$ 
     \| (\E_{\rho'} f_u g_v) - \E f_ug_v \|_\infty 
= 
    \max_\theta \big| \E_{\rho'}\big[f_u(X)g_v(X) - \E f_ug_v \, \big\vert\, 
        X_{\rho'}= \theta \big]\big| .
$$

Let $w= \rho(u,v)$. 
Since $f_u$ and $g_u$ are functions of $x_{\le w}$, relying on the \MP\, we know 
the function \( \ConE{w}{f_u\cdot g_v}(x) \) is a function of $x_w$ with expected value $\E f_u(X)g_v(X)$.  
With $$ (\E_{\rho'} f_ug_v)(x_{\rho'}) = \E\big[ (\E_{w} f_ug_v)(X_w) \, \big\vert \, X_{\rho'}=x_{\rho'}  ],$$
applying \eqref{eq: MClinftyDecay} from Lemma \ref{lem:Mbasis}, 
\begin{align*}
    \| (\E_{\rho'} f_u g_v) - \E f_ug_v \|_\infty 
\le &
    C_{\ref{lem:Mbasis}} 
    (\h(\rho')-\h(w))^q \lambda^{\h(\rho')-\h(w)} 
    \big\|\ConE{w}{f_ug_v}(\theta) - \E f_ug_v \big\|_\infty,
\end{align*}
where $C_{\ref{lem:Mbasis}}$ is the $M$-dependent constant introduced in the Lemma. \\

Next, we will estimate $\big\|\ConE{w}{f_ug_v}(\theta) - \E f_ug_v \big\|_\infty$. 
In the case $u\neq v$, there exists $i\neq j$ such that $u\le w_i$ and $v\le w_j$, which in turn implies that 
$(X_{\le u}\,\vert\, X_w=x_w)$ and $(X_{\le v}\,\vert\, X_w=x_w)$ are jointly independent by the \MP. Thus, 
$$ \ConE{w}{f_ug_v}(\theta) = \ConE{w}{f_u}(\theta) \ConE{w}{g_v}(\theta),$$ 
which implies
\begin{align*}
    \max_{\theta \in[q]} |\ConE{w}{f_ug_v}(\theta)|
\le & 
    \max_{\theta\in[q]} |\ConE{w}{f_u}(\theta)| 
    \cdot \max_{\theta \in [q]}| \ConE{w}{g_v}(\theta)| \\
\le &
    C_{\ref{lem:Mbasis}}^2
    (\h(w)-m)^{2q} \lambda ^{2 (\h(w)-m) }  \sqrt{ \E f_u^2(X) \E g_v^2(X)},
\end{align*}
where we applied \eqref{eq: MClinftyDecay} and \eqref{eq: VarLinfty} from 
Lemma~\ref{lem:Mbasis} in the last inequality. 
If $u=v$, then the same estimate follows immediately without relying on \eqref{eq: MClinftyDecay}. 

Now, we convert the above estimate to that of $\|\ConE{w}{f_ug_v}(\theta)- \E f_ug_v\|_\infty$,
which relies on the simple bound that $|\E f_u(X)g_v(X)| \le \max_{\theta \in[q]} |\ConE{w}{f_u g_v}(\theta) |$. 
Thus, 
\begin{align*}
    \big\|\ConE{w}{f_ug_v}(\theta) - \E f_ug_v \big\|_\infty
\le & 
    2\max_{\theta \in[q]} |\ConE{w}{f_u g_v}(\theta) | \\
\le &
    2    C_{\ref{lem:Mbasis}}^2
    (\h(w)-m)^{2q} \lambda^{2(\h(w)-m)}  
    \sqrt{ \E f_u^2(X) \E g_v^2(X)}.
\end{align*}
Together we conclude that for a pair $u,v \in D_m(\rho')$ with $w = \rho(u,v)$,  
\begin{align*}
    \|\ConE{\rho'}{f_ug_v} - \E f_ug_v \|_{\infty}
\le & 
    2 C_{\ref{lem:Mbasis}}^3
    (\h(w)-m)^{2q} (\h(\rho')-\h(w))^{q}  
    \lambda^{\h(\rho')+\h(w)-2m} \sqrt{ \E f_u^2(X) g_v^2(X) }\\
\le & 
    2 C_{\ref{lem:Mbasis}}^3
    (\h(\rho')-m)^{3q} \lambda^{\h(\rho')+\h(w)-2m}
  \sqrt{ \E f_u^2(X) g_v^2(X) }.
\end{align*}

Relying on this estimate, 
we are ready to bound  the \(l_{\infty}\)-norm of $ (\E_{\rho'}fg)(x_{\rho'}) - \E fg$. 
\begin{align}
    \nonumber
  \max_{\theta \in [q]} \big| \ConE{\rho'}{fg}(\theta)  - \E fg \big|
\le &  
    \sum_{u,v \in D_m(\rho')} 
    \max_{\theta \in [q]} \big| \ConE{\rho'}{f_ug_v}(\theta)  - \E f_ug_v \big| \\
\nonumber
= & 
    \sum_{k \in [m, \h(\rho')]} 
     \sum_{ w \in D_k(\rho')} 
      \sum_{u,v\,:\,\rho(u,v)=w}   
      \max_{\theta \in [q]} 
       \big| \ConE{\rho}{f_ug_v}(\theta)  - \E f_ug_v \big| \\
\label{eq: DotDeq100}
\le & 
    \sum_{k \in [m, \h(\rho')]} 
    \sum_{w \in D_k(\rho')} 
    \sum_{u,v\,:\,\rho(u,v)=w}   
        2 C_{\ref{lem:Mbasis}}^3
        (\h(\rho')-m)^{3q} \lambda^{\h(\rho')+k-2m}
        \sqrt{ \E f_u^2(X) g_v^2(X) }.
\end{align}
Next, relaxing the condition $\rho(u,v)=w$ in the summation,  
\begin{align}
\nonumber 
\eqref{eq: DotDeq100}
\le &
    \sum_{k \in [m, \h(\rho')]} 
     \sum_{w \in D_k(\rho')} 
      \sum_{u,v\in D_m(w)}   
        2 C_{\ref{lem:Mbasis}}^3
        (\h(\rho')-m)^{3q} \lambda^{\h(\rho')+k-2m}
        \sqrt{ \E f_u^2(X) g_v^2(X) } \\
\label{eq: DotDeg101}
= & 
    \sum_{k \in [m, \h(\rho')]} 
     \sum_{w \in D_k(\rho')} 
        2 C_{\ref{lem:Mbasis}}^3
        (\h(\rho')-m)^{3q} \lambda^{\h(\rho')+k-2m}
       \big( \sum_{u \in D_m(w)} \sqrt{\E f_u^2(X)}\big) \big( \sum_{u \in D_m(w)} \sqrt{\E g_u^2(X)}\big). 
\end{align}

Notice the inequality \(\sum_{i \in [n]} \frac{|t_i|}{n} \leq \sqrt{\sum_{i \in [n]} \frac{|t_i|^2}{n}}\) follows from Jenson's inequality applying to the function $t \mapsto t^2$ and the uniform measure on $[n]$.
Now apply this inequality to the collection $ \{\sqrt{ \E f_u^2(X)}\}$ and $ \{\sqrt{ \E g_u^2(X)}\}$ respectively, together with 
$
    |D_m(w)| \le R d^{\h(w)-m},
$ from our tree asscumption, 
we have 
\begin{align}
\nonumber
\eqref{eq: DotDeg101}
\le & 
    \sum_{k \in [m, \h(\rho')]} 
     \sum_{w\le \rho'\,:\, w \in D_k(\rho')} 
      2 C_{\ref{lem:Mbasis}}^3
      (\h(\rho')-m)^{3q} \lambda^{\h(\rho')+k-2m} Rd^{k-m}
       \sqrt{ \sum_{u \in D_m(w)} \E f_u^2(X)} \sqrt{ \sum_{u \in D_m(w)} \E g_u^2(X)}\\
\nonumber
\le & 
    \sum_{k \in [m, \h(\rho')]} 
    2 C_{\ref{lem:Mbasis}}^3
    (\h(\rho')-m)^{3q} \lambda^{\h(\rho')+k-2m} Rd^{ k-m} \\
\nonumber
&\phantom{AAAAA} \cdot 
       \sqrt{ \sum_{w\le \rho'\,:\, \h(w)=k} \sum_{u \in D_m(w)} \E f_u^2(X)} 
       \cdot \sqrt{ \sum_{w\le \rho'\,:\, \h(w)=k}\sum_{u \in D_m(w)} \E g_u^2(X)}\\
\label{eq: DotDeg102}
=   &
    \sum_{k \in [m, \h(\rho')]} 
    2 C_{\ref{lem:Mbasis}}^3
    (\h(\rho')-m)^{3q} \lambda^{\h(\rho')+k-2m} Rd^{ k-m} 
       \sqrt{  \sum_{u \in D_m(\rho')} \E f_u^2(X)} 
       \sqrt{ \sum_{u \in D_m(\rho')} \E g_u^2(X)},
\end{align}
where the last inequality follows from Cauchy-Schwarz inequality. Finally,  
\begin{align*}
    &\sum_{k \in [m, \h(\rho')]} 
    2 C_{\ref{lem:Mbasis}}^3
    (\h(\rho')-m)^{3q} \lambda^{\h(\rho')+k-2m} Rd^{k-m} \\
\le & 
    2 C_{\ref{lem:Mbasis}}^3  R
    (\h(\rho')-m)^{3q} \cdot 
    (\h(\rho')-m) 
    \lambda^{h(\rho')-m}
    \max_{k \in [m,h(\rho')]}\lambda^{k-m}d^{k-m}  \\
= &
    2 C_{\ref{lem:Mbasis}}^3  R
    (\h(\rho')-m)^{3q} \cdot 
    (\h(\rho')-m) 
    \big(\max\{ d\lambda^2,\, \lambda \} \big)^{\h(\rho')-m} \\
=   &
    2 C_{\ref{lem:Mbasis}}^3
    R (\h(\rho')-m)^{3q+1} \exp(-1.1\varepsilon (\h(\rho')-m)) \\
\le &
    C_0R \exp(-\varepsilon (\h(\rho')-m)),
\end{align*}
where 
$$
    C_0 
= 
    2 C_{\ref{lem:Mbasis}}^3
    \max_{n \in \mathbb{N}} n^{3q+1} \exp(-0.1\varepsilon n) 
<
    +\infty
$$
is a constant depending on $M$ and $\varepsilon$. Combining the above estimate with \eqref{eq: DotDeg102} we conclude that 
\begin{align*}
\max_{\theta\in [q]}
    \big| (\mathbb{E}_{\rho'}fg)(\theta) - \mathbb{E}fg\big|
\le
    C_0R \exp \big(-\varepsilon (\h(\rho')-m) \big)
    \sqrt{  \sum_{u \in D_m(\rho')} \E f_u^2(X)} 
       \sqrt{ \sum_{u \in D_m(\rho')} \E g_u^2(X)},
\end{align*}
and the lemma follows. 
\end{proof}

 The statement of Lemma \ref{lem:DotDeg1} together with Proposition \ref{prop: CVbasis4} implies the following:    

 \begin{cor} \label{cor: DotDeg1}
 There exists a constant $C=C(M,\varepsilon)\ge 1$ so that the following holds. 
 For $\rho' \in T$ and $0 \le m \le \h(\rho')$, 
consider two degree 1 polynomials $f$ and $g$ with variables $(x_u\,:\, u \in D_m(\rho'))$
    with $\E f(X) = \E g(X) = 0$.
Notice that by the \MP, $(\E_{\rho'}fg)(x)$ is a function of $x_{\rho'}$. 
 Then, 
 $$
     \max_{\theta\in [q]}
     \big| (\mathbb{E}_{\rho'}fg)(\theta) - \E fg \big|
 \le 
    C R^4
    \exp( - \varepsilon (\h(\rho')-m)) 
    \sqrt{ \E f^2(X) }
    \sqrt{ \E g^2(X) }.
 $$
 \end{cor}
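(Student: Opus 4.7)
The plan is to derive Corollary \ref{cor: DotDeg1} by a two-line chaining of Proposition \ref{prop: CVbasis4} and Lemma \ref{lem:DotDeg1}. Both $f$ and $g$ are degree-1 polynomials in $(x_u : u \in D_m(\rho'))$ with $\E f(X) = \E g(X) = 0$, so Proposition \ref{prop: CVbasis4} applies to each and yields almost-sure single-variable decompositions
\begin{align*}
f(X) &= \sum_{u \in D_m(\rho')} f_u(X_u), & g(X) &= \sum_{u \in D_m(\rho')} g_u(X_u),
\end{align*}
together with the aggregation bounds $\sum_u \var[f_u(X_u)] \le C_{\ref{prop: CVbasis4}} R^3\, \var[f(X)]$ and analogously for $g$.

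The first step is to arrange $\E f_u(X_u) = \E g_u(X_u) = 0$ for every $u$. This is free: replacing each $f_u$ by $f_u - \E f_u$ does not affect the a.s. identity (since $\E f = 0$ forces the sum of the subtracted constants to vanish modulo an overall shift which we absorb into one summand), and because of mean-zero, $\E f_u^2(X) = \var[f_u(X_u)]$, so the variance bound of Proposition \ref{prop: CVbasis4} translates to second-moment form.

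The second step is to feed these decompositions into Lemma \ref{lem:DotDeg1}, which gives directly
\begin{align*}
\max_{\theta \in [q]} \big| (\E_{\rho'} fg)(\theta) - \E fg \big|
\le C_{\ref{lem:DotDeg1}} R\, e^{-\varepsilon(\h(\rho')-m)} \sqrt{\sum_{u} \E f_u^2(X)} \sqrt{\sum_u \E g_u^2(X)}.
\end{align*}
Applying the Proposition \ref{prop: CVbasis4} bounds to each of the two square roots replaces each $\sqrt{\sum_u \E f_u^2}$ by $\sqrt{C_{\ref{prop: CVbasis4}} R^3\, \E f^2(X)}$ (and likewise for $g$), producing a factor $R \cdot R^{3/2} \cdot R^{3/2} = R^4$ and an overall constant $C := C_{\ref{lem:DotDeg1}} \cdot C_{\ref{prop: CVbasis4}}$, which is of the form $C(M,\varepsilon)$ claimed.

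There is no real technical obstacle: the argument is a direct composition of the two previously established results, and the only minor bookkeeping item is the recentering of $f_u$ and $g_u$ to mean zero, which is standard.
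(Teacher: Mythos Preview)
Your proposal is correct and follows exactly the approach the paper intends: the paper states just before the corollary that ``The statement of Lemma \ref{lem:DotDeg1} together with Proposition \ref{prop: CVbasis4} implies the following,'' and your chaining of these two results, with the recentering step and the $R \cdot R^{3/2} \cdot R^{3/2} = R^4$ bookkeeping, is precisely the intended argument.
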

 
 \begin{rem}
 By taking the degree 1 polynomial $f = g$ with the assumption that $\E f(X)=0$, 
 we get 
 \begin{align} \label{eq: fg00}
      \E\big[(\E_{\rho'}f^2)(X) - \E f^2(X)\big]^2
 \le 
     \Big( \max_{\theta\in [q]}
     \big| (\mathbb{E}_{\rho'}fg)(\theta) - \E fg \big| \Big)^2
 \le 
    C^2 R^6 \exp( - 2\varepsilon \h(\rho')) (\E f^2(X))^2.
 \end{align}
 \end{rem}
 In other words, if $\h(\rho')$ is sufficiently large, 
 $(\E_{\rho'}f^2)(X_{\rho'})$ is almost the same as $\E f^2(X)$ with a small fluctuation. 
 Let us state this as a seperate lemma. 
 
 \begin{lemma} \label{lem: deg1f^2}
    There exists $C=C(M,d)$ so that the following holds. For $\rho' \in T$ 
    with $$\h(\rho') \ge \frac{C(\log(R) + 1)}{\varepsilon},$$
    any degree 1 polynomial $f$ of variables $(x_u\,:\, u \in L_{\rho'})$ with $\E f(X)=0$ satisfies 
     \begin{align*}
         \max_{\theta \in [q]}  (\E_{\rho'}f^2)(\theta) 
     \le 
         2  \min_{\theta \in [q]} (\E_{\rho'}f^2)(\theta). 
     \end{align*}
 \end{lemma}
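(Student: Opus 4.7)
The plan is to read off the conclusion from the pointwise bound established in Corollary~\ref{cor: DotDeg1} (equivalently, from the remark preceding the lemma). Taking $g=f$ and $m=0$ (so that the variables are exactly those of $L_{\rho'}=D_0(\rho')$), and using the assumption $\E f(X)=0$, Corollary~\ref{cor: DotDeg1} gives an $\ell_\infty$ bound of the form
\begin{align*}
\max_{\theta\in [q]} \bigl|(\E_{\rho'}f^2)(\theta)-\E f^2(X)\bigr|
\le C_1 R^4 \exp(-\varepsilon\,\h(\rho'))\,\E f^2(X),
\end{align*}
where $C_1=C_1(M,\varepsilon)$ is the constant produced by the corollary.

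Next I would choose the constant $C$ in the statement of the lemma large enough (depending on $C_1$) so that the hypothesis $\h(\rho')\ge C(\log(R)+1)/\varepsilon$ forces the multiplicative factor $C_1 R^4 \exp(-\varepsilon\,\h(\rho'))$ to be at most $1/3$. Concretely, it suffices to take $C\ge (\log(3C_1)+4)/\varepsilon$, since then $\varepsilon\,\h(\rho')\ge \log(3C_1)+4\log(R)$. Under this choice, the displayed pointwise inequality becomes
\begin{align*}
\bigl|(\E_{\rho'}f^2)(\theta)-\E f^2(X)\bigr|\le \tfrac{1}{3}\,\E f^2(X)\qquad\text{for every }\theta\in[q].
\end{align*}

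From the upper half of this bound, $(\E_{\rho'}f^2)(\theta)\le \tfrac{4}{3}\E f^2(X)$ for every $\theta$. From the lower half, $\E f^2(X)\le (\E_{\rho'}f^2)(\theta)+\tfrac{1}{3}\E f^2(X)$, which rearranges to $\E f^2(X)\le \tfrac{3}{2}(\E_{\rho'}f^2)(\theta)$. Chaining these two estimates, for any $\theta_1,\theta_2\in[q]$,
\begin{align*}
(\E_{\rho'}f^2)(\theta_1)\le \tfrac{4}{3}\E f^2(X)\le \tfrac{4}{3}\cdot\tfrac{3}{2}(\E_{\rho'}f^2)(\theta_2)=2(\E_{\rho'}f^2)(\theta_2),
\end{align*}
and taking a max over $\theta_1$ and a min over $\theta_2$ yields the lemma.

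There is no real obstacle here: the Corollary/Remark has already done all the analytic work by establishing that $(\E_{\rho'}f^2)$ is pointwise close to $\E f^2(X)$; the lemma is merely a clean repackaging of this closeness into a multiplicative $\max/\min$ ratio, and the only thing to be careful about is calibrating $C$ so that the $R^4$ prefactor is absorbed by the exponential decay.
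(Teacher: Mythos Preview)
Your proof is correct and follows essentially the same approach as the paper: apply Corollary~\ref{cor: DotDeg1} with $g=f$ to obtain a pointwise bound $|(\E_{\rho'}f^2)(\theta)-\E f^2(X)|\le C_1R^4\exp(-\varepsilon\h(\rho'))\,\E f^2(X)$, then choose $C$ large enough to force this error below a small constant (the paper uses $1/4$, you use $1/3$) and read off the ratio bound. The only cosmetic difference is the exact calibration of $C$; your extra factor of $1/\varepsilon$ in $C$ is harmless since $\varepsilon=\varepsilon(M,d)$ is fixed.
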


\begin{proof}     
By Corollary \ref{cor: DotDeg1}, for every $\theta \in [q]$, 
$$
    \big| (\E_{\rho'}f^2)(\theta) - \E f^2(X) \big|
\le 
    C_{\ref{cor: DotDeg1}} R^4 \exp( - \varepsilon \h(\rho')) \E f^2(X).
$$
where $C_{\ref{cor: DotDeg1}}$ is the constant introduced in Lemma \ref{cor: DotDeg1}. 
Now, we set the constant described in the lemma as 
$$
    C 
=  
    \frac{1}{\varepsilon} \Big(  \log(4C_{\ref{cor: DotDeg1}}) + 4  \Big),
$$ 
which implies 
$$
    C_{\ref{lem:DotDeg1}} R \exp( - \varepsilon \h(\rho'))
\le 
    \frac{1}{4}\exp\big( - \varepsilon( \h(\rho')- C(\log(R) + 1))\big).
$$

Then, with $\h(\rho') \ge C(\log(R) + 1)$
\begin{align*}
    |(\E_{\rho'}f^2)(\theta) - \E f^2(X)|
\le 
    \frac{1}{4}\E f^2(X),
\end{align*}
which in term implies 
$$
   \frac{
        \max_{\theta \in [q]}  (\E_{\rho'}f^2)(\theta)
   }{
        \min_{\theta \in [q]} (\E_{\rho'}f^2)(\theta)}
\le
    \frac{\frac{5}{4} \E f^2(X)}{\frac{3}{4} \E f^2(X)}
< 
    2. 
$$
\end{proof}
\medskip

\begin{proof}[Proof of Proposition \ref{prop: inductionBase}]
Let $C_0 $ denote the constant introduced in the statement of the Proposition. 
Its precise value will be determined along the proof. 

Let $\rho'\in T$ with $\h':= \h(\rho')$. 
By Lemma \ref{lem:deg1}, any degree-1 polynomial $f(x)$  with variables 
 $(x_u\,: u \in L_{\rho'})$ satisfies 
\begin{align*}
		{\rm Var} \big[ \ConE{\rho'}{f}(X) \big]
\le 	 	C_{\ref{lem:deg1}} R^4 (\h')^{2q} (d\lambda^2)^{\h'} {\rm Var}[ f(X) ],
\end{align*}
where $C_{\ref{lem:deg1}}$ denotes the $M$-dependent constant introduced in the Lemma.  
For the term in front of $\var [f(X)]$, 
\begin{align*}
    C_{\ref{lem:deg1}}
    R^4 (\h')^{2q} (d\lambda^2)^{\h'}
\le
	C_{\ref{lem:deg1}} R^4 (\h')^{2q} \exp( - 1.1\varepsilon \h')
\le
	\exp\big( - \varepsilon \big( \h' - C_1(\log(R) + 1)\big)\big),
\end{align*}
where 
$$
    C_1 
:= 
    \frac{1}{\varepsilon} \Big(
    \log( C_{\ref{lem:deg1}})
    + 4 + 
    \max_{n \in \mathbb{N}} n^{2q} \exp(-0.1\varepsilon q) 
    \Big).
$$
Thus, if we impose the {\bf first assumption} on $C_0$ that 
$$
    C_0 \ge C_1, 
$$
then the first condition \eqref{eq: kA0} in Assumption \ref{assume: Ag} 
holds for $\CA_1$ if we take $\lA \ge C_0(1+\log(R))$.  

\vspace{0.5cm}

It remains to establish \eqref{eq: kA1}. 
Let $f,g$ be two degree-1 polynomials in the variables $(x_u\,: u \in L_{\rho'})$
satisfying $\E f(X) = \E g(X) = 0$. 
First, by Corollary \ref{cor: DotDeg1},
\begin{align*}
    \max_{\theta \in [q]}|(\E_{\rho'}fg)(\theta) - \E fg|
\le 
    C_{\ref{cor: DotDeg1}} R^4 \exp(-\varepsilon \h(\rho')) \sqrt{\E f^2(X)} \sqrt{\E g^2(X)},
\end{align*}
where $C_{\ref{cor: DotDeg1}}$ is the constant introduced in Corollary \ref{cor: DotDeg1}.
Next, we would like to apply Lemma \ref{lem: deg1f^2}.
Assuming 
$$\h(u) \ge \frac{ C_{\ref{lem: deg1f^2}} (\log(R) + 1)}{\varepsilon}$$ 
where $C_{\ref{lem: deg1f^2}}$ is the constant introduced in the Lemma, we can apply the lemma to get  
\begin{align*}
    \E f^2(X) \le 2 \min_{\theta} (\E_{\rho'} f^2)(\theta) 
\end{align*}
and the same holds for $g$. 
Together we may conclude that 
\begin{align*}
    \max_{\theta \in [q]}|(\E_{\rho'}fg)(\theta) - \E fg|
\le& 
    2C_{\ref{cor: DotDeg1}} R^4
     \exp(-\varepsilon \h(\rho'))
\sqrt{\min_{\theta} (\E_{\rho'} f^2)(\theta) \min_{\theta} (\E_{\rho'} g^2)(\theta)} 
\end{align*}

Now, we impose the {\bf second assumption} on $C_0$ that 
$$
    C_0 
\ge 
    \max \Big\{ 
    \frac{1}{\varepsilon} \Big( 
        \log(  2C_{\ref{cor: DotDeg1}}) + 4        
    \Big) ,\, 
        \frac{ C_{\ref{lem: deg1f^2}} }{\varepsilon}
    \Big\}.
$$
Then, we conclude that 
\begin{align*}
    \max_{\theta \in [q]}|(\E_ufg)(\theta) - \E fg|
\le& 
     \exp\Big(-\varepsilon \big(\h(\rho') - C_0(\log(R)+1) \big) \Big)
\sqrt{\min_{\theta} (\E_u f^2)(\theta) \min_{\theta} (\E_u g^2)(\theta)} 
\end{align*}
provided that 
$$
    \h(\rho') \ge C_0(\log(R)+1).
$$
Therefore, we can conclude that $\CA_1$ satisfies Assumption \ref{assume: Ag} with
$$
    \lA = C_0(\log(R)+1).
$$
\end{proof}

\section{Inductive Step in General Case}
\label{sec: generalInduction}
The goal in this section is to prove Theorem \ref{LINthm:induction}. Let us restate the theorem here: 

\begin{theorem*}
Consider the rooted tree $T$ and transition matrix $M$ described in Theorem \ref{thm:main}.
There exists $C=C(M,d)>1$ so that the following holds. 
Suppose $\cal A$ satisfies Assumption \ref{assume: Ag} with some parameter $\lA$.
Let $\cal B = \cal B(\cal A)$ (see Definition \ref{LINdef:BfromA}). Then, $\cal B$ satisfies 
Assumption \ref{assume: Ag} with parameter 
	$$\lA + C(\log(R) + 1).$$ 
\end{theorem*}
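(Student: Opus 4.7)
The proof of Theorem~\ref{LINthm:induction} follows the same overall architecture as the proof of Theorem~\ref{LINthm:inductionSimple}: decompose $f$ according to Lemma~\ref{LINlem:fdecomposition} into $f = \sum_k f_k$, derive decay estimates on the individual pieces $f_u$ and $f_k$, and combine them. Two modifications are required: (i) adapt the tools of Sections~\ref{sec: decomposition}--\ref{sec: ind2} to use the $L^\infty$ covariance bound \eqref{eq: kA1} in place of the pointwise lower bound \eqref{eq: assumeSimple1}; (ii) additionally propagate \eqref{eq: kA1} itself from $\CA$ to $\cal B$.

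For (i), the bridge is the following observation. Applying \eqref{eq: kA1} with $g=f$ to a mean-zero $\CA_{\le u}$-polynomial $f$ whenever $\h(u) \ge \lA + C_0$ for a sufficiently large constant $C_0$, yields
\begin{align*}
    \max_\theta \big|(\E_u f^2)(\theta) - \E f^2\big| \le \tfrac{1}{2} \min_\theta (\E_u f^2)(\theta),
\end{align*}
and hence $(\E_u f^2)(\theta) \in [\tfrac{1}{2} \E f^2,\, \tfrac{3}{2} \E f^2]$ uniformly in $\theta$. This effectively recovers \eqref{eq: assumeSimple1} with constant $c^*$ close to $1$, at the cost of an additional constant slack in $\h(u)-\lA$. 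With this bridge in hand, the proofs of Proposition~\ref{prop:Bwinwin}, Corollaries~\ref{cor: fuI} and \ref{cor: fufuI}, Proposition~\ref{prop: fu}, and Proposition~\ref{prop: fk} go through with only routine adjustments to constants; the variance decay \eqref{eq: kA0} for $\cal B$-polynomials then follows by the same combination argument as in the proof of Theorem~\ref{LINthm:inductionSimple}.

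For (ii), given $\cal B_{\le u}$-polynomials $f,g$ with $\E f = \E g = 0$, I split each as $f = A_f(X_u) + B_f$ with $A_f(X_u):=(\E_u f)(X_u)$ and $B_f := f - A_f(X_u)$ (so $\E[B_f\mid X_u]=0$), and similarly for $g$. The Markov property then yields the pointwise identity
\begin{align*}
    (\E_u fg)(\theta) - \E fg = \big[A_f(\theta)A_g(\theta) - \E A_f A_g\big] + \big[(\E_u B_f B_g)(\theta) - \E B_f B_g\big].
\end{align*}
The first bracket is controlled by combining the variance decay \eqref{eq: kA0} for $\cal B$-polynomials (established in (i)) with the $L^\infty$--$L^2$ comparison \eqref{eq: VarLinfty} of Lemma~\ref{lem:Mbasis}: $\max_\theta |A_f(\theta)| \le C\sqrt{\var A_f}$ is exponentially small in $\h(u)-\lA-C\log R$. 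For the second bracket I use the Markov property to decompose $B_f B_g$ across the child subtrees $T_{u_i}$ (conditionally independent given $X_u = \theta$) and combine the $L^\infty$ covariance bound \eqref{eq: kA1} for $\CA$ on each branch with a pointwise version of the covariance analysis behind Proposition~\ref{prop: fk}.

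The main obstacle is the pointwise-in-$\theta$ analysis required for the second bracket in (ii). While the corresponding $L^2$-in-$X_u$ statement is essentially given by Proposition~\ref{prop: fk}, lifting it to an $L^\infty$ bound requires carefully aggregating the pointwise decay estimates coming from the subtrees while preserving the proper normalization by $\min_\theta (\E_u f^2)(\theta)$ and $\min_\theta (\E_u g^2)(\theta)$ on the right-hand side of \eqref{eq: kA1}. The polynomial losses in $R$ and $d$ incurred by this aggregation are ultimately absorbed into the extra $C(\log(R)+1)$ slack permitted in the statement of the theorem.
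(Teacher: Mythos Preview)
Your part (i) is correct and is exactly Lemma~\ref{lem: ellAtoell*} in the paper: Assumption~\ref{assume: Ag} with parameter $\lA$ implies Assumption~\ref{assume: A} with $\lAs = \lA + \frac{2}{\varepsilon}\log 2$ and $c^* = 1/2$, so all of Sections~\ref{sec: ind1}--\ref{sec: ind2} (in particular Theorem~\ref{LINthm:inductionSimple}) can be recycled verbatim to establish the variance decay~\eqref{eq: kA0} for $\cal B$-polynomials.

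Your part (ii), however, takes a route different from the paper's and has a genuine gap. The split $f = A_f + B_f$ with $A_f = (\E_u f)(X_u)$ is natural, and your treatment of the $A_f A_g$ term is fine (modulo the bootstrap from $\sqrt{\E f^2}$ to $\sqrt{\min_\theta(\E_u f^2)(\theta)}$, which the paper also performs at the very end). The problem is the $B_f B_g$ term. You propose to ``decompose $B_f B_g$ across the child subtrees $T_{u_i}$'' and apply~\eqref{eq: kA1} on each branch, but $B_f$ does \emph{not} decompose additively over the children of $u$: a $\cal B_{\le u}$-polynomial contains terms $\psi_\sigma$ with $\rho(\sigma)=u$, which are \emph{products} $\prod_{i \in I(\sigma)}\tilde\phi_{\sigma_i}$ across at least two child subtrees. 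There is no sum-over-branches representation of $B_f$ on which one could invoke~\eqref{eq: kA1} termwise, so ``decompose across child subtrees'' as you describe it does not get off the ground.

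The paper's approach to (ii) does not use the $A_f/B_f$ split at the top level. Instead, it applies Lemma~\ref{LINlem:fdecomposition} to \emph{both} $f$ and $g$, writing $f=\sum_k f_k$ and $g=\sum_m g_m$, and then proves a pointwise oscillation bound on each cross term $(\E_{\rho'} f_k g_m)(\theta)$ (Proposition~\ref{prop: fkgm}), summing afterwards. The genuinely new ingredient beyond Section~\ref{sec: ind2} is Proposition~\ref{prop: fugu}, whose core is the Lipschitz argument of Lemma~\ref{lem: fuIguI}: for $a,b$ that are linear combinations of $\psi_\sigma$ with a \emph{fixed} $I(\sigma)=I$, one shows that $x_{u,I}\mapsto (\E_{u,I} ab)(x_{u,I})$ is Lipschitz in each single child coordinate by freezing all the others and applying~\eqref{eq: kA1} for $\CA$ to the one remaining $\CA_{\le u_i}$-polynomial factor (this is where the product structure of $\psi_\sigma$ is essential). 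Summing over $I,J$ with the off-diagonal decay from Corollary~\ref{cor: fuI} then gives the oscillation bound for general $f_u,g_u$. This Lipschitz-in-one-coordinate mechanism is the precise way~\eqref{eq: kA1} for $\CA$ is consumed, and it is the step your proposal is missing.
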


{\bf In this section, we fix a subcollection $\CA$ satisfying Assumption \ref{assume: Ag}
with a given parameter $\lA$ and let $\cal B = {\cal B}(\CA)$. }

We begin with the following lemma, which allows us to recycle some of the results from
the case $c_M>0$.  

\begin{lemma} \label{lem: ellAtoell*}
    Suppose $\CA$ satisfies Assumption \ref{assume: Ag} with parameter $\lA$. Then, 
    then $\CA$ satisfies Assumption \ref{assume: A} with 
    $\lAs = \lA + \frac{2}{\varepsilon}\log(2)$ and $c^* = \frac{1}{2}$. 
\end{lemma}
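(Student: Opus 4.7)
The plan is to verify the two conditions of Assumption~\ref{assume: A} in turn, with Condition~1 being an immediate consequence of Assumption~\ref{assume: Ag} and Condition~2 carrying the substantive content.

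\textbf{Condition 1 (variance contraction).} For $v$ with $\h(v) \ge \lAs$ and $f$ a $\CA_{\le v}$-polynomial, the bound
\[
   \var[\ConE{v}{f}(X)] \le \exp\!\bigl(-\varepsilon(\h(v) - \lAs)\bigr)\,\var[f(X)]
\]
follows at once from \eqref{eq: kA0}: since $\lAs = \lA + \tfrac{2}{\varepsilon}\log 2 \ge \lA$, we have $\h(v) - \lA \ge \h(v) - \lAs$, giving $\exp(-\varepsilon(\h(v)-\lA)) \le \exp(-\varepsilon(\h(v)-\lAs))$. No other tools are needed for this part.

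\textbf{Condition 2 (the $c^* = 1/2$ bound).} Fix $v$ with $\h(v) \ge \lAs$ and $f$ a $\CA_{\le v}$-polynomial with $\E f(X) = 0$. Set $h := \E_v f$, which by the \MP\ is a function of $x_v$ with $\E_\pi h(X_v) = 0$. Writing $(Mh^2)(\theta) := \sum_j M_{\theta j}\,h^2(j) = \E[h^2(X_v)\mid X_{\fp(v)}=\theta]$, the goal is to show
\[
   \tfrac12\,\E h^2(X_v) \;\le\; (Mh^2)(\theta) \;\le\; 2\,\E h^2(X_v) \qquad \text{for every } \theta \in [q].
\]
The plan is to apply inequality~\eqref{eq: kA1} with $u = \fp(v)$ and $g = f$: the hypothesis $\h(\fp(v)) \ge \lA$ is satisfied since $\h(\fp(v)) \ge \lAs + 1 > \lA$, and the choice $\lAs = \lA + \tfrac{2}{\varepsilon}\log 2$ is precisely calibrated so that the decay factor satisfies
\[
   \exp\!\Bigl(-\tfrac{\varepsilon}{2}\bigl(\h(\fp(v)) - \lA\bigr)\Bigr) \;\le\; \exp\!\Bigl(-\tfrac{\varepsilon}{2}\cdot \tfrac{2}{\varepsilon}\log 2\Bigr) \;=\; \tfrac12 .
\]
This yields $\max_\theta \bigl|(\E_{\fp(v)} f^2)(\theta) - \E f^2\bigr| \le \tfrac12 \min_\theta (\E_{\fp(v)} f^2)(\theta)$, so $(\E_{\fp(v)} f^2)(\theta)$ is pinned to within a factor of $2$ of $\E f^2$ uniformly in $\theta$.

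\textbf{Bridging from $\E_{\fp(v)} f^2$ to $\E_{\fp(v)} h^2$.} The standard conditional-variance identity, via the \MP, gives
\[
   (\E_{\fp(v)} f^2)(\theta) = (Mh^2)(\theta) + V(\theta), \qquad \E f^2 = \E h^2(X_v) + V,
\]
where $V(\theta) := \E\!\bigl[\var(f\mid X_v)\,\big|\,X_{\fp(v)}=\theta\bigr] \ge 0$ and $V := \E\,V(X_{\fp(v)})$. The tight bound from~\eqref{eq: kA1} therefore controls the \emph{sum} $(Mh^2)(\theta) + V(\theta)$, and the main obstacle is to separately control $(Mh^2)(\theta)$ from the residual $V(\theta)$. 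The plan is to apply~\eqref{eq: kA1} a second time with $u = v$ and $g = f$ (again valid, since $\h(v) \ge \lAs \ge \lA$, with the same decay factor $\le 1/2$). This produces the analogous pointwise control $(\E_v f^2)(\theta) = h^2(\theta) + W(\theta) \in [\tfrac23 \E f^2, \tfrac32 \E f^2]$ where $W(\theta) = \var[f \mid X_v = \theta] \ge 0$; $M$-averaging this identity recovers the $\fp(v)$-level decomposition, and comparing the two pinned ranges isolates $(Mh^2)(\theta)$ from $V(\theta)$ up to the same factor of $2$.

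The hardest step in the argument is this final decoupling: the bounds~\eqref{eq: kA1} deliver uniform control on the total $\E[f^2\mid \,\cdot\,]$, whereas the target is the marginal piece $(Mh^2)$. The precise reason the numerology works out with $c^* = \tfrac12$ and $\lAs - \lA = \tfrac{2}{\varepsilon}\log 2$ is that the two applications of~\eqref{eq: kA1} (at $u = v$ and $u = \fp(v)$) each contribute a decay factor of $\tfrac12$, which combine to pin $(Mh^2)(\theta)$ within a factor $2$ of $\E h^2(X_v)$ by the non-negativity of $W$ and $V$.
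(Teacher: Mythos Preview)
Your Condition~1 argument is correct and matches the paper.

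For Condition~2 there is a genuine gap in your decoupling step, and the paper's route is both shorter and different. The paper applies \eqref{eq: kA1} \emph{once}, at $u=v$ with $g=f$, obtaining
\[
\max_{\theta}\bigl|(\E_v f^2)(\theta)-\E f^2\bigr|\;\le\; \exp\!\Bigl(-\tfrac{\varepsilon}{2}(\h(v)-\lA)\Bigr)\,\min_{\theta}(\E_v f^2)(\theta)\;\le\;\tfrac12\,\E f^2,
\]
since $\h(v)\ge\lAs=\lA+\tfrac{2}{\varepsilon}\log 2$; hence $\tfrac12\,\E f^2\le (\E_v f^2)(\theta)\le \tfrac32\,\E f^2$ for every $\theta$, and the paper declares this to be \eqref{eq: assumeSimple1}. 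That is the entire argument.

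You instead target the literal form of \eqref{eq: assumeSimple1}---a pointwise comparison of $(Mh^2)(\theta)=\E\bigl[(\E_v f)^2\mid X_{\fp(v)}=\theta\bigr]$ with $\E h^2$---and propose to reach it by combining applications of \eqref{eq: kA1} at $u=v$ and at $u=\fp(v)$. This does not work. Since $(\E_{\fp(v)} f^2)(\theta)=\sum_j M_{\theta j}(\E_v f^2)(j)$, the bound at $\fp(v)$ is exactly the $M$-average of the bound at $v$ and carries no new information. From $h^2(\theta')+W(\theta')\in[\tfrac23\E f^2,\tfrac32\E f^2]$ pointwise you only get $(Mh^2)(\theta)\ge \tfrac23\E f^2-(MW)(\theta)$, and nothing prevents $(MW)(\theta)$ from being close to $\E f^2$ while $\E h^2=\E f^2-\E W$ is arbitrarily small; the ``non-negativity of $W$ and $V$'' supplies no cancellation here. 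So the claimed factor-$2$ pinning of $(Mh^2)(\theta)$ to $\E h^2$ does not follow from your two pinned ranges.

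In effect you have noticed a mismatch between what \eqref{eq: assumeSimple1} literally says (with $(\E_v f)^2$) and what the paper actually proves and later uses (control of $\E[f^2\mid X_v=\theta]$, hence by $M$-averaging of $\E[f^2\mid X_{\fp(v)}=\theta]$; cf.\ \eqref{eq: Bwinwin01}). The intended content of Condition~2 is the latter, for which the single application of \eqref{eq: kA1} at $u=v$ suffices; your two-level decoupling is aimed at a stronger statement that neither proof establishes.
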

\begin{proof}
   Let $f$ be a $\CA_{\le v}$-polynomial. 
   If we set $\lAs \ge\lA$, then \eqref{eq: assumeSimple0} follows immediately from \eqref{eq: kA0}. 

    Now, we assume that $\E f(X)=0$ and $\h(v)\ge\lA$. We could apply \eqref{eq: kA1} with $g=f$ to get 
    \begin{align*}
        \max_{\theta \in [q]} \big| (\E_v f^2)(\theta) - \E f^2(X) \big| 
    \le 
        \exp\Big( - \frac{\varepsilon}{2} ( \h(v)- \lA) \Big)
         \E f^2(X).
    \end{align*} 

    If $\exp\Big( - \frac{\varepsilon}{2} ( \h(v)- \lA) \Big) \le \frac{1}{2}$, or equivalently, 
    $$ \h(v) \ge \lA + \frac{2}{\varepsilon}\log(2) ,$$ 
    then, for every $\theta \in [q]$, 
    \begin{align*}
        \frac{1}{2} \E h^2(X)
    \le 
         (\E_v h^2)(\theta)  
    \le 
         \frac{3}{2}\E h^2(X).
    \end{align*} 
    Therefore, if we set $\lAs \ge\lA + \frac{2}{\varepsilon}\log(2)$ and $c^* = \frac{1}{2}$, 
    both \eqref{eq: assumeSimple0} and \eqref{eq: assumeSimple1} hold. 
\end{proof}

In the remainning of this section, we set 
\begin{align} \label{eq: ell*general}
    \lAs =\lA +\frac{2}{\varepsilon}\log(2) \mbox{ and } c^* = \frac{1}{2},
\end{align}
and we will rely on the fact that $\CA$ satisfies Assumption \ref{assume: A} with these two parameters.  
In particular, we could apply Theorem \ref{LINthm:inductionSimple} to show 
the existence of $C_{\ref{LINthm:inductionSimple}} = C(M,\varepsilon,1/2)$ 
such that for any ${\cal B}_{\le v}$-polynomial $f$, 
\begin{align*}
	{\rm Var} \big[ \ConE{v}{f}(X) \big]
\le 	
    \exp \Big( - \varepsilon \big( \h(v) -\lA + C_{\ref{LINthm:inductionSimple}}(\log(R)+1) \big) \Big) 
    {\rm Var}\big[ f(X)\big].
\end{align*}

Therefore, to establish Theorem \ref{LINthm:induction}, it remains to show the existence of $C=C(M,d)$
so that any ${\cal B}_{\le v}$-polynomials $f$ and $g$ with $\h(v) \ge\lA + C(\log(R)+1)$ 
and $\E f(X)= \E g(X)=0$ satisfy 
\begin{align*}
    \max_{\theta  \in [q]}|(\E_v f  g)(\theta) - \E f g| 
\le &
    \exp\Big( - \frac{\varepsilon}{2} (\h(v) -\lA - C(\log(R)+1)) \Big)
    \sqrt{\min_\theta (\E_v  f^2)(\theta) \min_{\theta'}( \E_v  g^2)(\theta)}. 
\end{align*}

To establish the above inequality, the higher level structure is essentially the same as that for deriving 
Theorem \ref{LINthm:inductionSimple}.  
We again decompose $f$ and $g$ according to Lemma \ref{LINlem:fdecomposition}. 
To the proof of the theorem, similarly it contains three steps: 
\begin{enumerate}
    \item Establish properties of $\tilde f_u$ and $\tilde g_u$, see Proposition \ref{prop: fugu}.
    \item Estalbish properties of $f_k$ and $g_k$,  see Proposition \ref{prop: fkgm}.
    \item Establish Theorem \ref{LINthm:induction}. 
\end{enumerate}

\subsection{Properties of $f_u$}
The main goal we want to prove in this subsection is the following Proposition.

\begin{prop} \label{prop: fugu}
    There exsits $C = C(M,d) \ge 1$ so that the following holds. 
    For a given $u \in T\backslash L$ with 
    $$ 
        \h(u) \ge\lA + C(\log(R) + 1),
    $$
    suppose $f_u$ and $g_u$ are two functions which are linear combination of 
    $\psi_{\sigma}(x)$ with $\sigma \in {\cal F}({\cal B}_u)$. Then, 
    for any $\theta,\theta' \in [q]$, 
\begin{align*}
    & \big| (\E_u f_u g_u)(\theta) - (\E_u f_u g_u)(\theta') \big| \\
\le &
    \exp\Big( -\frac{\varepsilon}{2} ( \h(u) - C(\log(R) + 1) -\lA)\Big)
    \sqrt{\min_\theta (\E_uf_u^2)(\theta) 
        \min_\theta (\E_ug_u^2)(\theta)}. 
\end{align*}
\end{prop}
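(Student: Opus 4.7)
My plan for proving Proposition \ref{prop: fugu} is to imitate the strategy used to derive Proposition \ref{prop: fu} in the $c_M > 0$ regime, replacing every appeal to \eqref{eq: assumeSimple1} by an appeal to the inductive hypothesis \eqref{eq: kA1} from Assumption \ref{assume: Ag}. The proposition is the $\cal B$-polynomial analogue of \eqref{eq: kA1}, restricted to the most complex components whose non-zero Fourier coefficients all satisfy $\rho(\sigma) = u$. I will run the argument in two phases: first prove a weaker form in which the right-hand side is $\sqrt{\E f_u^2(X) \, \E g_u^2(X)}$ instead of $\sqrt{\min_\theta (\E_u f_u^2)(\theta) \cdot \min_\theta (\E_u g_u^2)(\theta)}$, and then bootstrap to the stated form.

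For the first phase I will decompose $f_u = \sum_I f_{u,I}$ and $g_u = \sum_J g_{u,J}$ as in Definition \ref{defi: fu}, with $I,J \subseteq [d_u]$ of size at least two. For a monomial pair $(\sigma,\sigma')$ with $I(\sigma) = I$ and $I(\sigma') = J$, the product $\psi_\sigma \psi_{\sigma'}$ factors across subtrees into $\prod_{i \in I \cup J} r_i(x)$, where $r_i$ is supported on $T_{u_i}$ and equals a single mean-zero $\tilde\phi$ for $i \in I \Delta J$ and a product of two $\tilde\phi$'s for $i \in I \cap J$. By the \MP, $(\E_u \psi_\sigma \psi_{\sigma'})(\theta) = \prod_{i \in I \cup J} A_i(\theta)$ with $A_i(\theta) = (\E_u r_i)(\theta)$. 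I then apply the telescoping identity $\prod a_i - \prod b_i = \sum_k (a_k - b_k) \prod_{i<k} a_i \prod_{i>k} b_i$ to reduce the difference at $\theta$ versus $\theta'$ to a sum over $k \in I \cup J$ of one \emph{derivative} factor $|A_k(\theta) - A_k(\theta')|$ and $|I \cup J| - 1$ \emph{magnitude} factors. The derivative factor with $k \in I \cap J$ is controlled by \eqref{eq: kA1} applied at $v = u_k$ to the two $\CA_{\le u_k}$-polynomials coming from $\sigma,\sigma'$ in that subtree; for $k \in I \Delta J$ it suffices to bound $|A_k(\theta)| + |A_k(\theta')|$ using \eqref{eq: kA0} combined with \eqref{eq: VarLinfty} to convert $L^2$ decay into an $\ell^\infty$ bound on the mean-zero $\tilde\phi$. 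Magnitude factors at $i \neq k$ are handled by Cauchy--Schwarz.

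Once the per-monomial telescoping is in place, I will sum over $(\sigma,\sigma')$ using Cauchy--Schwarz to obtain an aggregate bound in terms of $\sqrt{\E f_{u,I}^2 \, \E g_{u,J}^2}$, then sum over $(I,J)$ against the geometric series in $|I \cup J|$ produced by the accumulated decay factors, analogously to Corollary \ref{cor: fuI} and Corollary \ref{cor: fufuI}; the combinatorial counts $\binom{d_u}{t} \le (Rd)^t$ are absorbed into the exponent by the additive constant $C(\log R + 1)$. This yields the weak form. For the second phase, I take the weak form with $g_u = f_u$: it shows that $(\E_u f_u^2)(\theta)$ varies across $\theta$ by at most $\tfrac{1}{2} \E f_u^2$ once $\h(u) \ge \lA + C(\log R + 1)$ is taken large enough. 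Combined with the identity $\sum_\theta \pi(\theta) (\E_u f_u^2)(\theta) = \E f_u^2$, this forces $\min_\theta (\E_u f_u^2)(\theta) \ge \tfrac{1}{2} \E f_u^2$, and similarly for $g_u$. Substituting back converts $\sqrt{\E f_u^2 \, \E g_u^2}$ into $2\sqrt{\min_\theta (\E_u f_u^2)(\theta) \cdot \min_\theta (\E_u g_u^2)(\theta)}$, with the constant $2$ absorbed into $C$. The main technical obstacle will be the combinatorial book-keeping in the first phase: assembling the per-monomial pointwise bounds into a clean aggregate bound in $\sqrt{\E f_u^2 \, \E g_u^2}$ requires a careful double Cauchy--Schwarz (one internal to each pair $(\sigma,\sigma')$ over the subtree coordinates, one external over pairs of monomials), together with a Corollary \ref{cor: fufuI}-style observation that the diagonal $I = J$ terms dominate the cross terms, so that $\sum_I \E f_{u,I}^2$ remains comparable to $\E f_u^2$.
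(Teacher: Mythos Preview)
Your overall architecture matches the paper's: decompose $f_u=\sum_I f_{u,I}$ and $g_u=\sum_J g_{u,J}$, bound each $(I,J)$ contribution with decay controlled by $|I\Delta J|$, sum geometrically over $(I,J)$, and then bootstrap from the weak form (with $\E f_u^2$ on the right) to the strong form (with $\min_\theta(\E_u f_u^2)(\theta)$) by taking $g_u=f_u$. That part is fine, and the bootstrap is exactly what the paper does.

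The gap is in the core step for the diagonal case $I=J$. You go all the way down to monomials $\psi_\sigma\psi_{\sigma'}$, use the product structure $\prod_i A_i(\theta)$ to telescope, and then plan to ``sum over $(\sigma,\sigma')$ using Cauchy--Schwarz to obtain an aggregate bound in terms of $\sqrt{\E f_{u,I}^2\,\E g_{u,J}^2}$''. This reassembly fails. After taking absolute values the per-monomial bound is of the form $|I|\,\delta\cdot\prod_i\|\tilde\phi_{P_i\sigma}\|\cdot\prod_i\|\tilde\phi_{P_i\sigma'}\|$, so the sum factors as
\[
|I|\,\delta\Big(\sum_\sigma |c_\sigma|\prod_i\|\tilde\phi_{P_i\sigma}\|\Big)\Big(\sum_{\sigma'} |c'_{\sigma'}|\prod_i\|\tilde\phi_{P_i\sigma'}\|\Big).
\]
But $\sum_\sigma|c_\sigma|\|\psi_\sigma\|$ is an $\ell^1$-type quantity in the basis $\{\psi_\sigma\}$, and the $\psi_\sigma$ are \emph{not} even approximately orthogonal (the $\tilde\phi_\tau$ in a single subtree are not orthogonal), so there is no way to control it by $\|f_{u,I}\|_2$; Cauchy--Schwarz would introduce a factor of the number of monomials, which is exponential in the number of leaves. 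Your ``double Cauchy--Schwarz'' cannot close this hole, because the coefficients $c_\sigma$ do not factor across subtrees.

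The paper avoids this by never decomposing into monomials. For $I=J$ it proves (Lemma~\ref{lem: fuIguI}) a Lipschitz estimate for the map $x_{u,I}\mapsto(\E_{u,I}f_{u,I}g_{u,I})(x_{u,I})$, changing one coordinate $x_{u_k}$ at a time: with the other coordinates frozen, the function $x_k\mapsto f_{u,I}(x_0,x_k)$ is itself a mean-zero $\CA_{\le u_k}$-polynomial (a linear combination of $\tilde\phi_{\sigma_k}$ with coefficients depending on $x_0$), so one can apply \eqref{eq: kA1} to the \emph{functions} $f_{u,I}(x_0,\cdot)$ and $g_{u,I}(x_0,\cdot)$ directly. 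This yields a Lipschitz constant $\lesssim\delta\sqrt{\max_{x_{u,I}}(\E_{u,I}f_{u,I}^2)\cdot\max_{x_{u,I}}(\E_{u,I}g_{u,I}^2)}$, and the $|I|\le d_u\le Rd$ accumulated from the coordinates is absorbed into $C(\log R+1)$. For $I\neq J$ no new argument is needed: the paper just reuses \eqref{LINeq:fuIfuJ} from Corollary~\ref{cor: fuI}, which bounds $|(\E_u f_{u,I}g_{u,J})(\theta)|$ itself with decay $\exp(-\tfrac{\varepsilon}{2}|I\Delta J|(\h(u)-\cdots))$, and then converts $\max_\theta(\E_u f_{u,I}^2)(\theta)$ to $\min_\theta$ via the $I=J$ Lipschitz estimate just established. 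In short: your telescoping idea is right, but it must be carried out at the function level, not the monomial level.
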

With a minor modification to our approach, we are able to obtain an analogous result wherein $f_u$ and $g_u$ are substituted by $\tilde{f}_u$ and $\tilde{g}_u$, respectively:

\begin{cor}
    There exsits $C = C(M,d) \ge 1$ so that the following holds. 
    For a given $u \in T\backslash L$ with 
    $$ 
        \h(u) \ge\lA + C(\log(R) + 1),
    $$
    suppose $f_u$ and $g_u$ are two functions which are linear combination of 
    $\psi_{\bf S}(x)$ with ${\bf S} \in {\cal F}({\cal B}_u)$. Then, 
    for any $\theta,\theta' \in [q]$, 
\begin{align*}
    & \big| (\E_u \tilde f_u \tilde g_u)(\theta) - (\E_u \tilde f_u \tilde g_u)(\theta') \big| \\
\le &
    \exp\Big( -\frac{\varepsilon}{2} ( \h(u) - C(\log(R) + 1) -\lA)\Big)
    \sqrt{\min_\theta (\E_u\tilde f_u^2)(\theta) 
        \min_\theta (\E_u \tilde g_u^2)(\theta)}. 
\end{align*}
\end{cor}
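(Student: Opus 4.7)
The plan is to reduce this corollary to Proposition~\ref{prop: fugu} by an algebraic expansion, absorbing the mean-centering via the tail bounds in Proposition~\ref{prop: fu}. Since $\CA$ satisfies Assumption~\ref{assume: A} with parameters $(\lA + \tfrac{2}{\varepsilon}\log 2,\,\tfrac12)$ by Lemma~\ref{lem: ellAtoell*}, Proposition~\ref{prop: fu} applies to $f_u$ and $g_u$, at the cost of enlarging the additive constant $C$ by $\tfrac{2}{\varepsilon}\log 2$. Set $\alpha := \exp\!\big(-\varepsilon(\h(u) - C(\log R + 1) - \lA)\big)$ with $C$ large enough that all invocations of Propositions~\ref{prop: fu} and~\ref{prop: fugu} below are valid and $\alpha$ is small.

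First I would expand $\tilde f_u \tilde g_u = f_u g_u - (\E f_u) g_u - (\E g_u) f_u + (\E f_u)(\E g_u)$, apply $\E_u$, and take the difference at $\theta,\theta'$; the constant $(\E f_u)(\E g_u)$ cancels, giving
\[
(\E_u \tilde f_u \tilde g_u)(\theta) - (\E_u \tilde f_u \tilde g_u)(\theta') \;=\; \Delta_{fg} - (\E g_u)\,\Delta_f - (\E f_u)\,\Delta_g,
\]
where $\Delta_h := (\E_u h)(\theta) - (\E_u h)(\theta')$. The term $|\Delta_{fg}|$ is controlled directly by Proposition~\ref{prop: fugu}. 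For the cross terms I would use \eqref{eq:hucondhuNotilde}, which gives $|(\E_u f_u)(\theta)| \le \alpha\sqrt{(\E_u f_u^2)(\theta)}$, and consequently $|\Delta_f| \le 2\alpha\sqrt{\max_\theta(\E_u f_u^2)(\theta)}$ and $|\E f_u| \le \max_\theta|(\E_u f_u)(\theta)| \le \alpha\sqrt{\max_\theta(\E_u f_u^2)(\theta)}$ (and analogously for $g_u$). Each cross term is therefore bounded by $2\alpha^2\sqrt{\max(\E_u f_u^2)\max(\E_u g_u^2)}$, which is of strictly smaller order than the $O(\sqrt{\alpha})$ bound on $\Delta_{fg}$.

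Two comparisons then convert the right-hand side into the desired form involving $\min_\theta(\E_u \tilde f_u^2)(\theta)$. Applying Proposition~\ref{prop: fugu} with $g_u = f_u$ yields $|(\E_u f_u^2)(\theta) - (\E_u f_u^2)(\theta')| \le \sqrt{\alpha}\min_\theta(\E_u f_u^2)(\theta)$, whence $\max_\theta(\E_u f_u^2)(\theta) \le 2\min_\theta(\E_u f_u^2)(\theta)$ once $\h(u)$ is large. Next, from
\[
(\E_u \tilde f_u^2)(\theta) = (\E_u f_u^2)(\theta) - 2(\E_u f_u)(\theta)\,\E f_u + (\E f_u)^2
\]
and the bounds above, the last two summands are $O(\alpha^2)\min_\theta(\E_u f_u^2)(\theta)$, so $\min_\theta(\E_u \tilde f_u^2)(\theta) = (1+O(\alpha))\min_\theta(\E_u f_u^2)(\theta)$. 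Combining all three estimates and absorbing multiplicative constants into a larger $C$ gives the claimed bound at rate $\exp(-\tfrac{\varepsilon}{2}(\h(u) - C(\log R+1) - \lA))$.

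The main obstacle is purely one of bookkeeping: choosing $C$ large enough that $\alpha$ is small enough for each max-to-min (and $f_u^2$-to-$\tilde f_u^2$) comparison to incur only a bounded multiplicative loss, while also ensuring the cross-term contribution of order $\alpha^2$ remains dominated by the $\sqrt{\alpha}$ bound from Proposition~\ref{prop: fugu}. No new technical ingredient is required beyond the two input propositions and Lemma~\ref{lem: ellAtoell*}.
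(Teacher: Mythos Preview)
Your proposal is correct and follows essentially the same approach as the paper: expand $\tilde f_u\tilde g_u$, bound the main term $\Delta_{fg}$ by Proposition~\ref{prop: fugu}, bound the cross terms via \eqref{eq:hucondhuNotilde} from Proposition~\ref{prop: fu} (invoked through Lemma~\ref{lem: ellAtoell*}), and use Proposition~\ref{prop: fugu} with $g_u=f_u$ to pass from $\max_\theta$ to $\min_\theta$. You are in fact slightly more careful than the paper, which stops at $\sqrt{\min_\theta(\E_u f_u^2)(\theta)\min_\theta(\E_u g_u^2)(\theta)}$ without making explicit the final conversion to $\tilde f_u^2,\tilde g_u^2$; your argument that $(\E_u\tilde f_u^2)(\theta)=(\E_u f_u^2)(\theta)+O(\alpha^2)\min_\theta(\E_u f_u^2)(\theta)$ fills that small gap.
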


Let us prove Corollary first.  

\begin{proof}
Let $C_0$ denote the constant introduced in the Corollary. Its value will be dervied during the proof. 

From the identity 
$$  \tilde f_u(x) \tilde g_u(x) 
= f_u(x)g_u(x) - f_u(x)\E g_u(X) 
    - \E f_u(X) g_u(x) + \E f_u(X) \E g_u(X),$$
it follows that 
\begin{align*}
    \big| (\E_u \tilde f_u \tilde g_u)(\theta) - (\E_u \tilde f_u \tilde g_u)(\theta') \big| 
\le &
    \big| (\E_u f_u g_u)(\theta) - (\E_u f_u g_u)(\theta') \big| 
    + |\E g_u(X)| \big| (\E_u f_u )(\theta) - (\E_u f_u )(\theta') \big| \\
    &+ |\E f_u(X)| \big| (\E_u g_u )(\theta) - (\E_u g_u)(\theta') \big| \\
\le &
 \big| (\E_u f_u g_u)(\theta) - (\E_u f_u g_u)(\theta') \big| 
    +4\max_{\theta'} |\E_u f_u(\theta)|\max_{\theta'} |\E_u g_u(\theta)|.
\end{align*}    

First, we apply Propostion \ref{prop: fu} with the fact that $\CA$ satisfies Assumption \ref{assume: Ag} with parameter $\lAs= \lA+ \frac{2}{\varepsilon}\log(2)$ and $c^*=\frac{1}{2}$, 
$$
    \max_{\theta'} (\E_u f_u)^2(\theta)
\le 
    \exp(-2\varepsilon (\h(u) - C_{\ref{prop: fu}}(\log(R) + 1) - \lAs) )     
\max_{\theta'} (\E_u f^2_u)(\theta')
$$
where $C_{\ref{prop: fu}}=C(M,d,\frac{1}{2})$ is the constant introduced in the Proposition. 

Second, applying Proposition \ref{prop: fugu} with $f_u=g_u$ we have  
\begin{align*}
    & \big| \max_{\theta}(\E_u f_u^2 )(\theta) - \min_{\theta'}(\E_u f_u^2)(\theta') \big| \\
\le &
    \exp\Big( -\frac{\varepsilon}{2} ( \h(u) - C_{\ref{prop: fugu}}(\log(R) + 1) -\lA)\Big)
    \min_\theta (\E_uf_u^2)(\theta) 
\end{align*}
where $C_{\ref{prop: fugu}}$ is the constant introduced in Proposition \ref{prop: fugu}. 

Let us impose the {\bf first assumption} that $C_0 \ge C_{\ref{prop: fugu}}$. 
Then, with  $\h(u) \ge\lA + C_0(\log(R) + 1)$, we can conclude that 
$$ \max_{\theta'} (\E_u f^2_u)(\theta') \le 2\min_{\theta'} (\E_u f^2_u)(\theta').$$
Clearly, the same derivation also holds for $g_u$.

Therefore, we conclude that 
\begin{align*}
    &\big| (\E_u \tilde f_u \tilde g_u)(\theta) - (\E_u \tilde f_u \tilde g_u)(\theta') \big|  \\
\le &
 \big| (\E_u f_u g_u)(\theta) - (\E_u f_u g_u)(\theta') \big| \\
    &+8\exp( -2\varepsilon (\h(u) -C_1(\log(R) + 1) - \lAs))
\sqrt{\min_\theta (\E_uf_u^2)(\theta) 
        \min_\theta (\E_ug_u^2)(\theta)} \\
\le &     \exp\Big( -\frac{\varepsilon}{2} ( \h(u) - C_{\ref{prop: fugu}}(\log(R) + 1) -\lA)\Big)
    \sqrt{\min_\theta (\E_uf_u^2)(\theta) 
        \min_\theta (\E_ug_u^2)(\theta)} \\ 
        &+8\exp \Big( -2\varepsilon \Big( \h(u) -C_{\ref{prop: fu}}(\log(R) + 1) - \lA - \frac{2}{\varepsilon}\log(2) \Big)\Big)
\sqrt{\min_\theta (\E_uf_u^2)(\theta) 
        \min_\theta (\E_ug_u^2)(\theta)} \\
\le & 
    \exp\Big( -\frac{\varepsilon}{2} ( \h(u) - C_0(\log(R) + 1) -\lA)\Big)
    \sqrt{\min_\theta (\E_uf_u^2)(\theta) 
        \min_\theta (\E_ug_u^2)(\theta)},
\end{align*}    
where the last inequality follows by imposing the {\bf second assumption} on $C_0$ that 
$$
    C_0 
\ge 
    \frac{2}{\varepsilon}\log(2) +  \max\Big\{ C_{\ref{prop: fugu}}, C_{\ref{prop: fu}} + \frac{2}{\varepsilon}\log(2) + \frac{1}{2\varepsilon}\log(8) \Big\}.
$$
This completes the proof of the Corollary.
\end{proof}

The main technical part for proving Proposition \ref{prop: fugu} is the following:
\begin{lemma} \label{lem: fuIguI}
    For any $u\in T$ with $\exp\big( - \frac{\varepsilon}{2} (\h(u) -\lA)\big) \le \frac{1}{4Rd}$, the following holds:
    Let $I\subset [d_u]$ be a subset of size at least 2.
    For any $a(x)$ and $b(x)$ which are linear combinations of $\psi_{\sigma}(x)$ with \( \sigma \in {\cal B}_u\) satisfying $I(\sigma)=I$, we have
    \begin{align*}
    \max_{\theta, \theta' \in [q]}
        \big| (\E_u ab)(\theta) - (\E_u ab)(\theta') \big|
    \le &4dR 
         \exp\Big( - \frac{\varepsilon}{2} (\h(u) -\lA)\Big)
        \sqrt{
          \min_\theta (\E_ua^2)(\theta) 
           \cdot
          \min_{\theta} (\E_ub^2)(\theta) 
        }.
    \end{align*}
\end{lemma}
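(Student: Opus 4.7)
The plan is to exploit the conditional independence of the branches at $u_i$ for $i \in I$ given $X_u$, combined with the bilinear form of Assumption~\ref{assume: Ag}'s inequality \eqref{eq: kA1}. Since $\psi_\sigma = \prod_{i \in I}\tilde\phi_{\sigma|_i}$ factors across branches and these are independent given $X_u = \theta$, I first expand
\[
(\E_u ab)(\theta) = \sum_{\sigma,\sigma'}c_\sigma d_{\sigma'}\prod_{i \in I}H^{(i)}_{\sigma|_i,\sigma'|_i}(\theta),
\]
where $H^{(i)}_{\alpha,\alpha'}(\theta) := \E[\tilde\phi_\alpha\tilde\phi_{\alpha'}\mid X_u=\theta] = \sum_{\theta''}M_{\theta,\theta''}\Xi^{(i)}_{\theta''}(\alpha,\alpha')$ with $\Xi^{(i)}_{\theta''} := (\E_{u_i}\tilde\phi\tilde\phi^{\mathsf T})(\theta'')$. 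For any $\theta_1,\theta_2 \in [q]$, I will then telescope the product $\prod_i H^{(i)}(\theta)$ over $j \in I$ to write $(\E_u ab)(\theta_1)-(\E_u ab)(\theta_2) = \sum_{j \in I}T_j$, where $T_j$ has the $j$-th factor replaced by $R^{(j)} := H^{(j)}(\theta_1)-H^{(j)}(\theta_2)$, and uses $H^{(k)}(\theta_1)$ for $k<j$, $H^{(k)}(\theta_2)$ for $k>j$.

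The key step is bounding each $|T_j|$ via the bilinear form of \eqref{eq: kA1} at $u_j$ (whose height $\h(u_j) = \h(u)-1 \ge \lA$ under the hypothesis). For mean-zero $\CA_{\le u_j}$-polynomials $f = \sum_\alpha f_\alpha\tilde\phi_\alpha$ and $g = \sum_\alpha g_\alpha\tilde\phi_\alpha$ and any $\theta''$, this yields
\[
|f^{\mathsf T}(\Xi^{(j)}_{\theta''}-\bar H^{(j)})g| \le \exp\!\bigl(-\tfrac{\varepsilon}{2}(\h(u)-1-\lA)\bigr)\sqrt{\min_\theta f^{\mathsf T}\Xi^{(j)}_\theta f\cdot\min_\theta g^{\mathsf T}\Xi^{(j)}_\theta g},
\]
with $\bar H^{(j)} = \E\tilde\phi\tilde\phi^{\mathsf T}$. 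Writing $R^{(j)} = \sum_{\theta''}(M_{\theta_1,\theta''}-M_{\theta_2,\theta''})(\Xi^{(j)}_{\theta''}-\bar H^{(j)})$ (the subtracted $\bar H^{(j)}$ is free since the differences of transition rows sum to zero) and using $\sum_{\theta''}|M_{\theta_1,\theta''}-M_{\theta_2,\theta''}| \le 2$, the same bilinear bound applies to $R^{(j)}$ up to a factor of $2$. Fixing outside indices $\tau = \sigma|_{I\setminus\{j\}}$, $\tau' = \sigma'|_{I\setminus\{j\}}$, I will use this bound on the inner $j$-th bilinear form in the coefficient vectors $(c_{\tau,\cdot}), (d_{\tau',\cdot})$, then contract over $(\tau,\tau')$ against the PSD tensor $\bigotimes_{k<j}H^{(k)}(\theta_1)\otimes\bigotimes_{k>j}H^{(k)}(\theta_2)$ via weighted Cauchy-Schwarz to obtain $|T_j| \le \kappa\,\exp(-\varepsilon/2(\h(u)-\lA))\sqrt{\min_\theta(\E_u a^2)(\theta)\cdot\min_\theta(\E_u b^2)(\theta)}$ for an $M$-dependent constant $\kappa$. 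Finally I sum the $|I| \le d_u \le Rd$ telescoping terms to obtain the claimed $4Rd$ bound.

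The main obstacle will be the contraction step: cleanly matching the outside tensor (which uses $H^{(k)}$ at $\theta_1$ for $k<j$ and $\theta_2$ for $k>j$) to the quadratic form $(\E_u a^2)(\theta) = c^{\mathsf T}\bigotimes_i H^{(i)}(\theta) c$ evaluated at a single reference $\theta$, and turning the branch-$j$ bilinear bound into a matching outer quadratic form. This will rely on two structural facts: each $H^{(k)}(\theta)$ is a convex combination of the PSD matrices $\Xi^{(k)}_{\theta''}$ via the weights $M_{\theta,\cdot}$, and by ergodicity (the irreducible aperiodic $M$ forces $\pi_{\min}>0$) every $\Xi^{(k)}_{\theta''}$ is dominated in PSD order by $(1/\pi_{\min})H^{(k)}(\theta)$ uniformly in $\theta,\theta''$. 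The hypothesis $\exp(-\varepsilon/2(\h(u)-\lA)) \le 1/(4Rd)$ then ensures that the accumulated multiplicative constants $\kappa$ arising from these PSD comparisons remain bounded and can be absorbed into the $4Rd$ prefactor.
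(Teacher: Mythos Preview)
Your high-level plan—vary one branch $j$ at a time and invoke \eqref{eq: kA1} at $u_j$—matches the paper's, but the contraction step has a genuine gap. You claim that ``every $\Xi^{(k)}_{\theta''}$ is dominated in PSD order by $(1/\pi_{\min})H^{(k)}(\theta)$ uniformly in $\theta,\theta''$.'' This is false in the general setting of this section, where $c_M=\min_{i,j}M_{ij}$ may vanish. Since $H^{(k)}(\theta)=\sum_{\theta''}M_{\theta,\theta''}\Xi^{(k)}_{\theta''}$, the domination $\Xi^{(k)}_{\theta''}\preceq C\,H^{(k)}(\theta)$ requires $M_{\theta,\theta''}>0$; when $M_{\theta,\theta''}=0$, the matrix $H^{(k)}(\theta)$ carries no information about $\Xi^{(k)}_{\theta''}$, and there is no reason for the ordering to hold. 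The quantity $\pi_{\min}$ only gives $\Xi^{(k)}_{\theta''}\preceq(1/\pi_{\min})\bar H^{(k)}$, not the pointwise-in-$\theta$ version you need. Even if your comparison were valid, the constant would compound multiplicatively over the $|I|-1$ outside branches, giving $\kappa\gtrsim(1/\pi_{\min})^{|I|-1}$, which is not an $M$-dependent constant and cannot be absorbed into $4Rd$.

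The paper avoids this entirely by working at the function level rather than with coefficient tensors. It conditions on $x_{u,I}=(x_{u_i})_{i\in I}$ (not just $x_u$) and shows that $x_{u,I}\mapsto(\E_{u,I}ab)(x_{u,I})$ is Lipschitz in Hamming distance: freezing all branches but one, the remaining map is a mean-zero $\mathcal A_{\le u_j}$-polynomial, so \eqref{eq: kA1} applies directly, and after integrating out the frozen branches via H\"older one obtains a Lipschitz constant of $2\delta\sqrt{\max_{x_{u,I}}(\E_{u,I}a^2)\cdot\max_{x_{u,I}}(\E_{u,I}b^2)}$. The passage from $\max$ to $\min$ is then a self-bootstrap: applying the Lipschitz bound with $a=b$ gives $\max(\E_{u,I}a^2)\le(1-2d_u\delta)^{-1}\min(\E_{u,I}a^2)$, and this is precisely where the hypothesis $\delta\le 1/(4Rd)$ is used—to ensure $2d_u\delta\le 1/2$—not merely to absorb constants as you suggest.
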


\begin{rem}
   From the assumption that $\h(u)$ satisfies 
   \begin{align*}
       4dR \exp \Big( - \frac{\varepsilon}{2} (\h(u)-\lA)\Big)  \le 1
    \Leftrightarrow
        \h(u) \ge\lA + \frac{2}{\varepsilon}\log(4dR) . 
   \end{align*}
   By taking $a(x)=b(x)$ we have 
   \begin{align} \label{eq: f_uIminmax}
      \max_{\theta} (\E_ua^2)(\theta)   
    \le 
        2 \min_{\theta} (\E_ua^2)(\theta). 
   \end{align} 
\end{rem}

\begin{proof}
Let $u$, $a(x)$, and $b(x)$ be the vertex and functions described in the Lemma. 
Let us introduce some notations for the ease of expressing the calculation later. 
For brevity, let 
\[ \delta = \exp \Big( -\frac{\varepsilon}{2} (\h(u) -\lA) \Big).\] 
For \(x \in [q]^T\), let 
\[ x_{u,I} = (x_{u_i})_{i\in I}.\] 
For any given function $h(x)$ with variables in $(x_v\,:\, v \in \bigcup_{i \in I}T_{u_i})$, we define   
$$
    (\E_{u,I}h)(x) :=  \E\Big[ h(X) \,\Big\vert\, \forall v \notin \bigcup_{i \in I} \{w<u_i\},\, X_v=x_v \Big].
$$
Observe that 
\begin{align*}
    (\E_{u,I}a)(x),\, (\E_{u,I}b)(x) \mbox{, and } (\E_{u,I}ab)(x)
\end{align*}
are functions with input $x_{u,I}$. This is due to the fact that $a$ and $b$ --and consequently $ab$--  are functions of variables $(x_v\,:\, x_v \in \bigcup_{i\in I} L_{u_i})$ and \MP.

\medskip    

{\bf Claim: } The function \( x_{u,I} \mapsto (\E_{u,I}ab)(x_{u,I}) \) is Lipschitz continuous with respect to the Hamming Distance with Lipschitz constant
\begin{align} \label{eq:fluc.00.00}
  2\delta
\sqrt{\max_{x_{u,I}}(\E_{u,I}a^2)(x_{u,I})} 
\sqrt{\max_{x_{u,I}}(\E_{u,I}b^2)(x_{u,I})}.
\end{align} 
We begin with the proof of the claim. 
Fix an index $i_0 \in I$. Without lose of generality, we assume $I=[k]$ and $i_0=1$. For $x\in [q]^{T}$, let $$x_i = x_{\le u_i}$$ for $i \le [d_u]$, and set $$x_0 = (x_2,\dots,x_k).$$ 
With this notation above, we can express
\begin{align*}
    a(x) =& a(x_0, x_1) & 
    &\mbox{and}&
    b(x) =& b(x_0, x_1).
\end{align*} 

Fix any value of $x_0$, the function 
\begin{align*}
x_1 \mapsto a(x_0,x_1)
\end{align*}
is a linear combination of $\tilde \phi_{\sigma_1}(x_1)$ with \(\sigma_1 \in {\cal F}( {\cal A}_{\le u})\). Notably, this implies that   
$ \mathbb{E} a(x_0,X_1) = 0$.     
The same properties hold for the function $x_1 \mapsto b(x_0,x_1)$. 

\medskip

Now, given the assumption $\exp\big( - \frac{\varepsilon}{2} (\h(u) -\lA)\big) \le \frac{1}{4Rd}$ implies $\h(u) \ge\lA$,
we can apply \eqref{eq: kA1} from Assumption \ref{assume: Ag} to get that
\begin{align*}
&   \max_{\theta,\theta' \in [q]} \Big| \CE{a(x_0,X_1)b(x_0,X_1)}{ X_{u_1}=\theta_1 }
     - \CE{a(x_0,X_1)b(x_0,X_1)}{ X_{u_1}=\theta_2 }  \Big| \\
\le & 
    2 \max_{\theta \in [q]}\Big| \CE{a(x_0,X_1)b(x_0,X_1)}{ X_{u_1}=\theta_1 }
     - \E a(x_0,X_1)b(x_0,X_1)  \Big|  \\
\le &
    2\delta \sqrt{ \min_\theta \CE{a^2(x_0,X_1)}{X_{u_1}=\theta} 
    \min_{\theta} \CE{b^2(x_0,X_1)}{X_{u_1}=\theta}}.
\end{align*}

For any $x\in [q]^T$, let $x_{u_0} = (x_{u_2}, x_{u_3},\dots, x_{u_{d_u}})$. By the \MP, for any $\theta \in [q]$,  
\begin{align*}
    (X_0 \, \vert\, X_{u_0} = x_{u_0}, X_{u_1}=\theta) 
=&  
    (X_0\,\vert\, X_{u_0} =x_{u_0})  \mbox{ and} \\
    (X_1 \, \vert\, X_{u_0} = x_{u_0}, X_{u_1}=\theta) 
=&  
    (X_{u_1}\,\vert\, X_{u_1}=\theta) 
\end{align*}
are jointly independent. Hence, 
\begin{align*}
&    \CE{ a(X_0,X_1)b(X_0,X_1)}{ X_{u_0} = x_{u_0}, X_{u_1}=\theta } \\
=&
    \CE{ a(Y_0,X_1) b(Y_0,X_1) }{ X_{u_1}=\theta } 
\end{align*}
where $Y_0$ is an independent copy of $(X_0\,\vert\, X_{u_0}=x_{u_0})$. We have
\begin{align*}
&   \big| \CE{ a(X_0,X_1)b(X_0,X_1)}{ X_{u_0} = x_{u_0}, X_{u_1}=\theta }
      - \CE{ a(X_0,X_1)b(X_0,X_1)}{ X_{u_0} = x_{u_0}, X_{u_1}=\theta' } \big| \\
= &
    \Big| \E_{Y_{0}} \Big[  \E_{X_1}[ a(Y_0,X_1)b(Y_0,X_1) \,\vert\, X_{u_1}=\theta ]
      - \E_{X_1}[ a(Y_0,X_1)b(Y_0,X_1) \,\vert\, X_{u_1}=\theta' ]\Big] \Big| \\
\le &
    \E_{Y_0} \Big[ \big| \E_{X_1} [ a(Y_0,X_1)b(Y_0,X_1) \,\vert\, X_{u_1}=\theta ]
      - \E_{X_1} [ a(Y_0,X_1)b(Y_0,X_1) \,\vert\, X_{u_1}=\theta' ]\big| \Big]\\
\le &
     2\delta \E_{Y_0}\Big[
      \big(\min_\theta \E_{X_1}[ a^2(Y_0,X_1)\,\vert\, X_{u_1}=\theta] \big)^{1/2} \cdot
      \big(\min_{\theta'} \E_{X_1} [ b^2(Y_0,X_1)\,\vert\, X_{u_1}=\theta'] \big)^{1/2}  \Big]\\
\le &
    2\delta
     \big(\E_{Y_0}\big[ \min_\theta \E_{X_1}[ a^2(Y_0,X_1)\,\vert\, X_{u_1}=\theta]\big] \big)^{1/2}
     \cdot
     \big(\E_{Y_0} \big[ \min_{\theta'} \E_{X_1}[ b^2(Y_0,X_1)\,\vert\, X_{u_1}=\theta']\big] \big)^{1/2} ,
\end{align*}
where the last inequality follows from H\"older's inequality. Further,
\begin{align*}
   \E_{Y_0} \big[ \min_\theta \E_{X_1} [ a^2(Y_0,X_1)\,\vert\, X_{u_1}=\theta ] \big]
\le&
    \min_\theta \E_{Y_0} \big[ \E_{X_1} [ a^2(Y_0,X_1)\,\vert\, X_{u_1}=\theta ] \big]\\
= &
   \min_\theta \CE{ a^2(X) }{ X_{u_0} = x_{u_0}, X_{u_1} =\theta }  \\
\le &
    \max_{x_{u,I}} (\E_{u,I}a^2)(x_{u,I}).
\end{align*}
Applying the same derivation to $b$ we get
\begin{align*}
    \E_{Y_0} \big[ 
        \min_\theta \E_{X_1} [ b^2(Y_0,X_1)\,\vert\, X_{u_1}=\theta ]
    \big]
\le 
    \max_{x_{u,I}} (\E_{u,I} b^2)(x_{u,I}). 
\end{align*}
Therefore, our claim \eqref{eq:fluc.00.00} follows: For any $\theta,\theta' \in [q]$, 
\begin{align*}
&    \big| \CE{ a(X)b(X)}{ X_{u_0} = x_{u_0}, x_{u_1}=\theta }
      - \CE{ a(X)b(X)}{ X_{u_0} = x_{u_0}, x_{u_1}=\theta' } \big| \\
\le &
    2\delta
    \sqrt{ \max_{x_{u,I}} (\E a^2)(x_{u,I})
    \max_{x_{u,I}} (\E b^2)(x_{u,I})}.
\end{align*}

With the Lipschitz continuity been established, essentially the lemma follows when $\delta$ is sufficiently small. Let us proceed with the remaining argument. Let
\begin{align*}
    x'_{u,I} 
=&  {\rm argmin}_{x_{u,I}} (\E_{u,I}a^2)(x_{u,I}) & 
&\mbox{ and }&
x''_{u,I} 
=&  {\rm argmax}_{x_{u,I}} (\E_{u,I}a^2)(x_{u,I}).
\end{align*}
Applying \eqref{eq:fluc.00.00} with the assumption $a(x)=b(x)$ and the fact \(|I|\le d_u\), 
\begin{align*}
    (\E_{u,I}a^2)(x''_{u,I}) - (\E_{u,I}a^2)(x'_{u,I}) 
\le &
    2d_u \delta (\E_{u,I}a^2)(a''_{u,I}),
\end{align*}
and hence
\begin{align} \label{eq:fluc.00.01}
   \max_{x_{u,I}} (\E_{u,I}a^2)(x_{u,I})
\le & 
    \frac{1}{ 1- 2d_u\delta }
      \min_{x_{u,I}}(\E_{u,I}a^2)(x_{u,I})  
\le  
     \frac{1}{ 1- 2d_u\delta } \min_{s} (\E_u a^2)(s),
\end{align}
provided that $2d_u \delta < 1$. 

Again, the same derivation also holds for $b$. Combining  \eqref{eq:fluc.00.00} and \eqref{eq:fluc.00.01} we conclude that for any $\theta,\theta' \in [q]$, 
\begin{align*}
  |(\E_u ab)(\theta) - (\E_u ab)(\theta')|  
\le &
    | \max_{x_{u,I}} (\E_u ab)(x_{u,I}) - \min_{x'_{u,I}} (\E_u ab)(x'_{u,I}) | \\
\le &
    \frac{ 2d_u \delta }{ 1 - 2d_u\delta} 
      \sqrt{ \min_{\theta} (\E_u a^2)(\theta) \min_{\theta'} (\E_u b^2)(\theta') }. 
\end{align*}

With our assumption on the tree $T$ that $d_u \le Rd$, our assumption 
$$
    \delta 
= 
    \exp \left( - \frac{\varepsilon}{2}(\h(u)-\lA)\right) 
\le 
    \frac{1}{4Rd},
$$
implies that 
$$
    \frac{ 2d_u \delta }{ 1 - 2d_u\delta }
\le 
    4 Rd \delta. 
$$
We conclude that 
\begin{align*}
    |(\E_u ab)(\theta) - (\E_u ab)(\theta')|  
\le &
    4Rd \exp\Big( \frac{\varepsilon}{2} (\h(u) -\lA) \Big)
    \sqrt{ \min_{\theta} (\E_u a^2)(\theta) \min_{\theta'} (\E_u b^2)(\theta') }. 
\end{align*}

\end{proof}

\vspace{0.5cm}

\begin{proof}[Proof of Proposition \ref{prop: fugu}]
Let $C_0 = C_0(M,d)$ denote the constant introduced in the statement of the Proposition. 
Recall the decomposition of $f_u$ into $f_{u,I}$ from Definition \ref{defi: fu}, consider the  decomposition 
$$f_u(x) = \sum_{I \subseteq [d_u]\,:\, |I| \ge 2} f_{u,I}(x)
    \mbox{ and }
g_u(x) = \sum_{I \subseteq [d_u]\,:\, |I| \ge 2} g_{u,I}(x).$$

The proof of the Proposition will proceed by bounding summands in the formula below: 
\begin{align} \label{eq: fugu00}
    \big| (\E_u f_u g_u)(\theta) - (\E_u f_u g_u)(\theta') \big|
\le
    \sum_{I, J \subseteq [d_u]\,:\, |I|,|J| \ge 2}
    \big| (\E_u f_{u,I} g_{u,J})(\theta) - (\E_u f_{u,I} g_{u,J})(\theta') \big|. 
\end{align}

\underline{Estimate of summands in \eqref{eq: fugu00}:}
For any $I,J \subseteq [d_u]$ with $|I|,|J| \ge 2$, we have two cases to consider:
First, we consider the case $I\neq J$.
Notice that, by Lemma \ref{lem: ellAtoell*}, 
$\CA$ satisfies Assumption \ref{assume: A} with parameters
$(\lA + \frac{2}{\varepsilon}\log(2),\,\frac{1}{2})$.
This allows us to invoke Corollary \ref{cor: fuI}, yielding 
\begin{align*}
    & \big| (\E_u f_{u,I} g_{u,J})(\theta) - (\E_u f_{u,I} g_{u,J})(\theta') \big| \\
\le &
    2\max_{\theta \in [q]} |(\E_u f_{u,I} g_{u,J})(\theta)|\\
\le& 
    2 \exp\Big( - \frac{\varepsilon |I\Delta J|}{2} 
        (\h(u) - C_{\ref{cor: fuI}} -\lA -\frac{2}{\varepsilon}\log(2) )\Big) 
    \big( \max_{\theta \in [q]}\ConE{u}{f^2_{u,I}}(\theta) \big)^{1/2} \cdot 
	\big( \max_{\theta \in [q]}\ConE{u}{g^2_{u,J}}(\theta) \big)^{1/2}, 
\end{align*}
where $C_{\ref{cor: fuI}}=C_{\ref{cor: fuI}}(M,d, \frac{1}{2})$ is the constant introduced in the Corollary.

Let us impose the {\bf first assumption} on $C_0$ that 
$$
    C_0 \ge \frac{2}{\varepsilon} (1+\log(4d)),
$$
which implies that $\h(u) \ge\lA + C_0(\log(R)+1) \ge\lA+ \frac{2}{\varepsilon}\log(4dR)$. With this assumption, we could apply the remark \eqref{eq: f_uIminmax} of Lemma \ref{lem: fuIguI} to get  
$$
\big( \max_{\theta \in [q]}\ConE{u}{f^2_{u,I}}(\theta) \big)^{1/2}
\le  2\big( \min_{\theta}\ConE{u}{f^2_{u,I}}(\theta) \big)^{1/2}
$$
and the same holds for $g_{u,J}$. Hence, for $I\neq J$ we have
\begin{align*}
    & \big| (\E_u f_{u,I} g_{u,J})(\theta) - (\E_u f_{u,I} g_{u,J})(\theta') \big| \\
\le &
    4 \exp\Big( - \frac{\varepsilon |I\Delta J|}{2} 
        (\h(u) - C_{\ref{cor: fuI}} -\lA +\frac{2}{\varepsilon}\log(2) )\Big) 
  \big( \min_{\theta \in [q]}\ConE{u}{f^2_{u,I}}(\theta) \big)^{1/2} \cdot 
	 \big( \min_{\theta \in [q]}\ConE{u}{g^2_{u,J}}(\theta) \big)^{1/2}.
\end{align*}

Second, we consider the case $I=J$. Here we simply apply Lemma \ref{lem: fuIguI}, yielding  
\begin{align*}
  &  \big| (\E_u f_{u,I} g_{u,I})(\theta) - (\E_u f_{u,I} g_{u,I})(\theta') \big| \\
\le &
4Rd
\exp\Big( - \frac{\varepsilon}{2} (\h(u) -\lA)\Big)
  \big( \min_{\theta \in [q]}\ConE{u}{f^2_{u,I}}(\theta) \big)^{1/2} \cdot 
	 \big( \min_{\theta \in [q]}\ConE{u}{g^2_{u,J}}(\theta) \big)^{1/2}.
\end{align*}

\medskip

Let us unify the above two estimates by introducing 
$$
    C_1 = \max \Big\{ 
        C_{\ref{cor: fuI}}  + \frac{2}{\varepsilon}\log(2) + \frac{2}{\varepsilon}\log(8),\, 
        \frac{2}{\varepsilon}(1+\log(24d))
    \Big\}.
$$
Then,
\begin{align}
\label{eq: propfugu01}
  &  \big| (\E_u f_{u,I} g_{u,J})(\theta) - (\E_u f_{u,I} g_{u,J})(\theta') \big| \\
\le &
   \underbrace{
        \frac{1}{6}\exp\Big( - \frac{\varepsilon}{2} \max\{|I\Delta J|,1\} 
        (\h(u) - C_1(\log(R)+1) -\lA)\Big)
    }_{:=a_{I,J}}
    \underbrace{
        \big( \min_{\theta \in [q]}\ConE{u}{f^2_{u,I}}(\theta) \big)^{1/2}
    }_{:=\alpha_I} 
    \cdot 
    \underbrace{
	    \big( \min_{\theta \in [q]}\ConE{u}{g^2_{u,J}}(\theta) \big)^{1/2}
    }_{:= \beta_J}
    ,
\end{align}
for every pair $I,J \subseteq [d_u]$ with $|I|\ge 2$ and $|J| \ge 2$. 

Using this inequality, \eqref{eq: fugu00} becomes 
\begin{align} \label{eq: propfugu00}
   \big| (\E_u f_u g_u)(\theta) - (\E_u f_u g_u)(\theta') \big| 
\le &
    \sum_{I, J \subseteq [d_u]\,: |I|,|J|\ge 2} a_{I,J} \alpha_I \beta_J
= 
     \vec{\alpha}^\top A  \vec{\beta}
\le 
   \|\vec{\alpha}\|\cdot\| A\| \cdot  \|\vec{\beta}\|
\end{align}
where $\vec{\alpha} = (\alpha_I)_{I \subseteq [d_u]\,:\, |I|\ge 2}$, 
$\vec{\beta} = (\beta)_{I \subseteq [d_u]\,:\, |I|\ge 2}$,
and 
$ A = (a_{I,J})_{I, J \subseteq [d_u]\,: |I|,|J|\ge 2}$.  
Further, $\|\vec{\alpha}\|$ and $\|\vec{\beta}\|$ are the $\ell_2$ norms of $\vec{\alpha}$ and $\vec{\beta}$, respectively, and $\|A\|$ is the operator norm of $A$. 

\medskip 

\underline{Estimate of operator norm of $A$:}
Notice that $A$ is a symmetric matrix. 
Thus, we can fix a unit vector $\vec{\gamma}$ satisfying   
$\|A\| =  \vec{\gamma}^\top A \vec{\gamma}$. 
For each pair $I,J \subseteq [d_u]$ with $|I| \ge 2$ and $|J| \ge 2$, since $a_{I,J} \ge 0$, 
$$
    \gamma_I a_{I,J} \gamma_J
\le 
    \frac{a_{I,J}}{2}\gamma_I^2  
+
    \frac{a_{I,J}}{2}\gamma_J^2,
$$
and thus,
\begin{align}
\label{eq: propfugu02}    
    \|A\|
=
    \sum_{I, J \subseteq [d_u]\,: |I|,|J|\ge 2} a_{I,J} \gamma_I \gamma_J
\le 
    \sum_{I \subseteq [d_u]\,: |I| \ge 2} \gamma_I^2  \Big( 
        \sum_{J \subseteq [d_u]\,: |J| \ge 2} a_{I,J}
    \Big).
\end{align}

For each $I \subseteq [d_u]$ with $|I|\ge 2$, the number of $J\subseteq [d_u]$ with $|I\Delta J| = k$ is bounded 
above by $d_u^{k-1} \le (Rd)^{k}$.
Then, with the given estimate of $a_{I,J}$ in \eqref{eq: propfugu01},   
\begin{align}
\nonumber
    \sum_{J \subseteq [d_u]\,: |J| \ge 2} a_{I,J}
\le & 
    \frac{1}{6}\exp\Big( - \frac{\varepsilon}{2}  
        (\h(u) - C_1(\log(R)+1) -\lA)\Big)\\
\label{eq: propfu00}
    & + \sum_{t \ge 1}
    \frac{1}{6} (Rd)^t
    \exp\Big( - \frac{\varepsilon}{2}t
        (\h(u) - C_1(\log(R)+1) -\lA)\Big).
\end{align}

Now, we impose the {\bf second assumption} on $C_0$ that 
$$
    C_0 
\ge 
    C_1 
+ 
    \frac{2}{\varepsilon} \Big( 1 + \log(2d)\Big).
$$
With the assumption that $\h(u) \ge \lA + C_0(\log(R)+1)$, 
the geometric sum in \eqref{eq: propfu00} has a decay rate smaller than $1/2$. 
Therefore,
\begin{align*}
    \sum_{J \subseteq [d_u]\,: |J| \ge 2} a_{I,J}
\le & 
    \frac{1}{2}\exp\Big( - \frac{\varepsilon}{2}  
        (\h(u) - C_2(\log(R)+1) -\lA)\Big),
\end{align*}
where 
$$
    C_2 = C_1 + \frac{2}{\varepsilon} \Big( 1 + \log(d)\Big).
$$

Now applying the above estimate, together with $\sum_{I \subseteq [d_u]\,: |I| \ge 2} \gamma_I^2 = 1 $, to \eqref{eq: propfugu02}, we obtain the following bound:
\begin{align*}
   \|A\| \le \frac{1}{2}\exp\Big( - \frac{\varepsilon}{2}  
        (\h(u) - C_1(\log(R)+1) -\lA)\Big).
\end{align*}

\underline{Comparison of $\sum_{I} \min_{\theta \in [q]}\ConE{u}{f^2_{u,I}}(\theta)$ and $\min_{\theta \in [q]}\ConE{u}{f^2_{u}}(\theta) $ (and the same for $g$):}
Here is the last step toward the proof of the Proposition. 
Returning to \eqref{eq: propfugu00}, we have  
\begin{align*}
    &\big| (\E_u f_u g_u)(\theta) - (\E_u f_u g_u)(\theta') \big|  \\
\le &
    \frac{1}{2}\exp\Big( - \frac{\varepsilon}{2}  
        (\h(u) - C_1(\log(R)+1) -\lA)\Big)
\cdot 
    \sqrt{\sum_{I} \min_{\theta \in [q]}\ConE{u}{f^2_{u,I}}(\theta)}
\cdot 
    \sqrt{\sum_{I} \min_{\theta \in [q]}\ConE{u}{g^2_{u,I}}(\theta)}.
\end{align*}

Let us impose the {\bf third assumption} on $C_0$ that 
$$
    C_0 \ge C_{\ref{cor: fufuI}} + \frac{2}{\varepsilon}\log(2)
$$
where $C_{\ref{cor: fufuI}}$ introduced in Corollary \ref{cor: fufuI}.
Recall that we have $\lAs =\lA +\frac{2}{\varepsilon}\log(2)$ from \eqref{eq: ell*general}. We can invoke this Corollary to yield: 
$$
    \forall \theta \in [q],\, 
    \sqrt{\sum_{I} \ConE{u}{f^2_{u,I}}(\theta)}
\le 
    \sqrt{ 2\E f_u^2(\theta)}.
$$
Let $\theta_0 \in [q]$ be the value minimizing $\theta \mapsto \sqrt{ \E f_u^2(\theta)}$.
Then, 
\begin{align*}
    \min_{\theta \in [q]} \sqrt{2 \E f_u^2(\theta)}
=
  \sqrt{ 2\E f_u^2(\theta_0)}
\ge
    \sqrt{\sum_{I} \ConE{u}{f^2_{u,I}}(\theta_0)}
\ge 
    \sqrt{\sum_{I} \min_{\theta \in [q]}\ConE{u}{f^2_{u,I}}(\theta)}.
\end{align*}
Clearly, the same derivation also holds for $g_u$. Together we conclude that  
\begin{align*}
    & \big| (\E_u f_u g_u)(\theta) - (\E_u f_u g_u)(\theta') \big|  \\
\le & 
    \exp\Big( -\frac{\varepsilon}{2} (\h(u)- C_2 (\log(R) + 1) -\lA)\Big)
    \big( \min_{\theta \in [q]}\ConE{u}{f^2_{u}}(\theta) \big)^{1/2}
    \big( \min_{\theta \in [q]}\ConE{u}{g^2_{u}}(\theta) \big)^{1/2}.
\end{align*}

Finally, if we impose the {\bf forth assumption} on $C_0$ that 
$$
    C_0 \ge C_2,
$$
then the Proposition follows. 

\end{proof}

\vspace{1cm}

\subsection{Properties of $f_k$: Products}
The goal of this subsection is to establish the following.
\begin{prop} \label{prop: fkgm}
There exists $C = C(M,d) \ge 1$ so that the following holds. 
For any $\rho' \in T$ satisfying 
$$
    \h(\rho') \ge \lA + C( \log(R) + 1) 
$$
and a positive integer $\lA + C( \log(R)+1) \le k_1 \le  \h(\rho') $.
Consider a function $f$ and $g$ are ${\cal B}_{\le \rho'}$ polynomials with $\E f(X)= \E g(X) =0$. 
We decompose $f$ and $g$ according to Lemma \ref{LINlem:fdecomposition} with the given $k_1$. 
Then, the following holds: 
For $k_1 \le m,k \le \h(\rho')$,

\begin{itemize}
    \item If $\max\{m,k\}>k_1$, 
\begin{align*}
	&\max_{\theta, \theta' \in [q]} 
        \big|(\mathbb{E}_{\rho'} f_k g_{m})(\theta) - (\mathbb{E}_{\rho'} f_k g_{m})(\theta')\big| \\
\le & 
   \exp\Big( -\frac{\varepsilon}{2} ( 2\h(\rho') -\max\{k,m\} - C(\log(R) + 1) -\lA)\Big)
    (\mathbb{E} f^2_k(X))^{1/2} 
        (\mathbb{E}   g^2_{m}(X))^{1/2}. 
\end{align*}
    \item If $k=m=k_1$, 
\begin{align*}
\max_{\theta, \theta' \in [q]} 
&      
    \big|(\mathbb{E}_{\rho'} f_k g_{m})(\theta) - (\mathbb{E}_{\rho'} f_k g_{m})(\theta')\big| \\
\le &
    \exp\Big( -\frac{\varepsilon}{2} ( 2\h(\rho') -2k_1 - C(\log(R) + 1) )\Big)
    (\mathbb{E} f^2_k(X))^{1/2} 
        (\mathbb{E}   g^2_{m}(X))^{1/2}.
\end{align*}
\end{itemize}
\end{prop}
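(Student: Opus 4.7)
The plan is to mirror the three-way decomposition used in the proof of Proposition \ref{prop: fk} (equation \eqref{eq:hk02}), but working throughout with the conditional expectation $\mathbb{E}_{\rho'}$ in place of the full expectation, and tracking the $\ell_\infty$ fluctuation of each resulting function of $x_{\rho'}$ rather than its second moment. By the same Markov-property manipulation one obtains
\begin{align*}
(\mathbb{E}_{\rho'} f_k g_m)(x_{\rho'})
=\,& (\mathbb{E}_{\rho'}(\mathbb{E}_k f_k)(\mathbb{E}_k g_m))(x_{\rho'}) \\
& + \sum_{u \in D_k(\rho')} (\mathbb{E}_{\rho'} \tilde f_u\, g_{m,u})(x_{\rho'})
- \sum_{u \in D_k(\rho')} (\mathbb{E}_{\rho'}(\mathbb{E}_u \tilde f_u)(\mathbb{E}_u g_{m,u}))(x_{\rho'}),
\end{align*}
and the aim is to bound the max--min fluctuation of each of the three summands separately.

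For the first summand, $\mathbb{E}_k f_k = \sum_u (\mathbb{E}_u \tilde f_u)(x_u)$ and $\mathbb{E}_k g_m = \sum_u (\mathbb{E}_u g_{m,u})(x_u)$ are mean-zero degree-$1$ polynomials in $(x_u : u \in D_k(\rho'))$, so Corollary \ref{cor: DotDeg1} bounds the $\ell_\infty$ fluctuation in $x_{\rho'}$ by $CR^4 \exp(-\varepsilon(\h(\rho')-k))\sqrt{\mathbb{E}(\mathbb{E}_k f_k)^2 \cdot \mathbb{E}(\mathbb{E}_k g_m)^2}$. The inner second moments contract by $\exp(-\varepsilon k)$ after applying Proposition \ref{prop: fu}~\eqref{eq:hucondhu} componentwise and summing via Lemma~\ref{lem: fhku}, exactly as in the proof of Proposition \ref{prop: fk}. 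This yields an $\exp(-\varepsilon \h(\rho'))$-type bound, comfortably stronger than needed. The third summand is again a product of two mean-zero degree-$1$ polynomials at level $k$ and is treated identically.

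The main work lies in the second summand. For each $u \in D_k(\rho')$, the Markov property gives $(\mathbb{E}_{\rho'}\tilde f_u g_{m,u})(x_{\rho'}) = \mathbb{E}[(\mathbb{E}_u \tilde f_u g_{m,u})(X_u) \mid X_{\rho'} = x_{\rho'}]$, so by \eqref{eq: MClinftyDecay} of Lemma \ref{lem:Mbasis} the fluctuation in $x_{\rho'}$ is at most $C(\h(\rho')-k)^q \lambda^{\h(\rho')-k}$ times the fluctuation of $(\mathbb{E}_u \tilde f_u g_{m,u})(\theta)$ in $\theta \in [q]$. To bound the latter, I would decompose $\tilde f_u = \sum_I \tilde f_{u,I}$ via Definition~\ref{defi: fu} and $g_{m,u} = \sum_{i \in [d_u]} g_{m,u_i}$, and split each cross term $\tilde f_{u,I}\, g_{m,u_i}$ according to whether $i \in I$. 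When $i \notin I$ the two functions live on disjoint subtrees of $u$ and the Markov property factorizes $(\mathbb{E}_u \tilde f_{u,I} g_{m,u_i})(\theta) = (\mathbb{E}_u \tilde f_{u,I})(\theta)\cdot(\mathbb{E}_u g_{m,u_i})(\theta)$; the first factor is controlled by Proposition \ref{prop:Bwinwin}, and the second by pushing the conditional expectation down to each $v \in D_m(u_i)$ via Lemma \ref{lem:Mbasis} and applying \eqref{eq:hucondhuNotilde} to each $\tilde g_v$. When $i \in I$, peel off the factor $\tilde\phi_{\sigma_{u,i}}$ from $\tilde f_{u,I}$ and combine it with $g_{m,u_i}$ into a single function supported in $T_{u_i}$, leaving a residual $\tilde f_{u,I}^{-i}$ supported on the remaining $|I|-1$ branches of $u$, which is again handled by Proposition \ref{prop:Bwinwin}. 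Summing over $(I,i)$ produces a geometric series in $|I|$ as in the proof of Proposition \ref{prop: fu}, and summing over $u$ via Cauchy--Schwarz together with Lemma \ref{lem: fhku} yields the target bound in terms of $\sqrt{\mathbb{E} f_k^2 \cdot \mathbb{E} g_m^2}$. The boundary case $k=m=k_1$ is handled separately using the degree-$1$ representation of $f_{k_1}, g_{k_1}$ provided by Lemma \ref{lem: f_kSubstitute} and a direct application of Lemma \ref{lem:DotDeg1} to the degree-$1$ polynomials $\sum_v (\mathbb{E}_v \bar f_v)$ and $\sum_v (\mathbb{E}_v \bar g_v)$.

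The technical crux is the sub-case $i \in I$ of Term~B: the hybrid product $\tilde\phi_{\sigma_{u,i}}(x)\cdot g_{m,u_i}(x)$ is not directly covered by Proposition \ref{prop: fugu}, since $\tilde\phi_{\sigma_{u,i}}$ is a single-variable factor at the child $u_i$, whereas $g_{m,u_i}$ is a $\mathcal{B}$-polynomial rooted at level $m$ well below $u_i$. The right $\ell_\infty$-fluctuation bound will require a short auxiliary step that first pushes $g_{m,u_i}$ from level $m$ up to height $u_i$ (incurring the appropriate $\lambda^{\h(u_i)-m}$ factor from Lemma \ref{lem:Mbasis} applied to each $\tilde g_v$), and then pairs it with $\tilde\phi_{\sigma_{u,i}}$ at $u_i$ using \eqref{eq: kA1} of Assumption \ref{assume: Ag} (equivalently, Lemma \ref{lem:DotDeg1} in the degree-$1$ case) before finally moving to $u$. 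Threading this hybrid bound so that the $|I|$-dependent exponent from Proposition \ref{prop:Bwinwin} remains summable across the outer $\sum_I$, and so that no $R$-dependent blow-up is incurred when summing over $u$, is the principal technical challenge of the argument.
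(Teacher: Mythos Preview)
Your three-term decomposition and your treatment of the first and third summands match the paper's approach (the paper packages these as Lemma~\ref{lem: fkgm1stsummand} and Lemma~\ref{lem: EufuEugmu}, using Lemma~\ref{lem:DotDeg1} rather than its corollary, but the content is the same).

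The divergence is in the second summand, and here you are working much harder than the paper. The paper does \emph{not} try to bound the $\ell_\infty$ fluctuation of $(\E_u\tilde f_u g_{m,u})(\theta)$ by decomposing both factors over $I$ and $i$. Instead it uses a short interpolation trick (see the display around \eqref{eq: fugmu00}): from $\var[(\E_u h)(X_u)] \le \|(\E_u h) - \E h\|_\infty \cdot \E|h-\E h|$ together with \eqref{eq: VarLinfty} one gets $\|(\E_u h)-\E h\|_\infty \lesssim \E|h-\E h|$, so after the Markov-chain $\lambda^{\h(\rho')-k}$ factor it suffices to bound the \emph{unconditional} quantity $\E|\tilde f_u g_{m,u} - \E\tilde f_u g_{m,u}|$. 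This is then dispatched in three lines by case on $m$ versus $k$:
\begin{itemize}
\item $m<k$: bound by $2\E|\tilde f_u g_{m,u}|$ and invoke \eqref{LINeq:huwinwin} from Proposition~\ref{prop: fu} after writing $g_{m,u}=\sum_i g_{m,u_i}$ (this is exactly the estimate \eqref{eq:hk01} already proved for Proposition~\ref{prop: fk});
\item $k_1<m=k$: here $g_{m,u}=\tilde g_u$, and Proposition~\ref{prop: fugu} (or its Corollary) gives the fluctuation bound on $(\E_u f_u g_u)(\theta)$ directly;
\item $m=k=k_1$: trivial Cauchy--Schwarz.
\end{itemize}
In particular, the ``hybrid product $\tilde\phi_{\sigma_{u,i}}\cdot g_{m,u_i}$'' that you flag as the technical crux never appears: the $L^1$ reduction lets you use \eqref{LINeq:huwinwin} as a black box in the $m<k$ case, and the $m=k>k_1$ case is precisely what Proposition~\ref{prop: fugu} was built for. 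Your plan as written also does not clearly separate the $m=k>k_1$ case (where $g_{m,u}$ has no child decomposition and Proposition~\ref{prop: fugu} is essential) from the $m<k$ case; the paper's case split makes this explicit.
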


\medskip

The proof mirrors the structure used in Proposition \ref{prop: fk}. In this case, we rely on both Proposition \ref{prop: fu} and Proposition \ref{prop: fugu}. Through this subsection, let 
$$C^\circ = C^\circ(M,d)$$ 
be the constant described in the Proposition.  
The functions $f$, $g$, and $k_1$ are as introduced in the Proposition.   

Assuming without lose of generality that $m \le k$,
we apply the reasoning from \eqref{eq:hk02} in Proposition \ref{prop: fk},
yielding
\begin{align}
	\nonumber
	&(\mathbb{E}_{\rho'} f_k g_{m})(\theta)\\
	\nonumber
=&
	\mathbb{E} \Big[
        \Big(\sum_{ u \in D_{k}(\rho')} (\E_{u}\tilde f_{u})(X) \Big) 
        \Big(\sum_{ u \in D_{k}(\rho')} (\E_u g_{m,u})(X) \Big)\, \Big\vert\, X_{\rho'}=\theta\Big]
	+ \sum_{u \in D_{k}(\rho')} (\mathbb{E}_{\rho'} \tilde f_u g_{m,u})(\theta) \\
&
\nonumber
	- \sum_{u \in D_{k}(\rho')}\mathbb{E} \Big[(\E_{u}\tilde f_{u})(X_u) (\E_{u} g_{m,u})(X_u) 
        \,\Big\vert\, X_{\rho'}=\theta \Big],
\end{align}
and hence, 
\begin{align}
	\nonumber
	&(\mathbb{E}_{\rho'} f_k g_{m})(\theta) - (\mathbb{E}_{\rho'} f_k g_{m})(\theta')\\
	\nonumber
=&
    \big(\E_{\rho'} (\E_k f_k)(\E_k g_m)\big)(\theta) 
    -
    \big(\E_{\rho'} (\E_k f_k)(\E_k g_m)\big)(\theta')  \\
\nonumber
&+
    \sum_{u \in D_{k}(\rho')} \big((\mathbb{E}_{\rho'} \tilde f_u g_{m,u})(\theta)  
        - (\mathbb{E}_{\rho'} \tilde f_u g_{m,u})(\theta')\big) \\
&-
    \Big(\sum_{u \in D_{k}(\rho')} \big((\mathbb{E}_{\rho'} \big(\E_u\tilde f_u) (\E_ug_{m,u})\big) (\theta)  
        - \big((\mathbb{E}_{\rho'} \big(\E_u\tilde f_u) (\E_u g_{m,u})\big) (\theta') \Big).
\label{eq: fkgk00}
\end{align}

Similar to the derivation of \eqref{LINeq:hkhm} from Proposition \ref{prop: fk}. The proof is dedicated into estimating the above three summands.

We begin with the following estimate:
\begin{lemma} \label{lem: fugmu}
    There exists a constant $C = C(M,d)$ so that the following holds.  
    Suppose $ C^\circ \ge C_{\ref{prop: fugu}}$, where $C_{\ref{prop: fugu}}$ is the constant introduced in Proposition \ref{prop: fugu}. 
    Then,  the following holds: For $u \in D_{k}(\rho')$, 
    \begin{enumerate}
    \item if $k>k_1$, then
        \begin{align*}
        &   \max_{\theta,\theta' \in [q]} 
            \big| (\E_{\rho'}\tilde f_u g_{m,u})(\theta) - 
            \E_{\rho'}\tilde f_u g_{m,u})(\theta') \big|\\
        \le & 
            \exp\Big( -\frac{\varepsilon}{2} ( 2\h(\rho') -k - C(\log(R) + 1) -\lA)\Big)
            (\mathbb{E} \tilde f^2_u(X))^{1/2} 
                (\mathbb{E}   g^2_{m,u}(X))^{1/2};
        \end{align*}
    \item if $k=m=k_1$, then 
        \begin{align*}
        \nonumber 
        &   \max_{\theta,\theta' \in [q]} 
            \big| (\E_{\rho'}\tilde f_u g_{m,u})(\theta) - 
            \E_{\rho'}\tilde f_u g_{m,u})(\theta') \big|\\
        \le & 
            \exp\Big( -\frac{\varepsilon}{2} ( 2\h(\rho') -2k - C)\Big)
            (\mathbb{E} \tilde f^2_u(X))^{1/2} 
                (\mathbb{E}   g^2_{m,u}(X))^{1/2}.
        \end{align*}
    \end{enumerate}
\end{lemma}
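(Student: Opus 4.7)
The plan is to prove both cases by first reducing to a pointwise fluctuation estimate at $u$, via the $\ell_\infty$ Markov chain decay from Lemma \ref{lem:Mbasis}. By the Markov property,
\[
(\E_{\rho'}\tilde f_u g_{m,u})(\theta) \;=\; \E\big[(\E_u \tilde f_u g_{m,u})(X_u)\,\big\vert\, X_{\rho'}=\theta\big],
\]
and \eqref{eq: MClinftyDecay} applied to the chain of length $\h(\rho')-k$ between $u$ and $\rho'$ yields
\[
\max_{\theta,\theta'}\bigl|(\E_{\rho'}\tilde f_u g_{m,u})(\theta)-(\E_{\rho'}\tilde f_u g_{m,u})(\theta')\bigr|
\;\le\; 2C(\h(\rho')-k)^q\lambda^{\h(\rho')-k}\cdot \max_\eta\bigl|(\E_u\tilde f_u g_{m,u})(\eta)-\E[\tilde f_u g_{m,u}]\bigr|.
\]
Since $d\lambda^2\le \exp(-1.1\varepsilon)$, after absorbing polynomial prefactors this step gives roughly an $\exp(-\varepsilon(\h(\rho')-k))$ factor. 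Thus it suffices to show the inner $\ell_\infty$ fluctuation at $u$ is bounded by $\exp(-\varepsilon/2(\h(u)-C(\log R+1)-\lA))\sqrt{\E\tilde f_u^2 \cdot \E g_{m,u}^2}$ in case (i), and by $\exp(-\varepsilon/2(\h(u)-k_1-C(\log R+1)))\sqrt{\E\tilde f_u^2\cdot \E g_{m,u}^2}$ in case (ii).

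For case (i) ($k>k_1$), $\tilde f_u$ is a centered linear combination of $\psi_\sigma$ with $\sigma\in \cal F(\cal B_u)$, while $g_{m,u}=\sum_{i\in [d_u]}g_{m,u_i}$ with $g_{m,u_i}(x)=\sum_{v\in D_m(u_i)}\tilde g_v(x)$ living on a single subtree $T_{u_i}$. For each $i$ we use the exact identity
\[
(\E_u\tilde f_u g_{m,u_i})(x_u) \;=\; \big(\E_u[(\tilde f_u-(\E_u\tilde f_u))(g_{m,u_i}-(\E_u g_{m,u_i}))]\big)(x_u) + (\E_u\tilde f_u)(x_u)(\E_u g_{m,u_i})(x_u).
\]
The product term is pointwise tiny because $(\E_u\tilde f_u)(x_u)$ decays by Proposition \ref{prop: fu}, while $(\E_u g_{m,u_i})(x_u)=\sum_{v\in D_m(u_i)}(\E_u\tilde g_v)(x_u)$ is a degree-$1$ polynomial at level $u$ whose pointwise fluctuation is bounded via Lemma \ref{lem:DotDeg1} combined with Lemma \ref{lem: deg1f^2} to pass between $\min_\theta(\E_u\cdot)^2(\theta)$ and the $\pi$-expectation. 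For the covariance term we adapt the Lipschitz argument of Lemma \ref{lem: fuIguI}: for each fixed $x_{u_j}$ with $j\ne i$, the map $x_{u_i}\mapsto (\E_u[\cdot])(x_u)$ has conditional mean zero, and by invoking Assumption \ref{assume: Ag}'s \eqref{eq: kA1} for the $\cal A$-polynomial pieces on branch $u_i$ together with the Markov-property factorization on the other branches, one obtains a Lipschitz constant of order $\exp(-\varepsilon/2(\h(u)-\lA))\sqrt{\min_\theta(\E_u\tilde f_u^2)\min_\theta(\E_u g_{m,u_i}^2)}$. Summing over $i\in [d_u]$ via Cauchy--Schwarz, together with the comparison $\sum_i\E g_{m,u_i}^2\lesssim R\cdot \E g_{m,u}^2$ from Lemma \ref{lem: fhku}, yields case (i).

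For case (ii) ($k=m=k_1$), the function $\tilde f_u$ at the bottom of the fractal decomposition is not a clean linear combination of $\psi_\sigma$'s; instead, by Lemma \ref{LINlem:fdecomposition} (via the substitution from Lemma \ref{lem: f_kSubstitute}), $\tilde f_u g_{k_1,u}=\tilde f_u\tilde g_u$ is obtained from degree-$1$-like summands that satisfy the appropriate $L^2$ compatibility. Taking conditional expectation $\E_u$ and writing the result as a quadratic expression in the degree-$1$ pieces $(\E_u\tilde f_u)(x_u)$, $(\E_u\tilde g_u)(x_u)$, one can apply Corollary \ref{cor: DotDeg1} directly to these degree-$1$ sums (pushed to $u$) to obtain the claimed decay without any $\lA$-dependence, at the cost of losing the $k_1$ layers that are now handled by the base-case argument rather than the induction. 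Combining the Markov decay from the first paragraph with this pointwise estimate at $u$ produces the stated bound.

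The main obstacle is the covariance term in case (i): because $g_{m,u_i}$ is a $\cal B$-polynomial whose Fourier support is at $\rho(\sigma)<u$, Proposition \ref{prop: fugu} does not apply off-the-shelf, and one must recycle Lemma \ref{lem: fuIguI}'s multi-branch Lipschitz mechanism while carefully tracking how \eqref{eq: kA1} (which is an $\cal A$-level statement) propagates through the $\cal B$-level object $g_{m,u_i}$. Branch-by-branch applications accumulate an $R$-dependent overhead, which is exactly what produces the $C(\log(R)+1)$ loss in the exponent.
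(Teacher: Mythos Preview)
Your first reduction via \eqref{eq: MClinftyDecay} is correct and matches the paper. However, there is a genuine gap in your handling of case (i).

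The decomposition $g_{m,u}=\sum_{i\in[d_u]} g_{m,u_i}$ with each $g_{m,u_i}$ supported on $T_{u_i}$ only makes sense when $m<k$: if $m=k$ then $D_m(u)=\{u\}$ and $g_{m,u}=\tilde g_u$, which has $\rho(\sigma)=u$ for every nonzero $\psi_\sigma$ and therefore does \emph{not} factor through a single child subtree. Your branch-by-branch Lipschitz scheme simply does not apply to this subcase, yet case (i) of the lemma must cover $m=k>k_1$ as well (Proposition~\ref{prop: fkgm} is proved under the standing assumption $m\le k$ only). This is precisely why the hypothesis $C^\circ\ge C_{\ref{prop: fugu}}$ appears: when $m=k>k_1$, both $\tilde f_u$ and $g_{m,u}=\tilde g_u$ are centered linear combinations of $\psi_\sigma$ with $\sigma\in\cal F(\cal B_u)$, and the paper invokes Proposition~\ref{prop: fugu} directly to bound the fluctuation of $(\E_u\tilde f_u\tilde g_u)$.

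Even where your decomposition is valid ($m<k$), the machinery you propose is far more elaborate than needed. The paper does not open up $(\E_u\tilde f_u g_{m,u_i})$ via a covariance/product identity or re-run any Lipschitz argument; it simply recycles the $L^1$ estimate \eqref{LINeq:huwinwin} from Proposition~\ref{prop: fu} (exactly as in the derivation of \eqref{eq:hk01}), which already gives
\[
\E\bigl|\tilde f_u(X)\,a(X)\bigr|\;\le\; \exp\!\Big(-\tfrac{\varepsilon}{2}\bigl(\h(u)-C(\log R+1)-\lAs\bigr)\Big)\,\sqrt{\E\tilde f_u^2}\,\sqrt{\E a^2}
\]
for any $a$ depending only on $(x_v:v\in T_{u_i})$. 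To interface with this $L^1$-type bound, the paper's Step~1 reduces not to your $\ell_\infty$ fluctuation at $u$ but to $\E\bigl|\tilde f_u g_{m,u}-\E[\tilde f_u g_{m,u}]\bigr|$, via a short H\"older trick combined with \eqref{eq: VarLinfty}. Likewise, for case (ii) ($k=m=k_1$) the trivial Cauchy--Schwarz bound $\E|\tilde f_u g_{m,u}|\le \sqrt{\E\tilde f_u^2}\,\sqrt{\E g_{m,u}^2}$ already suffices once combined with the $\exp(-\varepsilon(\h(\rho')-k_1))$ from Step~1; no appeal to Corollary~\ref{cor: DotDeg1} is needed.
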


\begin{proof}
\underline{Step 1. Bound $\mathbb{E} | \tilde f_u(X)   g_{m,u}(X) - \E \tilde f_u g_{m,u}|$
from above:}
By H\"older's inequality,  
\begin{align}
\nonumber
&&    \underbrace{\var \big[(\E_u \tilde f_u g_{m,u}) (X_u)\big]}_{\ell_2\mbox{-norm}}
\le &
    \underbrace{\max_{\theta \in [q]} \big|
    ( \E_u \tilde f_u g_{m,u})(\theta) - \E \tilde f_u g_{m,u} \big| }_{\ell_\infty\mbox{-norm}}
\cdot 
    \underbrace{\E \big| (\E_u \tilde f_u g_{m,u})(X_u) - \E \tilde f_u g_{m,u} \big|}_{\ell_1\mbox{-norm}} \\
\nonumber
&& \le &
     \sqrt{C_{\ref{lem:Mbasis}} \var\big[ (\E_u \tilde f_u g_{m,u})(X_u)\big]}
    \cdot 
    \mathbb{E} | \tilde f_u(X)   g_{m,u}(X) - \E \tilde f_u g_{m,u}| \\
\label{eq: fugmu00}
&\Leftrightarrow& 
    \sqrt{\var \big[(\E_u \tilde f_u g_{m,u}) (X_u)\big]}
\le & 
    \sqrt{C_{\ref{lem:Mbasis}}}
    \mathbb{E} | \tilde f_u(X)   g_{m,u}(X) - \E \tilde f_u g_{m,u}| ,
\end{align}
where we applied \eqref{eq: VarLinfty} from Lemma~\ref{lem:Mbasis} with 
$C_{\ref{lem:Mbasis}}$ is the constant introduced in the Lemma. 
Further, relying on \eqref{eq: fugmu00}, together with \eqref{eq: VarLinfty} and \eqref{eq: MCvarDecay} from the Lemma~\ref{lem:Mbasis}, we have  
\begin{align}
\nonumber 
    \max_{\theta,\theta' \in [q]} 
    \big| (\E_{\rho'}\tilde f_u g_{m,u})(\theta) - 
    \E_{\rho'}\tilde f_u g_{m,u})(\theta') \big|
\le & 
    2\max_{\theta \in [q]} 
    \big| (\E_{\rho'}\tilde f_u g_{m,u})(\theta) - \E \tilde f_u g_{m,u} \big| \\
\nonumber
\le &
    2C_{\ref{lem:Mbasis}}
    (\h(\rho')- k)^q \lambda^{\h(\rho')- k} \max_{\theta\in[q]}
    | (\E_{u}\tilde f_ug_{m,u})(\theta) - \E \tilde f_u g_{m,u}| \\
\nonumber
\le &
    2C_{\ref{lem:Mbasis}}^2
    (\h(\rho')- k)^q \lambda^{\h(\rho')- k} 
    \sqrt{C_{\ref{lem:Mbasis}}}  
    \mathbb{E} | \tilde f_u(X)   g_{m,u}(X) - \E \tilde f_u g_{m,u}|\\
\label{eq: propfkgm00}
= & 
     C_1 \exp( - \varepsilon(\h(\rho')-k))
    \mathbb{E} | \tilde f_u(X)   g_{m,u}(X) - \E \tilde f_u g_{m,u}|, 
\end{align}
where 
$$
    C_1 = 2C_{\ref{lem:Mbasis}}^{5/2} \cdot \max_{n \in \mathbb{N}} n^q \exp(-0.1\varepsilon n).
$$

\medskip

\underline{Case 1: $m<k$.} Here we can simply recycle the estimate from \eqref{eq:hk01}:  
\begin{align}
\nonumber
    \mathbb{E} | \tilde f_u(X)   g_{m,u}(X) - \E \tilde f_u g_{m,u}| 
\le &
\nonumber 
    2\mathbb{E} | \tilde f_u(X)   g_{m,u}(X)| \\
\le & 
    \exp \Big( - \frac{\varepsilon}{2} \big( k - C_2(\log(R) + 1) - \lA \big) \Big)
        (\mathbb{E} \tilde f^2_u(X))^{1/2} 
        (\mathbb{E}   g^2_{m,u}(X))^{1/2},
\end{align}
where 
$$
    C_2 
= 
    \frac{2}{\varepsilon}\Big(\frac{3}{2} + \frac{1}{2}\log(d) +  \log(C_{\ref{LINlem:fdecomposition}}) \Big) + 
    C_{\ref{prop: fu}} + 2\cdot \frac{2}{\varepsilon}\log(2),
$$
where $C_{\ref{LINlem:fdecomposition}}$ is the constant introduced in Lemma \ref{LINlem:fdecomposition} and  $C_{\ref{prop: fu}}$ is the constant introduced in Proposition \ref{prop: fu}. 

\underline{Case 2: $k_1<m=k$.}

This is the case where we need Proposition \ref{prop: fugu}. 
With the assumption that  
$
    C^\circ \ge C_{\ref{prop: fugu}},
$
where $C_{\ref{prop: fugu}} \ge 1$ is the constant introduced in the Proposition, we have 
\begin{align*}
    m=k >k_1 \ge\lA + C^\circ(\log(R)+1) \ge\lA + C_{\ref{prop: fugu}}(\log(R)+1),
\end{align*}
so that we could apply the Proposition to get 
\begin{align*}
&   \mathbb{E} | \tilde f_u(X)   g_{m,u}(X) - \E \tilde f_u g_{m,u}| \\
\le    &  
    \max_{\theta,\theta'}\big| (\E_u f_u g_u)(\theta) - (\E_u f_u g_u)(\theta') \big| \\
\le &
    \exp\Big( -\frac{\varepsilon}{2} ( k - C_{\ref{prop: fugu}}(\log(R) + 1) -\lA)\Big)
    (\mathbb{E} \tilde f^2_u(X))^{1/2} 
        (\mathbb{E}   g^2_{m,u}(X))^{1/2}.
\end{align*}

\underline{Case 3: $k_1 =m=k$}
The last case is straightforward:
\begin{align}
    \mathbb{E} | \tilde f_u(X)   g_{m,u}(X) - \E \tilde f_u g_{m,u}| 
\le &
\nonumber 
    2\mathbb{E} | \tilde f_u(X)   g_{m,u}(X)|
\le
    2\sqrt{\E \tilde f^2_u(X)} \sqrt{\E g_{m,u}^2(X)}.
\end{align}

\medskip

By taking $C_3 = \max\{C_2, C_{\ref{prop: fugu}} \} + \frac{2}{\varepsilon}\log(C_1)$, the statement of the Lemma follows with $C=C_3$. 
\end{proof}

As an analogue of the above Lemma, we also have  
\begin{lemma} \label{lem: EufuEugmu}
    There exists a constant $C = C(M,d)$ so that the following holds.  
    Suppose $ C^\circ \ge C_{\ref{prop: fugu}}$ is the constant introduced in Proposition \ref{prop: fugu}. 
    Then, the following holds: For $u \in D_{k}(\rho')$, 
    \begin{enumerate}
    \item if $k>k_1$, then
        \begin{align*}
        \nonumber 
        &   \max_{\theta,\theta' \in [q]} 
        \big|(\mathbb{E}_{\rho'} \big(\E_u\tilde f_u) (\E_ug_{m,u})\big) (\theta)  
                - \big((\mathbb{E}_{\rho'} \big(\E_u\tilde f_u) (\E_u g_{m,u})\big) (\theta')\big| \\
        \le & 
            \exp\Big( -\frac{\varepsilon}{2} ( 2\h(\rho') -k - C(\log(R) + 1) -\lA)\Big)
            (\mathbb{E} \tilde f^2_u(X))^{1/2} 
                (\mathbb{E}   g^2_{m,u}(X))^{1/2}.
        \end{align*}
    \item if $k=m=k_1$, then 
        \begin{align*}
        \nonumber 
        &   
        \big|(\mathbb{E}_{\rho'} \big(\E_u\tilde f_u) (\E_ug_{m,u})\big) (\theta)  
                - \big((\mathbb{E}_{\rho'} \big(\E_u\tilde f_u) (\E_u g_{m,u})\big) (\theta')\big|\\
        \le & 
            \exp\Big( -\frac{\varepsilon}{2} ( 2\h(\rho')-2k - C)\Big)
            (\mathbb{E} \tilde f^2_u(X))^{1/2} 
                (\mathbb{E}   g^2_{m,u}(X))^{1/2}.
        \end{align*}
    \end{enumerate}
\end{lemma}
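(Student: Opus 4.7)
The plan is to mirror the proof of Lemma \ref{lem: fugmu} while exploiting a key simplification: by the Markov property, both factors of
$h(x):=(\E_u\tilde f_u)(x_u)\cdot(\E_u g_{m,u})(x_u)$
are already functions of $x_u$ alone, so $h$ depends only on the single vertex value $X_u$. Writing $\E h(X_u)=\E[\tilde f_u g_{m,u}]$, the oscillation
$\max_{\theta,\theta'}|(\E_{\rho'}h)(\theta)-(\E_{\rho'}h)(\theta')|$
is at most $2\max_\theta|(\E_{\rho'}h)(\theta)-\E h(X_u)|$, and applying \eqref{eq: MClinftyDecay} of Lemma \ref{lem:Mbasis} for the $\h(\rho')-k$ Markov steps from $u$ up to $\rho'$ yields a contraction factor $C(\h(\rho')-k)^q\lambda^{\h(\rho')-k}$ against $\max_\theta|h(\theta)-\E h(X_u)|\le 2\max_\theta|h(\theta)|$. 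Since $\lambda\le\exp(-1.1\varepsilon)$ by Definition \ref{defi:epsilonkA}, the polynomial prefactor $(\h(\rho')-k)^q$ is absorbed into the $0.1\varepsilon$ of slack.

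The remaining task is to bound $\max_\theta|h(\theta)|\le\max_\theta|(\E_u\tilde f_u)(\theta)|\cdot\max_\theta|(\E_u g_{m,u})(\theta)|$. Each factor is mean-zero, so \eqref{eq: VarLinfty} gives $\max_\theta|(\E_u\tilde f_u)(\theta)|^2\le C_{\ref{lem:Mbasis}}\E(\E_u\tilde f_u)^2(X_u)$ and the analogous statement for $g_{m,u}$. In the case $\max\{k,m\}>k_1$ (WLOG $k>k_1$), Lemma \ref{lem: ellAtoell*} converts the standing Assumption \ref{assume: Ag} on $\CA$ into Assumption \ref{assume: A}, so \eqref{eq:hucondhu} of Proposition \ref{prop: fu} applies to $\tilde f_u$ and yields $\E(\E_u\tilde f_u)^2(X_u)\le\exp(-2\varepsilon(k-C'(\log R+1)-\lA))\E\tilde f_u^2(X)$. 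For $g_{m,u}$ I would use the same Proposition when $m=k$ (as $g_{m,u}=\tilde g_u$), and the free Jensen bound $\E(\E_u g_{m,u})^2(X_u)\le\E g_{m,u}^2(X)$ when $m<k$. Combining these produces the exponent $-1.1\varepsilon(\h(\rho')-k)-\varepsilon(k-C'(\log R+1)-\lA)=-\varepsilon(1.1\h(\rho')-0.1k-C'(\log R+1)-\lA)$, which dominates the target $-\tfrac{\varepsilon}{2}(2\h(\rho')-k-C(\log R+1)-\lA)$ once the standing hypothesis $k\ge k_1\ge\lA+C(\log R+1)$ is used to absorb linear-in-$k$ terms, provided $C$ is chosen sufficiently large relative to $C'$.

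The boundary case $k=m=k_1$ needs a separate treatment because $\tilde f_u$ and $\tilde g_u$ are no longer $\psi_\sigma$-combinations but come from the substitution step of Lemma \ref{lem: f_kSubstitute}, making Proposition \ref{prop: fu} unavailable. Fortunately the target exponent drops to the weaker $-\tfrac{\varepsilon}{2}(2\h(\rho')-2k-C)$, and the bare Jensen bounds $\E(\E_u\tilde f_u)^2(X_u)\le\E\tilde f_u^2(X)$, $\E(\E_u\tilde g_u)^2(X_u)\le\E\tilde g_u^2(X)$ combined with the single Markov-chain contraction $\lambda^{\h(\rho')-k}\le\exp(-1.1\varepsilon(\h(\rho')-k))$ already furnish it, the polynomial prefactor again disappearing into the $0.1\varepsilon$ slack.

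I do not anticipate a genuine obstacle. This lemma is structurally easier than its companion Lemma \ref{lem: fugmu} because $h$ depends on only one vertex value, so no Proposition \ref{prop: fugu}-style oscillation estimate for products of $\psi_\sigma$-combinations is required; the bound follows by juxtaposing the Markov-chain $\ell_\infty$ contraction with the pointwise decay of Proposition \ref{prop: fu}. The one step requiring care is the bookkeeping of the constant $C$, which must simultaneously absorb $C_{\ref{lem:Mbasis}}$, the constant $C_{\ref{prop: fu}}(M,d,\tfrac12)$ entering via Lemma \ref{lem: ellAtoell*}, and the shift $\lAs=\lA+\tfrac{2}{\varepsilon}\log 2$; since these are all $M$- and $d$-dependent constants, the resulting $C=C(M,d)$ is uniform in $\rho',u,k,m$ as required.
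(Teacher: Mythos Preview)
Your proposal is correct and follows essentially the same route as the paper. Both arguments observe that $h(x_u)=(\E_u\tilde f_u)(x_u)(\E_u g_{m,u})(x_u)$ is a single-vertex function, apply the Markov chain $\ell_\infty$ contraction of Lemma~\ref{lem:Mbasis} over the $\h(\rho')-k$ steps from $u$ to $\rho'$, and then control the remaining factor using Jensen on $\E_u g_{m,u}$ together with the decay \eqref{eq:hucondhu} of Proposition~\ref{prop: fu} on $\E_u\tilde f_u$ when $k>k_1$ (and bare Jensen in the boundary case $k=m=k_1$). The only cosmetic difference is that the paper recycles the $\ell_2/\ell_1$ duality trick \eqref{eq: fugmu00} from Lemma~\ref{lem: fugmu} to reach $\mathbb{E}|a_ub_u-\E a_ub_u|$ and then applies Cauchy--Schwarz, whereas you bound $\max_\theta|h(\theta)|$ directly as the product of $\ell_\infty$ norms and convert each factor via \eqref{eq: VarLinfty}; both are equivalent up to $M$-dependent constants.
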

Since the proof is simpler and the structure is the same as that for 
Lemma \ref{lem: fugmu}, we will outline a sketch proof in this case. 
\begin{proof}
   Let $a_u(x_u) =  (\E_u\tilde f_u)(x_u)$ and $b_u = (\E_ug_{m,u})(x_u)$. 
   Repeating the first step of the proof of Lemma \ref{lem: fugmu}, we have  
\begin{align}
\nonumber 
    \max_{\theta,\theta' \in [q]} 
    \big| (\E_{\rho'}a_u b_{u})(\theta) - 
    \E_{\rho'}a_ub_u)(\theta') \big|
\le & 
     C_1 \exp( - \varepsilon(\h(\rho')-k))
    \mathbb{E} | a_u(X)b_u(X) - \E a_ub_u|, 
\end{align}
with 
$$
C_1:= 2C_{\ref{lem:Mbasis}}^{5/2} \cdot \max_{n \in \mathbb{N}} n^q \exp(-0.1\varepsilon n),
$$
which is exactly the same constant stated in Lemma \ref{lem: fugmu}.
Next, 
$$
    \mathbb{E} | a_u(X)b_u(X) - \E a_ub_u|
\le 
    2\mathbb{E} | a_u(X)b_u(X)| 
\le 
    2\sqrt{\E a_u^2(X)} \sqrt{ \E b_u^2(X)}
\le 
    2\sqrt{\E a_u^2(X)} \sqrt{ \E g_{m,u}^2(X)}.
$$
If $k> k_1$, we could apply  \eqref{eq:hucondhu} from Proposition \ref{prop: fu} 
to $\tilde f_u$, and get 
\begin{align*}
    \sqrt{\E a_u^2(X)}
\le &
    \exp \Big( 
        - 2\varepsilon ( k - C_{\ref{prop: fu}}(\log(R) + 1) 
        \underbrace{-\lA - \frac{2}{\varepsilon}\log(2)}_{-\lAs}) 
    \Big) \sqrt{\E \tilde f_u^2(X)}.
\end{align*}
Indeed, this tail bound is stronger than what we got from Lemma \ref{lem: fugmu}. 
The remainning part involves combining these estimates with a suitable  constant $C$ so that the lemma holds. 
Given the argument was already presented in the proof of Lemma \ref{lem: fugmu}, we will omit these details.
\end{proof}

Before bounding the summands in \eqref{eq: fkgk00}, let us bound  
$\sum_{u \in D_{k}(\rho')} \mathbb{E} \tilde f^2_u(X)$ 
and $\sum_{u \in D_{k}(\rho')} \mathbb{E}   g^2_{m,u}(X)$ from above by  
$\mathbb{E} f^2_k(X)$ and $\mathbb{E} g^2_m(X)$, respectively. 

\begin{lemma}
\label{lem: gmusum}  
Suppose 
$$C^\circ \ge C_{\ref{lem: fhku}} + \frac{2}{\varepsilon}\log(2),$$ 
where $C_{\ref{lem: fhku}}$ is the constant introduced in Lemma \ref{lem: fhku}. 
Then, 
$$
    \sum_{u \in D_{k}(\rho')} \mathbb{E}   g^2_{m,u}(X)
\le 
    \max\big\{ 4, C_{\ref{LINlem:fdecomposition}}^2R^4 \big\}\,\cdot \mathbb{E} g_m^2(X),
$$
and 
$$
    \sum_{u \in D_{k}(\rho')} \mathbb{E}   \tilde f^2_{u}(X)
\le 
    \max\big\{ 4, C_{\ref{LINlem:fdecomposition}}^2R^4 \big\}\,\cdot \mathbb{E} f_k^2(X).
$$
\end{lemma}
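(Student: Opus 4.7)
The two claimed inequalities are structurally the same: taking $m=k$ in the definition of $g_{m,u}$ collapses the sum to a single term so that $g_{k,u} = \tilde g_u$, and the second inequality is the $g \mapsto f$, $m \mapsto k$ specialization of the first. Accordingly, the plan is to prove the $g$ statement and then note that the $f$ bound follows by identical reasoning. The argument splits naturally into two cases depending on whether $m>k_1$ or $m=k_1$, because in the first case the partial sums $g_{m,u}$ are controlled by Lemma~\ref{lem: fhku}, whereas in the second they are controlled directly by Lemma~\ref{LINlem:fdecomposition}.

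\emph{Case $m>k_1$.} The only thing to verify is that the hypothesis of Lemma~\ref{lem: fhku} applies. That lemma requires $k_1 \ge \lAs + C_{\ref{lem: fhku}}(\log(R)+1)$, where by Lemma~\ref{lem: ellAtoell*} we may take $\lAs = \lA + \tfrac{2}{\varepsilon}\log(2)$. Since the hypotheses of Proposition~\ref{prop: fkgm} guarantee $k_1 \ge \lA + C^\circ(\log(R)+1)$, the assumption $C^\circ \ge C_{\ref{lem: fhku}} + \tfrac{2}{\varepsilon}\log(2)$ yields exactly this. Applying the second statement of Lemma~\ref{lem: fhku} with the choice of $h=m$, we obtain
\begin{align*}
    \sum_{u \in D_{k}(\rho')} \mathbb{E}\, g^2_{m,u}(X) \;\le\; 4\,\mathbb{E}\, g_m^2(X),
\end{align*}
which is stronger than what we need in this case.

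\emph{Case $m = k_1$.} Here $g_{m,u}(x) = \sum_{v\in D_{k_1}(u)} \tilde g_v(x)$ for $u \in D_k(\rho')$. Recall from the construction in the proof of Lemma~\ref{LINlem:fdecomposition}, invoking Lemma~\ref{lem: f_kSubstitute}, that the bottom-layer functions $g_v$ (with $v \in D_{k_1}(\rho')$) are taken to have mean zero, so $\tilde g_v = g_v$. Therefore, the upper bound in part 2 of Lemma~\ref{LINlem:fdecomposition}, applied to the subtree rooted at each $u \in D_k(\rho')$, gives
\begin{align*}
    \mathbb{E}\, g_{m,u}^2(X) \;\le\; C_{\ref{LINlem:fdecomposition}} R \sum_{v \in D_{k_1}(u)} \mathbb{E}\, g_v^2(X).
\end{align*}
Summing over $u \in D_k(\rho')$ and using disjointness of the sets $D_{k_1}(u)$ yields
\begin{align*}
    \sum_{u \in D_k(\rho')} \mathbb{E}\, g_{m,u}^2(X) \;\le\; C_{\ref{LINlem:fdecomposition}} R \sum_{v \in D_{k_1}(\rho')} \mathbb{E}\, g_v^2(X).
\end{align*}
Finally, the lower bound in part 2 of Lemma~\ref{LINlem:fdecomposition} applied at $w=\rho'$ gives $\sum_{v \in D_{k_1}(\rho')} \mathbb{E}\, g_v^2(X) \le C_{\ref{LINlem:fdecomposition}} R^3\, \mathbb{E}\, g_m^2(X)$, and combining the two estimates produces the constant $C_{\ref{LINlem:fdecomposition}}^2 R^4$. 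Taking the maximum of the two case constants delivers the stated bound, and the $\tilde f_u$ statement is obtained by the same argument with $m = k$. No real obstacle is expected here; the only subtle point is making sure we are in the regime where Lemma~\ref{lem: fhku} is applicable, which is exactly what the quantitative assumption on $C^\circ$ is designed to secure.
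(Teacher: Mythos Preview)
Your proof is correct and follows essentially the same approach as the paper: split into the cases $m>k_1$ (handled by Lemma~\ref{lem: fhku}, after checking that $k_1 \ge \lAs + C_{\ref{lem: fhku}}(\log(R)+1)$ via the assumption on $C^\circ$) and $m=k_1$ (handled by the two-sided bound \eqref{eq: LINfdecompositionk1} from Lemma~\ref{LINlem:fdecomposition}), with the $\tilde f_u$ inequality following by specialization to $m=k$. The only cosmetic difference is that in the $m>k_1$ case the paper chains the first statement of Lemma~\ref{lem: fhku} twice while you invoke its second statement directly, yielding the same constant~$4$.
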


\begin{proof}
Given that $C^\circ \ge C_{\ref{lem: fhku}}+ \frac{2}{\varepsilon}\log(2)$, 
we have 
$$
    k_1 
\ge 
    \lA + C^\circ (\log(R)+1) 
> 
    \lA + \frac{2}{\varepsilon}\log(2) + C_{\ref{lem: fhku}}(\log(R)+1) 
= 
    \lAs + C_{\ref{lem: fhku}}(\log(R)+1),
$$
and thus we could apply Lemma \ref{lem: fhku}. When $m> k_1$, the lemma yields 
\begin{align*}
    \sum_{u \in D_{k}(\rho')} \mathbb{E}   g^2_{m,u}(X)
=
    \sum_{u \in D_{k}(\rho')} 
    \mathbb{E} \Big[\Big(\sum_{v \in D_m(u)} \tilde g_{v}(X)\Big)^2\Big]
\le  
    \sum_{u \in D_{k}(\rho')} \sum_{v \in D_{m}(u)}
    2  
    \mathbb{E} \tilde g^2_{v}(X)
\le 
    4 \mathbb{E} g_m^2(X).
\end{align*}
And in the case when $m=k_1$, 
we use the same derivation with Lemma \ref{lem: fhku} been replaced by 
\eqref{eq: LINfdecompositionk1} in Lemma \ref{LINlem:fdecomposition} to get 
\begin{align*}
    \sum_{u \in D_{k}(\rho')} \mathbb{E}   g^2_{m,u}(X)
\le 
    C_{\ref{LINlem:fdecomposition}}R^3 \cdot 
    C_{\ref{LINlem:fdecomposition}}R \mathbb{E} g_m^2(X).
\end{align*}
Clearly, the same derivation also holds for the comparison of 
$\sum_{u \in D_{k}(\rho')} \mathbb{E} \tilde f^2_u(X)$
and $\mathbb{E} f^2_k(X)$. 
\end{proof}

Now, relying on the above two lemmas, we will estimate the second and third summand of \eqref{eq: fkgk00}: 

\begin{cor} \label{cor: fkgm2nd3rdsummands}
    There exists a constant $C = C(M,d)\ge 1$ so that the following holds.  
    Suppose 
    $$ C^\circ
    \ge \max \Big\{C_{\ref{prop: fugu}}, C_{\ref{lem: fhku}} 
        + \frac{2}{\varepsilon}\log(2)\Big\},$$ 
    where the constants are introduced in Proposition \ref{prop: fugu} and Lemma \ref{lem: fhku}, respectively.   
    Then, 
    \begin{enumerate}
    \item if $k>k_1$, then
        \begin{align*}
        &
            \bigg| \sum_{u \in D_{k}(\rho')} \big((\mathbb{E}_{\rho'} \tilde f_u g_{m,u})(\theta)  
                - (\mathbb{E}_{\rho'} \tilde f_u g_{m,u})(\theta')\big) \\
        &-
            \Big(\sum_{u \in D_{k}(\rho')} \big((\mathbb{E}_{\rho'} \big(\E_u\tilde f_u) (\E_ug_{m,u})\big) (\theta)  
                - \big((\mathbb{E}_{\rho'} \big(\E_u\tilde f_u) (\E_u g_{m,u})\big) (\theta') \Big) \bigg| \\
        \le &
           \exp\Big( -\frac{\varepsilon}{2} ( 2\h(\rho') -k - C(\log(R) + 1) -\lA)\Big)
            (\mathbb{E} f^2_k(X))^{1/2} 
                (\mathbb{E}   g^2_{m}(X))^{1/2}.
        \end{align*}
    \item if $k=m=k_1$, then the above term above can be bounded by
\begin{align*}
    \exp\Big( -\frac{\varepsilon}{2} ( 2\h(\rho') -2k - C)\Big)
    (\mathbb{E} f^2_k(X))^{1/2} 
        (\mathbb{E}   g^2_{m}(X))^{1/2}.
\end{align*}
    \end{enumerate}
\end{cor}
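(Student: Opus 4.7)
The plan is to combine the pointwise estimates from Lemmas \ref{lem: fugmu} and \ref{lem: EufuEugmu} via triangle inequality and Cauchy-Schwarz, and then convert the resulting sum into second moments of $f_k$ and $g_m$ using Lemma \ref{lem: gmusum}. Without loss of generality assume $m \le k$, so the hypothesis of the two lemmas above applies uniformly across $u \in D_k(\rho')$.

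First, by the triangle inequality the quantity to be bounded is at most the sum of
\begin{align*}
A_1 &:= \sum_{u \in D_k(\rho')} \big|(\mathbb{E}_{\rho'}\tilde f_u g_{m,u})(\theta) - (\mathbb{E}_{\rho'}\tilde f_u g_{m,u})(\theta')\big|,\\
A_2 &:= \sum_{u \in D_k(\rho')} \big|(\mathbb{E}_{\rho'}(\E_u\tilde f_u)(\E_u g_{m,u}))(\theta) - (\mathbb{E}_{\rho'}(\E_u\tilde f_u)(\E_u g_{m,u}))(\theta')\big|.
\end{align*}
Apply Lemma \ref{lem: fugmu} to each summand of $A_1$ and Lemma \ref{lem: EufuEugmu} to each summand of $A_2$. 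Both lemmas produce a uniform prefactor depending only on $\h(\rho')$, $k$, $m$, and $\lA$, multiplied by $\sqrt{\E \tilde f_u^2(X)}\sqrt{\E g_{m,u}^2(X)}$. Call $D(\h(\rho'),k,m)$ the worst of these two prefactors (it is the prefactor appearing in the target inequality, up to a constant).

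Next, apply Cauchy--Schwarz to pull the sum inside the square roots:
\begin{align*}
\sum_{u \in D_k(\rho')} \sqrt{\E \tilde f_u^2(X)}\sqrt{\E g_{m,u}^2(X)}
\le \Big(\sum_{u \in D_k(\rho')}\E \tilde f_u^2(X)\Big)^{1/2} \Big(\sum_{u \in D_k(\rho')}\E g_{m,u}^2(X)\Big)^{1/2}.
\end{align*}
Now the assumption $C^\circ \ge C_{\ref{lem: fhku}} + \tfrac{2}{\varepsilon}\log(2)$ places us in the range where Lemma \ref{lem: gmusum} applies, giving $\sum_u \E \tilde f_u^2(X) \lesssim R^4 \E f_k^2(X)$ and $\sum_u \E g_{m,u}^2(X) \lesssim R^4 \E g_m^2(X)$ (with constants depending only on $M$ and $d$).

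Combining these three steps yields
\begin{align*}
A_1 + A_2 \le C' R^4\, D(\h(\rho'),k,m)\, \sqrt{\E f_k^2(X)}\sqrt{\E g_m^2(X)},
\end{align*}
and the $R^4$ factor is absorbed into the exponential prefactor by enlarging the constant $C$ by an additive $\frac{2}{\varepsilon}(4+\log C')$, which is legitimate since $D(\h(\rho'),k,m)$ decays as $\exp(-\tfrac{\varepsilon}{2}(\cdots - C(\log R + 1)))$. The two cases ($k > k_1$ versus $k = m = k_1$) are treated identically; the only difference is which branch of Lemmas \ref{lem: fugmu} and \ref{lem: EufuEugmu} supplies $D$. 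The main bookkeeping obstacle, and the one that pins down the final constant $C$, is simply making sure the exponent contributed by Cauchy--Schwarz and by the $R^4$ factor from Lemma \ref{lem: gmusum} is dominated by the exponential decay already present in $D(\h(\rho'),k,m)$; this is where the hypothesis $\h(\rho') \ge \lA + C(\log R + 1)$ with $C$ large enough is used.
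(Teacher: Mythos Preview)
Your proposal is correct and follows essentially the same route as the paper: triangle inequality to split into the two sums, apply Lemmas \ref{lem: fugmu} and \ref{lem: EufuEugmu} termwise, Cauchy--Schwarz over $u\in D_k(\rho')$, and then Lemma \ref{lem: gmusum} to pass to $\E f_k^2$ and $\E g_m^2$, with the resulting $R^4$ absorbed into the constant $C$. The only cosmetic difference is that the paper bundles the two sums directly rather than naming them $A_1,A_2$, and your final remark about needing $\h(\rho')\ge \lA + C(\log R+1)$ is not actually used here---the absorption of the polynomial-in-$R$ factor into the exponential is purely algebraic.
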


\begin{proof}

Let $C_1$ be the maximum of the two constants introduced in Lemma \ref{lem: fugmu} and Lemma \ref{lem: EufuEugmu}. For convenience, let 
\begin{align*}
    U:=
&
    \bigg| \sum_{u \in D_{k}(\rho')} \big((\mathbb{E}_{\rho'} \tilde f_u g_{m,u})(\theta)  
        - (\mathbb{E}_{\rho'} \tilde f_u g_{m,u})(\theta')\big) \\
&-
    \Big(\sum_{u \in D_{k}(\rho')} \big((\mathbb{E}_{\rho'} \big(\E_u\tilde f_u) (\E_ug_{m,u})\big) (\theta)  
        - \big((\mathbb{E}_{\rho'} \big(\E_u\tilde f_u) (\E_u g_{m,u})\big) (\theta') \Big) \bigg|.
\end{align*}
By the two lemmas together with the triangle inequality, in the case when $k>k_1$, we have 
\begin{align}
\nonumber
    U
\le &   
    \sum_{u \in D_{k}(\rho')} 2 \exp\Big( -\frac{\varepsilon}{2} ( 2\h(\rho') -k - C_1(\log(R) + 1) -\lA)\Big) 
    (\mathbb{E} \tilde f^2_u(X))^{1/2} 
        (\mathbb{E}   g^2_{m,u}(X))^{1/2} \\
\nonumber
\le &
    2 \exp\Big( -\frac{\varepsilon}{2} ( 2\h(\rho') -k - C_1(\log(R) + 1) -\lA)\Big)
    \sqrt{\sum_{u \in D_{k}(\rho')} \mathbb{E} \tilde f^2_u(X)}
    \sqrt{\sum_{u \in D_{k}(\rho')} \mathbb{E}   g^2_{m,u}(X)} \\
\label{eq: fkgm2nd3rdsummands00}
\le & 
    2 \max \Big\{ 4, C_{\ref{LINlem:fdecomposition}}^2R^4 \Big\} 
    \sqrt{\mathbb{E} f^2_k(X)}\sqrt{\mathbb{E} g_m^2(X)}, 
\end{align}
where the last inequality follows from Lemma \ref{lem: gmusum}.
Similarly, when $k=m=k_1$, we have 
\begin{align}
\label{eq: fkgm2nd3rdsummands01}
    U
\le & 
    2 \exp\Big( -\frac{\varepsilon}{2} ( 2\h(\rho') -2k - C_1)\Big)
    \sqrt{\sum_{u \in D_{k}(\rho')} \mathbb{E} \tilde f^2_u(X)}
    \sqrt{\sum_{u \in D_{k}(\rho')} \mathbb{E}   g^2_{m,u}(X)}.
\end{align}

By setting 
$$
    C = \frac{2}{\varepsilon} \left(C_1 + \log(4) + \log(C_{\ref{LINlem:fdecomposition}}^2) \right),
$$
the corollary follows.  
\end{proof}

It remains to estimate the first summand of \eqref{eq: fkgk00}:

\begin{lemma} \label{lem: fkgm1stsummand}
There exists a constant $C = C(M,d)\ge 1$ so that the following holds.  
Suppose 
$$ 
    C^\circ 
\ge 
    C_{\ref{lem: fhku}} + \frac{2}{\varepsilon}\log(2)
$$ 
where $C_{\ref{lem: fhku}}$ is the constant introduced in Lemma \ref{lem: fhku}. 
Then, 
\begin{enumerate}
    \item if $k>k_1$, then
        \begin{align*}
        &  \max_{\theta,\theta' \in [q]}
           \Big|\big(\E_{\rho'} (\E_k f_k)(\E_k g_m)\big)(\theta) 
            - \big(\E_{\rho'} (\E_k f_k)(\E_k g_m)\big)(\theta')  \Big| \\
        \le &     
            \exp\Big( - \varepsilon ( \h(\rho') - C(\log(R) + 1) -\lA  ) \Big) 
            \sqrt{\E f_k^2(X)} \sqrt{\E g_m^2(X)}.
        \end{align*}
    \item if $k=m=k_1$, then the above term is bounded by  
        $$
            \exp( - \varepsilon (\h(\rho') - k_1 - C(\log(R)+1)) ) 
            \sqrt{\E f_k^2(X)} \sqrt{\E g_m^2(X)}.
        $$
\end{enumerate}
\end{lemma}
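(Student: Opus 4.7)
The key observation is that, by the \MP, both conditional expectations are in fact degree-1 polynomials of the variables $(x_u\,:\, u \in D_k(\rho'))$ with mean zero. Specifically,
\begin{align*}
	F(x) := (\E_k f_k)(x) = \sum_{u \in D_k(\rho')} (\E_u \tilde f_u)(x_u),
	\qquad
	G(x) := (\E_k g_m)(x) = \sum_{u \in D_k(\rho')} (\E_u g_{m,u})(x_u),
\end{align*}
where each summand has zero mean (since $\E \tilde f_u(X) = 0$ and, because $m \le k$, $g_{m,u}$ is the sum of mean-zero terms $\tilde g_v$ for $v \in D_m(u)$). Since oscillation is bounded by twice the maximum deviation from the mean, the plan is to apply Corollary \ref{cor: DotDeg1} to $F$ and $G$, obtaining
\begin{align*}
	\max_{\theta,\theta'} \big|(\E_{\rho'} FG)(\theta) - (\E_{\rho'} FG)(\theta')\big|
	\le 2 C_{\ref{cor: DotDeg1}} R^4 \exp\big( - \varepsilon(\h(\rho') - k) \big) \sqrt{\E F^2(X)} \sqrt{\E G^2(X)}.
\end{align*}
It then remains to convert $\sqrt{\E F^2}$ and $\sqrt{\E G^2}$ back into $\sqrt{\E f_k^2}$ and $\sqrt{\E g_m^2}$, with enough additional decay to absorb the $k$-dependence.

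\emph{Case 1 ($k > k_1$).} For the $F$-factor, each $\tilde f_u$ with $u \in D_k(\rho')$ (which has $\h(u) = k > k_1 \ge \lA + C(\log R + 1)$) is a linear combination of $\psi_\sigma$ with $\sigma \in \cal F(\cal B_u)$. By Lemma \ref{lem: ellAtoell*}, $\cal A$ satisfies Assumption \ref{assume: A} with $\lAs = \lA + \tfrac{2}{\varepsilon}\log 2$ and $c^* = \tfrac12$, so Proposition \ref{prop: fu} (specifically \eqref{eq:hucondhu}) yields
\begin{align*}
	\E[(\E_u \tilde f_u)^2(X_u)] \le \exp\big(-2\varepsilon(k - C(\log R + 1) - \lA)\big)\, \E \tilde f_u^2(X).
\end{align*}
Summing over $u \in D_k(\rho')$, Lemma \ref{lem:deg1T} converts this into a bound on $\E F^2(X)$ in terms of $\sum_u \E(\E_u \tilde f_u)^2$, and Lemma \ref{lem: fhku} bounds $\sum_u \E \tilde f_u^2$ by $2\, \E f_k^2$. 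The outcome is
\begin{align*}
	\sqrt{\E F^2(X)} \le C \exp\big(-\varepsilon(k - C(\log R + 1) - \lA)\big)\, \sqrt{\E f_k^2(X)}.
\end{align*}
For the $G$-factor, since we have no such decay available when $m = k_1$, we simply use Jensen's inequality: $\sqrt{\E G^2(X)} \le \sqrt{\E g_m^2(X)}$. Combining with the Corollary \ref{cor: DotDeg1} bound, the $\exp(-\varepsilon(\h(\rho')-k))$ and $\exp(-\varepsilon(k - \cdots))$ factors telescope in $k$, producing exactly the claimed rate $\exp(-\varepsilon(\h(\rho') - C(\log R + 1) - \lA))$ after absorbing the $R^4$ into an enlarged constant $C$ in the exponent.

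\emph{Case 2 ($k = m = k_1$).} Here we cannot leverage Proposition \ref{prop: fu} on either side (the base layer $k_1$ is not a $\psi$-combination), so we use only Jensen's inequality on each factor: $\sqrt{\E F^2} \le \sqrt{\E f_{k_1}^2}$ and $\sqrt{\E G^2} \le \sqrt{\E g_{k_1}^2}$. Plugging into the Corollary \ref{cor: DotDeg1} bound gives exponent $\exp(-\varepsilon(\h(\rho') - k_1))$ with a multiplicative $R^4$ that can be absorbed into $\exp(-\varepsilon(\h(\rho') - k_1 - C(\log R + 1)))$ for suitable $C$.

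The proof thus reduces to invoking Corollary \ref{cor: DotDeg1} on the two degree-1 polynomials $F$ and $G$ and then controlling their second moments; no new covariance argument is needed. The main bookkeeping obstacle is verifying that the decay rates combine cleanly: the $\exp(-\varepsilon(\h(\rho')-k))$ from the ``degree-1 tree-distance'' factor and the $\exp(-\varepsilon(k - C(\log R + 1) - \lA))$ from the inductive decay of $F$ must cancel the $k$-dependence and absorb all $R^4$ prefactors into the exponent. Once the constant $C$ is chosen large enough (depending only on $M$ and $d$) relative to $C_{\ref{cor: DotDeg1}}$, $C_{\ref{prop: fu}}$, $C_{\ref{lem:deg1T}}$, and $C_{\ref{lem: fhku}}$, this is straightforward.
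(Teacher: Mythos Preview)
Your proposal is correct and follows essentially the same approach as the paper: recognize that $(\E_k f_k)$ and $(\E_k g_m)$ are mean-zero degree-1 polynomials in $(x_u : u \in D_k(\rho'))$, apply the degree-1 product oscillation bound, and then use Proposition~\ref{prop: fu} to extract the extra $\exp(-\varepsilon(k - C(\log R+1) - \lA))$ decay from the $F$-factor when $k > k_1$. The only minor differences are that the paper invokes Lemma~\ref{lem:DotDeg1} directly (giving the bound in terms of $\sqrt{\sum_u \E(\E_u\tilde f_u)^2}$ and $\sqrt{\sum_u \E(\E_u g_{m,u})^2}$) and then uses Lemma~\ref{lem: gmusum} rather than Jensen for the $G$-factor, whereas you go through Corollary~\ref{cor: DotDeg1} and Lemma~\ref{lem:deg1T}; both routes are valid and the bookkeeping is equivalent.
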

\begin{proof}
Observe that both $(\E_k f_k)(x) = \sum_{u \in D_k(\rho')} (\E_u \tilde f_u)(x_u)$ 
and $(\E_k g_m)(x) = \sum_{u \in D_k(\rho')} (\E_k g_{m,u})(x_u)$ 
are both degree-1 polynomials with variables $(x_u\,:\, u \in D_k(\rho'))$
satisfying $$ \E (\E_k g_m)(X) = \E (\E_k f_k) = 0.$$
This allows us to apply Lemma \ref{lem:DotDeg1}, yielding
\begin{align*}
&   \max_{\theta,\theta' \in [q]}
    \Big| 
        \big(\E_{\rho'} (\E_k f_k)(\E_k g_m)\big)(\theta) 
        - \big(\E_{\rho'} (\E_k f_k)(\E_k g_m)\big)(\theta')  
    \Big| \\
\le &
 2\max_{\theta}    
    \Big| 
        \big(\E_{\rho'} (\E_k f_k)(\E_k g_m)\big)(\theta) 
        - \E \big(\E_k f_k)(\E_k g_m)\big)
    \Big| \\
\le& 
    2 C_{\ref{lem:DotDeg1}} R 
    \exp( - \varepsilon (\h(\rho') - k ) ) 
    \sqrt{\sum_{u\in D_k(\rho')}\E \big[ (\E_u\tilde f_u)^2(X) \big]}
    \sqrt{ \sum_{u\in D_k(\rho')}\E \big[ (\E_u g_{m,u})^2(X) \big]},
\end{align*}
where $C_{\ref{lem:DotDeg1}} \ge 1$ is the constant introduced in Lemma \ref{lem:DotDeg1}. 
Next, we apply Lemma \ref{lem: gmusum} (which is why we need the assumption on $C^\circ$) to get
\begin{align*}
    \sqrt{ \sum_{u\in D_k(\rho')}\E \big[ (\E_u g_{m,u})^2(X) \big]}
\le 
    \sqrt{ \sum_{u\in D_k(\rho')}\E g^2_{m,u}(X)}
\le 
    \sqrt{\max\big\{ 4, C_{\ref{LINlem:fdecomposition}}^2R^4 \big\} \cdot \E g_m^2(X)}.
\end{align*}

As for $\sqrt{\sum_{u\in D_k(\rho')}\E \big[ (\E_u\tilde f_u)^2(X) \big]}$, 
if $k = m = k_1$, then we can apply the same derivation to get  
$$
    \sqrt{\sum_{u\in D_k(\rho')}\E \big[ (\E_u\tilde f_u)^2(X) \big]}
\le 
    \sqrt{\max\big\{ 4, C_{\ref{LINlem:fdecomposition}}^2R^4 \big\} \cdot \E f_k^2(X)}.
$$
This leads to 
\begin{align*}
& \max_{\theta,\theta' \in [q]}
   \Big|\big(\E_{\rho'} (\E_k f_k)(\E_k g_m)\big)(\theta) 
    - \big(\E_{\rho'} (\E_k f_k)(\E_k g_m)\big)(\theta')  \Big| \\
\le& 
    2 C_{\ref{lem:DotDeg1}} 
    \max\big\{ 4, C_{\ref{LINlem:fdecomposition}}^2R^4 \big\}
    \exp( - \varepsilon (\h(\rho') - k ) ) 
    \sqrt{\E f_k^2(X)} \sqrt{\E g_m^2(X)}.
\end{align*}

If $k>k_1$, then we can apply Proposition \ref{prop: fu} 
and Lemma \ref{lem: gmusum} to get  
\begin{align*}
    \sqrt{\sum_{u\in D_k(\rho')}\E \big[ (\E_u\tilde f_u)^2(X) \big]}
\le &
    \exp\Big( - \varepsilon ( k - C_{\ref{prop: fu}} (\log(R) + 1) -\lA - \frac{2}{\varepsilon}\log(2) ) \Big) 
    \sqrt{\sum_{u\in D_k(\rho')}\E \tilde f^2_u(X) } \\
\le &
    \sqrt{\max\big\{ 4, C_{\ref{LINlem:fdecomposition}}^2R^4 \big\}}
    \exp\Big( - \varepsilon ( k - C_{\ref{prop: fu}} (\log(R) + 1) -\lA - \frac{2}{\varepsilon}\log(2) ) \Big) 
    \sqrt{\E f_k^2(X)}.
\end{align*}
In this case, we have 
\begin{align*}
& \max_{\theta,\theta' \in [q]}
   \Big|\big(\E_{\rho'} (\E_k f_k)(\E_k g_m)\big)(\theta) 
    - \big(\E_{\rho'} (\E_k f_k)(\E_k g_m)\big)(\theta')  \Big| \\
\le& 
    2C_{\ref{lem:DotDeg1}}  
    \max\big\{ 4, C_{\ref{LINlem:fdecomposition}}^2R^4 \big\}
    \exp\Big( - \varepsilon ( \h(\rho') - C_{\ref{prop: fu}} (\log(R) + 1) -\lA - \frac{2}{\varepsilon}\log(2) ) \Big) 
    \sqrt{\E f_k^2(X)} \sqrt{\E g_m^2(X)}.
\end{align*}
By taking 
$$
    C 
= 
        C_{\ref{prop: fu}} 
        + \frac{2}{\varepsilon}\log(2) 
        + \frac{1}{\varepsilon}\Big( \log(2C_{\ref{lem:DotDeg1}})  + \log(C^2_{\ref{LINlem:fdecomposition}}) + 4\Big),
$$
both statements of the lemma follows. 
\end{proof}

\begin{proof}[Proof of Proposition \ref{prop: fkgm}]
Without lose of generality, it is sufficient to prove the case when $m\le k$. 

First, we impose the {\bf first assumption} that  
$$ 
    C^\circ 
\ge 
    \max \left\{
        C_{\ref{prop: fugu}},\, 
        C_{\ref{lem: fhku}} + \frac{2}{\varepsilon}\log(2) 
    \right\},
$$ 
where the constants are introduced in Proposition \ref{prop: fugu} 
and Lemma \ref{lem: fhku}, respectively.
This allows us to apply Corollary \ref{cor: fkgm2nd3rdsummands} 
and Lemma \ref{lem: fkgm1stsummand}.
For simplicity, let 
$$
    C_1 
:= 
    \max\{ 
        C_{\ref{cor: fkgm2nd3rdsummands}},\, 
        C_{\ref{lem: fkgm1stsummand}}
    \}.
$$
Then, combining the Corollary and the Lemma to the estimate \eqref{eq: fkgk00} 
we can conclude that: For $k_1 \le m \le k$ with $k> k_1$, 
\begin{align*}
	&\max_{\theta, \theta' \in [q]} 
        \big|(\mathbb{E}_{\rho'} f_k g_{m})(\theta) - (\mathbb{E}_{\rho'} f_k g_{m})(\theta')\big| \\
\le & 
   2\exp\Big( -\frac{\varepsilon}{2} ( 2\h(\rho') - k - C_1(\log(R) + 1) -\lA)\Big)
    (\mathbb{E} f^2_k(X))^{1/2} 
        (\mathbb{E}   g^2_{m}(X))^{1/2},
\end{align*}
and in the case where $k=m=k_1$, the above term is bounded by 
$$
    2\exp\Big( -\frac{\varepsilon}{2} ( 2\h(\rho') - 2k - C_1(\log(R) + 1) )\Big)
    (\mathbb{E} f^2_k(X))^{1/2} 
        (\mathbb{E}   g^2_{m}(X))^{1/2}.
$$
Then, the proof of the proposition follows by making the {\bf second assumption} on $C^\circ$ that 
$$
    C^\circ \ge C_1 + \frac{2}{\varepsilon}\log(2). 
$$
\end{proof}
\subsection{Proof of Theorem \ref{LINthm:induction}}
\begin{proof}
Now we are ready to establish the main theorem. 
As usual, let $C_0 = C_0(M,d)$ denote the constant introduced in the statement of the Theorem. The value of $C_0$ will be determined as the proof proceeds.

Applying Theorem \ref{LINthm:inductionSimple} with $\CA$ and $\lAs =\lA +  \frac{2}{\varepsilon}\log(2)$ 
and $c^*=1/2$, we conclude that
   \begin{align*}
	    {\rm Var} \big[ \ConE{\rho'}{f}(X) \big]
    \le 	\exp\big( - \varepsilon( \h(\rho') - C_{\ref{LINthm:inductionSimple}} (\log(R) + 1)- \lAs \big) {\rm Var}\big[ f(X)\big].
    \end{align*}
for any ${\cal B}_{\le \rho'}$-polynomial $f$, where $C_{\ref{LINthm:inductionSimple}}= C(M,d,1/2)$ is the constant introduced by the theorem. We impose the {\bf first assumption} on $C_0$ that 
\begin{align*}
    C_0 \ge C_{\ref{LINthm:inductionSimple}} + \frac{2}{\varepsilon}\log(2), 
\end{align*}
and conclude that  
\begin{align*}
    {\rm Var} \big[ \ConE{\rho'}{f}(X) \big]
\le 	
    \exp\big( - \varepsilon( \h(\rho') - C_{0} (\log(R) + 1) -\lA \big) {\rm Var}\big[ f(X)\big].
\end{align*}

\vspace{0.5cm}

Now, it remains to show that with the suitable choice of $C_0$, for any $\rho'$ with $\h(\rho')\ge\lA + C_0(\log(R)+1)$ and any two ${\cal B}_{\le \rho'}$-polynomials $f$ and $g$, we have 
$$
  \max_{\theta  \in [q]}|(\E_{\rho'} f  g)(\theta) - \E f g| 
\le 
    \exp\Big( - \frac{\varepsilon}{2} (\h(\rho') -\lA - C_0(\log(R)+1)) \Big)
    \sqrt{\min_\theta (\E_{\rho'}  f^2)(\theta) \min_{\theta'}( \E_{\rho'}  g^2)(\theta)}.
$$

Let 
$$
    C_1 := \max\Big\{C_{\ref{prop: fkgm}},  \frac{2}{\varepsilon}\log(2) + C_{\ref{prop: fk}}+ t_0  \Big\},
$$
where 
\begin{itemize}
    \item $t_0$ is the constant such that $\sum_{t =t_0}^{\infty}\exp\Big( - \frac{\varepsilon}{2} t \Big) \le \frac{1}{2}$,
    \item $C_{\ref{prop: fkgm}}$ is the constant introduced in Proposition \ref{prop: fkgm}, and 
    \item $C_{\ref{prop: fk}}=C(M,d,1/2)$ is the constant introduced in Proposition \ref{prop: fk}.
\end{itemize}
Next, let 
$$
    k_1 = \left\lceil \lA + C_1(\log(R)+1) \right\rceil.
$$
The chocie of $C_1$ and $k_1$ allow us to apply Proposition \ref{prop: fkgm} and Proposition \ref{prop: fk} toward both $f$ and $g$.

Next, we impose the {\bf second assumption} on $C_0$ that
$$
     C_0 \ge 2C_1 + 2.
$$
This assumption implies that there is a {\bf gap} between $h(\rho')$ and $k_1$, which is necessary for the proof.

\medskip

Now, we fix such $\rho'$ and consider two ${\cal B}_{\le \rho'}$-polynomial $f$ and 
$g$ with $\E f(X) = \E g(X) = 0$. Further, consider the decomposition of $f$ and $g$ according to Lemma \ref{LINlem:fdecomposition} with the above chosen $k_1$. 

First,  by our choice of $C_1$, we have  
$$k_1  \ge \lceil \underbrace{\lA+\frac{2}{\varepsilon}\log(2)}_{=\lAs}   + C_{\ref{prop: fk}}(\log(R) + 1) + t_0 \rceil.$$ 
This assumption allow us to recycle the partial step in the proof of Theorem \ref{LINthm:inductionSimple} to obtain \eqref{eq: ftofk}:
\begin{align}  \label{eq: ftofk2}
    \sum_{k\in [k_1,\h(\rho')]} \mathbb{E} f_k^2(X) \le 2\mathbb{E} f^2(X)
\quad \mbox{ and } \quad
    \sum_{k\in [k_1,\h(\rho')]} \mathbb{E} g_k^2(X) \le  2\mathbb{E} g^2(X).
\end{align}

Second, with our assumption that $k_1 \ge \lceil\lA + C_{\ref{prop: fkgm}}(\log(R)+1)\rceil$,
we can apply Proposition \ref{prop: fkgm} to get 
\begin{align*}
    \max_{\theta,\theta' \in [q]}  \big| (\E_{\rho'}fg)(\theta) - (\E_{\rho'}fg)(\theta') \big|
\le &
    \sum_{m,k \in [k_1,\h(\rho')]}  
    \max_{\theta,\theta' \in [q]}  \big| (\E_{\rho'}f_kg_m)(\theta) - (\E_{\rho'}f_kg_m)(\theta') \big|\\
\le &
    \sum_{m,k \in [k_1,\h(\rho')]}  a_{km} \alpha_k \beta_m
=
    \vec{\alpha}^\top A \vec{\beta}
\le 
    \|\vec{\alpha}\| \|A\| \|\vec{\beta}\|, 
\end{align*}
where $ \vec{\alpha} = (\alpha_{k_1},\alpha_{k_2},\dots, \alpha_{\h(\rho')})$ with 
$\alpha_k = \sqrt{\E f_k^2(X)}$, 
$ \vec{\beta} = (\beta_{k_1},\beta_{k_2},\dots, \beta_{\h(\rho')})$
with $\beta_m = \sqrt{\E g_m^2(X)}$, and $A = (a_{km})_{k,m \in [k_1,\h(\rho')]}$ with
\begin{align*}
   a_{km} 
:= 
    \begin{cases}
        \exp\Big( -\frac{\varepsilon}{2} ( \h(\rho') + \h(\rho')-\max\{k,m\} 
        - C_{\ref{prop: fkgm}}(\log(R) + 1) -\lA)\Big)
    & \max\{k,m\} > k_1  \\
       \exp\Big( -\frac{\varepsilon}{2} ( \h(\rho') + \h(\rho')-2k_1 - C_{\ref{prop: fkgm}}(\log(R) + 1) )\Big)
    & k=m=k_1. 
    \end{cases}
\end{align*}

Together with \eqref{eq: ftofk2}, we have 
$$
    \|\vec{\alpha}\| \|A\| \|\vec{\beta}\|
\le 
    2\|A\| \sqrt{ \E f^2(X)} \sqrt{ \E g^2(X)}.
$$

\vspace{0.5cm}

The next goal is to bound $\|A\|$ from above. 
Notice the fact that the matrix $A$ is symmetric implies there exists a unit vector $\vec{\gamma}$
such that $\|A\| = \vec{\gamma}^\top A \vec{\gamma}$. Now we fix such vector $\vec{\gamma}$. 
Relying on the fact that $a_{km}\ge 0$, 
\begin{align*}
    \|A\|
= &
    \sum_{k,m \in [k_1,\h(\rho')]} a_{km} \gamma_k\gamma_m
\le 
    \sum_{k,m \in [k_1,\h(\rho')]} \frac{a_{km}}{2} (\gamma_k^2 +\gamma_m^2)
=
    \sum_{m \in [k_1,\h(\rho')]} 
        \gamma_m^2 
        \Big( \sum_{k \in [k_1,\h(\rho')]} a_{km}\Big). 
\end{align*}

Clearly, from the definition of $a_{km}$, the term $\sum_{k \in [k_1,\h(\rho')]} a_{km}$ 
is maximized when $m=k_1$. 
\begin{align*}
    \sum_{k \in [k_1,\h(\rho')]} a_{kk_1}
= &
    \exp\Big( -\frac{\varepsilon}{2} ( \h(\rho') + \h(\rho')-2k_1 - C_{\ref{prop: fkgm}}(\log(R) + 1) )\Big) \\
&+ \sum_{k \in [k_1,\h(\rho')]} 
    \exp\Big( -\frac{\varepsilon}{2} ( \h(\rho') + \h(\rho')-k 
        - C_{\ref{prop: fkgm}}(\log(R) + 1) -\lA)\Big)\\
= &
    \exp\Big( -\frac{\varepsilon}{2} ( \h(\rho') - C_{\ref{prop: fkgm}}(\log(R) + 1) -\lA)\Big) \cdot \\
    & \cdot \biggl(
        \exp\Big( -\frac{\varepsilon}{2} ( \h(\rho')-2k_1 +\lA)\Big)
    +   
        \sum_{k \in [k_1,\h(\rho')]} 
        \exp\Big( -\frac{\varepsilon}{2} ( \h(\rho')-k 
        )\Big)
    \biggr). 
\end{align*}
First, 
$$
  \sum_{k \in [k_1,\h(\rho')]} 
        \exp\Big( -\frac{\varepsilon}{2} ( \h(\rho')-k 
        )\Big)
\le 
    \frac{1}{1-\exp(-\varepsilon/2)}
\le 
    \frac{4}{\varepsilon}.
$$
Second, 
\begin{align*}
    \exp\Big( -\frac{\varepsilon}{2} ( \h(\rho')-2k_1 +\lA)\Big)
\le &
    \exp\Big(
        -\frac{\varepsilon}{2} \Big(C_0(\log(R)+1) +\lA - 2(\lA+ C_1(\log(R)+1)+1) +\lA
    \Big) \Big)\\
\le &
    \exp\Big( - \frac{\varepsilon}{2}
        (C_0-2C_1-2) (\log(R)+1) 
    \Big)
\le 1,
\end{align*}
which in turn implies that
\begin{align*}
    \biggl(
        \exp\Big( -\frac{\varepsilon}{2} ( \h(\rho')-2k_1 +\lA)\Big)
    +   
        \sum_{k \in [k_1,\h(\rho')]} 
        \exp\Big( -\frac{\varepsilon}{2} ( \h(\rho')-k 
        )\Big)
    \biggr)
\le 
    \frac{5}{\varepsilon}.
\end{align*}
Hence, we conclude that 
$$
    \|A\|
\le 
    \frac{5}{\varepsilon} \exp\Big( -\frac{\varepsilon}{2} ( \h(\rho') - C_{\ref{prop: fkgm}}(\log(R) + 1) -\lA)\Big) .
$$

Together we conclude that when $\h(\rho) \ge\lA + C_1(\log(R)+1) $,
any two ${\cal B}_{\le \rho'}$-polynomials $f$ and $g$ with $\E f(X) = \E g(X)=0$ satisfies 
\begin{align} \label{eq: thm00}
    \max_{\theta,\theta' \in [q]}  \big| (\E_{\rho'}fg)(\theta) - (\E_{\rho'}fg)(\theta') \big|
\le &
    \frac{10}{\varepsilon} 
    \exp\Big( -\frac{\varepsilon}{2} ( \h(\rho') 
        - C_{\ref{prop: fkgm}}(\log(R) + 1) -\lA)\Big) 
    \sqrt{ \E f^2(X)} \sqrt{ \E g^2(X)}.
\end{align}

Now, we impose the {\bf third assumption} on $C_0$ that 
$$
    C_0 \ge C_{\ref{prop: fkgm}} + \frac{2}{\varepsilon}\log(20/\varepsilon),
$$
then 
$$
    \frac{10}{\varepsilon} 
    \exp\Big( 
        -\frac{\varepsilon}{2} ( 
            \h(\rho') 
            - C_{\ref{prop: fkgm}}(\log(R) + 1) -\lA)
        \Big) 
\le 
    \frac{10}{\varepsilon} 
    \exp\Big( 
        -\frac{\varepsilon}{2}  (C_0 - C_{\ref{prop: fkgm}})(\log(R) + 1) 
        \Big)
\le 
    1/2.
$$

Next, we apply \eqref{eq: thm00} to the special case that $f=g$:
\begin{align*} 
    \E f^2(X) - \min_{\theta \in [q]} (\E_{\rho'}f^2)(\theta)
\le 
    \max_{\theta,\theta' \in [q]}  \big| (\E_{\rho'}fg)(\theta) - (\E_{\rho'}fg)(\theta') \big|
\le &
    \frac{1}{2}\E f^2(X)\\
&\Rightarrow 
    \E f^2(X)
\le 
    2\min_{\theta \in [q]} (\E_{\rho'}f^2)(\theta).
\end{align*}
Clearly, the same statemnet holds for $g$ as well. Substituting these estimates back to \eqref{eq: thm00}, we can conclude that  
when $\h(\rho) \ge\lA + C_0(\log(R)+1) $,
any two ${\cal B}_{\le \rho'}$-polynomials $f$ and $g$ with $\E f(X) = \E g(X)=0$ satisfies 
\begin{align*} 
&    \max_{\theta,\theta' \in [q]}  \big| (\E_{\rho'}fg)(\theta) - (\E_{\rho'}fg)(\theta') \big|\\
\le &
    \frac{20}{\varepsilon} 
    \exp\Big( -\frac{\varepsilon}{2} ( \h(\rho') 
        - C_{\ref{prop: fkgm}}(\log(R) + 1) -\lA)\Big) 
    \sqrt{ \min_{\theta \in [q]} (\E_{\rho'}f^2)(\theta)} \sqrt{ \min_{\theta \in [q]} (\E_{\rho'}g^2)(\theta)} \\
\le &
    \exp\Big( -\frac{\varepsilon}{2} ( \h(\rho') 
        - C_0(\log(R) + 1) -\lA)\Big) 
    \sqrt{ \min_{\theta \in [q]} (\E_{\rho'}f^2)(\theta)} \sqrt{ \min_{\theta \in [q]} (\E_{\rho'}g^2)(\theta)}.
\end{align*}
Therefore, the theorem follows. 

\end{proof}

\section*{Acknowledgments}
Han Huang was supported by Elchanan Mossel's Vannevar Bush Faculty Fellowship ONR-N00014-20-1-2826 and by Elchanan Mossel's Simons Investigator award (622132). 
Elchanan Mossel was partially supported by 
Bush Faculty Fellowship ONR-N00014-20-1-2826, 
Simons Investigator award (622132), 
ARO MURI W911NF1910217 and NSF award CCF 1918421.

\bigskip

\appendix

\section{Variance Estimate for degree 1 polynomial} \label{sec: deg1Var}
This section is dedicated to prove Proposition \ref{prop: CVbasis4}. Let us restate it here:

\begin{prop} \label{prop: CVbasis3}
There exists a constant $C = C(M,d)\ge 1$ so that the following holds:
Fix $\rho' \in T$, and $0 \le k \le \h(\rho')$, then for any degree 1 function $f$ with variables $(x_u\,:\, u \in D_k(\rho'))$. 
There exists functions $f_u(x) = f_u(x_u)$ for $u \in D_k(\rho')$ so that the following holds:
\begin{enumerate}
    \item $f(X) = \sum_{u \in D_k(\rho')} f_u(X_u)$ almost surely. (They may not agree as functions from $[q]^T$ to $\R$.)
    \item For any $v \in T_{\rho'}$ with $ \h(u) \ge k$, 
    $$
        \sum_{u \in D_k(v)} \var[f_u(X_u)] \le CR^3 \var \big[\sum_{u \in D_k(v)}f_u(X_u) \big].
    $$ 
\end{enumerate}
\end{prop}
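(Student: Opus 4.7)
The plan is to take a decomposition $f(X) = \sum_{u \in D_k(\rho')} f_u(X_u)$ (holding almost surely) with $\mathbb{E}[f_u(X_u)] = 0$; such a decomposition exists after the reduction to $\mathbb{E}[f(X)] = 0$ by a constant shift, and its non-constant component is in fact essentially unique (expanding each $f_u$ in the basis $\{\phi_i\}_{i \ge 1}$, the coefficients $c_{u,i}$ are pinned down by $f$). Since this decomposition restricts naturally to any subtree $T_v$ rooted at $v \in T_{\rho'}$ with $\h(v) \ge k$ (via $f_v := \sum_{u \in D_k(v)} f_u$), it will suffice to establish the variance bound at $v = \rho'$ under a uniform argument; the bound at other $v$ follows by replaying the same argument inside $T_v$.

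I would mimic the strategy of the proof of Proposition~\ref{prop: CVbasisSimple}: apply Lemma~\ref{lem: generalizedDeg1VarDecomposition} to obtain
\[
\var[f(X)] \;\ge\; \sum_{u \in D_k(\rho')} \E\,\var\!\bigl[f_u(X_u) \,\big|\, X_{\fp(u)}\bigr],
\]
and then relate each conditional variance back to $\var[f_u(X_u)]$. The obstacle is that Lemma~\ref{lem: MCvarDecaySimple} (which gives the single-step decay $\E\,\var[h(X_u)\mid X_{\fp(u)}] \gtrsim \var[h(X_u)]$) fails when $c_M = 0$: a single step of the chain can entirely collapse the variance of a function of the child.

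The key replacement is an $m_0$-step mixing estimate available for any ergodic aperiodic $M$: there exists $m_0 = m_0(M) < \infty$ such that $M^{m_0}$ has strictly positive entries, so $c_{M^{m_0}} > 0$ and hence
\[
\E\,\var\!\bigl[h(X_u) \,\big|\, X_{\fp^{m_0}(u)}\bigr] \;\ge\; \tfrac{1}{C_0(M)} \var[h(X_u)]
\]
for any $u$ of sufficient height and any function $h$ of $x_u$. To exploit this, I would aggregate the variance-decomposition contributions of Lemma~\ref{lem: generalizedDeg1VarDecomposition} across blocks of $m_0$ consecutive layers, producing a lower bound on $\var[f(X)]$ that pairs each $u \in D_k(\rho')$ with its $m_0$-th ancestor and can then be controlled by the display above. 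The boundary regime $\h(\rho') - k < m_0$ is handled separately, since there the depth is bounded by the absolute constant $m_0$ and $|D_k(\rho')| \le R d^{m_0}$, so the desired inequality reduces to a finite linear-algebraic computation on the Gram matrix $\bigl(\E[\phi_i(X_u)\phi_j(X_{u'})]\bigr)$ using Lemma~\ref{lem:Mbasis}.

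The main obstacle will be tightly tracking the $R$-dependence through the aggregation over $m_0$ layers. Each invocation of Lemma~\ref{lem:deg1T} to compare a subtree's variance to the sum of its leaf variances introduces one factor of $R$, and the combination of such comparisons with the $m_0$-step mixing inequality is what ultimately yields the stated $R^3$ factor. Extracting $R^3$ rather than a worse polynomial in $R$ will require carefully matching the telescoping terms of the variance decomposition against the upper bound of Lemma~\ref{lem:deg1T}, rather than applying that lemma bluntly inside each subtree.
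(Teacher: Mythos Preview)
Your proposal has a genuine gap at the very first step: the claim that the centered decomposition $f(X)=\sum_u f_u(X_u)$ is ``essentially unique'' is false in the almost-sure sense, and the particular choice you make (the Efron--Stein/function-level expansion in the $\phi_i$ basis) is exactly the one that can violate the desired inequality.

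The obstruction is the phenomenon that the paper's appendix is built to handle: for an irreducible aperiodic $M$ with $c_M=0$ there can exist a non-constant function $\xi:[q]\to\R$ such that $\xi(X_u)$ is almost surely a function of $X_{\fp(u)}$. (A concrete $4$-state example: let rows $1,2$ of $M$ be $(0,\tfrac12,\tfrac12,0)$ and rows $3,4$ be $(\tfrac12,0,0,\tfrac12)$; this chain is irreducible and aperiodic, yet $\xi=\mathbf 1_{\{1,4\}}$ has $\xi(X_u)=\mathbf 1_{\{3,4\}}(X_{\fp(u)})$ a.s.) For siblings $u,v$ under a common parent $w$, the degree-$1$ function $f(x)=\xi(x_u)-\xi(x_v)$ has $f(X)=0$ a.s., while your decomposition gives $f_u=\xi-\E\xi$, $f_v=-(\xi-\E\xi)$, each with positive variance. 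Hence $\sum_u\var[f_u]>0=CR^3\var[f(X)]$ and the bound fails, both at $v=w$ and at $v=\rho'$. This happens already at depth $1<m_0$, so it lands squarely in your ``boundary regime'': the Gram matrix $\bigl(\E[\phi_i(X_u)\phi_j(X_{u'})]\bigr)$ is genuinely singular there, and no finite linear-algebra computation will produce a positive lower bound. The same cancellation transplants into any deeper tree by placing the pair $u,v$ under some internal node, so the $m_0$-step aggregation does not rescue the argument either.

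The point of the proposition is precisely that one must \emph{choose} the $f_u$'s to kill this redundancy. The paper does this by building a chain of partitions $\bP^{r,0}$ of $[q]$ encoding exactly which functions of $X_u$ are determined by $X_{\fp^r(u)}$, extracting a basis $\{\xi_{\sw}\}_{\sw\in\sW}$ adapted to that filtration (Proposition~\ref{prop: CVbasis}), and then, for each basis element $\xi_{\sw}$, \emph{averaging} the coefficients $c_{\sw,u}$ over all leaves $u$ sharing the same ancestor at height $r(\sw)$. This averaging leaves $f(X)$ unchanged a.s.\ but zeroes out exactly the directions in which your naive decomposition blows up; only after that replacement does a variance-decomposition argument (Proposition~\ref{prop: CVbasis2}) go through. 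Your $m_0$-step mixing idea is not wrong as a tool, but it addresses the wrong difficulty: the problem is selecting the decomposition, not bounding conditional variances once a good decomposition is in hand.
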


\begin{Example}
   Suppose $u,v \in {\frak c}(\rho')$ for $u,v,\rho' \in T$ and consider  
   $$
        M 
    = 
        \frac{1}{2} 
        \begin{bmatrix}
            1 & 0 & 1 & 0 \\
            0 & 1 & 0 & 1 \\ 
            1 & 0 & 1 & 0 \\
            0 & 1 & 0 & 1 
        \end{bmatrix}.
   $$ 

   Let us consider the function $f(x) = f_u(x)+f_v(x)$
   where 
$$
    f_u(x) 
= 
    {\bf 1}_{1,3}(x_u)
=
\begin{cases}
    1 & \mbox{ if } x_u \in \{1,3\}, \\ 
    0 & \mbox{otherwise}.
\end{cases} 
$$
    and 
    $f_v(x) = -{\bf 1}_{1,3}(x_v)$. 
    
    Condition on $X_{\rho'} \in \{1,3\}$, $ f(X_u) + f(X_v) =1-1 =0 $ condition on $X_{\rho'} \in \{1,3\}$ and condition on $X_{\rho'} \in \{2,4\}$,  $ f(X_u) + f(X_v) = 0 - 0 =0 $. Put it differntly, $ \var[f(X)]=0$ since $f(X) = 0 $ almost surely. 
    However, observe that $\pi$ is the uniform measure on $[4]$, which implies $$\var[f_u(X_u)]= \var[f_v(X_v)] = \frac{1}{4}>0.$$
    Therefore, it is not true that \eqref{eq: CVbasis4.01} holds for the standard (Efron-Stein) decomposition of $f(x) = \sum_{v \in D_k(\rho')}f_v(x_v)$.  
\end{Example}

Let us make a simple observation to give the insight for the construction. 
If \( f(X_u) \) is a function of \( X_{{\frak p}(u)} \), 
then for each \( i \in [q] \), the function \( f \) must take the same value 
for all possible outcomes of \( X_u \) conditioned on \( X_{{\frak p}(u)}=i \). 
In other words, the values of \( f \) are constant on the set  
\begin{align} \label{eq: rowSupp}
S_i = {\rm supp}( {\rm row}_i(M)) 
\end{align}
for every \( i\in[q] \). 
Now, let us consider the case where $f(X_u)$ is a function of $X_{\fp^r(u)}$. 
This can be reformulated as follows: 
for $k \in [0,r-1]$, $ \E\big[ f(X_u) \, \big\vert\, X_{\fp^k(u)}\big] $ is a function of $X_{\fp^{k+1}(u)}$. Equivalently, the values of $M^kf$ are constant on the set $S_i$ for every $i \in [q]$. 

Therefore, it is evident that the construction of the basis should primarily revolve around the sets $\{S_i\}_{i \in [q]}$ and their interaction with $M$. 

Following from this discussion, the proof of the Proposition \ref{prop: CVbasis3} is divided into the following steps:

\underline{Step 1 (Section \ref{subsec: Partition}):}
We try to give a precise description of when $f(X_u)$ is a function of $X_{\fp^k(u)}$
for some $k \in \mathbb{N}$. To this end, we introduce the following notation.
\begin{defi} \label{defi: coarser}
We define the following  partial order relation $\le$ on the collection of all partitions of $[q]$: 
Specifically, for two partitions $\bP$ and $\bP'$, we say that $\bP \le \bP'$ if 
$\bP'$ is finer than or equal to $\bP$.  
\end{defi}
Further, there exists $r \in \mathbb{N}$ such that ${\bf P}^{t,0}$ for $t \ge r$ is the trivial partition.

\begin{lemma} \label{lem: partitionToConditionalConstant}
There exists a chain of paritions 
\begin{align*}
   \bP^{0,0} \ge \bP^{1,0} \ge \bP^{2,0} \dots  \ge  \bP^{r,0} \ge \dots 
\end{align*}
A function \( f:[q] \mapsto \mathbb{R}  \) satisfies that \( f(X_u) \) is a function of \( X_{{\frak p}^r(u)} \)  for some $r \in \mathbb{N}$  
if and only if \( f \) is a linear combination of ${\bf 1}_P$ for $P \in \bP^{r,0}$. 
\end{lemma}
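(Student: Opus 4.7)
The plan is to define $\bP^{r,0}$ directly as the coarsest partition of $[q]$ that is compatible with the row-supports of $M^r$, and then to verify both the equivalence at fixed $r$ and the descending chain relation. The single identity that does all the work is
$$
\supp(\Row_s(M^{r+1})) \;=\; \bigcup_{t \,:\, M_{s,t}>0} \supp(\Row_t(M^r)),
$$
which is immediate from $M^{r+1} = M \cdot M^r$.

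For the construction, I define an equivalence relation $\sim_r$ on $[q]$ as the transitive closure of the relation $i \approx_r j \iff \{i,j\} \subseteq \supp(\Row_s(M^r))$ for some $s \in [q]$, and let $\bP^{r,0}$ be the partition of $[q]$ into $\sim_r$-classes. At $r=0$ we have $M^0 = I$, so each row has singleton support and $\bP^{0,0}$ is the discrete partition; since $M$ is irreducible and aperiodic, for $r$ sufficiently large every entry of $M^r$ is positive, and $\bP^{r,0}$ collapses to the trivial one-block partition $\{[q]\}$, justifying the stabilization remark preceding the lemma.

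For the equivalence at fixed $r$, note that $f$ is a linear combination of $\mathbf{1}_P$ with $P \in \bP^{r,0}$ iff $f$ is constant on each $\sim_r$-class, which by construction is equivalent to $f$ being constant on $\supp(\Row_s(M^r))$ for every $s \in [q]$. If $f$ takes the constant value $g(s)$ on $\supp(\Row_s(M^r))$, then conditioning on $X_{\fp^r(u)}=s$ forces $f(X_u)=g(s)$ almost surely, so $f(X_u)=g(X_{\fp^r(u)})$ a.s. Conversely, if $f(X_u)=g(X_{\fp^r(u)})$ a.s., then for each $s$ (taken by $X_{\fp^r(u)}$ with probability $\pi(s)>0$) the conditional law of $X_u$ is supported on $\supp(\Row_s(M^r))$, so $f$ must equal $g(s)$ on that support.

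To obtain the chain $\bP^{r,0} \ge \bP^{r+1,0}$, i.e.\ that $\bP^{r,0}$ is finer than $\bP^{r+1,0}$, it suffices to show $i \approx_r j \Rightarrow i \sim_{r+1} j$. If $\{i,j\} \subseteq \supp(\Row_t(M^r))$, pick any $s$ with $M_{s,t}>0$ (which exists by irreducibility of $M$); the displayed identity above then yields $\supp(\Row_t(M^r)) \subseteq \supp(\Row_s(M^{r+1}))$, so $i \approx_{r+1} j$ and hence $i \sim_{r+1} j$. The proof is largely book-keeping; the only mild subtlety is that $\bP^{r,0}$ must be defined via transitive closure of the support-overlap relation rather than that relation itself, since distinct row-supports of $M^r$ may overlap and thereby merge into a common block.
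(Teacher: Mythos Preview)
Your proof is correct and takes a genuinely different, more direct route than the paper's. You define $\bP^{r,0}$ in one stroke as the partition generated (under transitive closure) by the row supports of $M^r$, and then the equivalence and the chain relation both fall out of the single identity $\supp(\Row_s(M^{r+1})) = \bigcup_{t:\,M_{s,t}>0}\supp(\Row_t(M^r))$. The paper instead constructs an entire double-indexed family $\{\bP^{r,s}\}_{0\le s\le r}$ via an iterative scheme (Lemma~\ref{lem: partitionCollection}): a forward map $\bP\mapsto M\bP$ on partitions below $\bP^{1,0}$ (Corollary~\ref{cor: forwardBijection}) alternating with the coarsening $\bP_{\rm SC}$ that merges blocks sharing a common $S_i$; the off-diagonal $\bP^{r,s}$ with $s>0$ then serve as stepping stones in an inductive proof of both directions of the characterization. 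Your approach is shorter and more transparent for this lemma on its own, and since the subsequent basis construction in Section~\ref{subsec: Basis} only ever uses $\bP^{r,0}$, nothing downstream is lost by your shortcut. Finally, because the property ``$f$ constant on each block $\Leftrightarrow f(X_u)$ is $\sigma(X_{\fp^r(u)})$-measurable'' determines the partition uniquely, your $\bP^{r,0}$ necessarily coincides with the paper's despite the very different constructions.
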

(The double index for the partitions is due to a technical reason, which will be clear in the construction of the partitions.)

\underline{Step 2 (Section \ref{subsec: Basis}):} Next, we try to extract a basis of functions according to the partitions fro the previous step, along with suitable quantitative estimates: 
\begin{prop} \label{prop: CVbasis}
Let $M$ be an ergodic and irreducible transition matrix defined on the state space $[q]$. 
We can construct 
\begin{itemize}
    \item a basis of functions from $[q]$ to $\R$, denoted as
        $$\{\xi_{\sw}\}_{\sw \in  \sW},$$ 
        where $\sW$ is a set of size $q$, 
    \item a function $$r: \sW \to \mathbb{N}\cup \{0\},$$
    \item and a constant $C>1$ (which depends on $M$)
\end{itemize} 
so that the following holds:
\begin{enumerate}
    \item Let 
   \begin{align*}
        r_0 := \max_{\sw\in \sW} r(\sw).  
   \end{align*} 
     There exists unique $\sw_0 \in \sW$ such that $r(\sw_0)=r_0$. Moreover, $\xi_{\sw_0} \equiv 1$. 
    \item For each $\sw \neq \sw_0$,
    $\xi_\sw(X_u)$ is a function of $X_v$ where $v = \fp^{r(\sw)}(u)$ and  $\E \xi_\sw(X_u)=0$. 

    \item     
    $$ \var \big[  \sum_{\sw } c_{\sw} \xi_{\sw}(X_u) \big] \le C (\max_{\sw \,:\, r(\sw)\neq r_0} |c_{\sw}|)^2.$$   

\item For any $0 \le r' < r_0$ such that $\{\sw\in \sW\,:\, r(\sw)=r'\}$ is not empty, 
    $$ \E \var \Big[ \E \big[ \sum_{\sw \,:\, r(\sw)=r'} c_{\sw} \xi_{\sw}(X_u) \,\vert X_v\big]  \, \Big\vert\, X_{\fp(v)}\Big] \ge \frac{1}{C} (\max_{\sw \,:\, r(\sw)=r'} |c_{\sw}|)^2.$$ 
    
    \item  For any $0 \le r' < r_0$ such that $\{\sw\in \sW\,:\, r(\sw)< r'\}$ is not empty,
    $$ \E \var \Big[ \E \big[ \sum_{\sw \,:\, r(\sw)<r'} c_{\sw} \xi_{\sw}(X_u) \,\vert X_v\big]  \, \Big\vert\, X_{\fp(v)}\Big] \le C (\max_{\sw\,:\, r(\sw)<r' } |c_{\sw}|)^2.$$ 

\end{enumerate}
   
\end{prop}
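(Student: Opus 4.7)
\textbf{Proof Proposal for Proposition \ref{prop: CVbasis}.}

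The plan is to build the basis $\{\xi_{\sw}\}_{\sw \in \sW}$ directly from the chain of partitions provided by Lemma \ref{lem: partitionToConditionalConstant}. Let $V_r := \mathrm{span}\{\mathbf{1}_P : P \in \bP^{r,0}\} \subseteq \R^{[q]}$. Then $V_0 = \R^{[q]}$ (since $\bP^{0,0}$ is the partition into singletons) and $V_r = \mathrm{span}\{1\}$ for $r \ge r_0$, giving a nested filtration
\begin{align*}
    \R^{[q]} = V_0 \supseteq V_1 \supseteq \cdots \supseteq V_{r_0} = \mathrm{span}\{1\}.
\end{align*}
For each $0 \le r < r_0$ I would choose a basis of a linear complement of $V_{r+1}$ inside $V_r$ consisting of functions that are orthogonal to the constants with respect to $\pi$ (this is possible since $V_{r+1}$ contains the constant function). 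Each such basis vector is declared an element $\xi_\sw$ with $r(\sw) := r$. Finally set $\xi_{\sw_0} \equiv 1$ at level $r(\sw_0) := r_0$. The union has size $\dim V_0 = q$ and forms a basis of $\R^{[q]}$, so Properties (1) and (2) hold by construction: by Lemma \ref{lem: partitionToConditionalConstant}, $\xi_\sw \in V_{r(\sw)}$ implies $\xi_\sw(X_u)$ is a function of $X_{\fp^{r(\sw)}(u)}$, and the mean-zero condition was baked in.

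Property (3) is an equivalence-of-norms statement on the finite-dimensional space $\R^{[q]}$, with the variance (computed under the fixed measure $\pi$ depending only on $M$) serving as one norm and $\max_{\sw \neq \sw_0}|c_\sw|$ as another; only the nonconstant coefficients contribute to the variance. For Property (5), the operator $f \mapsto \E[f(X_u) \mid X_v]$ followed by $f \mapsto \E \var[f(X_v) \mid X_{\fp(v)}]$ is a bounded quadratic form on $\R^{[q]}$, so a uniform upper bound follows immediately by equivalence of norms on the subspace $\bigoplus_{r(\sw)<r'} \R \xi_\sw$.

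The main obstacle is Property (4). For a vector $g_{r'}(x) := \sum_{\sw : r(\sw)=r'} c_\sw \xi_\sw(x)$, observe that $g_{r'} \in V_{r'}$ so $\E[g_{r'}(X_u) \mid X_v] = g_{r'}(X_v)$ almost surely by the \MP. Hence the quantity in question becomes
\begin{align*}
    \E \var\big[ g_{r'}(X_v) \,\big\vert\, X_{\fp(v)} \big].
\end{align*}
The crucial fact is that $g_{r'}$ lies in a complement of $V_{r'+1}$ inside $V_{r'}$, so whenever not all $c_\sw$ vanish, $g_{r'} \notin V_{r'+1}$. By Lemma \ref{lem: partitionToConditionalConstant}, this means $g_{r'}(X_v)$ is \emph{not} a function of $X_{\fp(v)}$, forcing the conditional variance above to be strictly positive. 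I would then promote this qualitative statement to a uniform bound via a compactness argument: the quadratic form $g \mapsto \E\var[g(X_v)\mid X_{\fp(v)}]$ is continuous on the finite-dimensional subspace spanned by the level-$r'$ basis vectors and strictly positive on its unit sphere with respect to $\max |c_\sw|$, hence bounded below by some constant $c_{r'} > 0$. Taking $C$ to be the reciprocal of $\min_{r'} c_{r'}$ (a finite minimum depending only on $M$) gives the desired bound. The subtle point requiring care is that the ergodicity and irreducibility of $M$ are what guarantee that $\pi$ is supported on all of $[q]$ and that the chain $V_0 \supseteq V_1 \supseteq \cdots$ is well-defined and terminates; these inputs enter implicitly in showing $c_{r'}>0$.
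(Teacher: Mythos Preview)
Your proposal is correct and follows essentially the same route as the paper: build the basis from the filtration $V_r=\mathrm{span}\{\mathbf 1_P:P\in\bP^{r,0}\}$ coming from Lemma~\ref{lem: partitionToConditionalConstant}, use the complement structure together with that lemma to see that a nonzero level-$r'$ combination cannot be $\sigma(X_{\fp^{r'+1}(u)})$-measurable, and then upgrade positivity to a uniform constant by compactness on a finite-dimensional sphere. The paper does the same, only more explicitly (it picks the centered indicators $\mathbf 1_{P_\sw}-\E_\pi\mathbf 1_{P_\sw}$ via a tree-of-words indexing rather than an abstract complement), and packages your compactness step as Lemmas~\ref{lem: PartitionNotConstant} and~\ref{lem: PartitionNotConstant2}.

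One small slip worth fixing: the identity ``$\E[g_{r'}(X_u)\mid X_v]=g_{r'}(X_v)$'' is not literally true; what you have is $\E[g_{r'}(X_u)\mid X_v]=(M^{r'}g_{r'})(X_v)$, i.e.\ a different function of $X_v$. Your subsequent argument is unaffected, since the conditional variance vanishes iff $g_{r'}(X_u)$ is $\sigma(X_{\fp(v)})$-measurable, which by Lemma~\ref{lem: partitionToConditionalConstant} is exactly $g_{r'}\in V_{r'+1}$.
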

\begin{rem} \label{rem: CVbasis}
    For $\sw \in \sW$ and $l \in [r(\sw)]$, let 
    \begin{align} \label{eq: xil}
        \xi^{(l)}_\sw :=  M^l \xi_\sw,
    \end{align}
    where we treated $\xi$ as an vector in $\R^{[q]}$. Equivalently, 
    $$
        \xi^{(l)}(\theta) = \E\big[ \xi(X_u) \,\vert\, X_v= \theta ]
    $$
    where $u,v \in T$ are vertices such that $v = \fp^{l}(u)$. 
\end{rem}

\underline{Step 3 (Section \ref{subsec: CVbasis4}):} Finally, we will use the basis from the previous step to decompose  degree-1 polynomials to prove Proposition \ref{prop: CVbasis3}.

\subsection{Partitions of $[q]$}
\label{subsec: Partition}
Let us begin with the following observation.
\begin{lemma} \label{lem: constrainedPartition}
    Suppose $\{O_\alpha\}_{\alpha \in I}$ is a collection of non-empty subsets of $[q]$.    
    Then, there exists a unique partition $\bP$ of $[q]$ that satisfies the following 2 conditions: 
   \begin{enumerate}
       \item For each $\alpha \in I$ and $P \in \bP$, 
    either $O_\alpha \in P$ or $O_\alpha \cap P=\emptyset$. 
        \item For any other partition $\bP'$ that also satisfies the above property, $\bP' \le \bP $. 
   \end{enumerate} 
\end{lemma}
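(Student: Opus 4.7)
The plan is to construct $\bP$ explicitly as the partition of $[q]$ into equivalence classes of the smallest equivalence relation $\sim$ on $[q]$ containing the relation $R$ defined by: $iRj$ iff there exists $\alpha \in I$ with $\{i,j\} \subseteq O_\alpha$. Concretely, $i \sim j$ iff there is a finite chain $i = i_0, i_1, \ldots, i_k = j$ such that for each $l \in [0,k-1]$ there is some $\alpha_l \in I$ with $\{i_l, i_{l+1}\} \subseteq O_{\alpha_l}$ (with the convention $i \sim i$ always). Standard reflexivity/symmetry/transitivity arguments show $\sim$ is an equivalence relation, so its classes form a valid partition $\bP$ of $[q]$.

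First I would check condition 1 (interpreting the typo ``$O_\alpha \in P$'' as $O_\alpha \subseteq P$). Fix $\alpha \in I$ and $P \in \bP$, and suppose $O_\alpha \cap P \neq \emptyset$; pick any $i \in O_\alpha \cap P$. For any $j \in O_\alpha$ we have $\{i,j\} \subseteq O_\alpha$, so $iRj$ and hence $i \sim j$, placing $j$ in the same equivalence class $P$. Thus $O_\alpha \subseteq P$.

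Next I would check condition 2. Let $\bP'$ be any partition satisfying condition 1. I must show $\bP' \le \bP$, i.e.\ $\bP'$ is finer than $\bP$, i.e.\ every $P \in \bP$ is a union of parts of $\bP'$. Equivalently it suffices to show that any two elements $i,j$ with $i \sim j$ lie in the same part of $\bP'$. Take a chain $i = i_0, \ldots, i_k = j$ with $\{i_l,i_{l+1}\} \subseteq O_{\alpha_l}$. Since $O_{\alpha_l}$ meets the part of $\bP'$ containing $i_l$, condition 1 applied to $\bP'$ gives $O_{\alpha_l}$ is contained in that part, so $i_{l+1}$ lies there too. Iterating along the chain shows $i$ and $j$ are in the same part of $\bP'$.

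Uniqueness is then immediate: if two partitions $\bP, \bQ$ both satisfy (1) and (2), applying (2) for $\bP$ to the partition $\bQ$ gives $\bQ \le \bP$, and symmetrically $\bP \le \bQ$, so $\bP = \bQ$. There is no real obstacle in this argument; the only subtlety is keeping straight the convention that ``$\le$'' means ``coarser'' (per Definition~\ref{defi: coarser}), which is why we need the transitive closure rather than just the relation $R$ itself — taking the transitive closure is exactly what is needed to produce the coarsest partition compatible with condition 1.
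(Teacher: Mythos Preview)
Your construction and argument are correct and essentially the same as the paper's: both build $\bP$ as the connected components of the hypergraph with edges $\{O_\alpha\}$ (the paper phrases this as an equivalence on the index set $I$ via chains of overlapping $O_\alpha$'s, you phrase it as an equivalence on $[q]$ via chains of elements sharing some $O_\alpha$; these are the same partition).

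One expository slip to fix: you repeatedly swap ``finer'' and ``coarser.'' Per Definition~\ref{defi: coarser}, $\bP' \le \bP$ means $\bP'$ is \emph{coarser} than $\bP$, i.e.\ every part of $\bP$ is contained in a single part of $\bP'$ (not ``every $P \in \bP$ is a union of parts of $\bP'$''). Your chain argument actually proves exactly this correct direction --- that $i \sim j$ forces $i,j$ into the same part of $\bP'$, so each part of $\bP$ sits inside one part of $\bP'$ --- so the mathematics is right; only the surrounding words need correcting. Likewise, the transitive closure yields the \emph{finest} partition compatible with condition~1, not the coarsest.
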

\begin{proof}
    The proof can be carried out by constructing the partition $\bP$.  
    
    Without lose of generality, we may assume the collection $\{O_\alpha\}_{\alpha \in I}$ contains $\big\{ \{\theta \} \big\}_{\theta \in [q]}$, since for a singleton $\{\theta\}$ and a set $P$, it is always true that either $\{\theta \} \subseteq P$ or $\{\theta\} \cap P=\emptyset$. Consequently, we may assume 
    \begin{align} \label{eq: constrainedFinePartition00}
    \bigcup_{\alpha \in I} O_\alpha = [q].
    \end{align} 

    \medskip 
    
    First, we define an equivalence relation $\simeq$ on $\{O_\alpha\}_{\alpha \in I}$ as follows: 
    For any $\alpha,\alpha' \in I$, we denote $O_\alpha \simeq O_{\alpha'}$ if there exists a chain $(\alpha_1,\alpha_2,\dots,\alpha_l)$ such that  $O_{\alpha_{i-1}} \cap O_{\alpha_i} \neq \emptyset$
    for $i \in [l]$. 
    Let $I_1,\dots,I_{k_0} \subseteq I$ be the partition of $I$ such that $\{O_\alpha\}_{\alpha \in I_k}$ for $k \in [k_0]$ form the equivalence classes of the relation. 
    Now, let $\bP := \{P_1,\dots, P_{k_0}\}$, where 
    $$
        P_k := \cup_{\alpha \in I_k} O_k. 
    $$

    {\bf Claim 1:} For every $\alpha \in I$ and $k \in [k_0]$, either $O_\alpha \subseteq P_k$ or $O_\alpha \cap P_k = \emptyset$. 

    To prove this claim, consider any $\alpha$ and $k$ described above. 
    Suppose $O_\alpha \cap P_k \neq \emptyset$.  
    Let $\theta \in O_\alpha \cap P_k$ and pick an index $\alpha' \in I_k$ such that $\theta \in O_{\alpha'}$. Such an index exists because $P_k = \bigcup_{\alpha'' \in I_k} O_{\alpha''}$. 
    Then, we have $O_\alpha \cap O_{\alpha'} \neq \emptyset$, implying $\alpha \in I_k$. Consequently, $O_\alpha \subseteq P_k$. Therefore, the claim is proven. 
    
    \medskip 
    
    {\bf Claim 2:} $\bP$ is a partition of $[q]$. 
    
     We need to verify three properties: 
    \begin{enumerate}
        \item $\bigcup_{k \in [k_0]} P_k = [q]$, 
        \item $\forall k \in [k_0]$, $P_k \neq \emptyset$, and 
        \item $P_k \cap P_{k'}=\emptyset$ whenever $k\neq k'$. 
    \end{enumerate} 
     
    First, for each $\theta \in [q]$, by \eqref{eq: constrainedFinePartition00}, there exists $\alpha \in I$ such that $\theta \in O_\alpha$. Then, $\theta \in O_\alpha \in P_k$ where $k$ is the index such that $\alpha \in I_k$. Hence, we conclude that $\bigcup_{k \in [k_0]} P_k = [q]$. 

    Second, for each $k \in [k_0]$, let $\alpha \in I_k$. We have $\emptyset \neq O_\alpha \subseteq P_k$. Thus, $P_k$ is not an empty set. 

    Finally, for any distinct $k,k' \in [k_0]$, suppose $\theta \in P_k \cap P_{k'}$. 
    By \eqref{eq: constrainedFinePartition00}, let $\alpha \in I$ be the index so that $\theta \in O_\alpha$. Hence, both $O_\alpha \cap P_k$  and $O_\alpha \cap P_{k'}$. In particular, it is necessary that $\alpha \in I_k$ and $\alpha \in I_{k'}$, which forces $k=k'$, leading to a contradiction. Therefore, $P_k \cap P_{k'}=\emptyset$ whenever $k\neq k'$.
    Hence, the claim follows. 
    
    \medskip

    {\bf Claim 3:} $\bP' \le \bP$ for any $\bP'$ described in the statement. 

To prove the claim, it suffices to show that for any $P' \in \bP'$ and $P_k \in \bP$ with $k \in [k_0]$, if $P' \cap P_k \neq \emptyset$, then $P_k \subseteq P'$. 

Let us consider an arbitrary pair of $P' \in \bP'$ and $P_k \in \bP$ and assume that $P' \cap P_k \neq \emptyset$. 
There exists an index $\alpha$ such that $O_\alpha \cap P' \cap P_k \neq \emptyset$. 
Based on the assumptions regarding $\bP$ and $\bP'$, we have $\alpha \in I_k$ and $O_\alpha \subseteq P'$. 

For every other $\alpha' \in I_k$, there exists a chain $(\alpha=\alpha_0,\alpha_1,\dots, \alpha_{l_0} = \alpha')$ such that $O_{\alpha_{l-1}} \cap O_{\alpha_l} \neq \emptyset$ 
for $l\in [l_0]$. 
Observe that if $O_{\alpha_{l-1}}\subseteq P'$, then $O_{\alpha_l} \subseteq P'$, due to $O_{\alpha_l} \cap P' \supseteq O_{\alpha_l} \cap O_{\alpha_{l-1}} \neq \emptyset$. With $O_0 \subseteq P'$ as our starting point, we can apply this observation repeatedly to conclude that $O_{\alpha'}\subset P'$. Since the argument works for every $\alpha' \in I_k$, we conclude that $P_k = \bigcup_{\alpha'' \in I_k} O_{\alpha''} \subseteq P'$. 
   
\end{proof}

\begin{defi}
For any given collection of subsets $\{O_\alpha\}_{\alpha \in I}$ of $[q]$, let $\bP(\{O_\alpha\}_{\alpha \in I})$ denote the partition $\bP$ defined in Lemma \ref{lem: constrainedPartition}.

For any given partition $\bQ$ of $[q]$, let 
$$
    \bP_{\rm SC}(\bQ):= \bP\big( \{ Q\}_{Q\in \bQ} \cup \{ S_i \}_{i \in [q]}\big). 
$$
\end{defi}
\begin{rem}
    Clearly, $\bP_{\rm SC}(\bQ) \le \bQ$.  
\end{rem}
\begin{defi}
    Let $$\bP^{0,0} = \big\{ \{1\},\{2\},\dots, \{q\}\big\}$$
    and 
    $$ \bP^{1,0} = \bP_{SC}(\bP^{0,0}).$$
\end{defi}
Let us remark that $\bP^{1,0}$ is the finest partition of $[q]$ so that each part $P \in \bP^{1,0}$ either contains $S_i$ or disjoint from $S_i$ for $i \in [q]$.

We use double indices for indexing the partitions because constructing such a chain of partitions requires the creation of multiple partitions along the way, as we will illustrate shortly.

To proceed, let us begin with a simple observation. 
\begin{lemma} \label{lem: forward}
    If $P \in \bP^{1,0}$, then $$M{\bf 1}_P = {\bf 1}_Q$$
    where 
    $$
        Q = \{i \in [q]\,:\, S_i \subseteq P\}. 
    $$
    Suppose $\bP^{1,0} = \{P_1,P_2,\dots, P_{k_0}\}$. Then, the collection $\bQ: =\{ Q_1,Q_2,\dots, Q_{k_0}\}$ where 
    $$
        M{\bf 1}_{P_i} = {\bf 1}_{Q_i}
    $$
    is also a partition provided that $M$ is irreducible. 
\end{lemma}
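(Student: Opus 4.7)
The first claim is essentially a direct unpacking of the definition of $\bP^{1,0} = \bP_{\rm SC}(\bP^{0,0})$. For any row index $i \in [q]$, I compute $(M{\bf 1}_P)(i) = \sum_{j \in P} M_{ij} = \sum_{j \in P \cap S_i} M_{ij}$, using that $M_{ij} = 0$ for $j \notin S_i$. Since $P \in \bP^{1,0}$, the defining property from Lemma~\ref{lem: constrainedPartition} (applied to the collection $\{S_i\}_{i \in [q]} \cup \{\{\theta\}\}_\theta$) guarantees that either $S_i \subseteq P$ or $S_i \cap P = \emptyset$. In the first case $(M{\bf 1}_P)(i) = \sum_{j \in S_i} M_{ij} = 1$ since row $i$ of $M$ sums to $1$; in the second case $(M{\bf 1}_P)(i) = 0$. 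This yields exactly $M{\bf 1}_P = {\bf 1}_Q$ with $Q = \{i : S_i \subseteq P\}$.

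For the second claim I need to verify that $\bQ = \{Q_1, \ldots, Q_{k_0}\}$ is actually a partition of $[q]$, namely that (i) the $Q_k$ are pairwise disjoint, (ii) their union covers $[q]$, and (iii) each $Q_k$ is nonempty. Disjointness is immediate: if $i \in Q_k \cap Q_{k'}$, then $S_i \subseteq P_k \cap P_{k'}$, and since $S_i \ne \emptyset$ (each row of $M$ is a probability vector) we get $k = k'$. For the covering property, fix $i \in [q]$; then $S_i$ is a nonempty subset of $[q]$, and by the $\bP^{1,0}$ property $S_i$ must sit inside a unique part $P_k$, placing $i \in Q_k$.

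The main obstacle is property (iii), and this is the only place where irreducibility of $M$ is needed. The plan is to first observe that by irreducibility the stationary distribution $\pi$ is strictly positive, and from $\pi M = \pi$ it follows that for every $j \in [q]$ there is some $i$ with $M_{ij} > 0$, i.e.\ $j \in S_i$; hence $\bigcup_i S_i = [q]$. Now fix any $P_k$ and any $j \in P_k$, and pick $i$ with $j \in S_i$. Then $S_i \cap P_k \neq \emptyset$, and the defining property of $\bP^{1,0}$ forces $S_i \subseteq P_k$, i.e.\ $i \in Q_k$, so $Q_k \neq \emptyset$. Combining (i)--(iii), $\bQ$ is a partition with $|\bQ| = k_0$. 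The argument is short; the only subtle point is that without irreducibility one could have a part $P_k$ that no $S_i$ lies in, which would collapse $\bQ$ and break the correspondence $P_k \leftrightarrow Q_k$ needed downstream.
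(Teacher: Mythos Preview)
Your proof is correct and follows essentially the same approach as the paper. The only cosmetic difference is in the non-emptiness step: the paper argues directly by contraposition (if $Q_k=\emptyset$ then every $S_i$ is disjoint from $P_k$, so no state in $P_k$ is reachable, contradicting irreducibility), whereas you route through the positivity of $\pi$ to get that every column of $M$ has a nonzero entry; both are the same reachability observation.
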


\begin{proof}
    For $i$ with $S_i \cap P =\emptyset$, it is immediate that $(M{\bf 1}_P)_i = 0$.  Conversely, when $S_i \cap P \neq \emptyset$, 
    it is necessary that $S_i \subseteq P$. 
    Consequently, $(M{\bf 1}_P)_i = \sum_{j\in [q]} M_{ij} = 1$.

   To establish that \(\bQ\) is a partition, we need to demonstrate the following three conditions:

\begin{enumerate}
    \item \(Q_k \cap Q_{k'} = \emptyset\) for all distinct \(k,k' \in [k_0]\).
    \item \(\bigcup_{k \in [k]} Q_k = [q]\).
    \item \(Q_k \neq \emptyset\) for all \(k \in [k_0]\).
\end{enumerate}

For the first condition, suppose there exists \(i \in Q_k \cap Q_{k'} \) for some distinct \(k\) and \(k'\). By definition, \(S_i \subseteq P_k \) and \( S_i \subseteq P_{k'}\), which is a contradiction. Hence, \(Q_k \cap Q_{k'}= \emptyset\).

For the second condition, for every \(i \in [q]\), we know that \(S_i \subseteq P_k\) for some \(k\). Consequently, \(i \in Q_k\), ensuring \(\bigcup_{\alpha \in [k]} Q_k = [q]\).

For the third condition, if we assume \(Q_k = \emptyset\), implying that no \(i \in [q]\) satisfies \(S_i \subseteq P_k \), then \(M\) is not irreducible, since the states in \(P_k\) cannot be reached. 
\end{proof}

\begin{defi}
    Let $\bP^{1,1} = \bQ$ where $\bQ$ is the partition described in Lemma \ref{lem: forward}.
\end{defi}

\begin{lemma}
   If $P$ is a finite union of parts in $\bP^{1,0}$, then 
   \begin{align} \label{eq: forwardPart}
    M{\bf 1}_P = {\bf 1}_Q
   \end{align}
    where $Q$ is a finite union of parts in $\bP^{1,1}$. 
   The above map induces a bijection between subsets of $[q]$ that are finite union of parts of $\bP^{1,0}$ and subsets of $[q]$ that are finite union of parts of $\bP^{1,1}$, in which preserve the inclusion relation is preserved.  
\end{lemma}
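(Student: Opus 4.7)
The plan is to bootstrap Lemma \ref{lem: forward} using linearity of $M$ and disjointness of parts. Write $P = \bigsqcup_{k \in K} P_k$ where $K \subseteq [k_0]$ and $\bP^{1,0} = \{P_1, \dots, P_{k_0}\}$, so that ${\bf 1}_P = \sum_{k \in K} {\bf 1}_{P_k}$. By linearity of the matrix action and Lemma \ref{lem: forward}, $M{\bf 1}_P = \sum_{k \in K} M{\bf 1}_{P_k} = \sum_{k \in K} {\bf 1}_{Q_k}$, where $Q_k$ is the corresponding part of $\bP^{1,1}$. Since $\bP^{1,1} = \{Q_1, \dots, Q_{k_0}\}$ is a partition (by Lemma \ref{lem: forward}), the parts $Q_k$ are pairwise disjoint, and hence $\sum_{k \in K} {\bf 1}_{Q_k} = {\bf 1}_Q$ with $Q := \bigsqcup_{k \in K} Q_k$. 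This establishes \eqref{eq: forwardPart}.

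Next, I would observe that the assignment $\Phi : K \mapsto \bigsqcup_{k \in K} P_k$ is a bijection from $2^{[k_0]}$ onto the collection of finite unions of parts of $\bP^{1,0}$ (because $\bP^{1,0}$ is a partition), and similarly $\Psi : K \mapsto \bigsqcup_{k \in K} Q_k$ is a bijection from $2^{[k_0]}$ onto the collection of finite unions of parts of $\bP^{1,1}$. The composition $\Psi \circ \Phi^{-1}$ is therefore a bijection between the two collections, and the identity $M{\bf 1}_{\Phi(K)} = {\bf 1}_{\Psi(K)}$ just derived confirms that this is exactly the map induced by \eqref{eq: forwardPart}.

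Finally, for preservation of inclusion, I would note that since both $\{P_k\}$ and $\{Q_k\}$ are partitions, one has $\Phi(K) \subseteq \Phi(K')$ iff $K \subseteq K'$ iff $\Psi(K) \subseteq \Psi(K')$. This immediately gives the required monotonicity of the induced map. There is no real obstacle here: every ingredient is already provided by Lemma \ref{lem: forward} together with the elementary fact that indicator functions of disjoint sets add to the indicator of their union.
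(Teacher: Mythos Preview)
Your proposal is correct and follows essentially the same approach as the paper: both identify a finite union of parts with its index set $K \subseteq [k_0]$, use linearity of $M$ together with Lemma~\ref{lem: forward} and disjointness of parts to get $M{\bf 1}_{P_K} = {\bf 1}_{Q_K}$, and then read off the bijection and inclusion preservation from the common parametrization by $2^{[k_0]}$. Your write-up is in fact slightly more explicit about the inclusion-preservation step than the paper's, but there is no substantive difference.
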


\begin{proof}
    Let us express $\bP^{1,0} = \{P_1,P_2,\dots, P_{[k_0]}\}$ and $\bP^{1,1}= \{Q_1,Q_2,\dots, Q_{k_0}\}$ where ${\bf 1}_{Q_k} = M{\bf 1}_{P_k}$. 

    For each $I\subseteq [k_0]$, let $P_I = \bigcup_{k \in I} P_k$ and $Q_I = \bigcup_{k \in I} Q_k$.  
    Since ${\bf 1}_{P_I} = \sum_{k \in I} {\bf 1}_{P_k}$ and ${\bf 1}_{Q_I} = \sum_{k \in I} {\bf 1}_{Q_k}$, 
    clearly we have 
    $$
        {\bf 1}_{Q_I} = M{\bf 1}_{P_I}. 
    $$

    Since naturally both finite union of parts of $P$ and of $Q$ are identified with a subset $I \subset [k_0]$ in the above way, the statement of the lemma follows. 
\end{proof}

An immediate consequence is the following. 
\begin{cor} \label{cor: forwardBijection}
    The transition matrix $M$ induces a bijection between partitions that are $\le \bP^{1,0}$ and partitions that are $\le \bP^{1,1}$. 
For convenience, we adopt the following definitions:

\begin{enumerate}
    \item For any partition \(\mathbf{P}\) such that \(\mathbf{P} \leq \mathbf{P}^{1,0}\), define
    \[
    M\mathbf{P} := \left\{ Q\,:\, \exists P \in \mathbf{P} \text{ such that } \mathbf{1}_Q = M\mathbf{1}_P \right\} \leq \mathbf{P}^{1,1}.
    \]
    
    \item Given any \(\mathbf{P} \leq \mathbf{P}^{1,0}\) and for each \(P \in \mathbf{P}\), let \(MP\) represent a part in \(M\mathbf{P}\) where
    \[
    \mathbf{1}_{MP} = M \mathbf{1}_P.
    \]
\end{enumerate}

\end{cor}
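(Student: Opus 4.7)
The plan is to reduce the statement to the bijection at the level of unions of parts that was established in the preceding lemma. Recall that if $\bP \le \bP^{1,0}$, then by definition each $P \in \bP$ is a disjoint union of parts of $\bP^{1,0}$. Thus, by the preceding lemma, for every $P \in \bP$ there is a uniquely determined subset $MP \subseteq [q]$, itself a disjoint union of parts of $\bP^{1,1}$, satisfying ${\bf 1}_{MP} = M{\bf 1}_P$. The map I will build sends $\bP$ to $M\bP := \{ MP : P \in \bP\}$, and I need to check three things: that $M\bP$ is genuinely a partition, that it is $\le \bP^{1,1}$, and that the assignment $\bP \mapsto M\bP$ is a bijection.

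First I would verify that $M\bP$ is a partition of $[q]$. Non-emptiness of each $MP$ follows because $P$ is a non-empty union of parts $P_k \in \bP^{1,0}$, and each $MP_k$ is a non-empty part of $\bP^{1,1}$ by the construction of $\bP^{1,1}$; hence $MP = \bigsqcup_k MP_k$ is non-empty. Pairwise disjointness is immediate from the inclusion-preserving bijection of the preceding lemma: if $P, P' \in \bP$ with $P \neq P'$, then $P \cap P' = \emptyset$ at the level of unions of parts of $\bP^{1,0}$, which (under the bijection to unions of parts of $\bP^{1,1}$) translates into $MP \cap MP' = \emptyset$. Finally, covering follows from $\sum_{P \in \bP} {\bf 1}_P = {\bf 1}_{[q]}$ by applying $M$ to both sides: since $M$ is stochastic, $M{\bf 1}_{[q]} = {\bf 1}_{[q]}$, and thus $\sum_{P \in \bP} {\bf 1}_{MP} = {\bf 1}_{[q]}$, giving $\bigcup_{P \in \bP} MP = [q]$ (in fact as a disjoint union, by the previous step).

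Next, that $M\bP \le \bP^{1,1}$ is automatic because each $MP$ is a union of parts of $\bP^{1,1}$. Finally, for the bijection, I would invoke the inverse direction of the preceding lemma: the subset-level map $P \mapsto MP$ between unions of parts of $\bP^{1,0}$ and unions of parts of $\bP^{1,1}$ has an inverse $Q \mapsto M^{-1}Q$ preserving inclusion. Given any $\bQ \le \bP^{1,1}$, the collection $\{M^{-1}Q : Q \in \bQ\}$ is a partition of $[q]$ refining $\bP^{1,0}$ by the symmetric argument (non-emptiness, disjointness from the inclusion-preserving bijection, and covering from the fact that $M^{-1}{\bf 1}_{[q]} = {\bf 1}_{[q]}$ at the level of the subset bijection). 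This gives the inverse map, completing the bijection.

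The main technical point to be careful about is the covering argument for the inverse direction, since $M$ is not literally invertible as a matrix; one must use the inclusion-preserving subset bijection (which is invertible as a map of lattices of subsets) rather than a matrix inverse. No serious obstacle is expected beyond this bookkeeping.
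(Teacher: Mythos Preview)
Your proposal is correct and follows exactly the route the paper intends: the corollary is stated in the paper as ``an immediate consequence'' of the preceding lemma with no separate proof, and you have simply written out the verification that the inclusion-preserving subset bijection of that lemma carries partitions to partitions and is invertible at that level. There is nothing to add or correct.
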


Next, we will build a collection of partitions ${\bf P}^{r,s}$ for $r \ge 0$ and $0 \le s \le r$
starting with $\bP^{0,0}= \big\{\{1\}, \{2\},\dots, \{q\}\big\}$ and establishing the relationship illustrated by the diagram below. 
\begin{align*}
   \begin{matrix}
       {\bf P}^{0,0} & \underset{SC}{\ge}  & {\bf P}^{1,0} &\ge &  {\bf P}^{2,0} &\ge & {\bf P}^{3,0} &\ge & {\bf P}^{4,0} & \dots \\[2pt]
       && \downarrow && \downarrow && \downarrow&& \downarrow&\\[2pt]
       &&{\bf P}^{1,1} & \underset{SC}{\ge}  & {\bf P}^{2,1} &\ge & {\bf P}^{3,1} &\ge & {\bf P}^{4,1} & \dots \\[2pt]
       &&            && \downarrow && \downarrow&& \downarrow&\\[2pt]
       &&             &&  {\bf P}^{2,2} & \underset{SC}{\ge} &  {\bf P}^{3,2} & \ge & {\bf P}^{4,2} & \dots \\[2pt]
       &&            &&            && \downarrow&& \downarrow&\\[2pt]
       &&             &&               && {\bf P}^{3,3} &\underset{SC}{\ge} & {\bf P}^{4,3} & \dots \\ 
       &&            &&            &&           && \downarrow&\\[2pt]
       &&             &&               &&              && {\bf P}^{4,4} & \dots \\
       &&             &&               &&              &&               & \ddots 
   \end{matrix} 
\end{align*}
( In the above diagram, $\bP \rightarrow \bQ$ indicates that $\bQ = M\bP$; $\bQ \underset{SC}{\ge} \bP$ indicates $\bP = \bP_{\rm SC}(\bQ)$.)

Indeed, the initial definition of $\bP^{0,0}$ and the relation diagram determine the collection of partitions completely. Let us summarise it as a  statement:
\begin{lemma} \label{lem: partitionCollection}
There exists a unique collection of partitions $\{\bP^{r,s}\}_{r \ge s \ge 0}$ 
that satisfies the following properties: 
For $0 \le s <r$, 
\begin{enumerate}
    \item $\bP^{0,0} = \big\{ \{1\}, \{2\}, \dots, \{q\}\big\}.$
    \item $\bP^{r,s} \le \bP^{1,0}$.  
    \item $\bP^{r,s+1} = M \bP^{r,s}$.
    \item $\bP^{r+1,s} \le \bP^{r,s}$. 
    \item $\bP^{r+1,r} = \bP_{\rm SC}(\bP^{r,r})$. 
\end{enumerate}
\end{lemma}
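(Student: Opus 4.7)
The plan is to proceed by strong induction on $r$, simultaneously establishing existence and uniqueness of the entire row $\bP^{r,0}, \bP^{r,1}, \ldots, \bP^{r,r}$. The base case $r=0$ is immediate since property 1 prescribes $\bP^{0,0}$. For the inductive step, suppose rows $0,1,\ldots,r$ have been constructed and are unique, and that for every $s$ with $0 \le s < r$ we have $\bP^{r,s} \le \bP^{1,0}$, and for every $s$ with $1 \le s \le r$ we have $\bP^{r,s} \le \bP^{1,1}$ (the latter follows from the former via $\bP^{r,s} = M\bP^{r,s-1}$ and order-preservation of the bijection in Corollary~\ref{cor: forwardBijection}).

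I first define $\bP^{r+1,r} := \bP_{\rm SC}(\bP^{r,r})$ as forced by property 5. Two order-theoretic observations are then used: (i) since $\bP^{r+1,r}$ satisfies the constraint that each $S_i$ lies in a single part, and $\bP^{1,0} = \bP_{\rm SC}(\bP^{0,0})$ is the \emph{finest} partition with this property (by Lemma~\ref{lem: constrainedPartition}), we obtain $\bP^{r+1,r} \le \bP^{1,0}$; (ii) since $\bP_{\rm SC}(\bP^{r,r})$ refines $\bP^{r,r}$ with additional constraints, $\bP^{r+1,r} \le \bP^{r,r}$. Combining (ii) with the inductive fact $\bP^{r,r} \le \bP^{1,1}$ gives $\bP^{r+1,r} \le \bP^{1,1}$, which is exactly what is needed to pull $\bP^{r+1,r}$ back through the bijection $M:\{\bP \le \bP^{1,0}\} \to \{\bP \le \bP^{1,1}\}$.

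Next I define $\bP^{r+1,s}$ for $s = r-1, r-2, \ldots, 0$ by backward recursion, setting $\bP^{r+1,s} := M^{-1}\bP^{r+1,s+1}$ (this is forced by property 3). At each step I must check that $\bP^{r+1,s+1}$ lies in the image of $M$, i.e.\ that $\bP^{r+1,s+1} \le \bP^{1,1}$. Since the $M$-bijection is order-preserving on finite unions of parts, the inclusion $\bP^{r+1,s+1} \le \bP^{r,s+1}$ is inherited from the previous pullback step, and combined with the inductive $\bP^{r,s+1} \le \bP^{1,1}$ (for $s+1 \ge 1$), the pullback is legitimate; moreover $\bP^{r+1,s} \le \bP^{r,s} \le \bP^{1,0}$, which continues the induction. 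Finally, set $\bP^{r+1,r+1} := M\bP^{r+1,r}$, well-defined because $\bP^{r+1,r} \le \bP^{1,0}$.

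The five required properties for row $r+1$ then follow: property 2 holds by construction; property 3 holds because the pullback was defined so that $M\bP^{r+1,s} = \bP^{r+1,s+1}$; property 4 is inherited from the order-preservation of $M^{-1}$ (starting from $\bP^{r+1,r} \le \bP^{r,r}$); and property 5 is the very definition of $\bP^{r+1,r}$. Uniqueness is immediate since at each stage the value is forced: property 5 pins down $\bP^{r+1,r}$, and property 3 together with injectivity of the bijection $M$ pins down the remaining entries. The only technically delicate point is the bookkeeping of the two inclusion chains ($\le \bP^{1,0}$ for $s < r$, $\le \bP^{1,1}$ for $s \ge 1$) that must be maintained in tandem so that each pullback step remains inside the domain of the bijection from Corollary~\ref{cor: forwardBijection}; once this is arranged, the rest is a straightforward unwinding of the diagram.
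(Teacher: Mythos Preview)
Your proposal is correct and follows essentially the same approach as the paper's own proof: induct on the column index, anchor each new column at the entry $\bP^{r+1,r}=\bP_{\rm SC}(\bP^{r,r})$, push forward once to get $\bP^{r+1,r+1}$, and pull back via the order-preserving bijection of Corollary~\ref{cor: forwardBijection} to fill in $\bP^{r+1,s}$ for $s<r$, carrying along the inequalities $\bP^{r+1,s}\le\bP^{r,s}$ and $\bP^{r+1,s}\le\bP^{1,0}$ at each step. The only cosmetic difference is that the paper packages the backward step as a separate Claim, whereas you fold the necessary bookkeeping (maintaining $\le\bP^{1,0}$ on one side and $\le\bP^{1,1}$ on the other) directly into the inductive hypothesis.
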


\begin{proof}[Proof of Lemma \ref{lem: partitionCollection}]
   The proof is proceeded by induction.  
   We assume that $\bP^{r,s}$ is constructed and uniquely determined 
   for $0 \le r < r_0$ and $0\le s \le r$ for some $r_0 \ge 0$
   so that it satisfies the properties described in the lemma. 

  \medskip  
   We will define the partitions in the next column $\{\bP^{r_0,s}\}_{s \in [0,r_0]}$ by starting with $\bP^{r_0,r_0-1} = \bP_{\rm SC}( \bP^{r_0-1,r_0-1} )$. 
   
   Besides constructing the rest of partitions, we also need to show that these partitions satisfy the following {\bf list of conditions} ( let us denote it as {\bf List A}): For $s \in [0,r_0,-1]$, 
   \begin{enumerate}
       \item $\bP^{r_0,s} \le \bP^{1,0}$. 
       \item $\bP^{r_0,s} \le \bP^{r_0-1,s}$ for $s \in [0,r_0-1]$. 
       \item $\bP^{r_0,s+1}= M\bP^{r_0,s}$.
   \end{enumerate}

   \medskip 
   
   By definition of the map $\bP_{\rm SC}$, the first and second condition in the list are satisfied for $s = r_0-1$. Relying on $\bP^{r_0,r_0-1} \le \bP^{1,0}$, we can define $\bP^{r_0,r_0} = M\bP^{r_0,r_0-1}$. Hence, the third condition in the list is also satisfied for $s=r_0-1$. 

    \medskip

   It remains to construct $\bP^{r_0,s}$ for $s \in [0,r_0-2]$ and they satisfy those 3 conditions in the list. This can be proceeded inductively starting from $s=r_0-2$.
   
   {\bf Claim}: 
   For $s \in [0,r_0-2]$, 
   if $\bP^{r_0,s+1} \le \bP^{r_0-1,s+1}$, 
   then there exists a unique partition $\bP^{r_0,s}$ which satisfies the conditions in {\bf List A} for $s$.  

   Suppose the Claim holds. With $\bP^{r_0,r_0-1} \le \bP^{r_0-1,r_0-1}$, we could apply the claim repeatedly and the lemma follows. The rest of the proof is to show the claim holds.  

\medskip
   
   Let us assume $\bP^{r_0,s+1} \le \bP^{r_0-1,s+1}$ for some $s \in [0,r_0-2]$.  
   First, from our assumption on $\{\bP^{r,s}\}$ for $0 \le s \le r_0-1$,  
   $\bP^{r_0-1,s+1}= M\bP^{r_0-1,s}$. 
   By Corollary \ref{cor: forwardBijection}, $\bP^{r_0-1,s+1} \le \bP^{1,1}$.
   Since $\bP^{r_0,s+1} \le \bP^{r_0-1,s+1}$, 
   we conclude that  $\bP^{r_0,s+1} \le \bP^{1,1}$.

   Applying Corollary \ref{cor: forwardBijection} again, 
   we know there exists an unique partition $\bP \le \bP^{1,0}$ 
   so that $\bP^{r_0,s+1} = M \bP$. We set $\bP^{r_0,s}:=\bP$. 
   In particular, the choice of $\bP^{r_0,s}$ is unique in order to satisfy
   the first and third condition from the list. 
   
   It remains to show that $\bP^{r_0,s}$ also satisfies 
   the second condition in {\bf List A}. 
   Notice that from Corollary \ref{cor: forwardBijection},
   the induced map of $M$ on partitions preserves $\le$ relation. Hence, 
   $\bP^{r_0,s+1} \le \bP^{r_0-1,s+1}$ implies 
   $\bP^{r_0-1,s} \le \bP^{r_0-1,s}$.
   Therefore, the claim holds.

\end{proof}

\begin{proof}[Proof of Lemma \ref{lem: partitionToConditionalConstant}]
We start with the proof on the $\Rightarrow$ implication. 
Suppose $f$ is a function satisfied the first condition described in the lemma. 
   
Since $f(X_u)$ is a function of $X_{{\frak p}^r(u)}$, this is equivalent to 
\begin{align*}
     0 
= & 
    \E \Big[ \var \big[ f(X_u) \,\Big\vert \, X_{{\frak p}^r(u)} \big] \Big] . 
\end{align*}
Relying on the identity $\var[Y] = \E \var[ Y \,\vert\, Z] + \var\big[ \E [ Y\,\vert\, Z]\big]$ and 
$(X_{{\frak p}^r(u)}, X_{{\frak p}^{r-1}(u)},\, \dots , X_u)$ is a Markov Chain,
\begin{align*}
   \E \Big[ \var \big[ f(X_u) \,\Big\vert \, X_{{\frak p}^r(u)} \big] \Big] 
= &  
     \sum_{s=1}^{r} 
    \E \Big[ \var\big[ f(X_u) \,\big\vert\,  X_{{\frak p}^{s}(u)} \big]\Big].
\end{align*}
Hence, $\E \Big[ \var\big[ f(X_u) \,\big\vert\,  X_{{\frak p}^{s}(u)} \big]\Big]=0$ 
for $s \in [r]$, 
which in turn implies $\E \big[ f(X_u) \, \big\vert\, X_{{\frak p}^{s-1}(u)} \big]$ conditioned on $X_{{\frak p}^{s}(u)}$ is a constant function for each $s \in [r]$.  
Equivalently, $M^{s-1}f$ takes the same value for all elements in each $S_i$ for $i \in [q]$.  

\medskip

{\bf Claim:}
For $s \in [r]$, if $f$ can expressed in the form 
$ f = \sum_{ P \in \bP^{s-1,0}} c_{s-1,P} {\bf 1}_P$, 
then it can be expressed in the form 
$f = \sum_{ P \in \bP^{s,0}} c_{s,P} {\bf 1}_P$.

Clearly, if the claim holds, then we can apply it repeatedly to draw the conclusion that $f$ is a linear combination of ${\bf 1}_P$ for $P \in \bP^{r,0}$.   

Now, we fix $s \in [r]$ and assume $ f = \sum_{ P \in \bP^{s-1,0}} c_{s-1,P} {\bf 1}_P$. Then, 
\[\E\big[ f(X_u) \,\big\vert \, X_{{\frak p}^{s-1}(u)}=a\big] = (M^{s-1}f)(a) 
=  \sum_{P \in \bP^{s-1,0}} c_{s-1,P} M^{s-1}{\bf 1}_P 
= \sum_{P \in \bP^{s-1,0}} c_{s-1,P} {\bf 1}_{P^{s-1}}, \]
where for each $P \in \bP^{s-1,0}$, $P^{s-1} \in \bP^{s-1,s-1}$ is the corresponding part such that $ M^{s-1}{\bf 1}_P = {\bf 1}_{P^{s-1}}$.
In other words, $M^{s-1}f$ is a linear combination of ${\bf 1}_P$ for $P \in \bP^{s-1,s-1}$. 

Because $M^{s-1}f$ takes the same value not only for all elements in each $S_i$ for $i \in [q]$,
but also for all elements in each $P$ for $P \in \bP^{s-1,s-1}$, 
it implies $M^{s-1}f$ takes the same value for all elements in each $P' \in \bP_{\rm SC}(\bP^{s-1,s-1}) = \bP^{s,s-1}$.   

Together with the fact that the induced map of $M$ on partitions preserves $\le$ relation,
we conclude that $c_{s-1,P_1} = c_{s-1,P_2}$ for $P_1,P_2 \in P^{s-1,0}$ whenever $P_1$ and $P_2$ are both contained 
in some $P \in \bP^{s,0}$. Equivalently, within each $P \in \bP^{s,0}$, $f$ is a constant function. 
Hence, we can express $f$ as a linear combination of ${\bf 1}_P$ for $P \in \bP^{s,0}$.

For the $\Leftarrow$ implication, suppose $f$ is a linear combination of  ${\bf 1}_{P}$ with $P \in \bP^{r,0}$.
    
What we need to show is for $s \in [0,r-1]$, 
$M^{s}f$ takes the same values for all elements in each $S_i$ for $i \in [q]$. 
From the chain $\bP^{r,0} \rightarrow \bP^{r,1} \rightarrow \dots \rightarrow \bP^{r,r}$ and   
by \eqref{eq: forwardPart},  for $s \in [0,r-1]$, $M^sf$ is a linear combination of ${\bf 1}_P$ with $P \in \bP^{r,s}$. 
 
Since  $\bP^{s+1,s} = \bP_{\rm SC}( bP^{s,s} ) \le \bP^{1,0}$ and $\bP^{s+1,s} \ge \cdots \ge \bP^{r,s}$, 
we have $\bP^{r,s}\le \bP^{1,0}$, which implies $M^sf$ takes the same values for all elements in each $S_i$ for $i \in [q]$. Therefore, the proof is completed.  

\medskip

Now, it remains to prove the second statement of the lemma.  

First, if there exists $r \in \mathbb{N}$ such that $\bP^{r,0}$ is trivial. Then $\bP^{t,0}$ is also trivial for $t >r$
    since $\bP^{t,0} \le \bP^{r,0}$. Hence, it is enough to show the existence of $r$ such that $\bP^{r,0}$ is trivial.$\bP^{r,0}$ is trivial.

   From the assumption on $M$, we knew that the stationary distribution $\pi$ of $M$ satisfies 
   $\min_{i \in [q]} \pi(i) >0$ and $M^r$ converges entry-wise to the matrix whose row is identically $\pi$. Therefore, for sufficiently large $r$, $\min_{i,j \in [q]}(M^r)_{ij}>0$.  

    Now, let us fix such $r$ and assume ${\bf P}^{r,0}$ is not trivial. Let us express ${\bf P}^{r,s} = \{ P^{r,s}_1,\dots, P^{r,s}_{k_r}\}$ with for $s \in [0,r]$ and $k_r \ge 2$
    where the index is assigned so that $ P^{r,s}_k = MP^{r,s-1}_k$ for $s \in [r]$ and $k \in [k_r]$. 
    First,  
    \[
      {\bf 1}_{P^{r,r}_1} = M^r {\bf 1}_{P^{r,0}_1}. 
    \]
   With ${\bf 1}_{P^{r,0}_1}$ is non-negative and not zero, every component of $M^r {\bf 1}_{P^{r,0}_1}$ is non-zero. This forces  $ P^{r,r}_1 = [q]$, which contradicts to the assumption that ${\bf P}^{r,r}$ is non-trivial.

\end{proof}

\subsection{ A basis of functions from $[q] \mapsto \R$ according to the partition}
\label{subsec: Basis}
From now on, let $r_0$ be the smallest non-negative integer such that ${\bf P}^{r,0}$ is trivial. 
Consider the collection 
\begin{align*}
    \big\{  
        (P, s) \, :\, s \in [0,r_0], P \in {\bf P}^{s,0}
    \big\}
\end{align*}

We will establish an identification between elements of the set described above and words whose alphabet consists of non-negative integers. This identification is constructed through induction, following these steps:

\begin{itemize}
    \item First, we identify $([q], r_0)$ with the word $(1)$. 
    \item Assuming that elements in $\{(P,s+1)\,:\, P \in {\bf P}^{s+1,0}\} $ have already been identified with unique words, we proceed as follows:          
        For each $(P,s+1)$, suppose there are $k$ pairs of $(P',s)$ such that $P' \subseteq P$. We identify these $k$ pairs with the words $(\sw ,{\mathsf i})$ for ${\sf i} \in [0,k-1]$, in any order of preference.  
        For each $(P',s)$, due to ${\bf P}^{s,0}$ is a finer than or equal to ${\bf P}^{s+1,0}$, there exists an unique pair $(P,s+1)$ so that $P' \subseteq P$. This guarantees the above procedure assigns each $(P',s)$ a unique word. 
\end{itemize}

We denote the set of words described above as $\widetilde \sW$, and we adopt the notation $\sw \sim (P,s)$
to indicate that $(P,s)$ is associated with the word $\sw$. For a given $\sw \in \widetilde\sW$, we represent the corresponding pair as $(P_{\sw},r(\sw))$,
where $r(\sw) = r_0+1- {\rm len}(\sw)$.  

Now, let us make the following observations
\begin{enumerate}
    \item If $\sw \in \widetilde\sW$ is a word with ${\rm len}(\sw)<r_0+1$, then $(\sw,0)\in \widetilde\sW$.
    \item Each $(P,s)$ corresponds to a word of length $r_0+1-s$. 
    \item Suppose $\sw,\sw' \in \widetilde\sW$ such that $\sw$ is a prefix of $\sw'$. Then, $P_{\sw'} \subseteq P_{\sw}$. 
\end{enumerate}

Let $T_{\widetilde\sW}$ be the tree defined on $\widetilde\sW$ using the prefix relation. In this tree, edges are drawn from $w'$ to $w$ if $r(\sw') = r(\sw)+1$ and $P_{\sw} \subseteq P_{\sw'}$. Now, we will select $q$ parts from these elements ${(P,s)}$ based on their corresponding words.

\begin{lemma}
Let $\sW \subseteq \widetilde\sW$ be the subcollection of words which end with a positive integer. Then,  $|\sW|=q$. 
\end{lemma}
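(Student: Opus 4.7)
The plan is a straightforward combinatorial counting argument on the tree $T_{\widetilde\sW}$, built around two observations: (i) the leaves of this tree are in bijection with $\bP^{0,0}$, of which there are exactly $q$; (ii) at every internal node, exactly one of its children is excluded from $\sW$ (the one with last coordinate $0$). So the strategy is to express $|\sW|$ as ``root contribution $+$ contribution of each internal node's nonzero-indexed children" and reduce this to the leaf count.

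More concretely, I would first verify the leaf count. A word $\sw \in \widetilde\sW$ corresponds to a pair $(P_{\sw}, r(\sw))$ with $r(\sw) = r_0 + 1 - \text{len}(\sw)$, and the construction declares $\sw$ to be a leaf precisely when $r(\sw) = 0$, i.e.\ when $(P_{\sw}, 0)$ with $P_{\sw} \in \bP^{0,0}$. Since $\bP^{0,0} = \{\{1\},\ldots,\{q\}\}$ has $q$ elements, the tree has exactly $q$ leaves.

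Next I would partition $\sW$ as $\{(1)\} \sqcup \sW'$, where $\sW'$ consists of non-root words ending in a positive integer. The root $(1)$ ends with $1$ so it lies in $\sW$. For a non-root word $\sw = (\sw', i)$, the inductive construction labels the children of $\sw'$ by $0, 1, \ldots, k_{\sw'} - 1$ where $k_{\sw'}$ is the number of children; so among the $k_{\sw'}$ children of $\sw'$, exactly one (the one with last coordinate $0$) is outside $\sW'$ and $k_{\sw'} - 1$ lie in $\sW'$. Summing over internal nodes $\sw'$ and using that every non-root node is a child of exactly one parent:
\begin{align*}
|\sW'| \;=\; \sum_{\sw' \text{ internal}} (k_{\sw'} - 1)
     \;=\; \Bigl(\sum_{\sw' \text{ internal}} k_{\sw'}\Bigr) - (\text{\# internal nodes})
     \;=\; (V-1) - (V-q) \;=\; q-1,
\end{align*}
where $V$ denotes the total number of nodes. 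Adding the root gives $|\sW| = 1 + (q-1) = q$.

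There is no real obstacle here; the only thing to be careful about is the bookkeeping of which nodes are roots, internal, and leaves, and that the labeling of children by $\{0,1,\ldots,k-1\}$ produces exactly one ``zero-indexed" child per internal node. Both are immediate from the construction preceding the statement.
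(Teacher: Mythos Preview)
Your proof is correct. It is a genuinely different argument from the paper's.

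The paper proves the lemma by constructing an explicit bijection: for each leaf $\sw'_i$ (the word of length $r_0+1$ corresponding to $(\{i\},0)$), it defines $\sw_i$ to be the longest prefix of $\sw'_i$ ending in a positive integer (equivalently, $\sw'_i$ with its trailing zeros stripped off), and then checks directly that $i \mapsto \sw_i$ is injective and surjective onto $\sW$. Your approach instead is a pure double-counting on the tree $T_{\widetilde\sW}$: every internal node has exactly one ``zero-indexed'' child, so the number of non-root words ending in a positive integer equals (total non-root nodes) $-$ (internal nodes) $= (V-1)-(V-q)=q-1$, and adding the root gives $q$.

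Your argument is shorter and more transparent as a cardinality proof. The paper's bijection, on the other hand, yields a concrete description of the elements of $\sW$ (each is obtained from a unique leaf by deleting trailing zeros), which is a slightly stronger structural statement even if the lemma only asks for the count.
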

\begin{proof}
   First of all, there are exactly $q$ words in $\widetilde\sW$ with length $r+1$, since $\bP^{0,0} = \big\{ \{i\}\big\}_{i \in [q]}$ has $q$ parts. For each $i \in [q]$, 
   let $\sw'_i$ be the word corresponding to $(\{i\},0)$ and let $\sw_i$ be the longest word ending with a positive integer so that is either a prefix of equals to $\sw'_i$. This is well-defined since every word in $\widetilde\sW$ is a word starting with $1$. 

   The proof of the lemma follows if we can show the following {\bf claim}:
    $\sw_1,\sw_2,\dots,\sw_q$ are distinct and are all words which ends with a positive integer. 

   To prove the claim, we begin by showing $\sw_i \neq \sw_j$ whenever $i\neq j$.  
   Suppose $\sw_i=\sw_j$ for some distinct pair of $i,j \in [q]$. Let $\tilde \sw$ be the longest prefix of $\sw'_1,\sw'_2$, necessarily we have $\sw_i= \sw_j$ is either a prefix of $\tilde \sw$ or $\tilde \sw$ itself. Further, the length of $\tilde \sw$ is less equal than $r$, since otherwise it implies $\sw'_i = \sw'_j$, which is a contradiction. 
   
   Now, let $(\tilde \sw,\mathsf{e}_i)$ and $(\tilde \sw,\mathsf{e}_j)$ be the two words which are prefix of $\sw'_i$ and $\sw'_j$, respectively. From the definition that $\tilde \sw$ is the longest common prefix, $\mathsf{e}_i$ and $\mathsf{e}_j$ are distinct non-negative integers. Since $\sw_i$ is a prefix of $(\sw,\mathsf{e}_i)$, it is necessary that $\mathsf{e}_i=0$, otherwise it violates the definition of $\sw_i$. For the same reason, $\mathsf{e}_j=0$. Therefore, we reach a contradiction. 

   The remaining part to prove the claim is to show that $\{\sw_i\}_{i\in[q]}$ are all the words in $\widetilde\sW$ ending with a positive integer. 
   Suppose $\sw$ is a word in which ends with a positive integer. If len$(\sw)<r+1$, we can keep fill $0$ until its length is $r+1$ and denote the resulting word by $\sw'$. Observe that $\sw' \in \widetilde\sW$. Together with the length of $\sw'$ is $r+1$,
   necessarily $\sw'=\sw'_i$ for some $i$. Recall the definition of $\sw_i$, we conclude  $\sw=\sw_i$. Therefore, the claim follows. 
   
\end{proof}

\begin{lemma} \label{lem: linCombPart}
    For any given $0 \le r' < r$, suppose $ \sW_{r'}:=\{ \sw \in \sW \,:\, r(\sw)=r'\}$ is non-empty.
    Consider a linear combination $ \sum_{\sw \in \sW_{r'}} c_{\sw}{\bf 1}_{P_{\sw}}$. If it can be expressed as a linear combination of ${\bf 1}_{P}$ for $P \in \bP^{r'+1,0}$, then $c_{\sw}$ are identically $0$.  
\end{lemma}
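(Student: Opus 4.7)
\medskip

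\noindent\textbf{Proof plan for Lemma \ref{lem: linCombPart}.}

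The plan is to exploit the hierarchical word-labeling of parts built in the previous subsection, and the fact that exactly the ``zero-suffix'' children have been excluded from $\sW$. Concretely, I would argue by directly evaluating both sides of the identity
\[
    \sum_{\sw \in \sW_{r'}} c_{\sw}{\bf 1}_{P_{\sw}}
    \;=\; \sum_{P \in \bP^{r'+1,0}} d_{P}{\bf 1}_{P}
\]
on carefully chosen parts of the finer partition $\bP^{r',0}$, and reading off first that $d_P \equiv 0$ for every $P$ appearing with non-zero $c_{\sw}$, and then that each $c_{\sw}$ itself is $0$.

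First I would fix an arbitrary $\sw \in \sW_{r'}$ and use the construction of $\widetilde{\sW}$ to write it as $\sw = (\sw^*,{\sf i})$ with ${\sf i}\ge 1$ and $\sw^* \in \widetilde{\sW}$ satisfying $r(\sw^*)=r'+1$; in particular $\sw^* \sim (P_{\sw^*},r'+1)$ with $P_{\sw^*} \in \bP^{r'+1,0}$. The fact that $(\sw^*,{\sf i})$ exists in $\widetilde{\sW}$ for some ${\sf i}\ge 1$ forces $(P_{\sw^*},r'+1)$ to have at least two children in $\bP^{r',0}$; in particular the child $(\sw^*,0)$ is in $\widetilde{\sW}$ as well. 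By construction $(\sw^*,0)$ ends with $0$, hence $(\sw^*,0)\notin \sW$, so $(\sw^*,0)\notin \sW_{r'}$.

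Next I would evaluate both sides of the identity on $P_{(\sw^*,0)}\in \bP^{r',0}$. Since parts of $\bP^{r',0}$ are pairwise disjoint, the left-hand side reduces to $c_{(\sw^*,0)}{\bf 1}_{P_{(\sw^*,0)}}$, but $(\sw^*,0)\notin \sW_{r'}$, so the left-hand side is $0$. Since $P_{(\sw^*,0)}\subseteq P_{\sw^*}$ and parts of $\bP^{r'+1,0}$ are pairwise disjoint, the right-hand side equals $d_{P_{\sw^*}}$. Therefore $d_{P_{\sw^*}}=0$. Evaluating on $P_{\sw}=P_{(\sw^*,{\sf i})}$ now gives left-hand side $c_{\sw}$ and right-hand side $d_{P_{\sw^*}}=0$, so $c_{\sw}=0$. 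Since $\sw\in \sW_{r'}$ was arbitrary, every coefficient vanishes.

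The main (and essentially only) subtlety is bookkeeping: verifying that the ancestor word $\sw^*$ really does have a $0$-labeled child in $\widetilde{\sW}$, and that this $0$-labeled child lies outside $\sW$. Both follow directly from the inductive identification in the previous subsection, so no delicate estimate is needed; the proof is a short combinatorial evaluation argument.
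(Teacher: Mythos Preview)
Your proposal is correct and takes essentially the same approach as the paper: both arguments exploit that for each parent word $\sw^*$ at level $r'+1$, the zero-suffix child $(\sw^*,0)$ lies in $\widetilde{\sW}\setminus\sW$, so evaluating the identity on $P_{(\sw^*,0)}$ forces the constant value on $P_{\sw^*}$ to be $0$, and then evaluating on $P_{(\sw^*,\mathsf{i})}$ for $\mathsf{i}\ge 1$ kills each $c_\sw$. The paper organizes this by looping over all parents $\sw_k$ of $\bP^{r'+1,0}$ at once, whereas you fix one $\sw\in\sW_{r'}$ at a time, but the combinatorial content is identical.
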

\begin{proof}
    Let $\sw_1,\dots, \sw_{k_0}$ be the words with $r(\sw_k)= r'+1$ and corresponding to each part of $\bP^{r'+1,0}$.  
    Then, the words that corresponds to pairs of the form $(P,r')$ with $P \in \bP^{r',0}$ are 
    $$  
        \big\{  (\sw_k, \mathsf{t} ) \big\}_{k \in [k_0], \mathsf{t} \in [0,\mathsf{t}_k]} 
    $$
    where $\mathsf{t}_k$ are non-negative integers.  
    Now, we express 
    \begin{align*}
        \sum_{\sw \in \sW_{r'}} c_{\sw} {\bf 1}_{P_\sw} 
    =& 
        \sum_{k \in [k_0]} \sum_{\mathsf{t} \in [\mathsf{t}_k]} c_{(\sw_{k}, {\sf t})} {\bf 1}_{P_{(\sw_k, {\sf t})}}. 
    \end{align*} 
    For each $k \in [k_0]$ and any $\theta \in P_{\sw_k}$, we have
$$  
        \sum_{k' \in [k_0]} \sum_{\mathsf{t} \in [\mathsf{t}_{k'}]} c_{(\sw_{k'}, {\sf t})} {\bf 1}_{P_{(\sw_{k'}, {\sf t})}}(\theta)
        = \sum_{\mathsf{t} \in [\mathsf{t}_k]} c_{(\sw_k, {\sf t})} {\bf 1}_{P_{(\sw_k, {\sf t})}}(\theta).
$$
    Therefore,  $\sum_{\sw \in \sW_{r'}} c_{\sw} {\bf 1}_{P_\sw}$ can be expressed as $\sum_{k\in [k_0]} c_{\sw_k} {\bf 1}_{P_{\sw_k}}$ if and only if $\sum_{\mathsf{t} \in [\mathsf{t}_k]} c_{(\sw, {\sf t})} {\bf 1}_{P_{(\sw, {\sf t})}}$ is a constant on $P_{\sw_k}$. 

    For each $k \in [k_0]$, let $\theta \in P_{(\sw_k,0)}$, then we have  
    $$
        \sum_{k' \in [k_0]} \sum_{\mathsf{t} \in [\mathsf{t}_{k'}]} c_{(\sw_{k'}, {\sf t})} {\bf 1}_{P_{(\sw_{k'}, {\sf t})}}(\theta)
    = \sum_{\mathsf{t} \in [\mathsf{t}_k]} c_{(\sw_k, {\sf t})} {\bf 1}_{P_{(\sw_k, {\sf t})}}(\theta)
    = 0, 
    $$
    which forces $c_{(\sw_k,{\sf t})} = 0$ for every ${\sf t} >0$ (if it exists).  Therefore, the proof is complete.  
\end{proof}

\begin{defi} \label{defi: MbasisB}
Let $ \mathfrak{B} := \{ \xi_\sw \}_{\sw \in \sW}$ be a collection of $q$ functions from $[q]$ to $\R$, defined as follows: 
\begin{enumerate}
    \item If $\sw=(\mathsf{1})$, $\xi_{\sw} = {\bf 1}_{P_\sw} = 1$. 
    \item If $\sw \neq (\mathsf{1})$,
    \begin{align*}
    \xi_{\sw}(\theta) := {\bf 1}_{P_{\sw}}(\theta) - \E_{Y\sim \pi}{\bf 1}_{P_{\sw}}(Y).
\end{align*}
\end{enumerate}
\end{defi}

\begin{rem} \label{rem: MbasisB}
    The remaining goal in this subsection is to show that $\mathfrak{B}$ is the desired basis described in Proposition \ref{prop: CVbasis}. We also remark that the first two properties stated in Proposition \ref{prop: CVbasis} are already satisfied with this construction:
    ${\rm argmax}_{\sw \in \sW}r(\sw) = ({\sf 1})$ with $\xi_{({\sf 1})} = {\bf 1}_{[q]}=1$;  
    $\xi_\sw(X_u)$ is a function of $X_v$ where $v= \fp^{r(\sw)}(u)$.  
\end{rem}

\begin{lemma} \label{lem: MbasisB0}
    The collection $\frak{B}$ forms a linear basis for functions from $[q]$ to $\R$. 
\end{lemma}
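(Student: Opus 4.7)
\textbf{Proof proposal for Lemma \ref{lem: MbasisB0}.} Since the space of functions $[q] \to \R$ has dimension $q$ and $|\mathfrak{B}| = |\sW| = q$, it suffices to establish linear independence. Suppose, toward a contradiction, that there exist constants $\{c_\sw\}_{\sw \in \sW}$, not all zero, such that $\sum_{\sw \in \sW} c_\sw \xi_\sw \equiv 0$.

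The first step is to peel off the word $(\mathsf{1})$. By construction, $\xi_{(\mathsf{1})} \equiv 1$, while every other $\xi_\sw$ with $\sw \neq (\mathsf{1})$ satisfies $\E_{Y \sim \pi} \xi_\sw(Y) = 0$. Taking expectations with respect to $\pi$ in the relation $\sum_{\sw} c_\sw \xi_\sw = 0$ forces $c_{(\mathsf{1})} = 0$. It remains to show that the reduced relation $\sum_{\sw \neq (\mathsf{1})} c_\sw \xi_\sw \equiv 0$ forces all remaining $c_\sw$ to vanish.

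The heart of the argument is a downward induction on the value $r(\sw)$, exploiting the chain $\bP^{0,0} \ge \bP^{1,0} \ge \dots \ge \bP^{r_0, 0}$. Let $r'$ be the \emph{smallest} integer with the property that some $\sw \in \sW_{r'} := \{\sw \in \sW : r(\sw) = r'\}$ has $c_\sw \neq 0$; such $r'$ exists by assumption and satisfies $r' < r_0$. I rearrange the identity as
\[
    \sum_{\sw \in \sW_{r'}} c_\sw \mathbf{1}_{P_\sw}
    \;=\; -\sum_{\substack{\sw \neq (\mathsf{1}) \\ r(\sw) > r'}} c_\sw \xi_\sw \;+\; \text{const},
\]
using $\xi_\sw = \mathbf{1}_{P_\sw} - \E \mathbf{1}_{P_\sw}(Y)$ for $\sw \neq (\mathsf{1})$. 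For each $\sw$ appearing on the right-hand side, $r(\sw) > r'$, so $P_\sw$ belongs to a partition coarser than or equal to $\bP^{r'+1, 0}$; hence $\mathbf{1}_{P_\sw}$ is a linear combination of the indicators $\mathbf{1}_P$ with $P \in \bP^{r'+1, 0}$, and the same is true of the constant terms (since $1 = \sum_{P \in \bP^{r'+1, 0}} \mathbf{1}_P$). Consequently the entire right-hand side is a linear combination of $\{\mathbf{1}_P\}_{P \in \bP^{r'+1, 0}}$.

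I can now invoke Lemma \ref{lem: linCombPart}: the left-hand side $\sum_{\sw \in \sW_{r'}} c_\sw \mathbf{1}_{P_\sw}$ is a combination of parts at level $r'$ that equals a combination of parts at level $r'+1$, forcing every $c_\sw$ with $r(\sw) = r'$ to be zero. This contradicts the minimal choice of $r'$, completing the proof. The only subtlety worth checking carefully is the bookkeeping in the first paragraph—namely, that Lemma \ref{lem: linCombPart} applies even when the right-hand side carries an additive constant—but this follows immediately by absorbing the constant into the sum $\sum_P \mathbf{1}_P$ over $\bP^{r'+1, 0}$, so no genuine obstacle arises.
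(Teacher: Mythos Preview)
Your proof is correct, but it takes a genuinely different route from the paper's. The paper argues \emph{spanning}: it shows by induction from $s=r_0$ down to $s=0$ that every indicator $\mathbf{1}_P$ with $P\in\bP^{s,0}$ lies in the span of $\{\mathbf{1}_{P_\sw}\}_{\sw\in\sW}$, hence in particular every $\mathbf{1}_{\{i\}}$ does; the key step writes $\mathbf{1}_{P_{(\sw',0)}} = \mathbf{1}_{P_{\sw'}} - \sum_{\mathsf{t}>0}\mathbf{1}_{P_{(\sw',\mathsf{t})}}$. Your argument instead proves \emph{linear independence} by taking the smallest level $r'$ carrying a nonzero coefficient and invoking Lemma~\ref{lem: linCombPart} to force a contradiction. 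Your approach is slightly more economical since Lemma~\ref{lem: linCombPart} is already available (the paper proves it anyway, for later use in Lemma~\ref{lem: PartitionNotConstant}); the paper's approach is more constructive, as it actually exhibits how each $\mathbf{1}_{\{i\}}$ decomposes in the basis.
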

\begin{proof}
   Since there are exactly $q$ functions, our goal is to show
    $$\R^{[q]} = {\rm span}( \{\xi_{\sw}\}_{\sw\in \sW}),$$ and the R.H.S. is the same as ${\rm span}( \{{\bf 1}_{P_\sw}\}_{\sw\in \sW})$. 
    It suffices to show for each $i \in [q]$, ${\bf 1}_{\{i\}}$ can be expressed as a linear combination of ${\bf 1}_{P_{\sw}}$ with $\sw\in \sW$.

   To prove this statement, we will use induction, showing that for $s$ from $r_0$ to $0$, 
   each ${\bf 1}_P$ with $P \in \bP^{s,0}$ can be expressed as a linear combination of   
   of ${\bf 1}_{P_{\sw}}$ with $\sw\in \sW$. Since $\bP^{0,0} = \big\{ \{1\},\dots, \{q\}\big\}$, the proof follows once we establish this inductive statement.  

   First, when $s=r$, since ${\bf 1}_{[q]}$ is the only part in $\bP^{r_0,0}$ and  $[q] = P_{(\mathsf{1})}$, the statement holds for $s=r_0$.  

   Now, suppose the inductive hypothesis holds for $s+1$ with $s <r_0$.  
   Pick any $P \in \bP^{s,0}$, let $\sw = (\sw', \mathsf{t})$ be the word associate with $(P,s)$. 
   If $\mathsf{t}=0$, then 
    $${\bf 1}_P = {\bf 1}_{P_{\sw'}} - \sum_{P''} {\bf 1}_{P''}$$ 
   where the sum is taken over all parts $P'' \in \bP^{s,0}\backslash \{P\}$ contained in $P_{\sw'}$. Each $P''$ in the summation (if it exists) must corresponds to a word of the form $(\sw',\mathsf{t}'')$
   with $\mathsf{t}''>0$, or equivalently $(\sw',\mathsf{t}'') \in \sW$. From the induction hypothesis, ${\bf 1}_{P_{\sw'}}$ is a linear combination of ${\bf 1}_{P_{\sw}}$ with $\sw\in \sW$. Therefore, we conclude that ${\bf 1}_P$ is also a linear combination of ${\bf 1}_{P_{\sw}}$ with $\sw\in \sW$.
   If $\mathsf{t}>0$, then $\sw \in \sW$, and the same conclusion follows immediately. 
   With no restriction on the choice of $P$, the induction hypothesis holds for $s$ as well. 

   Therefore, the lemma follows from induction.  
\end{proof}

\begin{lemma} \label{lem: PartitionNotConstant}
    For any given $0 \le r' < r$, suppose $ \sW_{r'}:=\{ \sw \in \sW \,:\, r(\sw)=r'\}$ is non-empty. Then, there exists a constant $C\ge 1$ (which could depends on $M$) such that the following holds: Let $u,v \in T$ be two vertices such that $v= \fp^{r'}(u)$. We have
    \begin{align} \label{eq: PNC00}
      \E \var \Big[
        \E\big[\sum_{\sw \in \sW_{r'}} c_{\sw} \xi_{\sw}(X_u) \, \big\vert\, X_v\big] \, \Big\vert\, X_{\fp(v)}\Big] \ge \frac{1}{C} (\max_{\sw \in \sW_{r'}} |c_{\sw}|)^2.  
    \end{align}
\end{lemma}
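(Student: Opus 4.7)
The idea is to interpret the left hand side of \eqref{eq: PNC00} as a positive semi-definite quadratic form in $(c_\sw)_{\sw \in \sW_{r'}}$ and to show, using the partition structure of Section \ref{subsec: Partition} together with Lemma \ref{lem: linCombPart}, that this form is in fact positive definite on the finite-dimensional space $\R^{\sW_{r'}}$. A standard compactness argument will then provide the $M$-dependent constant $C$.

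\medskip

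Concretely, set
\[
    g(x_u) := \sum_{\sw \in \sW_{r'}} c_\sw \xi_\sw(x_u), \qquad
    h(x_v) := \E\big[ g(X_u) \,\big\vert\, X_v = x_v \big] .
\]
Since each $\xi_\sw$ with $\sw \in \sW_{r'}$ is a linear combination of $\mathbf{1}_{P_\sw}$ and a constant with $P_\sw \in \bP^{r',0}$, we have $h = M^{r'} g$ viewed as a vector in $\R^{[q]}$, and $h$ is a linear combination of $\mathbf{1}_P$ with $P \in \bP^{r',r'}$. Conditioning on $X_{\fp(v)}=j$ forces $X_v \in S_j$, so
\[
    \E\var\big[h(X_v) \,\big\vert\, X_{\fp(v)}\big]
    = \sum_{j \in [q]} \pi(j) \sum_{k \in S_j} M_{jk}\bigl(h(k)-\bar h_j\bigr)^2 ,
\]
which vanishes if and only if $h$ is constant on each $S_j$, i.e. $h$ lies in $V := \mathrm{span}\{\mathbf{1}_P : P \in \bP^{1,0}\}$.

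\medskip

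The main step is to show that $h \in V$ forces $c_\sw = 0$ for every $\sw \in \sW_{r'}$. Assume $h \in V$. Then $h$ is simultaneously constant on every $P \in \bP^{r',r'}$ and every $S_j$. By the definition of $\bP_{\rm SC}$, any two elements of the same part of $\bP^{r'+1,r'} = \bP_{\rm SC}(\bP^{r',r'})$ are joined by a chain of overlaps between the $S_j$'s and the parts of $\bP^{r',r'}$; tracing this chain shows $h$ is constant on each part of $\bP^{r'+1,r'}$, i.e.\ $h$ is a linear combination of $\mathbf{1}_Q$ for $Q \in \bP^{r'+1,r'}$. Invoking the bijection induced by $M^{r'}$ (Corollary \ref{cor: forwardBijection} applied iteratively), $g$ must then be a linear combination of $\mathbf{1}_P$ with $P \in \bP^{r'+1,0}$; absorbing the constant $\E g(X)$ (which is itself a multiple of $\mathbf{1}_{[q]} = \sum_{P \in \bP^{r'+1,0}}\mathbf{1}_P$) gives that $\sum_{\sw} c_\sw \mathbf{1}_{P_\sw}$ is such a combination. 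Lemma \ref{lem: linCombPart} then forces every $c_\sw$ to be zero.

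\medskip

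With this linear-algebraic step in hand, the map $L:(c_\sw)_{\sw \in \sW_{r'}} \mapsto h$ is a linear injection whose image has trivial intersection with $V$. The quadratic form
\[
    Q(c) := \E\var\big[ L(c)(X_v) \,\big\vert\, X_{\fp(v)} \big]
\]
is therefore positive definite on the finite-dimensional space $\R^{\sW_{r'}}$, so by compactness of the unit sphere there is a constant $c_0(M)>0$ with $Q(c) \ge c_0(M) \cdot \max_\sw |c_\sw|^2$, giving the Lemma with $C = 1/c_0(M)$. The main obstacle is the linear-algebraic step above, which is the place where the whole partition machinery of Section \ref{subsec: Partition} is genuinely used; once that is established, the conclusion is a soft finite-dimensional argument.
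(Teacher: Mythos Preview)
Your proposal is correct and follows essentially the same strategy as the paper: both show the quadratic form is strictly positive on the unit sphere of $\R^{\sW_{r'}}$ via the partition machinery and Lemma \ref{lem: linCombPart}, then invoke compactness for the constant $C$. The only cosmetic difference is that the paper shortcuts your ``main step'' by citing Lemma \ref{lem: partitionToConditionalConstant} directly: since $\sum_\sw c_\sw \mathbf{1}_{P_\sw}(X_u)$ is already a function of $X_v$, vanishing of the conditional variance means it is a function of $X_{\fp(v)}$, and Lemma \ref{lem: partitionToConditionalConstant} immediately identifies such functions as linear combinations of $\mathbf{1}_P$ with $P \in \bP^{r'+1,0}$ --- whereas you re-derive this conclusion by working at the level of $h$ and $\bP^{r'+1,r'}$ and then pulling back via Corollary \ref{cor: forwardBijection}.
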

\begin{proof}
 First, both sides of \eqref{eq: PNC00} scale by a factor $h^2$ if every term $c_{\sw}$ is multiplied by $h \in \R$. Hence, it suffices to establish the inequality in the case 
 $$\max_{\sw \in \sW_{r'}} |c_{\sw}| =1 .$$ 

 Given this, consider the set $ \big\{ ( c_{\sw} )_{\sw \in \sW_{r'}}  \,: \,  \max_{\sw \in \sW_{r'}} |c_{\sw}| =1 \big\} \subseteq \R^{\sW_{r'}}$. It is compact set  
 and 
\begin{align} \label{eq: PNC01}
  \E \var \Big[\E\big[\sum_{\sw \in \sW_{r'}} c_{\sw} \xi_{\sw}(X_u) \, \big\vert\, X_v\big] \, \Big\vert\, X_{\fp(v)}\Big]    
\end{align} 
 is continuous in $( c_{\sw} )_{\sw \in \sW_{r'}}$ (it is a polynomial of $c_\sw$). 
 By a compact argument one can estalbish the existence of $C \ge 1$ described in the lemma if 
 for every $( c_{\sw} )_{\sw \in \sW_{r'}}$ with $\max_{\sw \in \sW_{r'}} |c_{\sw}| =1$, 
$$
\E \var \Big[
        \E\big[\sum_{\sw \in \sW_{r'}} c_{\sw} \xi_{\sw}(X_u) \, \big\vert\, X_v\big] \, \Big\vert\, X_{\fp(v)}\Big] > 0.
$$

 We can simplify this by observing that    
 $$
 \sum_{\sw \in \sW_{r'}} c_{\sw} \xi_{\sw}
 = \sum_{\sw \in \sW_{r'}} c_{\sw} {\bf 1}_{P_\sw} + {\rm constant},
 $$
 and hence, 
 \begin{align} 
  \E \var \Big[\E\big[\sum_{\sw \in \sW_{r'}} c_{\sw} \xi_{\sw}(X_u) \, \big\vert\, X_v\big] \, \Big\vert\, X_{\fp(v)}\Big]    
  =& 
  \E \var \Big[\E\big[\sum_{\sw \in \sW_{r'}} c_{\sw} {\bf 1}_{P_{\sw}}(X_u) \, \big\vert\, X_v\big] \, \Big\vert\, X_{\fp(v)}\Big]    \\
  \nonumber
  =& \E \var \Big[\sum_{\sw \in \sW_{r'}} c_{\sw} {\bf 1}_{P_{\sw}}(X_u) \, \Big\vert\, X_{\fp(v)}\Big],
\end{align} 
where the second equality follows from that  
$\sum_{\sw \in \sW_{r'}} c_{\sw} {\bf 1}_{P_{\sw}}(X_u)$ is a function of $X_v$
by Lemma \ref{lem: partitionToConditionalConstant}.

Moreover, to show $\E \var \Big[\sum_{\sw \in \sW_{r'}} c_{\sw} {\bf 1}_{P_{\sw}}(X_u) \, \Big\vert\, X_{\fp(v)}\Big] > 0$, this is the same as showing $$\sum_{\sw \in \sW_{r'}} c_{\sw} {\bf 1}_{P_{\sw}}(X_u)$$ is not a function of $X_{\fp(v)}$. By Lemma \ref{lem: partitionToConditionalConstant}, this is equivalent to show 
$\sum_{\sw \in \sW_{r'}} c_{\sw} {\bf 1}_{P_{\sw}}$ is not a linear combination of ${\bf 1}_P$ for $P \in \bP^{r'+1}$, which was proven in Lemma \ref{lem: linCombPart}. Therefore, the proof is complete. 

\end{proof}

\begin{lemma} \label{lem: PartitionNotConstant2}
    For any given $0 \le r' < r$, suppose $ \sW_{<r'}:=\{ \sw \in \sW \,:\, r(\sw)<r'\}$ is non-empty. Then, there exists $C\ge 1$ so that the following holds: Let $u,v \in T$ be two nodes such that $v= \fp^{r'}(u)$. We have
    \begin{align} \label{eq: PNC200}
      \E \var \Big[
        \E\big[\sum_{\sw \in \sW_{<r'}} c_{\sw} \xi_{\sw}(X_u) \, \big\vert\, X_v\big] \, \Big\vert\, X_{\fp(v)}\Big] \le C (\max_{\sw \in \sW_{<r'}} |c_{\sw}|)^2.  
    \end{align}
\end{lemma}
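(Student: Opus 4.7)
\medskip

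\noindent\textbf{Proof plan for Lemma \ref{lem: PartitionNotConstant2}.}
Unlike the matching lower bound in Lemma \ref{lem: PartitionNotConstant} (which required showing that a certain combination is not measurable with respect to a coarser $\sigma$-algebra via the partition analysis of Lemma \ref{lem: partitionToConditionalConstant}), the upper bound here is purely soft: it comes from two applications of Jensen's inequality and the fact that each basis function $\xi_\sw$ is uniformly bounded. No information on the partitions $\bP^{r,s}$ is needed.

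Let $g(X_u) := \sum_{\sw \in \sW_{<r'}} c_{\sw} \xi_{\sw}(X_u)$ and set $M_0 := \max_{\sw \in \sW_{<r'}} |c_{\sw}|$. First, by the definition of conditional variance and then by Jensen's inequality applied to $\E[\,\cdot\,|X_v]$, I would chain the trivial bounds
\begin{align*}
    \E\var\Big[ \E[g(X_u) \mid X_v] \,\Big\vert\, X_{\fp(v)}\Big]
\le
    \E\Big[ \big(\E[g(X_u) \mid X_v]\big)^2 \Big]
\le
    \E\big[ g^2(X_u) \big].
\end{align*}
This collapses the problem to bounding $\E[g^2(X_u)]$ by a constant multiple of $M_0^2$.

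For the last step, recall from Definition \ref{defi: MbasisB} that for each $\sw \in \sW$ the function $\xi_\sw$ is of the form $\mathbf{1}_{P_\sw} - \E_{Y\sim\pi}\mathbf{1}_{P_\sw}(Y)$, so $\|\xi_\sw\|_\infty \le 2$ and in particular $\E[\xi_\sw^2(X_u)] \le 4$. Combining the elementary Cauchy--Schwarz inequality $(\sum_{i=1}^n a_i)^2 \le n \sum_{i=1}^n a_i^2$ with $|\sW_{<r'}| \le |\sW| = q$ yields
\begin{align*}
    \E\big[ g^2(X_u) \big]
\le
    |\sW_{<r'}| \sum_{\sw \in \sW_{<r'}} c_\sw^2 \,\E[\xi_\sw^2(X_u)]
\le
    4 q^2 \, M_0^2.
\end{align*}
Setting $C := 4q^2$ (which depends only on $M$) gives the claimed inequality, so there is no real obstacle; the only subtlety is merely observing that we do not need any structural property of the partition, since we are happy to be wasteful on an upper bound.
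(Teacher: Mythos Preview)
Your proof is correct and is in fact more direct than the paper's. The paper argues by homogeneity and continuity: since both sides of \eqref{eq: PNC200} scale as $h^2$ under $c_\sw \mapsto h c_\sw$, it suffices to show that whenever the left-hand side equals $1$ the vector $(c_\sw)$ lies outside some fixed neighbourhood of the origin, and this follows because the left-hand side is continuous and vanishes at $\vec{0}$. This gives the existence of $C$ but no explicit value.

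Your route --- bound the conditional variance by the full second moment via Jensen, then control $\E[g^2(X_u)]$ using the uniform bound $\|\xi_\sw\|_\infty \le 2$ from Definition~\ref{defi: MbasisB} and Cauchy--Schwarz --- is shorter and yields the explicit constant $C=4q^2$. Both arguments are ``soft'' in that neither uses the partition structure $\bP^{r,s}$, which is exactly your observation that an upper bound can afford to be wasteful. The paper's compactness argument has the minor advantage of being agnostic to the specific form of the $\xi_\sw$ (it would work for any finite basis), but in this concrete setting your approach is cleaner.
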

\begin{proof}
    The proof is more straightforward compared to the arguments presented in the proof of Lemma \ref{lem: PartitionNotConstant}. First,   
  both sides of \eqref{eq: PNC200} scale by a factor $h^2$ if we scaled each $c_{\sw}$ by $h \in \R$. Therefore, it suffices to establish the inequality when 
  $$ \E \var \Big[
        \E\big[\sum_{\sw \in \sW_{<r'}} c_{\sw} \xi_{\sw}(X_u) \, \big\vert\, X_v\big] \, \Big\vert\, X_{\fp(v)}\Big] = 1.$$

If there is no $ ( c_{\sw})_{\sw \in \sW_{<r'}}$ satisfying the above condition, then the proof is completed. 
Now we assume this set is not empty. Notice that  
$\E \var \Big[
        \E\big[\sum_{\sw \in \sW_{<r'}} c_{\sw} \xi_{\sw}(X_u) \, \big\vert\, X_v\big] \, \Big\vert\, X_{\fp(v)}\Big]$
is a continuous function of $ ( c_{\sw})_{\sw \in \sW_{<r'}}$ which takes value $0$ when $( c_{\sw})_{\sw \in \sW_{<r'}} = \vec{0}$.
Thus, there is an open ball $B \subseteq \R^{\sW_{<r'}}$ centered at $\vec{0}$ such that 
for $(c_{\sw})_{\sw \in \sW_{r'}} \in B$, 
$$\E \var \Big[
        \E\big[\sum_{\sw \in \sW_{<r'}} c_{\sw} \xi_{\sw}(X_u) \, \big\vert\, X_v\big] \, \Big\vert\, X_{\fp(v)}\Big] < 1. $$
On the other hand, by choosing $C$ sufficiently large, the set 
$$
\Big\{ (c_{\sw})_{\sw \in \sW_{r'}}\,:\, (\max_{\sw \in \sW_{<r'}} |c_{\sw}|)^2 \le 1/C \Big\},$$ which is the cube of side length $2/C^{1/2}$ centered at $\vec{0}$, 
is contained in $B$. Therefore, the lemma follows. 
\end{proof}

\begin{proof}[Proof of Proposition \ref{prop: CVbasis}]
    From Remark \ref{rem: MbasisB} and Lemma \ref{lem: MbasisB0}, 
    it remains to show $\mathfrak{B}$ satisfies the last 3 properties stated in the Proposition. 

    As for the third property, notice that the variance of $\sum_{\sw \in \sW}c_\sw \xi_w(X_u)$ is not zero as long as $c_\sw$ are not identically $0$ for $\sw \neq \sw_0$. Following the same arguments in the proof of Lemma \ref{lem: PartitionNotConstant}, the property follows if $C\ge 1$ is sufficiently large. 
    
    The last two follows by applying Lemma \ref{lem: PartitionNotConstant} and Lemma \ref{lem: PartitionNotConstant2} to every $0 \le r' < r$ and choosing the constant $C$ can be chosen to be the maximum of those constants $C$ from the two lemmas. 
    
\end{proof}

\subsection{Proof of Proposition \ref{prop: CVbasis3}}
\label{subsec: CVbasis4}
In this subsection, we consider soley degree $1$ polynomial of the leave values.  

\begin{defi} \label{defi: CVbasisFun}
    For any given $\rho' \in T$  and a degree-1 polynomial $f$ of $\{x_u\}_{u\in L_{\rho'}}$,  
    the function can be expressed uniquely in the form 
\begin{align}
   f(x) = \sum_{\sw \in \sW,\, u \in L_{\rho'}} c_{\sw,u} \xi_{\sw}(x_u)
\end{align}
where $\{\xi_\sw\}_{\sw\in \sW}$ is the basis introduced in Proposition \ref{prop: CVbasis}.  

For $u \in T_{\rho'}$, let $f_u(x) := \sum_{\sw \in \sW\,,\, v \in L_u} c_{\sw,v}\xi_{\sw}(x_v)$. Observe that from this definition, for each $0 \le l\le r$, 
\begin{align*}
   f(x) = \sum_{u \in T_{\rho'}\,:\, \h(u)=l} f_u(x).  
\end{align*}
Further, for $u\in T_{\rho'}\backslash L_{\rho'}$, let 
\begin{align*}
   c_{\sw,u} :=  \sum_{v \in L_u} c_{\sw,v}.
\end{align*}
\end{defi}

\begin{rem}
From the definition above, for each $\rho' \in T$ and degree-1 polynomial $f$ of variables $\{x_u\}_{u\in L_{\rho'}}$, we have 
\begin{align*}
\forall u \in T_{\rho'},\, \forall x \in \R^q,\
    (\E_uf_u)(x) = \sum_{\sw} c_{\sw,u}  \xi^{(l)}_{\sw}(x_u),
\end{align*}
where $\xi_{\sw}^{(l)}(\theta)$ is introduced in Remark \ref{rem: CVbasis}.
\end{rem}

\medskip

\begin{prop} \label{prop: CVbasis2}
There exists a constant $C = C(M,d)\ge 1$ so that the following holds:
Suppose 
\begin{align*}
    f(x) = \sum_{u \in L_{\rho'},\, \sw \in \sW} c_{\sw,u}\xi(x_u) 
\end{align*}
where $\rho' \in T$ is a node satisfying $\h(\rho')\le r_0$ 
and 
\begin{align*}
    c_{\sw,u} = c_{\sw,v} 
\end{align*}
for $u,v \in L_{\rho'}$ satisfying $\h(\rho(u,v)) \le r(\sw)$, where 
$\rho(u,v)$ is the lowest common ancestor of $u$ and $v$. Then, 
\begin{align*}
    \sum_{u\in L_{\rho'}} \var[ f_u(X)]
 \le  
    C R^3 \E \var \big[ f(X) \,\vert\, X_{\rho'}\big]
\end{align*}   
\end{prop}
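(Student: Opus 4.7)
Expand $f$ in the basis $\{\xi_\sw\}_{\sw\in\sW}$ via Definition~\ref{defi: CVbasisFun}, and for each $w\in T_{\rho'}$ define the aggregated coefficient $\hat c_{\sw,w}:=\sum_{v\in L_w}c_{\sw,v}$. The compatibility hypothesis forces $\hat c_{\sw,w}=|L_w|\,c_{\sw,[w]}$ whenever $r(\sw)\ge\h(w)$, where $c_{\sw,[w]}$ denotes the common value of $c_{\sw,v}$ on $v\in L_w$, and the key identity $(\E_wf_w)(X_w)=\sum_{\sw}\hat c_{\sw,w}\xi_\sw^{(\h(w))}(X_w)$ from Definition~\ref{defi: CVbasisFun} links the edge contributions to these coefficients.

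\medskip
For the LHS, Property~(3) of Proposition~\ref{prop: CVbasis} gives $\var[f_u(X)]\le C_1(\max_{\sw\ne\sw_0}|c_{\sw,u}|)^2$; bounding the max by a sum over levels $r'$ and regrouping the outer sum over $u\in L_{\rho'}$ by the ancestor $\fp^{r'}(u)\in D_{r'}(\rho')$ produces
\begin{equation*}
\sum_{u\in L_{\rho'}}\var[f_u(X)]\ \le\ C_1\sum_{r'=0}^{\h(\rho')-1}\sum_{w\in D_{r'}(\rho')}|L_w|\,a_{r',w}^2,\qquad a_{r',w}:=\max_{\sw:r(\sw)=r'}|c_{\sw,[w]}|,
\end{equation*}
plus analogous boundary contributions at levels $r'\ge\h(\rho')$ which one argues separately using that the coefficients are constant across $L_{\rho'}$ in that range.

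\medskip
For the RHS, since $f$ is degree $1$ in the leaf variables, Lemma~\ref{lem: generalizedDeg1VarDecomposition} with $k=0$ (the within-leaf term $\E\var[f_u\mid X_u]$ vanishes) and subtracting the $X_{\rho'}$-measurable part yields
\begin{equation*}
\E\var[f(X)\mid X_{\rho'}]\ =\ \sum_{w\in T_{\rho'}\setminus\{\rho'\}}\E\var[F_w(X_w)\mid X_{\fp(w)}],\qquad F_w:=\sum_{\sw}\hat c_{\sw,w}\,\xi_\sw^{(\h(w))}.
\end{equation*}
For each $w$ at height $r'<\h(\rho')$, Property~(4) of Proposition~\ref{prop: CVbasis} applied at $v=w$ gives the \emph{level-matching} lower bound $\E\var[F_w^{=}\mid X_{\fp(w)}]\ge C_P^{-1}|L_w|^2 a_{r',w}^2$ for the subsum $F_w^{=}:=\sum_{\sw:r(\sw)=r'}\hat c_{\sw,w}\xi_\sw^{(r')}$.

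\medskip
The main obstacle is disentangling $F_w^{=}$ from the full $F_w$: the tails with $r(\sw)<r'$ and $r(\sw)>r'$ produce cross-covariance terms in $\var[F_w\mid X_{\fp(w)}]$. To handle these, I plan to use Property~(5) of Proposition~\ref{prop: CVbasis} to upper bound the low-frequency tail, and to use the Markov mixing estimate~\eqref{eq: MCvarDecay} (combined with $\E_\pi\xi_\sw=0$ for $\sw\ne\sw_0$) together with the identity $\E\var[h(X_w)\mid X_{\fp(w)}]=\var_\pi h-\var_\pi(Mh)$ to control the high-frequency tail. Combining these pieces via an inequality of the form $(a+b+c)^2\ge\tfrac12 a^2-2(b^2+c^2)$ and telescoping across levels yields $\E\var[f\mid X_{\rho'}]\gtrsim\sum_{r'}\sum_{w\in D_{r'}(\rho')}|L_w|^2 a_{r',w}^2$. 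Comparing with the LHS upper bound, the ratio at each level is $1/|L_w|\le 1$; the tree-growth bound $|L_v|\le Rd^{\h(v)}$ combined with the geometric summation across depths accounts for the factor $R^3$ in the final constant.
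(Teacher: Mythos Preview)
Your overall setup is sound: the identity $F_w(X_w)=(\E_wf_w)(X_w)=\sum_\sw\hat c_{\sw,w}\xi_\sw^{(\h(w))}(X_w)$, the edge decomposition from Lemma~\ref{lem: generalizedDeg1VarDecomposition}, and the observation that the high-frequency piece $\sum_{r(\sw)>\h(w)}$ is $X_{\fp(w)}$-measurable and hence drops from the conditional variance are all correct. The pointwise inequality $\var[A+B]\ge\tfrac12\var[A]-2\var[B]$ is valid as well. The gap is in the telescoping step.

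Concretely: after summing over $w$, you need
\[
\frac{1}{2C_P}\sum_{r'}\sum_{w\in D_{r'}(\rho')}|L_w|^2a_{r',w}^2\;-\;2C_P\sum_{r'}\sum_{w\in D_{r'}(\rho')}\Bigl(\max_{\sw:r(\sw)<r'}|\hat c_{\sw,w}|\Bigr)^2
\]
to be bounded below by a positive multiple of the first sum. But $\hat c_{\sw,w}$ for $r(\sw)=r''<r'$ satisfies $|\hat c_{\sw,w}|\le\sum_{w'\in D_{r''}(w)}|L_{w'}|a_{r'',w'}$, and after Cauchy--Schwarz and summing over $r'>r''$ the negative side picks up a factor of order $C_P\,R\,d^{r_0}$ in front of $\sum_{r'',w'}|L_{w'}|^2a_{r'',w'}^2$. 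Since $C_P$ is a fixed constant determined by $M$, there is no reason that $\tfrac{1}{2C_P}>2C_P\,R\,d^{r_0}$; the constants simply do not close, and no amount of bookkeeping with $R^3$ on the other side repairs this, because the obstruction is a \emph{sign} problem, not a polynomial-in-$R$ loss.

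The paper avoids this by not summing over levels at all. Lemma~\ref{lem: CVinduction} records the same dichotomy you are using implicitly---either $\E\var[F_w\mid X_{\fp(w)}]\gtrsim t^2$ or the low-frequency coefficients at $w$ are $\gtrsim t$---but then \emph{follows the second branch down}: it produces a chain $(l_k,t_k,u_k)$ with $l_0>l_1>\cdots$, at each step either terminating with a single edge-variance lower bound or passing to a descendant at a strictly lower level with $t_{k+1}\ge (\text{const}/Rd^{l_k-l_{k+1}})\,t_k$. The chain has length at most $r_0$, so the accumulated loss is a product of at most $r_0$ bounded factors. One edge variance bounded below by $\text{const}\cdot t_0^2/R^{2r_0}$ is then compared to the crude upper bound $\sum_u\var[f_u]\le C\,|L_{\rho'}|\,t_0^2$. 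This chain argument is what you are missing; your global telescoping cannot replace it because Properties~(4) and~(5) come with the same constant $C_P$ on opposite sides.
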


If Proposition \ref{prop: CVbasis2} is proven, then Proposition \ref{prop: CVbasis3} follows as a corollary:
\begin{proof}[ Proof of Proposition \ref{prop: CVbasis3}]

\underline{Reduction to $\h(\rho')\le r_0$}:
Without loss of generality, it is sufficient to consider degree $1$ functions of $L$, rather than degree $1$ 
functions of variables in $D_k(u)$ for some $u$ in the tree and $0 \le k \le \h(u)$. 

Recall that 
$$
    D_{r_0}(\rho) = \{ w \in T\,:\, \h(w) = r_0\}. 
$$
We know that we can express $f(x) = \sum_{w \in D_{r_0}(\rho)} f_w(x)$ so that each of them is a degree-1 polynomial with variables $\{x_u\}_{u \in L_w}$. 

Together with the variance decomposition for degree-1 polynomials (See Lemma \ref{lem: generalizedDeg1VarDecomposition})
$$\var[f(X)] \ge \sum_{w \in D_{r_0}(\rho)} \E\var[f_w(X_w)\,|\,X_{w}],$$
it suffices to prove the same statement for degree-1 polynomials of $x_u$ with $u \in L_{\rho'}$ for $\rho'$ satisfying $\h(\rho')\le r_0$. 

Now, we fix such $\rho'$ and consider  
$$
    f(x) = \sum_{\sw, u \in L_{\rho'}}  c_{\sw,u}\xi_\sw(x_u). 
$$

\underline{Averaging the Coefficients}:
For each $\sw \in \sW$ and for each $u \in D_{r(\sw)}(\rho')$,  
we know that for any $v_1,v_2 \in L_{u}$, 
$$
    \xi_\sw(X_{v_1}) = \xi_\sw(X_{v_2})
$$ 
almost surely. As a consequcne, we have 
$$
    \sum_{v \in L_u} c_{\sw,v} \xi_\sw(X_v)  
=
    \sum_{v \in L_u} \frac{ \sum_{v \in L_u} c_{\sw,v}}{|L_u|} \xi_\sw(X_v)  
$$  
almost surely. Now, we repeat this averaging process for each $\sw \in \sW$ and for each $u \in D_{r(\sw)}(\rho')$. We denote the resulting function by $\tilde f$. While $\tilde f$ and $f$ may not be the same function, $\tilde f(X) = f(X)$ almost surely.  
On the other hand, $\tilde f$ is a function which satisfies the condition in Proposition \ref{prop: CVbasis2}. Following from the proposition, we have
$$
        \sum_{u \in L} \E \var[\tilde f_u(X)] \le CR^3 \E \var[\tilde f(X)\,|\, X_{\rho'}] = CR^3\var[f(X)\,|\, X_{\rho'}].
$$ 
The proof is complete. 
\end{proof}

Let us begin with an intermediate step toward the proof of the Proposition \ref{prop: CVbasis2}.

\begin{lemma} \label{lem: CVinduction}
Suppose $f$ is a function described in Definition  \ref{defi: CVbasisFun}.  
For any given $1\le l < r$ such that $ \sW_l :=\{ \sw\in \sW\,:\, r(\sw)=l\}$ is non-empty. Let $u \in T_{\rho'}$ with $\h(u)=r(\sw)$, suppose 
$$
    t = \max_{\sw \in \sW_l} |c_{\sw,u}| >0. 
$$
Then one of the following statement holds: 
\begin{itemize}
    \item Either $ \E \var\big[ (\E_uf_u)(X_u) \,\vert\, X_{\fp(u)} \big] \ge  \frac{\pi_{\min}}{2C_0} t^2$, or 
    \item $\max_{\sw \in \sW_{<l}} |c_{\sw,u}| \ge \frac{\sqrt{\pi_{\min}}}{2C_0}t $.
\end{itemize}
Here, $C_0 \ge 1$ is the constant $C$ described in Proposition \ref{prop: CVbasis} and 
$\pi_{\min} := \min_{\theta \in [q]} \pi(\theta)$. 

Further, in the case when $l=0$, then we simply have 
 $ \E \var\big[ (\E_uf_u)(X_u) \,\vert\, X_{\fp(u)} \big] \ge  \frac{1}{C_0} t^2$. 
\end{lemma}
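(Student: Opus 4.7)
The first move is to write $(\E_u f_u)(X)$ explicitly. Since $f_u$ involves only $X_v$ for $v \in L_u$, the Markov property gives $(\E_u f_u)(x) = \E[f_u(X) \mid X_u = x_u]$, which depends only on $x_u$. Using the broadcast identity $\E[\xi_\sw(X_v) \mid X_u] = (M^l \xi_\sw)(X_u)$ for $v \in L_u$ and $l = \h(u)$, this becomes
\[
A(X_u) \;:=\; (\E_u f_u)(X_u) \;=\; \sum_{\sw \in \sW} c_{\sw,u}\, (M^l \xi_\sw)(X_u),
\qquad c_{\sw,u} = \sum_{v \in L_u} c_{\sw,v}.
\]
I then partition the sum by $r(\sw)$ as $A = A_{=l} + A_{<l} + A_{>l}$, and handle each piece with properties 4 and 5 of Proposition \ref{prop: CVbasis}.

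For $A_{=l}$, the summands $\xi_\sw(X_{v'})$ for $\sw \in \sW_l$ and $v' \in L_u$ are already a deterministic function of $X_u$, namely $(M^l \xi_\sw)(X_u)$, so $A_{=l}(X_u) = \sum_{\sw \in \sW_l} c_{\sw,u}\,\xi_\sw(X_{v'})$. Applying property 4 of Proposition \ref{prop: CVbasis} with $r' = l$ (reading the $v$ of the proposition as our $u$, and the $u$ of the proposition as any $v' \in L_u$) yields $\E\var[A_{=l}(X_u) \mid X_{\fp(u)}] \geq C_0^{-1} t^2$. Dually, property 5 with the same $r'$ gives $\E\var[A_{<l}(X_u) \mid X_{\fp(u)}] \leq C_0 s^2$. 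For $A_{>l}$, each $M^l \xi_\sw$ with $r(\sw) > l$ is an indicator-minus-mean of a part of $\bP^{r(\sw), l}$, which is coarser than $\bP^{l, l}$; rewriting $A_{=l} + A_{>l}$ in the $\bP^{l, l}$-basis and re-applying property 4 shows that its conditional variance is still bounded below by $C_0^{-1} t^2$ up to a factor depending on $\pi_{\min}$, which enters via the $\ell_\infty$--$\ell_2$ comparisons of Lemma \ref{lem:Mbasis} when quantifying how much the $\sW_{>l}$ contributions can dilute the $\sW_l$ coefficient of maximal size.

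Combining these estimates via $(a+b)^2 \geq \tfrac{1}{2} a^2 - b^2$ applied to the centered pieces yields
\[
\E\var[A(X_u) \mid X_{\fp(u)}] \;\geq\; \frac{\pi_{\min}}{2 C_0} t^2 - C_0 s^2.
\]
If $s \geq \frac{\sqrt{\pi_{\min}}}{2 C_0} t$, the second alternative of the lemma holds. Otherwise $C_0 s^2 < \frac{\pi_{\min}}{4 C_0} t^2$, and after a mild rebalancing of constants the right-hand side exceeds $\frac{\pi_{\min}}{2 C_0} t^2$, giving the first alternative. The case $l = 0$ is cleaner: there is no $A_{<l}$ or $A_{>l}$ piece, so $\E_u f_u = f_u$ and property 4 applies directly, yielding the sharper bound $C_0^{-1} t^2$ with no $\pi_{\min}$ factor.

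The main obstacle is the $A_{>l}$ contribution. It has no direct bound from Proposition \ref{prop: CVbasis}, and the maximal-coefficient $\sw^* \in \sW_l$ realizing $t$ could in principle be partially cancelled by high-$r(\sw)$ terms when rewritten in the $\bP^{l,l}$-basis. Showing that such cancellation can cost at most a factor $\pi_{\min}^{1/2}$ requires carefully tracking which $Q^{(l)}_\sw \in \bP^{r(\sw),l}$ contain the $\bP^{l,l}$-part attached to $\sw^*$ and using the stability estimate in Lemma \ref{lem:Mbasis} to convert $\ell_2$-sized perturbations into $\ell_\infty$-sized coefficient bounds; this is exactly where the $\pi_{\min}$ in the stated constants originates.
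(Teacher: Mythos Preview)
Your decomposition $A = A_{=l} + A_{<l} + A_{>l}$ matches the paper, and your use of properties 4 and 5 of Proposition~\ref{prop: CVbasis} for the first two pieces is correct. The gap is in your treatment of $A_{>l}$, which you flag as ``the main obstacle'' and attempt to handle by rewriting $A_{=l}+A_{>l}$ in a coarser partition basis and invoking $\ell_\infty$--$\ell_2$ comparisons from Lemma~\ref{lem:Mbasis}. This argument is not carried out, and the worry about cancellation is in fact a non-problem: the key observation you are missing is that for every $\sw$ with $r(\sw)>l$, the function $\xi_\sw^{(l)}(X_u)=(M^l\xi_\sw)(X_u)$ is a deterministic function of $X_{\mathfrak p^{r(\sw)-l}(u)}$, which is an ancestor of $\mathfrak p(u)$. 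Consequently $A_{>l}(X_u)$ is \emph{constant} once $X_{\mathfrak p(u)}$ is fixed, and it simply disappears from the conditional variance:
\[
\E\var\big[(\E_u f_u)(X_u)\mid X_{\mathfrak p(u)}\big]
=\E\var\big[A_{=l}(X_u)+A_{<l}(X_u)\mid X_{\mathfrak p(u)}\big].
\]
With this, the proof reduces to a clean two-term comparison: pick $\theta$ with $\var[A_{=l}\mid X_{\mathfrak p(u)}=\theta]\ge t^2/C_0$, and split on whether $\var[A_{<l}\mid X_{\mathfrak p(u)}=\theta]$ is below or above $t^2/(4C_0)$. In the first case the triangle inequality for standard deviations gives the variance lower bound; in the second case property~5 forces $\max_{\sw\in\sW_{<l}}|c_{\sw,u}|$ to be large.

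Two further corrections. First, the factor $\pi_{\min}$ does not come from any dilution by $\sW_{>l}$ terms; it enters when passing from the pointwise bound at the specific $\theta$ above to the expectation via $\E\var[\cdot\mid X_{\mathfrak p(u)}]\ge \pi(\theta)\,\var[\cdot\mid X_{\mathfrak p(u)}=\theta]$. Second, your claim that for $l=0$ ``there is no $A_{>l}$ piece'' is false: there are plenty of $\sw$ with $r(\sw)>0$. What is true is that $A_{>0}$ is again constant given $X_{\mathfrak p(u)}$, and $A_{<0}$ is empty, so property~4 applies directly to give the bound $t^2/C_0$ with no $\pi_{\min}$.
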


\begin{proof}

We decompose $(\E_uf_u)(x)$ into three components: 
$$
    (\E_uf_u)(x) 
= 
    \sum_{\sw\,:r(\sw)<l} c_{w,u} \xi_w^{(l)}(x_u)
+ 
    \sum_{\sw\,:r(\sw)=l} c_{w,u} \xi_w^{(l)}(x_u)
+
    \sum_{\sw\,:r(\sw)>l} c_{w,u} \xi_w^{(l)}(x_u),
$$
where $\xi_{\sw}^{(l)}$ is introduced in Remark \ref{rem: CVbasis}. 

For each $\sw$ with $r(\sw)>l$,  $\xi^{(l)}_w(X_u)$ is a function of $X_{v}$ 
with $v = \fp^{r(\sw)-l}(u)$. Hence, the last component  $\sum_{\sw\,:r(\sw)>l} c_{w,u} \xi_w^{(l)}(x_u)$ is a constant function whenever we condition on $X_{\fp(u)}$. Consequently, 

\begin{align} \label{eq: basis00}
 \E \var\big[ (\E_uf_u)(X_u) \,\vert\, X_{\fp(u)} \big]
=
    \E \var \Big[
    \sum_{\sw\,:r(\sw)<l} c_{w,u} \xi_w^{(l)}(X_u)
+ 
    \sum_{\sw\,:r(\sw)=l} c_{w,u} \xi_w^{(l)}(X_u)
    \,\Big\vert\, X_{\fp(u)}
\Big]. 
\end{align}
From Proposition \ref{prop: CVbasis}, we know that 
\begin{align} \label{eq: basis01}
 \E \var \Big[
    \sum_{\sw\,:r(\sw)=l} c_{w,u} \xi_w^{(l)}(X_u)
    \,\Big\vert\, X_{\fp(u)} \Big] \ge \frac{t^2}{C_0},   
\end{align}
where the constant $C_0$ is the constant $C$ stated in the Proposition. 
Intuitively, from  \eqref{eq: basis01} it should be clear that if the R.H.S. of \eqref{eq: basis00} is small, then 
$\E \var \Big[
    \sum_{\sw\,:r(\sw)<l} c_{w,u} \xi_w^{(l)}(X_u)
    \,\Big\vert\, X_{\fp(u)} \Big]$
cannot be small. Let us derive this with a coarse estimate. 

By \eqref{eq: basis01}, we know there exists $\theta \in [q]$ such that 
\begin{align*} 
 \var \Big[
    \sum_{\sw\,:r(\sw)=l} c_{w,u} \xi_w^{(l)}(X_u)
    \,\Big\vert\, X_{\fp(u)}=\theta \Big] \ge \frac{t^2}{C_0}.
\end{align*}

Now, suppose  
$\var \Big[
    \sum_{\sw\,:r(\sw)<l} c_{w,u} \xi_w^{(l)}(X_u)
    \,\Big\vert\, X_{\fp(u)} = \theta \Big] < \frac{t^2}{4C_0}$. 
We could apply triangle inequality  to get 
\begin{align*}
    & \sqrt{ \var \Big[
    \sum_{\sw\,:r(\sw)<l} c_{w,u} \xi_w^{(l)}(X_u)
+ 
    \sum_{\sw\,:r(\sw)=l} c_{w,u} \xi_w^{(l)}(X_u)
    \,\Big\vert\, X_{\fp(u)} = \theta 
\Big]} \\
\ge & \sqrt{\var \Big[
    \sum_{\sw\,:r(\sw)=l} c_{w,u} \xi_w^{(l)}(X_u)
    \,\Big\vert\, X_{\fp(u)}=\theta \Big] }
    - \sqrt{\var \Big[
    \sum_{\sw\,:r(\sw)<l} c_{w,u} \xi_w^{(l)}(X_u)
    \,\Big\vert\, X_{\fp(u)} = \theta \Big]} \\
\ge & 
       \frac{t}{2C_0^{1/2}},
\end{align*}
and together with \eqref{eq: basis00},
\begin{align*}
    \E \var\big[ (\E_uf_u)(X_u) \,\vert\, X_{\fp(u)} \big]
\ge &
    \pi(\theta)\var \Big[
    \sum_{\sw\,:r(\sw)<l} c_{w,u} \xi_w^{(l)}(X_u)
+ 
    \sum_{\sw\,:r(\sw)=l} c_{w,u} \xi_w^{(l)}(X_u)
    \,\Big\vert\, X_{\fp(u)}\Big] \\
\ge&
    \frac{\pi_{\min}}{2C_0}t^2.  
\end{align*}
Consider the opposite case where  
$\var \Big[
    \sum_{\sw\,:r(\sw)<l} c_{w,u} \xi_w^{(l)}(X_u)
    \,\Big\vert\, X_{\fp(u)} = \theta \Big] \ge \frac{t^2}{4C_0}$.
First,
 $$\E\var \Big[
    \sum_{\sw\,:r(\sw)<l} c_{w,u} \xi_w^{(l)}(X_u)
    \,\Big\vert\, X_{\fp(u)} = \theta \Big] \ge \frac{\pi_{\min}}{4C_0}t^2.$$    
By applying the 4th property stated in Proposition \ref{prop: CVbasis}, 
we conclude that 
$$ \max_{\sw\, :r(\sw)<l } |c_{\sw,u}| \ge \frac{\sqrt{\pi_{\min}}}{2C_0}t.$$

In the case when $l=0$. 
The argument is simpler, which follows directly from \eqref{eq: basis00}
and the Proposition \ref{prop: CVbasis}.  

\end{proof} 

\begin{proof} [Proof of Proposition \ref{prop: CVbasis2}]
   Let $t_0 = \max_{\sw,u \in L_{\rho'}} |c_{\sw,u}|$ 
   and let $\sw' \in \sW$ and $u' \in L_{\rho'}$ be the pair such that 
   $t_0 = |c_{\sw',u'}|$. Further, let $l_0 = r(\sw')$ and $u_0 = \fp^{l}(u')$.  

   If $l_0 >0$, then we have 
   $$|c_{\sw',u_0}| = \sum_{v \in L_u} |c_{\sw',v}| \ge |c_{\sw',u'}| = t_0,$$
    where the first equality follows from the assumptions of the coefficients. 
    We will try to construct a sequence of triples $(l_k,t_k,u_k)$ indexed by $k$ such that $(l_k)_{k \ge 0}$ is strictly decreasing such that 
    $\sW_{l_k}\neq \emptyset$, $\h(u_k)=l_k$, and $t_k = \max_{\sw \in \sW_{l_k}} |c_{\sw,u_k}|$.

   Suppose we have a triple $(l_k,t_k,u_k)$ such that $l_k \ge 0$, $\h(u_k)=l_k$, $\sW_{l_k}\neq \emptyset$, and $ t_k = \max_{\sw \in \sW_{l_k}} |c_{\sw,u_k}|$ for some index $k\ge 0$. 

   We apply Lemma \ref{lem: CVinduction} to get  
   \begin{enumerate}
       \item Either $\E\var \big[ (\E_{u_k}f_{u_k})(X_{u_k}) \,\big\vert\, X_{\fp(u_k)} \big]
       \ge \frac{\pi_{\min}}{2C_0} t_k^2$, or 
       \item 
       $\max_{\sw \in \sW_{<\ell_k}} |c_{\sw,u_k}| \ge \frac{\sqrt{\pi_{\min}}}{2C_0}t_k $. (This case cannot happen if $\ell_k=0$.)
   \end{enumerate}
   If the first case is true, then we terminate the process of finding next triple $(\ell_{k+1}, t_{k+1}, u_{k+1})$.
   
   If the second case is true, let $\sw'' \in \sW$ be the vertex such that $|c_{\sw'',u_k}| = \max_{\sw \in \sW_{<\ell_k}} |c_{\sw,u_k}|$ and set $\ell_{k+1} = r(w'')$. Since 
$$
    c_{\sw,u_k}  = \sum_{u \in D_{\ell_{k+1}}(u_k)} c_{\sw,u},
$$
   we have 
    \begin{align} \label{eq: CVbasis200}
             t_{k+1}:=\max_{u \in T_{u_k}\,:\, \h(u) = \ell_{k+1}} |c_{\sw,u}|
        \ge  \frac{1}{Rd^{\ell_k-\ell_{k+1}}}|c_{\sw,u_k}|
        = \frac{1}{Rd^{\ell_k-\ell_{k+1}}} t_k .       
    \end{align}
    
    Further, let $u_{k+1} =  {\rm argmax}_{u \in T_{u_k}\,:\, \h(u) = \ell_{k+1}} |c_{\sw,u}|$.

    In this way, we produce a new triple satisfying the same assumption as  $(l_k,t_k,u_k)$ described above. 

    Since $l_0 > l_1 > l_2 \dots$ is a monotone decreasing chain of non-negative number, it means this argument must terminated in $r_0$ steps. Now, suppose it terminates at the $k$-th step, resulting a triple $(l_k,t_k,u_k)$, and 
    $$
        \E\var \big[ (\E_{u_k}f_{u_k})(X_{u_k}) \,\big\vert\, X_{\fp(u_k)} \big]
    \ge 
        \frac{\pi_{\rm min}}{2C_0} t_k^2
    \overset{\eqref{eq: CVbasis200}}{\ge} 
        \frac{\pi_{\min}}{2C_0}\big( \frac{\sqrt{\pi_{\min}}}{2C_0} Rd\big)^{-2r_0} t_0^2.
    $$

    On the other hand, from Proposition \ref{prop: CVbasis}, 
    $$
        \sum_{u \in L_{\rho'}} \var[f_u(X_u)]
        \le C Rd^{r_0}  t_0^2.
    $$
    Therefore, we conclude that 

    \begin{align*}
        \sum_{u \in L_{\rho'}} \var[f_u(X_u)]
    \le
         C(M,d)R^{2r_0+1} \E \var\big[ f(X) \,\vert\, X_{\rho'}  \big].
    \end{align*}

\end{proof}

\bigskip

\section{Properties of Markov Chains and Galton-Watson Tree }
\label{sec: MCGWT}
\subsection{Markov Chains}
\begin{proof}[Proof of Lemma \ref{lem: MCvarDecaySimple}]
   Let $\theta_1 = \text{argmin}_{\theta \in [q]} h(\theta)$ 
   and $\theta_2 = \text{argmax}_{\theta \in [q]} h(\theta)$. (In the case of a tie, we may choose any of the minimizers or maximizers.) 
   First, we have 
   $$
        \var[h(X_u)] \le (h(\theta_2) - h(\theta_1))^2.
   $$  
    Next, for any $\beta \in [q]$, we have   
    $$
        \max \left\{ 
            |\E[h(X_u)\,\vert\, X_{\fp(u)}=\beta ] - h(\theta_1)|,\, 
            |\E[h(X_u)\,\vert\, X_{\fp(u)}=\beta ] - h(\theta_2)|
        \right\} \ge \frac{1}{2}|h(\theta_2) - h(\theta_1)|.
    $$
    Let $i \in \{1,2\}$ be the index such that $|\E[h(X_u)\,\vert\, X_{\fp(u)}=\beta ] - h(\theta_i)| \ge \frac{1}{2}|h(\theta_2) - h(\theta_1)|$, 
    and we will use this together with $c_M>0$ to give a lower bound on the conditional variance:  
    \begin{align*}
        \var [h(X_u)\,\vert\, X_{\fp(u)} = \beta] 
    \ge &
        \big(\E[h(X_u)\,\vert\, X_{\fp(u)}=\beta ] - h(\theta_i)\big)^2 
        \mathbb{P}\{ X_u = \theta_i \, \vert \, X_{\fp(u)} = \beta\}\\
    \ge &
        \frac{1}{4} (h(\theta_2) - h(\theta_1))^2 c_M. 
    \end{align*}
    Since it holds for every $\beta \in [q]$, we conclude that 
    $$
        \E\var [h(X_u)\,\vert\, X_{\fp(u)} ]
    \ge 
        \frac{1}{4} (h(\theta_2) - h(\theta_1))^2 c_M
    \ge 
        \frac{c_M}{4} \var[h(X_u)].
    $$ 
\end{proof}

\subsubsection{Proof of Lemma \ref{lem:Mbasis}}
Recall that real Jordon Canonical form of $M$ is a $q \times q$ diagonal block matrix
	$ {\bf J} = {\rm diag}({\bf J}_0, {\bf J}_1, \dots, {\bf J}_{s_1})$
for some $s_1 \le q$. 

Since $M$ is ergodic, the eigenspace corresponds to eigenvalue $1$ is 1-dimensional. Thus, 
We may assume ${\bf J}_0 = [1]$ is the unique Jordan block corresponds to eigenvalue $1$. 

For each $s \in [1,s_1]$, ${\bf J}_s$ is either a $m_s \times m_s$ matrix of the form  
$	{\bf J}_s 
=
	\begin{bmatrix}
	\lambda_s & 1 & & \\
	& \lambda_s & \ddots & \\
	&&  \ddots & 1 \\
	&&& \lambda_s 
	\end{bmatrix} $
for some $\lambda_s \in \mathbb{R}$ satisfying $|\lambda_s|\le \lambda$;
or a $J_s$ is a $2m_s \times 2m_s$ matrix of the form 
$   	{\bf J}_s 
=
	\begin{bmatrix}
	\lambda_s R_s & I_2 & & \\
	& \lambda_s R_s & \ddots & \\
	&&  \ddots & I_2 \\
	&&& \lambda_s R_s
	\end{bmatrix}$,
where $|\lambda_s|\le \lambda$, and 
$ R_s = \begin{bmatrix} \cos(\theta_s) & \sin(\theta_s) \\ -\cos(\theta_s) & \sin(\theta_s) \end{bmatrix}$ is a rotation matrix in $\mathbb{R}^2$ with parameter $\theta_s \in (0,2\pi)$. 
In the later case, it corresponds to the conjugate pair of eigenvalues 
$\lambda_s(\cos(\theta_{s}) \pm {\bf i} \sin(\theta_{s}))$

According to Jordon Decomposition, there exists an invertible matrix $P$ such 
that $M = P{\bf J} P^{-1}$. 

For $i \in [1,q-1]$, let $\phi_i$ be the $i+1$th column of $P$. Because $P$ is invertible,  $\{\phi_i\}_{i \in [q]}$ form a linear basis of functions from $[q]$ to $\mathbb{R}$. 

Since $\pi$ is a left-eigenvector of $M$ with eigenvalue $1$, we have 
$$ \mathbb{E}_{Y \sim \pi} \phi_i(Y) = \pi^\top \phi_i = 0,$$
because $\phi_i$ is a sum of up to two generalized eigenvectors with eigenvalues not equal to $1$. 

(A generalized eigenvector $v$ with eigenvalue $\lambda'$ of $M$ is a vector 
which satisfies $(M-\lambda')^k v = \vec{0}$ for some positive integer $k$. 
Whenever $\lambda' \neq 1$, 
$$	\pi^\top v 
= 	(\frac{1}{(1-\lambda')^k} \pi^\top (M-\lambda')^k) v 
= 	\frac{1}{(1-\lambda')^k} \pi^\top \cdot \vec{0}
=	0.$$ 
If index $i$ corresponds to ${\bf J}_s$ which associated with a real eigenvalue,
then $\phi_i$ is a generalized eigenvector with eigenvalue $\lambda_s$; 
And if ${\bf J}_s$ associates with a complex conjugate pair or eigenvalues, 
then $\phi_i$ is a sum of two generalized eigenvectors with eigenvalues 
$\lambda_s(\cos(\theta_{s}) + {\bf i} \sin(\theta_{s}))$ 
and $\lambda_s(\cos(\theta_{s}) - {\bf i} \sin(\theta_{s}))$, respectively. )
As a consequence, every function $f : [q] \mapsto \mathbb{R}$ can be uniquely decomposed in the form 
\begin{align}
 f =  \mathbb{E}f + \sum_{i \in [q-1]} \delta_i \phi_i. 
\end{align}

With this unique decomposition, let us define a semi-norm
$$
    \|f\|_M = \max_{i \in [q-1]} |\delta_i|. 
$$

\begin{lemma}
\label{lem: varMnorm}
There exists $C>0$ so that for every $f : [q] \to \mathbb{R}$, 
\begin{align}
	C^{-1} \|f\|_M^2 \le {\rm Var}_{Y\sim \pi}(f(Y)) \le C \|f\|^2_M.
\end{align}
\end{lemma}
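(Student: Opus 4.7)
The plan is to prove this as a standard equivalence of two norms on the finite-dimensional vector space
\[
V := \big\{ f : [q] \to \mathbb{R} \,:\, \mathbb{E}_{Y\sim\pi} f(Y) = 0 \big\},
\]
which has dimension $q-1$. Observe first that both quantities $\|f\|_M$ and $\sqrt{\mathrm{Var}_{Y\sim\pi} f(Y)}$ depend only on $f - \mathbb{E}_{Y\sim\pi} f(Y)$, so it suffices to restrict to $V$ and prove the equivalence there.

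Next, I would verify that each of $\|\cdot\|_M$ and $\sqrt{\mathrm{Var}_{Y\sim\pi}(\cdot)}$ is in fact a genuine norm on $V$, not merely a seminorm. For $\|\cdot\|_M$: writing $f = \sum_{i\in[q-1]} \delta_i \phi_i$, if $\|f\|_M = 0$ then all $\delta_i$ vanish, and since $\{\phi_i\}_{i\in[q-1]}$ together with the constant function forms a basis, $f\equiv 0$. For the variance: since $M$ is irreducible and aperiodic, the stationary distribution $\pi$ satisfies $\pi(\theta) > 0$ for every $\theta\in[q]$; hence $\mathrm{Var}_{Y\sim\pi}(f(Y)) = 0$ forces $f$ to be constant on all of $[q]$, and combined with $\mathbb{E}_{Y\sim\pi} f(Y) = 0$ this gives $f\equiv 0$. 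Both maps are clearly continuous in the coefficients $(\delta_1,\ldots,\delta_{q-1})$ (one is a max of absolute values; the other is a nonnegative quadratic form in the $\delta_i$).

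Now I would invoke the standard fact that any two norms on a finite-dimensional real vector space are equivalent. Concretely, the unit sphere $S := \{f \in V : \|f\|_M = 1\}$ is compact, and $f \mapsto \sqrt{\mathrm{Var}_{Y\sim\pi}(f(Y))}$ is a continuous, strictly positive function on $S$. Hence it attains a positive minimum $m$ and a finite maximum $M$ on $S$, and by homogeneity
\[
m\,\|f\|_M \;\le\; \sqrt{\mathrm{Var}_{Y\sim\pi}(f(Y))} \;\le\; M\,\|f\|_M \qquad \text{for all } f \in V.
\]
Taking $C := \max\{M^2, 1/m^2\}$ and squaring yields the claimed two-sided bound. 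The constant $C$ depends only on $M$, through the basis $\{\phi_i\}$ (which is determined by the Jordan decomposition of $M$) and the stationary distribution $\pi$.

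There is no real obstacle here: the entire content is finite-dimensional norm equivalence together with the observation that irreducibility gives $\pi > 0$ entrywise, which rules out degeneracy of the variance seminorm. The only minor point worth flagging is that the constant $C$ is not explicit; it is obtained abstractly via compactness, which is consistent with the rest of the paper where $M$-dependent constants are treated as unspecified but finite.
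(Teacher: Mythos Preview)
Your proof is correct. Both you and the paper reduce to the mean-zero subspace $V$ and use that $\pi$ has full support (from irreducibility) to rule out degeneracy. The difference is one of concreteness: the paper writes $\mathrm{Var}_{Y\sim\pi}(f(Y)) = \vec{\delta}^{\,\top} P^\top D_\pi P\, \vec{\delta}$ with $D_\pi = \mathrm{diag}(\pi)$, and then bounds this quadratic form above and below via the extreme singular values of $P^\top D_\pi P$ together with the $\ell_\infty/\ell_2$ comparison $q^{-1/2}\|\vec\delta\|_2 \le \|\vec\delta\|_\infty \le \|\vec\delta\|_2$; invertibility of $P^\top D_\pi P$ gives $s_{\min}>0$. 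You instead appeal directly to compactness of the $\|\cdot\|_M$-sphere and continuity/positivity of the variance. Your route is shorter and is in fact exactly the argument the paper uses for the very next lemma (the $\|\cdot\|_M$ versus $\|f-\mathbb{E}f\|_\infty$ comparison); the paper's route here yields a nominally more explicit constant $C = \max\{s_{\max}^2,\, q/s_{\min}^2\}$, though since the singular values are not computed this gains nothing in practice.
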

\begin{proof}
Without lose of generality, let $f = \sum_{i \in [2,q]} \delta_i\phi_i$,  
since both $\|f\|_M$ and ${\rm Var}_{Y\sim \pi}(f(Y))$ 
are invariant under a constant shift. 

Let $D_{\pi} = {\rm diag}(\pi_1,\dots,\pi_q)$. Also, let $\vec{\delta} = (0,\delta_2,\dots, \delta_q)$.  Then, 
\begin{align*}
	\|f\|_M 
=& 	\|\vec \delta \|_\infty &
 &\mbox{ and }&
	{\rm Var}_{Y\sim \pi}(f(Y))
=&	\vec{\delta}^\top P^\top D_{\pi} P \vec{\delta}.
\end{align*}
Let $s_{\rm max}$ and $s_{\rm min}$ 
be the maximum and minimum singular value of $P^\top D_{\pi} P$, respectively. 
Together with 
$ 	q^{-1/2} \|\vec \delta \|_2  
\le 	\|\vec \delta \|_\infty
\le 	\|\vec \delta \|_2$,
we have 
\begin{align}
	s^2_{\rm min} q^{-1} \|f\|^2_M  
\le 	s^2_{\rm min} q^{-1} \|\vec \delta \|^2_2
\le	{\rm Var}_{Y \sim \pi}(f(Y))
\le	s^2_{\rm max} \|\vec \delta \|^2_2
\le	s^2_{\rm max}\|f\|^2_M.
\end{align}
If $s_{\rm min}>0$, then we can complete the proof by taking $C =\max\{ s_{\rm max}^2, q/s^2_{\rm min}\}$. It remains to show that $s_{\rm min}>0$, or equvialently $P^\top D_\pi P$ is invertible. 
Because $M$ is ergodic, each entry of $\pi$ is positive, and thus $D_\pi$ is invertible. 
Hence, $P^\top D_\pi P$ is invertible because it is a product of three invertible matrices.

\end{proof}

\begin{lemma}
\label{lem: linftyMnorm}
There exists $C>0$ so that for every $f : [q] \to \mathbb{R}$, 
\begin{align}
	C^{-1} \|f\|_M \le \|f - \mathbb{E}_{Y\sim \pi}f(Y)\|_\infty \le C \|f\|_M.
\end{align}
\end{lemma}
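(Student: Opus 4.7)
The plan is to reduce to the mean-zero case and then exploit the previous lemma on the variance together with the trivial $L^2$--$L^\infty$ comparison available on a finite state space with strictly positive stationary distribution. Both quantities in the statement are invariant under adding a constant to $f$, so without loss of generality I would write $f = \sum_{i\in[q-1]} \delta_i \phi_i$ with $\mathbb{E}_\pi f(Y) = 0$, in which case $\|f - \mathbb{E}_\pi f\|_\infty = \|f\|_\infty$.

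For the upper bound $\|f\|_\infty \le C\|f\|_M$, I would just use the triangle inequality on the basis expansion:
\begin{align*}
\|f\|_\infty \;\le\; \sum_{i=1}^{q-1} |\delta_i|\,\|\phi_i\|_\infty \;\le\; (q-1)\bigl(\max_{i\in[q-1]}\|\phi_i\|_\infty\bigr) \|f\|_M,
\end{align*}
so I can take $C \ge (q-1)\max_i \|\phi_i\|_\infty$, a constant depending only on $M$.

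For the lower bound $C^{-1}\|f\|_M \le \|f\|_\infty$, I would invoke Lemma \ref{lem: varMnorm} to relate $\|f\|_M$ to $\mathrm{Var}_{Y\sim\pi}(f(Y))$, and then observe that on a finite space the variance is dominated by the sup-norm:
\begin{align*}
\mathrm{Var}_{Y\sim\pi}(f(Y)) \;=\; \sum_{\theta\in[q]} \pi(\theta)\bigl(f(\theta)-\mathbb{E}_\pi f\bigr)^2 \;\le\; \|f-\mathbb{E}_\pi f\|_\infty^2.
\end{align*}
Combining with $C_{\ref{lem: varMnorm}}^{-1}\|f\|_M^2 \le \mathrm{Var}_{Y\sim\pi}(f(Y))$ yields $\|f\|_M \le C_{\ref{lem: varMnorm}}^{1/2}\,\|f-\mathbb{E}_\pi f\|_\infty$, which is the desired direction.

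There is no genuine obstacle here: the essence is that $\|\cdot\|_M$ and $\|\cdot - \mathbb{E}_\pi(\cdot)\|_\infty$ are both seminorms on $\mathbb{R}^{[q]}$ which vanish precisely on the one-dimensional space of constants, so they descend to genuine norms on the finite-dimensional quotient $\mathbb{R}^{[q]}/\mathbb{R}\mathbf{1}$ and must be equivalent. Taking $C$ to be the maximum of the two constants produced in the two steps above completes the argument. If a slightly sharper constant were desired, one could alternatively bypass Lemma \ref{lem: varMnorm} and directly note that the linear map $\vec\delta \mapsto \sum_{i\in[q-1]} \delta_i\phi_i$ from $(\mathbb{R}^{q-1},\|\cdot\|_\infty)$ onto the mean-zero functions equipped with $\|\cdot\|_\infty$ is a bijection between finite-dimensional normed spaces, hence bi-Lipschitz.
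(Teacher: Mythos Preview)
Your proof is correct and in fact subsumes the paper's own argument: the paper's proof is precisely your final paragraph, namely the one-line observation that $\|\cdot\|_M$ and $\|\cdot - \mathbb{E}_\pi(\cdot)\|_\infty$ are both norms on the finite-dimensional space of mean-zero functions and hence automatically equivalent. Your explicit derivations of the two inequalities via the triangle inequality and Lemma~\ref{lem: varMnorm} are a bonus that the paper does not bother to spell out.
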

\begin{proof}
   This simply follows from both $\|f\|_M$ and $\|f -\mathbb{E}f\|_\infty$ are both norms 
   on the finite dimensional space $\{f : [q]\rightarrow \R\,:\, \mathbb{E}_{Y \sim \pi}f(Y)=0\}$.
\end{proof}
\begin{lemma}
\label{lem: MnormDecay}
There exists  $C \ge 1$ depending on $M$ such that 
For any function $f:[q]\mapsto \mathbb{R}$ and $k \in \mathbb{N}$, 
\begin{align}
	\|M^k f \|_M
\le
	Ck^q\lambda^k \|f\|_M. 
\end{align}
\end{lemma}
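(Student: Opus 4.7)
The plan is to work directly with the Jordan decomposition used to define $\|\cdot\|_M$ and to exploit the simple fact that powers of Jordan blocks have entries controlled by $k^{m} |\mu|^k$, where $m$ is the block size and $\mu$ the eigenvalue. Since the $\phi_i$ are literally the columns of the change-of-basis matrix $P$, applying $M = P {\bf J} P^{-1}$ acts in a transparent way on the coordinates $(\delta_i)$ that define $\|\cdot\|_M$.

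Concretely, I would proceed as follows. First, reduce to the mean-zero case: since $M$ preserves the constant function (as $M{\bf 1} = {\bf 1}$, equivalently $\phi_0 \equiv 1$ is the eigenvector for eigenvalue $1$), one has $\mathbb{E}_{Y\sim\pi}(M^k f)(Y) = \mathbb{E}_{Y\sim\pi}f(Y)$, so $M^k$ commutes with subtracting the mean; hence it suffices to bound $\|M^k f\|_M$ for $f = \sum_{i\ge 1} \delta_i \phi_i$. Second, write $f$ in the Jordan coordinates as the vector $\vec{\delta} = (0,\delta_1,\dots,\delta_{q-1})^\top$ so that $f = P\vec\delta$ and therefore $M^k f = P {\bf J}^k \vec\delta$; by definition, $\|M^k f\|_M = \|{\bf J}^k \vec\delta\|_\infty$ (restricted to the non-constant coordinates). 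Third, compute ${\bf J}^k$ block by block.

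For a real Jordan block of size $m_s$ with eigenvalue $\lambda_s$, the explicit formula
\begin{equation*}
({\bf J}_s^k)_{ij} = \binom{k}{j-i}\lambda_s^{k-(j-i)} \quad \text{for } j\ge i,
\end{equation*}
(and $0$ otherwise) gives entrywise bounds $|({\bf J}_s^k)_{ij}| \le k^{m_s} |\lambda_s|^{k - m_s} \le C_s k^{m_s} \lambda^k$ once $k \ge m_s$, using $|\lambda_s|\le \lambda$ and absorbing the finitely many small-$k$ cases into the constant. For a block corresponding to a complex conjugate pair, the same computation with the rotation blocks $R_s$ (which are orthogonal, so $\|R_s^{j}\|_{op}=1$) yields entries bounded by $\binom{k}{j-i}|\lambda_s|^{k-(j-i)}\cdot 1$, giving the same estimate. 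Since each block has size at most $q$, every entry of ${\bf J}^k$ is bounded by $C k^q \lambda^k$ for a constant $C = C(M)$. Consequently,
\begin{equation*}
\|{\bf J}^k \vec\delta\|_\infty \le q \cdot C k^q \lambda^k \|\vec\delta\|_\infty,
\end{equation*}
which, after absorbing the factor $q$ into $C$, is precisely the claimed bound $\|M^k f\|_M \le C k^q \lambda^k \|f\|_M$.

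There is no real obstacle here — the lemma is essentially a restatement of the standard bound on $\|{\bf J}^k\|$ for the non-leading Jordan blocks, together with the observation that $\|\cdot\|_M$ is just the $\ell^\infty$ norm on the Jordan coordinates. The only mildly delicate point is making sure the two kinds of real Jordan blocks (real eigenvalue versus complex conjugate pair) are handled uniformly; this is taken care of by noting that the rotation factors $R_s$ are isometries, so they contribute nothing to the growth, and the geometric/polynomial decay is governed entirely by $|\lambda_s|^{k-j}\binom{k}{j}$ with $|\lambda_s|\le\lambda$ and $j\le q$.
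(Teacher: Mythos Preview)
Your proposal is correct and follows essentially the same approach as the paper: both identify $\|\cdot\|_M$ with the $\ell^\infty$ norm in Jordan coordinates, compute ${\bf J}^k$ block by block via the binomial formula (treating the complex-pair blocks with rotation matrices of norm $1$), bound each entry by $C k^q \lambda^k$, and absorb the factor $q$ into the constant. The paper's writeup is slightly terser but the argument is the same.
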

\begin{rem}
Notice that $M^kf$ can be interpreted as
\begin{align*}
	\CE{ f(X_u) }{ X_{\frak p^k(u)}=i } = (M^kf)(i),
\end{align*}
for every $u \in T$ where $\frak p^k(u)$ is well-defined. 
\end{rem}

\begin{proof}
	
\begin{align} \label{eq:Mkf00}
	\|M^k f \|_M
=&  
	\| P {\bf J}^k (\sum_{i \in [q]} \delta_i e_i) \|_{M} 
=
	\| {\bf J}^k (\sum_{i \in [2,q]} \delta_i e_i) \|_{\infty}.
\le  	
	q \max_{i\in [2,q]}  \|\delta_i\| \max_{i,j \in [2,q]} |{\bf J}^k_{ij}|.
\end{align}

Notice that ${\bf J}^k$ is the diagonal block matrix whose blocks ${\bf J}_s^{k}$ for $s \in [s_1]$.  The block ${\bf J}^k_s$ can be computed directly: 
In the case when ${\bf J}_s$ corresponds to a complex conjugate pair of eigenvalues, 
\begin{align}
	{\bf J}^k_s
=
	\begin{bmatrix}
	\lambda^k_s R_s^k & {k \choose 1} \lambda^{k-1}_s R^{k-1}_s  & \dots 
			  & {k \choose m_s-1} \lambda^{k-m_s+1}_s R^{k-m_s+1}_s 	\\
	& \lambda^k_s R^k_s & \ddots &  \vdots \\
		&&  \ddots & {k \choose 1} \lambda^{k-1}_s R^{k-1}_s \\
		&&& \lambda^k_s R^k_s,
	\end{bmatrix}
\end{align}
where we treat ${ k \choose r } = 0$ if $r>k$. It can be verified directly by induction, relying on the identity ${k \choose r - 1 } + {k \choose r} = {k+1 \choose r} $. 
Further, removing the $R_s$ terms in the above equation we obtain the formula for 
${\bf J}^k_s$ when ${\bf J}_s$ corresponds to a real eigenvalue.

Therefore, with $ {k \choose q} \le k^q$, $|\lambda_s^{r}| \le \lambda^{r}$, and 
$\max_{i,j} {R_s^{r}}_{ij}\ <1$ for $r\ge 1$, we obtain the bound 
\begin{align}
	\max_{s \in [2,q]}
	\max_{i,j \in [q]} |({\bf J_s}^k)_{ij}|
\le
	 C' k^q\lambda^k,
\end{align}
where $C'$ is a constant which depends on $q$ and $\lambda$. 

Now we substitute the above bound into \eqref{eq:Mkf00} to get 
\begin{align*}
	\|M^k f \|_M
\le	qC' k^q\lambda^k \|f\|_M. 
\end{align*}
The proof is completed by taking $C = qC'$.

\end{proof}

\begin{proof}[Proof of Lemma \ref{lem:Mbasis}]
   The proof of Lemma \ref{lem:Mbasis} follows from the $\|\cdot\|_M$ decay from 
   Lemma \ref{lem: MnormDecay} and that both $\var[f]$ and $\|f-\mathbb{E}f\|_\infty$ are comparable to $\|f\|_M$ within a constant multiplicative factor
   (Lemma \ref{lem: varMnorm} and Lemma \ref{lem: linftyMnorm}).
\end{proof}

\end{document}